\title{Relative Untwisted Outer Space for Right-Angled Artin Groups}
\author{Adrien \textsc{Abgrall}}
\date{\today}
\begin{document}

\pagestyle{plain}
\frenchspacing
\parindent=15pt
\theoremstyle{plain}
\newtheorem{thm}{Theorem}[section]
\newtheorem{lm}[thm]{Lemma}
\newtheorem{pro}[thm]{Proposition}
\newtheorem{cor}[thm]{Corollary}
\newtheorem{conj}[thm]{Conjecture}
\newtheorem*{csvdefi}{Definition}
\newtheorem*{csvlm}{Lemma}
\newtheorem*{csvcor}{Corollary}
\newtheorem*{csvthm}{Theorem}
\newtheorem{innercustomthm}{Theorem}
\newenvironment{customthm}[1]
  {\renewcommand\theinnercustomthm{#1}\innercustomthm}
  {\endinnercustomthm}
\newtheorem{innercustomcor}{Corollary}
\newenvironment{customcor}[1]
  {\renewcommand\theinnercustomcor{#1}\innercustomcor}
  {\endinnercustomcor}
\theoremstyle{definition}
\newtheorem{defi}[thm]{Definition}
\newtheorem{rem}[thm]{Remark}
\theoremstyle{remark}
\newtheorem{q}[thm]{Question}
\newtheorem{ex}[thm]{Example}
\newtheorem{nota}[thm]{Notation}
\newtheorem*{ack}{Acknowledgments}

\newcommand{\NN}{\mathbb{N}}
\newcommand{\ZZ}{\mathbb{Z}}
\newcommand{\QQ}{\mathbb{Q}}
\newcommand{\RR}{\mathbb{R}}
\newcommand{\CC}{\mathbb{C}}
\renewcommand{\SS}{\mathbb{S}}
\newcommand{\TT}{\mathbb{T}}
\newcommand{\HH}{\mathbb{H}}
\newcommand{\FF}{\mathbb{F}}

\newcommand{\cat}{\mathrm{CAT}(0)}
\newcommand{\s}{\mathrm{Sep}}
\newcommand{\out}{\mathsf{Out}}
\newcommand{\aut}{\mathsf{Aut}}
\newcommand{\inn}{\mathsf{Inn}}
\newcommand{\unt}{\mathcal{U}}
\newcommand{\uaut}{\mathcal{U}\mathsf{Aut}}
\renewcommand{\H}{\mathfrak{H}}
\newcommand{\K}{\mathfrak{K}}
\newcommand{\W}{\mathfrak{W}}
\renewcommand{\emptyset}{\varnothing}

\newcommand{\n}[1]{\left|\left|\,#1\,\right|\right|}
\newcommand{\floor}[1]{\left\lfloor #1 \right\rfloor}
\newcommand{\ceil}[1]{\left\lceil #1 \right\rceil}
\newcommand{\gen}[1]{\left\langle #1 \right\rangle}
\newcommand{\card}[1]{\left | #1 \right |}
\newcommand{\gl}{GL}
\newcommand{\uc}[1]{\widetilde{#1}}
\newcommand{\USS}{\uc{\SS}}
\newcommand{\sd}{\mathbin{\triangle}}

\newcommand{\bin}[1]{}
\newcommand{\todo}[1]{{\color{red}#1}}

\begin{abstract}

For $\mathcal{G} = \{G_i\}$, $\mathcal{H}=\{H_j\}$ two finite collections of finitely generated subgroups of a right-angled Artin group $A_\Gamma$, the untwisted McCool group $\unt(A_\Gamma;\,\mathcal{G},\mathcal{H}^t)$ is the subgroup of untwisted outer automorphisms of $A_\Gamma$ preserving the conjugacy class of each $G_i$ and acting trivially up to conjugacy on each $H_j$. We prove that when the $G_i$ are standard subgroups of $A_\Gamma$, $\unt(A_\Gamma;\,\mathcal{G},\mathcal{H}^t)$ acts properly cocompactly on a finite-dimensional subcomplex of the spine of untwisted outer space for $A_\Gamma$, providing a geometric model for this group and proof that it is of type VF.
\end{abstract}

\maketitle

\section{Introduction}

A right-angled Artin group is a group $A_\Gamma$ with a fixed finite \emph{standard generating set} $V$, where the only relations come from commutations of some pairs of standard generators, encoded by a graph $\Gamma$ with vertex set $V$. Examples include finite-rank free groups $F_n$ and free abelian groups $\ZZ^n$. Many properties of right-angled Artin groups are reminiscent of one class or the other. A finite generating set for the automorphism group $\aut(A_\Gamma)$ was found by Laurence (\cite{laurence}) and Servatius (\cite{servatius}). This set closely resembles the Nielsen generators for $\aut(F_n)$, but includes some automorphisms called \emph{twists} whose behaviour is similar to transvections in
$SL_n(\ZZ) = \aut(\ZZ^n)$. This observation warranted the definition of the \emph{untwisted subgroup} $\uaut(A_\Gamma)$ of $\aut(A_\Gamma)$, generated by all the Laurence-Servatius generators except the twists. It appears sometimes in the literature as the subgroup generated by \emph{long-range automorphisms}. Note that, while the definition of the untwisted subgroup might seem somewhat arbitrary, Fioravanti (\cite{coarsemedian}) characterized it as the group of automorphisms of $A_\Gamma$ that coarsely preserve a natural median structure, providing some geometric significance to this subgroup. Moreover, if for all $v\neq w\in V$, there is no inclusion between balls of radius $1$ around $v$ and $w$ in $\Gamma$, then no twist can exist. In that case, the untwisted subgroup is $\aut(A_\Gamma)$ itself. Because of the similarities in their generating sets, the untwisted subgroup $\uaut(A_\Gamma)$, and its outer automorphism image $\unt(A_\Gamma)\leq \out(A_\Gamma)$ are expected to share many properties with $\aut(F_n)$ and $\out(F_n)$ respectively.

The \emph{spine of outer space} for the free group $F_n$ is a simplicial complex constructed by Culler and Vogtmann (\cite{cullervogtmann}). It is contractible and admits a properly discontinuous and cocompact action of $\out(F_n)$ by simplicial isometries. Closely related is the \emph{spine of auter space} for $F_n$, constructed by Hatcher and Vogtmann (\cite{auterspace}), having an action of $\aut(F_n)$ with analogous properties. Recently, Charney, Stambaugh, and Vogtmann constructed in \cite{untwistedos} a \emph{spine of untwisted outer space} $K_\Gamma$ for $A_\Gamma$, having an action of the group of untwisted outer automorphisms $\unt(A_\Gamma)$ with analogous properties again. This construction provided a geometrically explicit proof of the fact $\unt(A_\Gamma)$ is of \emph{type VF}, namely that a torsion-free subgroup of finite index acts freely and cocompactly on $K_\Gamma$, a finite-dimensional contractible complex, with many homological consequences. In particular, it recovered the fact that $\unt(A_\Gamma)$ is finitely presented, which was known from an earlier work of Day (\cite{daypresentation}, Proposition~5.4). In \cite{spatial}, we gave a new construction of the spine $K_\Gamma$: vertices of $K_\Gamma$ are certain locally $\cat$ cube complexes called \emph{spatial cube complexes}, endowed with \emph{untwisted markings} identifying their fundamental group with $A_\Gamma$, considered modulo marking-preserving combinatorial isomorphism. Edges of $K_\Gamma$ correspond to marking-preserving \emph{hyperplane collapses} between spatial cube complexes. So far, no analogue to the spine of auter space was known for $A_\Gamma$. Even more recently, Bregman, Charney, and Vogtmann in \cite{twistedos} modified the construction of $K_\Gamma$ into a (significantly more complex) \emph{outer space for $A_\Gamma$}, i.e.~a contractible space with a properly discontinuous action of the whole $\out(A_\Gamma)$.

\bigskip

The group $\out(A_\Gamma)$ and its subgroup $\unt(A_\Gamma)$ act on the set of conjugacy classes of elements of $A_\Gamma$. The intent of this article is to obtain a geometric understanding of the stabilizer in $\unt(A_\Gamma)$ of a finite array of such conjugacy classes. Examples of such stabilizers are many, but include notably, when $A_\Gamma$ is free, pure (extended) mapping class groups of finite-type non-compact surfaces, and for a general $A_\Gamma$, the subgroup of \emph{outer conjugating automorphisms}, which are the outer automorphisms preserving the conjugacy class of every standard generator (see Example~\ref{conjugatingautomorphisms}). We obtain the following theorem.

\begin{customthm}{\ref{mccoolcyclic}}[special case]
Let $[h_1],\dots, [h_n]$ be finitely many conjugacy classes of elements of $A_\Gamma$. There exists a contractible subcomplex of the spine of untwisted outer space $K_\Gamma$ on which the untwisted stabilizer $Stab([h_1],\dots, [h_n])\cap \unt(A_\Gamma)$ acts properly and cocompactly. In particular, this group is of type VF.
\end{customthm}

To the best of our knowledge, the question remains unanswered when twists are allowed. Stabilizers in $\unt(A_\Gamma)$ have proved more convenient to grasp than their counterparts in $\out(A_\Gamma)$ due to the combinatorial nature of the spine of untwisted outer space. The question had been previously solved when $A_\Gamma$ is free: a theorem of McCool (\cite{mccooloriginal}) proves finite presentation of the stabilizer, a result of Guirardel and Levitt (\cite{guirardel_mccool_2015}) proves that it has type VF, and a recent work of Bestvina, Feighn and Handel (\cite{mccoolwhitehead}) produces a contractible subcomplex of Culler-Vogtmann's outer space with a cocompact action (see below). Their algorithmic proof relates to an algorithm announced by Gersten (\cite{gerstenalgorithmfree}) and verified by Kalajdžievski (\cite{algorithmfree}). Very recently, this special case has been obtained independently by Corrigan (\cite{corrigan}).

The results of Guirardel and Levitt, and of Bestvina, Feighn and Handel are even more general and invite to consider stabilizers not only of conjugacy classes of elements but also of conjugacy classes of subgroups. The relevant notion of a \emph{McCool group}, a subgroup of an outer automorphism group was introduced by Guirardel and Levitt.
\begin{defi}[\cite{mccooldef}]
    Let $G$ be a group and $\mathcal{G} = \{G_i\},\mathcal{H} = \{H_j\}$ two collections of subgroups of $G$. The \emph{McCool group} $\out(G;\,\mathcal{G},\mathcal{H}^t)\leq \out(G)$ is the subgroup of outer automorphisms that preserve the conjugacy class of each $G_i$ and are inner on each $H_j$ (i.e.~for every $j$, some representative coincides with the identity on $H_j$).

    When $G=A_\Gamma$ is a right-angled Artin group, we denote $\unt(A_\Gamma;\,\mathcal{G},\mathcal{H}^t)$ the intersection $\unt(A_\Gamma)\cap \out(A_\Gamma;\,\mathcal{G},\mathcal{H}^t)$.
\end{defi}

With this notation, the full generality of the result of Bestvina, Feighn and Handel is the following.

\begin{thm}[\cite{mccoolwhitehead}]
    For $\mathcal{G}$ a finite collection of finitely generated subgroups, $\out(F_n;\mathcal{G},\emptyset)$ acts properly and cocompactly on a subcomplex of the spine of Culler-Vogtmann's outer space for $F_n$. In particular, this group is of type VF.
\end{thm}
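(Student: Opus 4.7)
The plan is to adapt the strategy of Bestvina, Feighn and Handel, working directly in the spine $K_n$ of Culler-Vogtmann outer space. I would begin by assigning to any marked graph $(G,\sigma)$ and any finitely generated subgroup $H\leq F_n$ a canonical \emph{core subgraph} $C_H(G,\sigma)\subseteq G$ whose fundamental group, via the marking, is conjugate to $H$. This core depends only on the $F_n$-conjugacy class of $H$, so the McCool group $M = \out(F_n;\,\mathcal{G},\emptyset)$ permutes the collection $\{C_{G_i}(G,\sigma)\}_i$ in a way that is invariant under change of marking. I would then equip the vertex set of $K_n$ with an $M$-invariant complexity $c(G,\sigma)$, for instance the pair (total number of edges used by all the core subgraphs, total number of edges of $G$) ordered lexicographically; this is essentially a Whitehead-type norm of the collection $\mathcal{G}$ measured on $(G,\sigma)$.

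Let $K_n^{\mathcal{G}}$ be the subcomplex of $K_n$ spanned by the marked graphs of minimal complexity. By construction $K_n^{\mathcal{G}}$ is $M$-invariant, and since $M$ acts with finite stabilizers on $K_n$, the induced action is automatically proper. Cocompactness would follow from a Whitehead-type finiteness argument: marked graphs of minimal complexity fall into finitely many combinatorial types once the rank and the core configuration are fixed, so there are finitely many $M$-orbits of vertices, and hence of simplices, in $K_n^{\mathcal{G}}$.

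The main obstacle, and the part I expect to be hardest, is contractibility of $K_n^{\mathcal{G}}$. The plan is to build an $M$-equivariant deformation retraction of $K_n$ onto $K_n^{\mathcal{G}}$. The key input would be a generalized Whitehead lemma: at any non-minimal vertex there exists a fold or blow-up move to an adjacent vertex of strictly smaller complexity, and moreover the poset of such strictly descending moves at that vertex should be shown to be contractible, via a nerve analysis of the Whitehead graphs of the $G_i$ at $(G,\sigma)$. A discrete-Morse or Quillen-type argument applied to the filtration of $K_n$ by sublevel sets of $c$ then lifts contractibility of $K_n$, due to Culler and Vogtmann, to contractibility of the minimal sublevel set $K_n^{\mathcal{G}}$. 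Once this is in place, since $M\leq\out(F_n)$ is virtually torsion-free, any torsion-free finite-index subgroup acts freely and cocompactly on the contractible finite-dimensional complex $K_n^{\mathcal{G}}$, producing a finite $K(\pi,1)$ and hence the type VF conclusion.
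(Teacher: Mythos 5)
The paper cites this theorem to Bestvina, Feighn and Handel and does not prove it; the nearest proved analogue is Theorem~\ref{mccoolgeneral}, which for free groups only handles free factors for $\mathcal{G}$ but proceeds by the same style of argument (a Whitehead norm on roses, a union of stars, and peak reduction). Your plan is morally of the same type, and the core ideas (a norm, a minimal subcomplex, a relative Whitehead lemma, a Morse-style contractibility argument) are the right ones; however, two of your definitions would not work as written.

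The ``core subgraph'' $C_H(G,\sigma)\subseteq G$ does not exist as a subgraph: what the marking and a finitely generated $H$ canonically determine is the Stallings core graph, an \emph{immersion} $\Gamma_H\to G$ rather than an embedding (for cyclic $H$ it is already a reduced cyclic edge path that revisits edges). The quantity to track is the number of edges of the domain $\Gamma_H$, that is total length with multiplicity, not the set of edges of $G$ hit by the image. More seriously, with your complexity $c(G,\sigma)$ ordered lexicographically by total core length and then by number of edges of $G$, the subcomplex spanned by the minimal-complexity marked graphs is a discrete set of vertices: collapsing any forest edge weakly shortens every core loop (cf.\ Corollary~\ref{collapsedownstairs}) and strictly decreases the number of edges, so the lexicographic minimum is attained only at roses, and no two roses span an edge of $K_n$. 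The subcomplex you actually need is the union of the \emph{stars} of the minimal-norm roses, as in Definition~\ref{relativeos} and the definition of $K^{\mathcal{G}}_{\mathcal{H}^t}$; correspondingly, the norm should only be defined on roses, and a non-rose vertex is included when it collapses onto a minimal-norm rose. Finally, contractibility is not a formal ``lift'' from contractibility of $K_n$: the peak-reduction and Quillen-poset arguments (compare Lemmas~\ref{peak}, \ref{higginslyndon}, \ref{peakred} and Proposition~\ref{contractintersect}) must be re-established in the relative setting, which is the genuinely hard part; your ``generalized Whitehead lemma'' and nerve analysis are the right ingredients, but they constitute a fresh proof rather than a transfer from the Culler--Vogtmann theorem.
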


Our main theorem adapts this result to untwisted McCool subgroups of for right-angled Artin groups, with finitely many fixed (conjugacy classes of) finitely generated subgroups, and stabilized \emph{standard subgroups} (i.e.~subgroups generated by a subset of the standard generators).

\begin{customthm}{\ref{mccoolgeneral}}
Let $\mathcal{G}$ be a collection of standard subgroups of $A_\Gamma$ and let $\mathcal{H}$ be a finite collection of arbitrary finitely generated subgroups of $A_\Gamma$. There exists a contractible subcomplex of the spine of untwisted outer space $K_\Gamma$ on which $\unt(A_\Gamma; \mathcal{G},\mathcal{H}^t)$ acts properly and cocompactly. In particular, this group is of type VF.
\end{customthm}

The special case above corresponds to the case where $\mathcal{G} =\emptyset$ and $\mathcal{H}$ is made of cyclic subgroups. Passing from these cyclic subgroups to finitely generated ones (Section~\ref{fixedfg}) relies on a purely geometric translation length argument, developped in Section~\ref{minsets}. However, stabilization of standard subgroups requires much care, and is in fact the most technical part of this article (Section~\ref{relative}). 

Previous work of Day and Wade (\cite{relativeout}) had established type VF for $\out(A_\Gamma;\,\mathcal{G},\mathcal{H}^t)$ (not the untwisted version) but only under the extra assumption that every subgroup of $\mathcal{H}$ is standard as well. Moreover their proof is algebraic in nature and does not provide explicitly a geometric model for the group.

We make the observation that the untwisted automorphism group $\uaut(A_\Gamma)$ is isomorphic to the untwisted McCool group $\unt(A_\Gamma*\ZZ, \{A_\Gamma\}, \{\ZZ\}^t)$, and obtain as a direct corollary a \emph{spine of untwisted auter space for $A_\Gamma$}:

\begin{customcor}{\ref{auterspace}}
There exists a contractible simplicial complex $L_\Gamma$ on which $\uaut(A_\Gamma)$ acts properly and cocompactly. In particular, this group is of type VF.
\end{customcor}

\bigskip

In Section~\ref{background}, we recall all the needed concepts from the theory of $\cat$ cube complexes, right-angled Artin groups, hyperplane collapses and untwisted outer space. We prove a relative version of Droms' theorem on isomorphisms of right-angled Artin groups (Lemma~\ref{relativedroms}) and a generation result for certain subgroups of the untwisted outer automorphism groups (Proposition~\ref{fioravanti}).

In Section~\ref{minsets}, we provide results on the behavior of minsets of isometries acting cospecially on $\cat$ cube complexes, and minimal invariant subcomplexes for groups acting on such complexes.

In Section~\ref{collapses}, we give two preliminary results about how hyperplane collapses behave on locally convex subcomplexes.

In Section~\ref{relative}, we describe the spine of untwisted outer space relative to a collection $\mathcal{G}$ of standard subgroups as a subspace $K^\mathcal{G}$ of $K_\Gamma$ and prove several properties of $K^\mathcal{G}$.

In Section~\ref{fixedZ}, we introduce a further subspace $K^\mathcal{G}_\mathcal{H}$ of $K_\Gamma$ associated to a family $\mathcal{G}$ of stabilized standard subgroups and a family $\mathcal{H}$ of fixed arbitrary cyclic subgroups. The definition is close to the work of Bestvina, Feighn and Handel, but some new arguments are required.

In Section~\ref{contractibility}, we prove that $K^\mathcal{G}_\mathcal{H}$ is contractible, finishing the proof of Theorem~\ref{mccoolcyclic}. This proof of contractibility mirrors the proof of \cite{untwistedos}, however, more arguments are needed at each step to account for the new fixed subgroups.

Finally, in Section~\ref{fixedfg}, we use a short geometric argument to obtain the more general Theorem~\ref{mccoolgeneral}, and conjecture further generalizations.

\begin{ack}
I gratefully acknowledge support from project ANR-22-CE40-0004. I thank Samuel Fisher and Zachary Munro for many interesting discussions while writing this work. I am extremely grateful to my advisor, Vincent Guirardel, for his careful attention and precious advice.
\end{ack}

\section{Background}
\label{background}

\subsection{CAT(0) cube complexes}

The study of $\cat$ cube complexes started with Gromov (\cite{gromov}) and Sageev (\cite{ccc}). Further work is due to Haglund and Paulin (\cite{wallspace}) from the viewpoint of \emph{wallspaces}, and to Chepoi (\cite{median}) from the viewpoint of \emph{median graphs}. We will mainly use the same background results given in \cite{spatial}, Section~2.1 and refer the reader to them first. We use the same notation $\mu \colon X^{(0)}\times X^{(0)}\times X^{(0)}\to X^{(0)}$ for the \emph{median} defined on the $0$-skeleton of a $\cat$ cube complex $X$. Recall that when $X$ is $\cat$, a subcomplex $A$ of $X$ is \emph{convex} when $A$ is full and geodesic edge paths in $X$ joining vertices of $A$ are fully contained in $A$. When $X$ is locally $\cat$, a subcomplex of $X$ is \emph{locally convex} if any of its preimages in the universal cover of $X$ is convex. We also define \emph{hyperplanes} in locally $\cat$ cube complexes as equivalence classes of edges under parallelism. Hyperplanes are \emph{transverse} when they have dual edges spanning a square: a hyperplane might be transverse to itself. When $X$ is $\cat$ and $A, B$ are two subsets of $X$, we write $\s(A\mid B)$ to denote the set of hyperplanes of $X$ separating $A$ from $B$. We quote the following classical result.

\begin{lm}
\label{convexcarac}
Let $X$ be a $\cat$ cube complex, and $A\subseteq X$ a full subcomplex. The following are equivalent:
\begin{enumerate}
    \item $A$ is convex
    \item $\mu(A^{(0)},A^{(0)},X^{(0)})\subseteq A^{(0)}$
    \item $A$ is \emph{gated}: for every $x\in X^{(0)}$ there exists $\pi_A(x)\in A^{(0)}$ such that for every $a\in A^{(0)}$, $d(x,a) = d(x,\pi_A(x)) + d(\pi_A(x),a)$, i.e.~$\pi_A(x)$ lies on some gedoesic edge path joining $x$ and $a$.
    \item $A$ is an intersection of halfspaces
\end{enumerate}
Moreover, when they hold, $\pi_A(x)$ is unique and called the \emph{projection of $x$ onto $A$}.
\end{lm}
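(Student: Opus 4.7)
The plan is to establish the cycle $(1)\Rightarrow(2)\Rightarrow(3)\Rightarrow(4)\Rightarrow(1)$, extracting uniqueness of $\pi_A(x)$ as a by-product of $(2)\Rightarrow(3)$. The key combinatorial input, taken from the background, is the halfspace description of the median: $\mu(y,z,w)$ lies in a halfspace iff at least two of $y,z,w$ do, whence $d(w,\mu(y,z,w))=|\s(w|y)\cap\s(w|z)|$ and $\mu(y,z,w)$ lies on a combinatorial geodesic from $y$ to $z$.

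For $(1)\Rightarrow(2)$, given $a,a'\in A^{(0)}$ and $x\in X^{(0)}$ the median $\mu(a,a',x)$ lies on an edge-geodesic from $a$ to $a'$, hence in $A$ by convexity. For $(2)\Rightarrow(3)$, I let $\pi\in A^{(0)}$ minimize $d(x,\cdot)$: a second minimizer $a$ would give $\mu(a,\pi,x)\in A^{(0)}$ with $d(x,\mu(a,\pi,x))\le d(x,\pi)$, equality forcing $a=\pi$ via the median formula, so the minimizer is unique. For arbitrary $a\in A^{(0)}$ the same inequality forces $\mu(\pi,a,x)=\pi$, which is equivalent to the disjoint decomposition $\s(x|a)=\s(x|\pi)\sqcup\s(\pi|a)$, i.e.\ the gate property for $\pi_A(x):=\pi$.

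For $(3)\Rightarrow(4)$, I claim $A$ equals the intersection of the halfspaces containing it. Given $x\in X^{(0)}\setminus A^{(0)}$, I take $H$ to be the hyperplane dual to the last edge of an edge-geodesic from $\pi_A(x)$ to $x$, so $H\in\s(x|\pi_A(x))$; by the gate decomposition, $H\in\s(x|a)$ for every $a\in A^{(0)}$, so $A$ lies in the halfspace of $H$ not containing $x$, excluding $x$ from the intersection. Fullness of $A$ ensures the intersection captures the subcomplex structure and not merely the vertex set. Finally $(4)\Rightarrow(1)$: each halfspace is convex because no minimal edge path crosses a hyperplane twice, and intersections of convex subcomplexes are convex.

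The main point where care is needed is $(3)\Rightarrow(4)$: a hyperplane separating $x$ from the single vertex $\pi_A(x)$ must be upgraded to one separating $x$ from the \emph{entire} subcomplex $A$. This is exactly what the gate decomposition $\s(x|a)=\s(x|\pi_A(x))\sqcup\s(\pi_A(x)|a)$ delivers, but it is worth spelling out; the remaining implications are bookkeeping with the median formula.
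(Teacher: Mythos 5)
Your proof is correct and follows essentially the same route as the paper: median-on-geodesic for $(1)\Rightarrow(2)$, taking a closest point and forcing $\mu(\pi,a,x)=\pi$ by minimality for $(2)\Rightarrow(3)$, using the gate decomposition to produce a separating halfspace for $(3)\Rightarrow(4)$, and convexity of halfspaces for $(4)\Rightarrow(1)$. The only cosmetic differences are that you fold uniqueness of $\pi_A(x)$ into $(2)\Rightarrow(3)$ rather than treating it at the end, and you argue $(3)\Rightarrow(4)$ directly (exhibiting the excluding halfspace for an arbitrary $x\notin A$) rather than by contradiction against the intersection $B$.
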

\begin{proof}
$(1.)\Rightarrow (2.)$: If $a,b\in A^{(0)}, x\in X^{(0)}$ then $\mu(a,b,x)$ belongs to a geodesic joining $a$ and $b$.

$(2.)\Rightarrow (3.)$: Let $x\in X$ be a vertex and set $\pi_A(x)$ to be any vertex in $A$ at minimal distance from $x$. Let $a\in A$ be an arbitrary vertex. The median $\mu(x,\pi_A(x),a)$ belongs to $A$ by assumption, and to a geodesic edge path joining $x$ and $\pi_A(x)$ by definition. Therefore it is equal to $\pi_A(x)$ by minimality. Thus $\pi_A(x)$ lies on some geodesic edge path joining $x$ and $a$.

$(3.)\Rightarrow (4.)$: Let $B\supseteq A$ be the intersection of all the halfpaces of $X$ that contain $A$ entirely. Assume $B\setminus A$ contains a vertex $x$. Since $x\notin A$, $d(\pi_A(x),x)>0$, hence there exists $H\in \s(x\mid \pi_A(x))$. Because $H$ separates two points of $B$, it must separate two points of $A$, hence the existence of $a\in A$ such that $H\in \s(x,a\mid \pi_A(x))$. Thus, a geodesic joining $x$ and $a$ through $\pi_A(x)$ crosses $H$ twice, a contradiction. Therefore, every vertex of $B$ is contained in $A$, which is full, thus $A$ is equal to $B$, which is an intersection of halfspaces.

$(4.)\Rightarrow (1.)$ is clear because halfspaces are convex.

For the uniqueness of the projection, any vertex $p$ with the same property lies on a geodesic joining $x$ and $\pi_A(x)$, but cannot be closer to $x$ by minimality of $d(x,\pi_A(x))$. Hence $p=\pi_A(x)$.
\end{proof}

\subsection{Right-angled Artin groups and their autmorphisms}
Let $\Gamma = (V,E)$ be a non-empty finite simple graph fixed for the rest of this article. The following definition gathers most of the tools we will need concerning right-angled Artin groups. A more thorough introduction appears in \cite{introraags}.

\begin{defi}
The \emph{right-angled Artin group} $A_\Gamma$ associated with $\Gamma$ is the group given by the following presentation:
\[A_\Gamma=\gen{V\mid [v_i,v_j]\; \forall \{v_i,v_j\}\in E}\]

Elements of $V\subset A_\Gamma$ are the \emph{standard generators}. The \emph{Salvetti complex} $\SS$ associated with $A_\Gamma$ is the subcomplex of the torus $(S^1)^V$ whose cubes correspond one-to-one to the cliques of $\Gamma$. Its fundamental group canonically identifies with $A_\Gamma$. Moreover, $\SS$ is a special (see \cite{special}), locally $\cat$ cube complex with a single vertex. Conversely every special cube complex with a single vertex and fundamental group isomorphic to $A_\Gamma$ is combinatorially isomorphic to $\SS$. The hyperplanes of $\SS$ are in one-to-one correspondence with $V$, and two hyperplanes are transverse if and only if their corresponding vertices are adjacent. Let $*$ denote the unique vertex of $\SS$ and $\USS$ the universal cover of $\SS$ with the pulled back labeling of hyperplanes by $V$. It is a $\cat$ cube complex with a natural free cocompact cospecial action of $A_\Gamma$. In particular, $\SS$ is a classifying space for $A_\Gamma$. Since $\SS$ is compact, $A_\Gamma$ is torsion-free.

Let $V^{\pm}=V\sqcup V^{-1}$ denote the set of standard generators and their inverses. For $x\in V^\pm$, the \emph{link of $x$} is the set $lk(x)$ of elements of $V^\pm\setminus \{x,x^{-1}\}$ commuting with $x$. When $x\in V$, these are exactly the standard generators at distance $1$ from $x$ in $\Gamma$ and their inverses, i.e.~the labels of the edges spanning a square with the edge labeled $x$ in $\SS$. A \emph{standard} (sometimes also called \emph{special}) \emph{subgroup} is a subgroup of $A_\Gamma$ generated by some subset $W\subseteq V$. Considering the full subgraph $\Delta$ of $\Gamma$ induced by $W$, its Salvetti complex $\SS_\Delta$ associated with $A_\Delta$ embeds as a locally convex subcomplex of $\SS$, and every locally convex subcomplex of $\SS$ is of this form. In particular, the inclusion $\SS_\Delta\hookrightarrow \SS_\Gamma$ is $\pi_1$-injective, so $A_\Delta = \pi_1\SS_\Delta$ embeds in $\pi_1\SS$ as the subgroup generated by $W$: standard subgroups are right-angled Artin groups themselves. The center of $A_\Gamma$ coincides with the standard subgroup $A_Z$ generated by the set $Z$ of all $v\in V$ such that $lk(v)\cup\{v,v^{-1}\} = V$. The group of inner automorphisms $\inn(A_\Gamma)$ identifies with $A_\Gamma/A_Z\simeq A_{\Gamma\setminus Z}$.

A word in the standard generators and their inverses is \emph{reduced} when it is of minimal length among all words representing the same element of $A_\Gamma$. It is \emph{cyclically reduced} when it is of minimal length among all words representing an element of the same conjugacy class. Any word can be turned into any reduced word representing the same element by a sequence of exchanges of adjacent letters representing commuting elements in $V^\pm$ and deletions of two-letter subwords of the form $vv^{-1}$ or $v^{-1}v$ (\cite{normalformraags}, Proposition~3.2). Additionally, any two reduced words representing the same element differ by a sequence of exchanges of adjacent commuting letters.

We say that a standard generator $v\in V$ \emph{appears} in a word if $v$ or $v^{-1}$ is a (possibly repeated) letter of that word. Since reductions do not introduce new letters, if $W\subset V$, and $w$ is a reduced word in $V$ that represents an element of the standard subgroup generated by $W$, all the standard generators appearing in $w$ belong to $W$.

Finally, when $A_\Gamma$ acts on some metric space $X$ by isometries, for $g\in A_\Gamma$ let $\ell_X(g)=\inf_{x\in X} d_X(x, gx)$ be the \emph{translation length of $g$ in $X$}. It depends only on the conjugacy class of $g$. When $X$ is simply connected and the action is properly discontinuous and cocompact, the isomorphism $A_\Gamma\simeq \pi_1(X/A_\Gamma)$ realizes the translation length of $g$ as the infimum of lengths of loops in the free homotopy class representing the conjugacy class of $g$. In particular, when $X = \USS$ with the combinatorial metric, the quantity $\ell(g)\coloneqq \ell_{\USS}(g)$ is the word length of any cyclically reduced word in the standard generators representing the conjugacy class of $g$.
\end{defi}

The following definition, following \cite{untwistedos}, singles out some particular automorphisms of $A_\Gamma$.

\begin{defi}
We consider partitions $\mathbf{P} = (P,P^*,L)$ of $V^\pm$ into three parts: $V^\pm = P\sqcup P^*\sqcup L$, where only $L$ might be empty. $L$, is the \emph{link} of $\mathbf{P}$, and $P,P^*$ are the \emph{sides} of $\mathbf{P}$. We denote:
\[\begin{aligned}
lk(\mathbf{P}) &= L\\
single(\mathbf{P}) &= \{x\in V^\pm\mid x\in P,\,x^{-1}\in P^* \text{ or }x\in P^*,\,x^{-1}\in P\}\\
double(P) &= \{x\in V^\pm \mid x,x^{-1}\in P\}\\
double(P^*) &= \{x\in V^\pm \mid x,x^{-1}\in P^*\}
\end{aligned}\]

A \emph{based Whitehead partition} $(\mathbf{P},b)$ is the data of a partition $\mathbf{P}$ as above and a \emph{basepoint} $b\in V^\pm$, such that:
\begin{itemize}
\item $b\in P$, $b^{-1}\in P^*$, and $L=lk(b)$.
\item If $x\in P$ and $x^{-1}\in P^*$, $lk(x)\subseteq L$ (we say that $\mathbf{P}$ \emph{splits} $x$).
\item If $x\in P$ and $y\in P^*$ are not inverses, $x$ and $y$ do not commute.
\item $P$ and $P^*$ both contain at least two elements.
\end{itemize}
A \emph{Whitehead partition} is a partition $\mathbf{P}$ such that there exists $b\in V^\pm$ making $(\mathbf{P},b)$ a based Whitehead partition. Whitehead partitions often have several basepoints. A Whitehead partition is entirely determined by any basepoint and one side.

Given $(\mathbf{P},b)$ a based Whitehead partition, the corresponding \emph{Whitehead automorphism} is defined on standard generators as follows:
\[v\mapsto \begin{cases}
v^{-1} &\text{ if }v\in\{b,b^{-1}\}\\
vb^{-1} &\text{ if }v\in single(\mathbf{P})\cap (P\setminus \{b\})\\
bv &\text{ if }v\in single(\mathbf{P})\cap (P^*\setminus \{b^{-1}\})\\
bvb^{-1} &\text{ if }v\in double(P)\\
v &\text{ if }v \in double(P^*)\cup L
\end{cases}\]
While one often finds $b\mapsto b$ in the literature, we adopt here the convention of \cite{untwistedos} which has the advantages of making the automorphism involutive and having a convenient topological realization.
\end{defi}

We recall the following names for some other automorphisms of $A_\Gamma$:

\begin{defi}
An automorphism $\varphi$ of $A_\Gamma$ is called:
\begin{itemize}
\item an \emph{oriented graph permutation} if $\varphi(V^\pm) = V^\pm$. An oriented graph permutation is an \emph{inversion} if it maps one standard generator to its inverse and fixes all the others.
\item a (dominated right-)\emph{transvection} if for some $v_1\neq v_2\in V$, $\varphi(v_1) = v_1v_2$ and $\varphi$ fixes every standard generator different from $v_1$. More precisely it is a \emph{twist} if $v_1$ and $v_2$ commute and a \emph{fold} otherwise. Note that every standard generator in $lk(v_1)$ needs to centralize $v_2$ for this automorphism to be well-defined.

 \item a(n extended) \emph{partial conjugation} if there exists some $v\in V$ such that for every $w\in V$, $\varphi(w)\in \{w,\,vwv^{-1}\}$. Note that any two standard generators commuting with each other but not with $v$ need to be both fixed or both conjugated for this automorphism to be well-defined.
\end{itemize}

All this terminology carries over to outer automorphisms.
\end{defi}

By work of Laurence (\cite{laurence}) and Servatius (\cite{servatius}), oriented graph permutations, transvections and partial conjugations generate all of $\aut(A_\Gamma)$.

\begin{nota}
For $\varphi$ an automorphism of $A_\Gamma$, we will denote by $[\varphi]$ the outer automorphism it represents. Likewise, if $S\subseteq A_\Gamma$, $[S]$ will denote the conjugacy class of $S$. Given $[\varphi]$ an outer automorphism and $S\subseteq A_\Gamma$, the images of $S$ under representatives of $[\varphi]$ form a conjugacy class of subsets that we will denote $[\varphi(S)]$.
\end{nota}

The group $\out(A_\Gamma)$ is virtually torsion-free (\cite{virtuallytorsionfree}), and the right-angled Artin group $\inn(A_\Gamma)$ is torsion-free. Therefore, $\aut(A_\Gamma)$ is virtually torsion-free and so are all the following subgroups of $\aut(A_\Gamma)$ and $\out(A_\Gamma)$:

\begin{defi}
The \emph{untwisted automorphism group} $\uaut(A_\Gamma)<\aut(A_\Gamma)$ is the subgroup generated by inner automorphisms, oriented graph permutations and Whitehead automorphisms (or equivalently oriented graph permutations, folds and partial conjugations). The \emph{untwisted outer automorphism group} $\unt(A_\Gamma)<\out(A_\Gamma)$ is its outer automorphism image.

The \emph{pure automorphism group} $\aut^0(A_\Gamma)<\aut(A_\Gamma)$ is the subgroup generated by inversions, transvections and partial conjugations. It is normal and finite-index. The \emph{pure outer automorphism group} $\out^0(A_\Gamma)<\out(A_\Gamma)$ is its outer automorphism image.

Given $\mathcal{G} = (G_i)$ and $\mathcal{H} = (H_j)$ two families of subgroups of $A_\Gamma$, the \emph{McCool group} $\out(A_\Gamma;\,\mathcal{G},\, \mathcal{H}^t)$ is the subgroup of outer automorphisms $[\varphi]\in \out(A_\Gamma)$ such that for all $i$, $[\varphi(G_i)] = [G_i]$ and for all $j$, there exists $\psi_j \in \inn(A_\Gamma)$ with $(\psi_j)_{\mid H_j} = \varphi_{\mid H_j}$.

The corresponding \emph{untwisted McCool group} and \emph{pure McCool group} are :
\[ \unt(A_\Gamma;\, \mathcal{G},\, \mathcal{H}^t) = \unt(A_\Gamma)\cap \out(A_\Gamma;\,\mathcal{G},\, \mathcal{H}^t)\]
\[\out^0(A_\Gamma;\,\mathcal{G},\, \mathcal{H}^t)=\out^0(A_\Gamma)\cap \out(A_\Gamma;\,\mathcal{G},\, \mathcal{H}^t)\]
\end{defi}

The letter $t$ stands for the fact that the automorphisms act \emph{trivially} up to conjugacy on the subgroups of $\mathcal{H}$. In this article, $\mathcal{G}$ and $\mathcal{H}$ will always be finite collections of finitely generated subgroups. We omit a collection if it is empty.

The following result combines work of Laurence (\cite{laurence}) and Fioravanti (\cite{coarsemedian}).

\begin{pro}
\label{fioravanti}
Let $\varphi\in \aut(A_\Gamma)$. Assume that for every standard generator $v\in V$, $v$ appears in every cyclically reduced word representing $[\varphi(v)]$. Then the following are equivalent:
\begin{enumerate}
\item $\varphi$ is untwisted (i.e.~belongs to $\uaut(A_\Gamma)$)
\item $\varphi$ is \emph{simple}: for every standard generator $v\in V$, the subgraph of $\Gamma$ induced by the set of the standard generators that appear in every cyclically reduced representative of $[\varphi(v)]$ does not split as a nontrivial join
\item $\varphi$ decomposes as a product of inversions, folds and partial conjugations.
\end{enumerate}
\end{pro}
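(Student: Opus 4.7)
The plan is to prove the cycle of implications $(3)\Rightarrow (1)\Rightarrow (2)\Rightarrow (3)$, using a direct computation for the first arrow and the cited works of Fioravanti and Laurence for the other two.

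For $(3)\Rightarrow (1)$, I check that each of the three generator types belongs to $\uaut(A_\Gamma)$. Inversions are oriented graph permutations by definition. A partial conjugation by $v$ is the Whitehead automorphism associated with the based partition having basepoint $v$, empty $single$ component, and the conjugated generators (together with their inverses) forming $double(P)$. A fold $v_1\mapsto v_1v_2$ with $v_1,v_2$ non-commuting decomposes as the inversion of $v_2$ composed with the Whitehead automorphism based at $b=v_2$ in which $v_1$ lies in $single(\mathbf{P})\cap P$ and every generator outside $\{v_1,v_2\}$ is placed in $double(P^*)$ or $L$ according to whether it commutes with $v_2$. The condition $lk(v_1)\subseteq lk(v_2)$ that ensures the fold is well-defined is exactly the splitting axiom $lk(v_1)\subseteq L$ required by the Whitehead partition.

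For $(1)\Rightarrow (2)$, I invoke Fioravanti's characterization of $\uaut(A_\Gamma)$ as the group of automorphisms that coarsely preserve the natural median structure on $A_\Gamma$ (\cite{coarsemedian}). Under the no-cancellation hypothesis, the support $S_v\subseteq V$ of $[\varphi(v)]$ is well-defined and contains $v$. A nontrivial join decomposition $S_v = U*W$ would force $\varphi(v)$ to lie in a standard subgroup that splits as a direct product of the standard subgroups generated by $U$ and by $W$, with both factors nontrivial; the resulting pair of commuting factors produces a geometric obstruction to the coarse median preservation granted by Fioravanti's characterization, contradicting the assumption that $\varphi$ is untwisted.

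Finally, $(2)\Rightarrow (3)$ follows from Laurence's peak-reduction argument (\cite{laurence}): under the no-cancellation hypothesis and the simple condition (2), $\varphi$ decomposes as a product of inversions, folds, and partial conjugations. The no-cancellation hypothesis places $\varphi$ precisely within the scope of Laurence's theorem. The main obstacle is expected to be $(1)\Rightarrow (2)$, where extracting the combinatorial join-irreducibility statement from Fioravanti's coarse geometric characterization requires carefully producing an explicit obstruction to median preservation in the presence of a join split; the other two implications are either a bookkeeping verification or a direct application of an existing result.
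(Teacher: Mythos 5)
Your cycle of implications matches the paper's, but two of the three arrows have genuine gaps.

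For $(3)\Rightarrow(1)$, your explicit Whitehead-automorphism bookkeeping is not needed: the paper's definition of $\uaut(A_\Gamma)$ literally lists ``oriented graph permutations, folds and partial conjugations'' as an equivalent generating set, so this implication is immediate. (Your computation also has small slips: with the convention $b\mapsto b^{-1}$ adopted here, the partial conjugation is the Whitehead automorphism \emph{composed with the inversion of $b$}, not the Whitehead automorphism itself, and one should handle the degenerate case where a side of $P$ has fewer than two elements, where the partial conjugation is inner.)

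For $(1)\Rightarrow(2)$ you only sketch the idea: you claim that a nontrivial join in the support of $\varphi(v)$ ``produces a geometric obstruction to coarse median preservation,'' but you never construct that obstruction, and you yourself flag this as the main unfinished step. That is precisely where the work lies. The paper does not rederive this from the coarse-median characterization: it cites specific structural results of Fioravanti (\cite{coarsemedian}, Corollary~3.25, Corollary~3.3, Lemma~3.11) that directly yield join-irreducibility of the supports. As written, your implication is an outline, not a proof.

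For $(2)\Rightarrow(3)$ you assert that Laurence's peak reduction produces a decomposition into ``inversions, folds, and partial conjugations.'' But Laurence's decomposition is a priori into inversions, \emph{transvections}, and partial conjugations, and transvections include twists. The fact that condition (2) forces $\varphi$ itself to not be a single twist does not by itself preclude twists from appearing (and cancelling) in the peak-reduction sequence. That none of the factors is a twist is an additional fact, not explicit in Laurence, noted by Fioravanti in the proof of his Proposition~3.26. Your argument silently assumes this, which is the exact subtlety the paper's proof points out.
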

\begin{proof}
$(1.)\Rightarrow(2.)$ by \cite{coarsemedian}, Corollary~3.25, Corollary~3.3, and Lemma~3.11.

$(2.)\Rightarrow(3.)$ by \cite{laurence}, Proposition~6.8, although the decomposition explicitly appears only in \cite{laurence}, proof of Corollary~to Lemma~6.6. It is not explicit in \cite{laurence} but remarked by Fioravanti (\cite{coarsemedian}, proof of Proposition~3.26) that none of the automorphisms of the decomposition are twists.

$(3.)\Rightarrow(1.)$ by definition of the untwisted subgroup.
\end{proof}

Finally, we use the following result of Day and Wade:
\begin{pro}[\cite{relativeout}, from Proposition~3.5]
\label{daywade}
Given $\mathcal{G}$ a family of standard subgroups of $A_\Gamma$, there exists a graph $\widehat{\Gamma}$ containing $\Gamma$ as a full subgraph such that the conjugacy class of $A_\Gamma<A_{\widehat{\Gamma}}$ is invariant under $\out^0(A_{\widehat{\Gamma}})$ and the restriction homomorphism $\out^0(A_{\widehat{\Gamma}})\to \out(A_\Gamma)$ has exactly $\out^0(A_\Gamma;\,\mathcal{G})$ for image.

Moreover, given $\varphi\in \aut^0(A_\Gamma;\,\mathcal{G})$, there exists $\widehat{\varphi}\in \aut^0(A_{\widehat{\Gamma}})$ such that for every vertex $v$ of $\widehat{\Gamma}$, the following holds:
\begin{itemize}
    \item If $v$ is a vertex of $\Gamma$, $\widehat{\varphi}(v) = \varphi(v)$
    \item Otherwise, $[\widehat{\varphi}(v)]=[v]$
\end{itemize}
\end{pro}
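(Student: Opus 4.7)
The plan is to build $\widehat{\Gamma}$ explicitly by attaching to $\Gamma$, for each standard subgroup $A_{W_i}\in \mathcal{G}$, a new auxiliary vertex $z_i$ whose link in $\widehat{\Gamma}$ is exactly $W_i$, together with enough additional isolated (or otherwise ``rigidifying'') auxiliary vertices to prevent any link-domination relation from appearing between a vertex of $V$ and an auxiliary vertex. No edges are added among vertices of $V$, so $\Gamma$ is a full subgraph of $\widehat{\Gamma}$ and the inclusion $A_\Gamma\hookrightarrow A_{\widehat{\Gamma}}$ is as a standard subgroup.

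The first task is to check that $[A_\Gamma]$ is preserved by $\out^0(A_{\widehat{\Gamma}})$. By the Laurence--Servatius generation, it suffices to verify this on inversions, transvections and partial conjugations of $A_{\widehat{\Gamma}}$. Inversions of vertices of $V$ preserve $A_\Gamma$ setwise, partial conjugations carry it to a conjugate, and the delicate case of transvections is ruled out by the rigidifying auxiliary vertices: the choice of links must ensure that no vertex of $V$ dominates an auxiliary vertex in $\widehat{\Gamma}$, nor conversely. Once this is arranged, a restriction homomorphism $\out^0(A_{\widehat{\Gamma}})\to \out(A_\Gamma)$ is well-defined, and its image is contained in $\out^0(A_\Gamma;\,\mathcal{G})$: the centralizer of $z_i$ in $A_{\widehat{\Gamma}}$ equals $A_{W_i}\times \gen{z_i}$, and any outer automorphism preserves this centralizer up to conjugacy, forcing preservation of $[A_{W_i}]$.

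For the reverse inclusion, and the ``moreover'' clause, each generator $\varphi\in \aut^0(A_\Gamma;\,\mathcal{G})$ is lifted explicitly: writing $\varphi(A_{W_i}) = g_i A_{W_i} g_i^{-1}$ with $g_i\in A_\Gamma$, I would define $\widehat{\varphi}$ to send $v\in V$ to $\varphi(v)$ and $z_i$ to $g_i z_i g_i^{-1}$ (and to fix every other auxiliary vertex). Writing $\varphi(v) = g_i u\, g_i^{-1}$ for $v\in W_i$ with $u\in A_{W_i}$, the relation $[u,z_i]=1$ immediately gives $[\widehat{\varphi}(v), \widehat{\varphi}(z_i)] = 1$, so all defining relations of $A_{\widehat{\Gamma}}$ are preserved; composition of such lifts shows every element of $\aut^0(A_\Gamma;\,\mathcal{G})$ is realized. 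The main obstacle is the simultaneous calibration of the auxiliary link structure: it must be restrictive enough to force the image of the restriction map to lie inside $\out^0(A_\Gamma;\,\mathcal{G})$, yet permissive enough that the conjugators $g_i$ can be realized as compositions of admissible partial conjugations of the $z_i$, so that each lifted $\widehat{\varphi}$ is in fact \emph{pure}, and not merely an arbitrary automorphism of $A_{\widehat{\Gamma}}$.
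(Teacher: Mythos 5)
Since the paper cites this result from Day--Wade (their Proposition~3.5) and gives no proof, there is no paper argument to compare against; I assess whether your construction would establish the statement. Your broad strategy --- cone off each $W_i$ with an auxiliary vertex $z_i$ of link $W_i$, show the restriction is well-defined with image $\out^0(A_\Gamma;\,\mathcal{G})$, and lift explicitly --- is the right shape, but there are two concrete gaps. The ``rigidifying via isolated vertices'' does nothing: an isolated vertex $u$ satisfies $lk(u)=\emptyset$, so it lies neither in $lk_{\widehat\Gamma}(v)$ for $v\in V$ nor in $lk_{\widehat\Gamma}(z_i)\cup\{z_i\}$, and therefore leaves every link-domination relation between $V$ and the $z_i$ unchanged. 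Killing the transvection $v\mapsto vz_i$ (which does not preserve $[A_\Gamma]$, as one sees already on the abelianization) requires a new vertex adjacent to $v$ but not to $z_i$ --- an isolated vertex can never serve.

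The sharper gap is your unverified claim that partial conjugations carry $A_\Gamma$ to a conjugate. This is false for partial conjugations based at a cone vertex $z_i$ unless $V\setminus W_i$ happens to lie in a single connected component of $\widehat\Gamma\setminus(lk(z_i)\cup\{z_i\})$. Take $\Gamma$ the path $b$--$a$--$c$, $\mathcal{G}=\{\gen{a}\}$, and add one cone vertex $z$ with $lk(z)=\{a\}$: then $\widehat\Gamma\setminus(lk(z)\cup\{z\})=\{b,c\}$ has two components, and the partial conjugation of $A_{\widehat\Gamma}$ sending $b\mapsto zbz^{-1}$ while fixing $a$, $c$, $z$ is a legitimate element of $\aut^0(A_{\widehat\Gamma})$ mapping $A_\Gamma=\gen{a,b,c}$ to $\gen{a,zbz^{-1},c}$. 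Writing $A_{\widehat\Gamma}\cong\ZZ\times F(b,c,z)$ with $a$ central, the free factor $\gen{zbz^{-1},c}$ has unbased Stallings core graph with two vertices while that of $\gen{b,c}$ has one, so they are not conjugate in $F(b,c,z)$; hence the image of $A_\Gamma$ is not a conjugate of $A_\Gamma$, and the naive coning already fails at the well-definedness of the restriction map. The actual Day--Wade construction must arrange, and does arrange, that $V\setminus W_i$ stay connected in the complement of each cone star simultaneously with killing the bad transvections; your sketch also defers the verification that the lifted $\widehat\varphi$ is \emph{pure} to an unspecified ``calibration'' of the links, which is precisely the unresolved part of the argument.
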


From this proposition and the fact that any outer inversion, transvection or partial conjugation in $\out^0(A_{\widehat{\Gamma}})$ restricts to an outer automorphism of $A_\Gamma$ of the same nature, it is clear that $\out^0(A_\Gamma;\,\mathcal{G})$ is generated by the inversions, transvections and partial conjugations it contains. We prove the analogue for the untwisted subgroup.

\begin{cor}
\label{gen}
Given $\mathcal{G}$ a family of standard subgroups of $A_\Gamma$, the group $\unt(A_\Gamma;\,\mathcal{G})$ is generated by the outer oriented graph permutations, outer folds and outer partial conjugations it contains.
\end{cor}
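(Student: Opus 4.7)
The plan is to follow the template suggested just after Proposition~\ref{daywade} for $\out^0(A_\Gamma;\,\mathcal{G})$, with two refinements: a preliminary reduction by an outer oriented graph permutation to the pure case, and an application of Proposition~\ref{fioravanti} ensuring that the Day--Wade lift of the pure part is itself untwisted, so that its decomposition involves folds (rather than arbitrary transvections), inversions, and partial conjugations.

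Given $[\varphi]\in\unt(A_\Gamma;\,\mathcal{G})$, I first write $[\varphi]=[\sigma][\psi]$ with $[\sigma]$ an outer oriented graph permutation and $[\psi]\in\out^0(A_\Gamma)$, and show that $[\sigma]$ lies in $\unt(A_\Gamma;\,\mathcal{G})$. Letting $V_i\subseteq V$ be the standard generating set of $G_i$, the invariance of $[G_i]$ under $[\varphi]$ implies $\varphi^{\mathrm{ab}}(\ZZ^{V_i})=\ZZ^{V_i}$ in the abelianization $\ZZ^V$. Since $\psi$ is pure, $\psi^{\mathrm{ab}}(v)=\pm v+\sum_{w>v}c_w w$ with $w$ ranging over vertices dominating $v$, so the $\sigma(v)$-coordinate of $\varphi^{\mathrm{ab}}(v)=\sigma^{\mathrm{ab}}(\psi^{\mathrm{ab}}(v))$ equals $\pm 1$; for $v\in V_i$ the condition $\varphi^{\mathrm{ab}}(v)\in\ZZ^{V_i}$ then forces $\sigma(v)\in V_i$, giving $\sigma(V_i)=V_i$. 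Hence $[\sigma]\in\out(A_\Gamma;\,\mathcal{G})$, and since graph permutations are untwisted, both $[\sigma]$ and $[\psi]$ lie in $\unt(A_\Gamma;\,\mathcal{G})$. It remains to decompose $[\psi]\in\unt(A_\Gamma;\,\mathcal{G})\cap\out^0(A_\Gamma)$.

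I then choose a representative $\psi\in\aut^0(A_\Gamma)\cap\uaut(A_\Gamma)\cap\aut(A_\Gamma;\,\mathcal{G})$ (possible because $\inn(A_\Gamma)\subseteq\aut^0(A_\Gamma)\cap\uaut(A_\Gamma)$) and lift it to $\widehat{\psi}\in\aut^0(A_{\widehat{\Gamma}})$ via Proposition~\ref{daywade}. I verify the hypothesis and condition~(2) of Proposition~\ref{fioravanti} for $\widehat{\psi}$. For $v\in V$, $[\widehat{\psi}(v)]=[\psi(v)]$, and since $\Gamma$ is a full subgraph of $\widehat{\Gamma}$, cyclically reduced representatives in $A_{\widehat{\Gamma}}$ use only letters from $V$ and agree with those in $A_\Gamma$; purity of $\psi$ ensures $v$ appears in each of them, and untwistedness of $\psi$ in $A_\Gamma$ yields that the induced subgraph in $\Gamma$, equivalently in $\widehat{\Gamma}$, does not split as a nontrivial join. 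For $v\notin V$, $[\widehat{\psi}(v)]=[v]$ makes both conditions trivial. Thus $\widehat{\psi}\in\uaut(A_{\widehat{\Gamma}})$, and implication~(2)$\Rightarrow$(3) of Proposition~\ref{fioravanti} expresses $\widehat{\psi}=\alpha_1\cdots\alpha_n$ as a product of inversions, folds, and partial conjugations in $\aut(A_{\widehat{\Gamma}})$.

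Finally I restrict each outer factor. By the cited fact following Proposition~\ref{daywade}, each $[\alpha_i|_{A_\Gamma}]$ is an outer inversion, transvection, or partial conjugation of $A_\Gamma$ lying in $\out^0(A_\Gamma;\,\mathcal{G})$. Each restriction is moreover untwisted: this is trivial for inversions and partial conjugations, and for transvections it follows by combining the fact that outer twists are not untwisted with the observation that Proposition~\ref{fioravanti}'s simpleness condition~(2) descends from $\widehat{\Gamma}$ to $\Gamma$ (since the relevant induced subgraphs agree when $\Gamma$ is a full subgraph). Consequently each $[\alpha_i|_{A_\Gamma}]\in\unt(A_\Gamma;\,\mathcal{G})$, and multiplying by $[\sigma]$ finishes the proof. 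The main delicate points are the abelianization argument for reducing to the pure case and the vertex-by-vertex verification of Proposition~\ref{fioravanti}'s conditions for $\widehat{\psi}$.
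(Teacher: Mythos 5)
Your proposal follows the same overall template as the paper --- extract an outer graph permutation $[\sigma]$ and show it lies in $\unt(A_\Gamma;\,\mathcal{G})$, lift the remaining part via Proposition~\ref{daywade}, decompose via Proposition~\ref{fioravanti}, and restrict each factor --- but there is a genuine gap in the first step. You take an arbitrary factorization $[\varphi]=[\sigma][\psi]$ with $[\psi]$ merely pure, and then claim that purity forces $\psi^{\mathrm{ab}}(v)=\pm v+\sum_w c_w w$ (hence that the $\sigma(v)$-coordinate of $\varphi^{\mathrm{ab}}(v)$ is $\pm 1$) and, later, that $v$ appears in every cyclically reduced representative of $[\psi(v)]$. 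Neither claim is true for general pure automorphisms. Already in $F_2 = \gen{a,b}$, the composite of the fold $a\mapsto ab$ with the inverse of the fold $b\mapsto ba$ sends $a\mapsto aba^{-1}$ and $b\mapsto ba^{-1}$; this is pure (a product of transvections) and untwisted, yet $\psi^{\mathrm{ab}}(a)=b$ has $a$-coordinate $0$, and the cyclic reduction of $[\psi(a)]$ is the single letter $b$, in which $a$ does not appear. (Domination is not antisymmetric, so pure automorphisms need not be unitriangular in any ordering of $V$.) This defeats both the abelianization argument for $\sigma(V_i)\subseteq V_i$ and the verification of the hypothesis of Proposition~\ref{fioravanti} for $\widehat{\psi}$.

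The missing ingredient is Laurence's normal-form lemma (\cite{laurence}, Corollary to Lemma~4.5), which is exactly what the paper invokes at this point: it produces a factorization $\varphi=\psi\circ\sigma$ where $\sigma$ is a graph permutation and, by construction, every standard generator $v$ appears in every cyclically reduced word representing $\psi(v)$. With this stronger guarantee in hand, the claim $\sigma(\Delta_i)\subseteq\Delta_i$ follows immediately (since $\sigma(v)$ then appears in every cyclically reduced representative of $[\varphi(v)]=[\psi(\sigma(v))]$, some of which lies in $A_{\Delta_i}$), and the hypothesis of Proposition~\ref{fioravanti} holds for $\psi$ and hence for $\widehat{\psi}$ without further argument. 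The remainder of your proposal --- the Day--Wade lift, the application of Fioravanti's untwisted criterion, and the restriction of each factor back to $A_\Gamma$ --- is sound and matches the paper.
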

\begin{proof}
Let $[\varphi]\in \unt(A_\Gamma;\,\mathcal{G})$ be represented by $\varphi\in \uaut(A_\Gamma)$. By \cite{laurence}, Corollary~to the Lemma~4.5, there exists a decomposition $\varphi = \psi\circ \sigma$ where $\psi \in \aut(A_\Gamma)$, for every standard generator $v\in V$, $v$ appears in every cyclically reduced word representing $\psi(v)$, and $\sigma$ is a graph permutation (but we don't know yet that $[\psi],[\sigma]\in \unt(A_\Gamma;\,\mathcal{G})$). Let $A_\Delta \in \mathcal{G}$ and $v$ a vertex of $\Delta$. The standard generator $\sigma(v)$ appears in every word representing the conjugacy class $[\varphi(v)]$. Some representative of this class lies in $A_\Delta$, hence $\sigma(v)\in \Delta$. Thus, $[\sigma]\in \unt(A_\Gamma;\,\mathcal{G})$, and $[\psi]\in \unt(A_\Gamma;\,\mathcal{G})$ as well.

Now $[\psi]\in \out^0(A_\Gamma;\,\mathcal{G})$ and $\psi$ is simple by Proposition~\ref{fioravanti}. Using Proposition~\ref{daywade}, there exists an extension $\widehat\psi$ of $\psi$ to $A_{\widehat\Gamma}$, and this extension is simple as well. Thus, by Proposition~\ref{fioravanti} applied to $A_{\widehat\Gamma}$, $[\widehat\psi]$ decomposes as a product of outer inversions, outer folds and outer partial conjugations in $A_{\widehat\Gamma}$. By Proposition~\ref{daywade} again, every factor of the product stabilizes $A_\Gamma$, and restricts to an automorphism of the same type in $\out^0(A_\Gamma;\,\mathcal{G})$, proving the result.
\end{proof}

The following lemma will not be used in the rest of this work, but is a noteworthy consequence of the result of Laurence used in the proof above. It is a relative version of the theorem of Droms (\cite{isomorphismraags}) stating that for every isomorphism of right-angled Artin groups $A_{\Gamma}\simeq A_{\Delta}$, there is an isomorphism of graphs $\Gamma\simeq \Delta$.
\begin{lm}
\label{relativedroms}
    Let $\Gamma,\Delta$ be two finite simplicial graphs. Let $\Gamma_1,\dots, \Gamma_n$ be a family of full subgraphs of $\Gamma$ and $\Delta_1,\dots,\Delta_n$ a family of full subgraphs of $\Delta$. Assume there exists an isomorphism $\varphi\colon A_\Gamma\to A_\Delta$ such that for all $i\leq n$, $[\varphi(A_{\Gamma_i})]=[A_{\Delta_i}]$. Then there exists an isomorphism of graphs $\iota\colon \Gamma\to \Delta$ such that $\iota(\Gamma_i) = \Delta_i$ for all $i\leq n$.
\end{lm}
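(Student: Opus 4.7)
The plan is to combine Droms' theorem (\cite{isomorphismraags}) with the Laurence decomposition already used in the proof of Corollary~\ref{gen}. Droms guarantees the existence of some graph isomorphism $\tau\colon \Gamma\to \Delta$, which induces a group isomorphism $\tau_*\colon A_\Gamma\to A_\Delta$; however nothing forces $\tau$ to respect the prescribed subgraphs. The idea is to modify $\tau$ by a well-chosen graph permutation $\sigma$ of $\Gamma$ so that $\tau\circ \sigma$ sends each $\Gamma_i$ onto $\Delta_i$.

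To produce $\sigma$, I would consider $\tau_*^{-1}\circ \varphi\in \aut(A_\Gamma)$ and apply the decomposition of Laurence invoked in the proof of Corollary~\ref{gen} to write it as $\psi\circ \sigma$, where $\sigma$ is a graph permutation of $\Gamma$ and $\psi\in \aut(A_\Gamma)$ has the property that, for every standard generator $w$ of $A_\Gamma$, $w$ appears in every cyclically reduced word representing $[\psi(w)]$. Now fix $i$ and a vertex $v\in \Gamma_i$, and set $w=\sigma(v)$, again a standard generator. By hypothesis $\varphi(v)$ is conjugate into $A_{\Delta_i}$, so $\psi(w) = \tau_*^{-1}(\varphi(v))$ is conjugate into the standard subgroup $A_{\tau^{-1}(\Delta_i)}$. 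Some cyclically reduced word representing $[\psi(w)]$ therefore has all its letters in $\tau^{-1}(\Delta_i)$ (by the general fact, recalled in the background, that reduced words for elements of a standard subgroup involve only the corresponding standard generators). Since $w$ must appear in every such word, $\sigma(v)=w\in \tau^{-1}(\Delta_i)$, giving $(\tau\circ \sigma)(\Gamma_i)\subseteq \Delta_i$ for every $i$.

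To upgrade these inclusions into equalities, I would apply the same argument symmetrically to $\tau_*\circ \varphi^{-1}\in \aut(A_\Delta)$, producing a graph isomorphism $\Delta\to \Gamma$ sending each $\Delta_i$ into $\Gamma_i$. Comparing cardinalities forces $|\Gamma_i|=|\Delta_i|$, so $\iota := \tau\circ \sigma$ is a graph isomorphism $\Gamma\to \Delta$ satisfying $\iota(\Gamma_i)=\Delta_i$ for every $i$; because the $\Gamma_i$ and $\Delta_i$ are full subgraphs, $\iota$ automatically restricts to an isomorphism $\Gamma_i\to \Delta_i$. The main delicate point is the Laurence decomposition step: one has to combine carefully the ``support'' property of $\psi$, applied to the standard generator $\sigma(v)$, with the observation that cyclically reduced representatives of an element of a standard subgroup live in that standard subgroup, to extract the inclusion $\sigma(v)\in \tau^{-1}(\Delta_i)$. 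Everything else is bookkeeping around Droms' theorem and a vertex-counting argument.
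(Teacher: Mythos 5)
Your proof is correct and follows essentially the same route as the paper: reduce to an automorphism of $A_\Gamma$ via Droms' theorem, apply Laurence's decomposition $\psi\circ\sigma$ with $\sigma$ a graph permutation and $\psi$ supported on every generator, and use the support property to conclude $\sigma(v)\in\tau^{-1}(\Delta_i)$. The only variation is the final cardinality step, where you rerun the argument symmetrically for $\varphi^{-1}$, whereas the paper more directly observes that $A_{\Gamma_i}$ and $A_{\Delta_i}$ are isomorphic and hence have abelianizations of equal rank; both yield $|\Gamma_i|=|\Delta_i|$ and the paper's version avoids a second appeal to the Laurence decomposition.
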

\begin{proof}
    By the original theorem of Droms, we can assume that $\Gamma = \Delta$, with vertex set $V$, $\varphi$ becoming an automorphism. By \cite{laurence}, Corollary~to the Lemma~4.5, there exists a decomposition $\varphi = \psi\circ \sigma$ where for every standard generator $v\in V$, $v$ appears in every cyclically reduced word representing $\psi(v)$, and $\sigma$ is a graph permutation automorphism. Let $i\leq n$ and $v$ be a vertex of $\Gamma_i$. The standard generator $\sigma(v)$ appears in every word representing the conjugacy class $[\varphi(v)]$. Some representative of this class lies in $A_{\Delta_i}$, hence $\sigma(v)\in \Delta_i$. Since the subgraphs $\Gamma_i$ and $\Delta_i$ are full, this proves that the graph automorphism $\iota\colon \Gamma\to \Gamma$ corresponding to $\sigma$ satisfies $\iota(\Gamma_i) \subseteq \Delta_i$. Since $A_{\Gamma_i}$ and $A_{\Delta_i}$ are isomorphic via a conjugate of $\varphi$, their abelianizations have the same ranks, hence $\Gamma_i$, $\iota(\Gamma_i)$ and $\Delta_i$ have the same number of vertices. This proves that $\iota(\Gamma_i)=\Delta_i$ for all $i\leq n$.
\end{proof}

Finally, the following lemma embeds the untwisted automorphism group of $A_\Gamma$ as a McCool subgroup for the larger right-angled Artin group $A_\Gamma*\ZZ$.

\begin{lm}
\label{autmccool}
    Consider $\Gamma'$ the graph obtained by adjoining to $\Gamma$ an isolated vertex $t$, and $A_{\Gamma'}\simeq A_\Gamma*\ZZ$ the corresponding right-angled Artin group. Let $\mathcal{G} = \{A_\Gamma\}$ and $\mathcal{H} = \{\gen{t}\}$. Then $\uaut(A_\Gamma)\simeq \unt(A_{\Gamma'};\,\mathcal{G},\mathcal{H}^t)$.
\end{lm}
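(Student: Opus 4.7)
The plan is to define $\Phi\colon\uaut(A_\Gamma)\to\unt(A_{\Gamma'};\mathcal{G},\mathcal{H}^t)$ by sending $\varphi\in\uaut(A_\Gamma)$ to the outer class of the automorphism $\widetilde\varphi\in\aut(A_{\Gamma'})$ determined by $\widetilde\varphi|_{A_\Gamma}=\varphi$ and $\widetilde\varphi(t)=t$; the free product decomposition $A_{\Gamma'}=A_\Gamma*\langle t\rangle$ makes this extension unique and makes $\varphi\mapsto\widetilde\varphi$ a group homomorphism. By construction $[\widetilde\varphi]$ preserves $A_\Gamma$ and fixes $t$, so it lies in the McCool group; untwistedness follows by writing $\varphi$ as a product of graph permutations, folds, partial conjugations and inner automorphisms of $A_\Gamma$ (the last being products of single-generator conjugations, which extend to partial conjugations of $A_{\Gamma'}$ fixing $t$). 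Injectivity of $\Phi$ is immediate: if $\widetilde\varphi=\iota_g$ then $g\in C_{A_{\Gamma'}}(t)\cap N_{A_{\Gamma'}}(A_\Gamma)=\langle t\rangle\cap A_\Gamma=\{1\}$, using that $t$ is isolated in $\Gamma'$ and that factors of a free product are self-normalizing.

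For surjectivity, I take $[\psi]$ in the McCool group and use the $\mathcal{H}^t$-condition to obtain a representative $\psi_1$ with $\psi_1(t)=t$, so that $\psi_1(A_\Gamma)=g_0 A_\Gamma g_0^{-1}$ for some $g_0\in A_{\Gamma'}$. The key lemma is that $g_0$ may be chosen in $\langle t\rangle\cdot A_\Gamma$. I would prove this via the Bass-Serre tree $T$ of the splitting $A_{\Gamma'}=A_\Gamma*\langle t\rangle$: $\psi_1$ induces a $\psi_1$-equivariant graph automorphism $\widetilde{\psi_1}$ of $T$, fixing the vertex $v_t$ with stabilizer $\langle t\rangle$ and sending the base vertex $v_0$ with stabilizer $A_\Gamma$ to $g_0\cdot v_0$. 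Preservation of the edge $(v_0,v_t)$ then forces $g_0\cdot v_0$ to be a neighbor of $v_t$; those neighbors are precisely the vertices $t^k\cdot v_0$ for $k\in\ZZ$, so $g_0=t^k a$ for some $k\in\ZZ$ and $a\in A_\Gamma$. Setting $\psi_0:=\iota_{t^{-k}}\circ\psi_1$ produces a representative satisfying both $\psi_0(t)=t$ and $\psi_0(A_\Gamma)=A_\Gamma$.

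It remains to show that $\varphi:=\psi_0|_{A_\Gamma}$ lies in $\uaut(A_\Gamma)$, after which $\widetilde\varphi=\psi_0$ by uniqueness of extensions and $\Phi(\varphi)=[\psi]$. Using the Laurence decomposition $\psi_0=\alpha\circ\sigma$ from the proof of \cref{gen}, with $\sigma$ a graph permutation of $\Gamma'$ and $\alpha$ satisfying the cyclic-reduction hypothesis of Proposition~\ref{fioravanti}, the facts that $\psi_0(V)\subseteq A_\Gamma$ and $\psi_0(t)=t$ force $\sigma(V)\subseteq V^{\pm}$ and $\sigma(\{t,t^{-1}\})=\{t,t^{-1}\}$, so both $\sigma$ and $\alpha=\psi_0\circ\sigma^{-1}$ preserve $A_\Gamma$ setwise. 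Applying Proposition~\ref{fioravanti} in $A_{\Gamma'}$ yields simplicity of $\alpha$; since for each $v\in V$ the induced subgraph associated to $\alpha(v)$ already lies in $\Gamma$, the restriction $\alpha|_{A_\Gamma}$ is simple in $A_\Gamma$, and Proposition~\ref{fioravanti} applied in $A_\Gamma$ then shows it is untwisted. Hence $\varphi=\alpha|_{A_\Gamma}\circ\sigma|_{A_\Gamma}\in\uaut(A_\Gamma)$. The main technical obstacle is the Bass-Serre step ensuring $g_0\in\langle t\rangle\cdot A_\Gamma$, without which the $\mathcal{G}$ and $\mathcal{H}^t$ conditions could not be simultaneously realized by a single representative.
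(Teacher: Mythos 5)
Your proof is correct in outline but takes a genuinely different route from the paper, and the one step you flag as the ``main technical obstacle'' is the one place that needs a real justification.

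For injectivity you and the paper agree, up to the wording: the paper uses only that $C_{A_{\Gamma'}}(t)=\gen{t}$ (Servatius) and that no nontrivial power of $t$ normalizes $A_\Gamma$; your phrasing via $C_{A_{\Gamma'}}(t)\cap N_{A_{\Gamma'}}(A_\Gamma)=\gen{t}\cap A_\Gamma=\{1\}$ is a clean repackaging of the same facts. For surjectivity, the paper is much shorter: it notes that the outer graph permutations, folds and partial conjugations lying in $\unt(A_{\Gamma'};\mathcal{G},\mathcal{H}^t)$ all have representatives fixing $t$ and preserving $A_\Gamma$, and then invokes Corollary~\ref{gen}. You instead produce, from a given $[\psi]$, an explicit representative $\psi_0$ with $\psi_0(t)=t$ and $\psi_0(A_\Gamma)=A_\Gamma$, and then reproduce the mechanism of the proof of Corollary~\ref{gen} (Laurence decomposition plus Proposition~\ref{fioravanti}) directly on $\psi_0|_{A_\Gamma}$. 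Your version is longer but more self-contained: Corollary~\ref{gen} is stated for $\unt(A_{\Gamma'};\mathcal{G})$ without the $\mathcal{H}^t$ condition, so the paper's one-line appeal to it leaves an extra remark implicit (the $t\mapsto t^{-1}$ inversion separates the two groups), which your argument never has to confront.

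The Bass--Serre step is where you should be more careful. The existence of a $\psi_1$-equivariant graph automorphism $\widetilde{\psi_1}$ of $T$ does not follow merely from $\psi_1$ permuting the vertex stabilizers, because edges of $T$ have trivial stabilizer and so carry no subgroup data from which adjacency could be recovered. What you actually need is the uniqueness, up to equivariant isomorphism, of the reduced Bass--Serre tree of the Grushko-type free splitting $A_{\Gamma'}=A_\Gamma*\gen{t}$; then the tree with the action twisted by $\psi_1$ is equivariantly isomorphic to $T$ and that isomorphism is your $\widetilde{\psi_1}$. This is a standard fact (deformation spaces of free splittings, Forester / Guirardel--Levitt), or can be obtained directly by a Stallings-fold or ping-pong argument showing that if $d(g_0v_0,v_t)\geq 2$ then $\gen{t}$ and $\psi_1(A_\Gamma)$ generate a proper subgroup. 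Your conclusion $g_0\in\gen{t}\cdot A_\Gamma$ is correct, but state and justify the uniqueness lemma instead of asserting ``preservation of the edge.''

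Two short remarks would tighten the untwistedness step: (i) because $\SS_\Gamma$ is a convex subcomplex of $\SS_{\Gamma'}$ stabilized by any $h\in A_\Gamma$, cyclically reduced words over $V'^\pm$ representing the $A_{\Gamma'}$-conjugacy class $[h]$ use only letters of $V^\pm$; this is what justifies that the induced subgraph associated to $\alpha(v)$ ``already lies in $\Gamma$'' and that the cyclic-reduction hypothesis of Proposition~\ref{fioravanti} passes from $A_{\Gamma'}$ to $A_\Gamma$; (ii) you should say explicitly that $\alpha=\psi_0\circ\sigma^{-1}$ is untwisted because $[\psi_0]\in\unt(A_{\Gamma'})$ and $\sigma$ is a graph permutation, so that Proposition~\ref{fioravanti}~$(1.)\Rightarrow(2.)$ applies to $\alpha$. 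With these points filled in your argument is complete.
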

\begin{proof}
    Extend every $\varphi\in \uaut(A_\Gamma)$ as $\widehat{\varphi}\in \aut(A_{\Gamma'})$ by fixing $t$. It is clear from the definition that $\widehat\varphi$ is untwisted, and $\varphi\mapsto [\widehat\varphi]$ defines a group homomorphism from $\uaut(A_\Gamma)$ to $\unt(A_{\Gamma'};\,\mathcal{G},\mathcal{H}^t)$. If $\widehat\varphi$ is inner, it is the conjugacy by some element $g\in A_\Gamma$ centralizing $t$. By \cite{servatius}, Centralizer Theorem, $g$ is a power of $t$. The only conjugacy by a power of $t$ preserving $A_\Gamma$ is the identity. This proves that the homomorphism is injective.

    Every outer oriented graph permutation, outer fold or outer partial conjugation contained in $\unt(A_{\Gamma'};\,\mathcal{G},\mathcal{H}^t)$ is the outer class of a graph permutation, fold or partial conjugation fixing $t$ and preserving $A_\Gamma$. By Corollary~\ref{gen}, the homomorphism is surjective.
\end{proof}

\subsection{Untwisted outer space}

The \emph{spine of untwisted outer space}, $K_\Gamma$, was constructed in \cite{untwistedos}. We will use mostly the equivalent description from \cite{spatial} except to prove a few intermediate results to the main theorem (Lemmas~\ref{whitehead}, \ref{relativepartition} and \ref{pushing}). Below, we recall the main features of both constructions.

\begin{defi}[see \cite{spatial}, Definition~3.1, Lemmas~3.2, 3.7]
\label{defcollapse}
    A \emph{collapse between $\cat$ cube complexes} is a surjective map $c\colon X\to Y$ between $\cat$ cube complexes, that maps cubes of $X$ to cubes of $Y$ affinely, and such that in restriction to each cube, the kernel of $c$ is generated by a subset of the edges. Equivalently, it is a restriction quotient (in the sense of \cite{capracesageev}). A \emph{collapse} is a map of locally $\cat$ cube complexes $c\colon X\to Y$ lifting to a collapse of $\cat$ cube complexes between universal covers $\uc{X}\to \uc{Y}$.
    
    For every hyperplane $H$ of $X$, either all edges dual to $H$ are mapped to vertices ($H$ is \emph{collapsed}), or every edge dual to $H$ is mapped bijectively to an edge. The set of collapsed hyperplanes in $X$ determines $c$ up to isomorphism of the range.
    
    A \emph{strong collapse} is a collapse map $c\colon X\to Y$ satisfying the further assumptions:
    \begin{itemize}
        \item $c$ is a homotopy equivalence.
        \item Any two parallel edges of $X$ mapping to the same vertex of $Y$ are parallel inside the preimage of that vertex (i.e. there exist a sequence of squares witnessing their parallelism that is entirely mapped to that vertex).
    \end{itemize}
\end{defi}

\begin{lm}
\label{collapseupstairs}
    Let $c\colon X\to Y$ be a collapse map between $\cat$ cube complexes. Then $c$ is surjective, preimages of cubes of $Y$ are convex subcomplexes of $X$. Moreover $c$ preserves medians and maps geodesic edge paths to geodesic edge paths (up to reparametrization). The restriction of $c$ to $0$-skeleta is $1$-Lipschitz.

    More generally, if $D\subset Y$ is convex, $c^{-1}(D)\subset X$ is convex, and if $E\subset X$ is convex, $c(E)\subset Y$ is convex,
\end{lm}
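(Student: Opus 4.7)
The plan is to leverage the restriction-quotient description of $c$: writing $\mathcal{N}$ for the set of hyperplanes of $X$ not collapsed by $c$, the map $c$ induces a bijection between $\mathcal{N}$ and the hyperplanes of $Y$ such that preimages of halfspaces of $Y$ are exactly the halfspaces of $X$ bounded by the corresponding elements of $\mathcal{N}$. Surjectivity is part of the definition. For a convex $D \subset Y$ (in particular, a cube), I will use Lemma~\ref{convexcarac}(4) to write $D$ as an intersection of halfspaces, whereupon $c^{-1}(D)$ becomes an intersection of halfspaces of $X$, and is therefore convex.

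To prove that $c$ preserves medians, I will use that $\mu(a,b,x)$ is characterized as the unique vertex on the same side as the majority of $\{a,b,x\}$ across each hyperplane of $X$; restricting this property to hyperplanes in $\mathcal{N}$ and pushing through $c$ identifies $c(\mu(a,b,x))$ with $\mu(c(a),c(b),c(x))$. Next, for a geodesic edge path from $u$ to $v$ in $X$, each hyperplane in $\s(u\mid v)$ is crossed exactly once; under $c$, collapsed edges become constant and non-collapsed edges map bijectively to edges of $Y$, producing an edge path that crosses each hyperplane of $\s(c(u)\mid c(v))$ exactly once, hence a geodesic. The $1$-Lipschitz bound on $c$ then follows from $|\s(c(u)\mid c(v))| \leq |\s(u\mid v)|$.

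The most delicate step is to show that the image $c(E)$ of a convex $E \subset X$ is convex. Since $c$ is cubical, $c(E)$ is automatically a subcomplex of $Y$, and I will verify the two hypotheses of Lemma~\ref{convexcarac}(2): closure of $c(E)^{(0)}$ under taking medians with arbitrary third vertices of $Y$, and fullness of $c(E)$. The first is immediate from median preservation: lift two vertices of $c(E)^{(0)}$ to $E$, lift the third arbitrarily, apply convexity of $E$ to get a vertex of $E$ whose image is the desired median.

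The main obstacle will be fullness. Given a cube $\sigma\subset Y$ with all vertices in $c(E)^{(0)}$, I must exhibit a cube of $E$ mapping onto $\sigma$. If $\widetilde H_1,\dots,\widetilde H_n\in \mathcal{N}$ correspond to the dual hyperplanes of $\sigma$, each $\widetilde H_i$ meets $E$ because two lifts in $E$ of the endpoints of an edge of $\sigma$ dual to the corresponding hyperplane of $Y$ are separated by $\widetilde H_i$ in $X$. I will lift $\sigma$ to a cube $\widetilde \sigma \subset X$ with $c(\widetilde\sigma)=\sigma$ (possibly of higher dimension, using surjectivity and the cubical structure of $c$), then apply the gate projection $\pi_E \colon X\to E$: the image $\pi_E(\widetilde\sigma)$ is a cube in $E$ dual exactly to those hyperplanes of $\widetilde\sigma$ meeting $E$, and these include all $\widetilde H_i$. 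The extra hyperplanes of $\widetilde\sigma$, if any, lie outside $\mathcal{N}$, so $c$ collapses them and $c(\pi_E(\widetilde\sigma))=\sigma$. The non-trivial input here is the standard fact that the gate projection of a cube onto a convex subcomplex of a CAT(0) cube complex is itself a cube.
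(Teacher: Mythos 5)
Your handling of surjectivity, preimages of convex sets, median preservation, geodesics, the Lipschitz bound, and the median-closure of $c(E)^{(0)}$ is sound, and mostly runs parallel to the paper (which cites \cite{spatial} for the first block, and uses the same median argument for $c^{-1}(D)$ and for closure of $c(E)$ under medians; your halfspace-intersection route for $c^{-1}(D)$ is an equally valid alternative). Your strategy for fullness of $c(E)$ is genuinely different: the paper never uses gate projections, instead locating a cube directly inside $c^{-1}(\mathcal{C})\cap E$ dual to the lifted hyperplanes and then observing its image must equal $\mathcal{C}$ because it is an $n$-subcube of the $n$-cube $\mathcal{C}$.

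The genuine gap is in the last sentence of your fullness argument. You correctly conclude that $c(\pi_E(\widetilde\sigma))$ is an $n$-cube of $Y$ dual to the hyperplanes $H_1,\dots,H_n$ of $\sigma$, but this does \emph{not} pin down the cube: in a $\cat$ cube complex, several distinct $n$-cubes can share the same set of dual hyperplanes (e.g.\ the squares $[0,1]^2\times\{i\}$ inside $[0,1]^2\times[0,5]$ are all dual to the same two ``vertical'' hyperplanes). So ``$c(\pi_E(\widetilde\sigma))=\sigma$'' does not follow from hyperplane bookkeeping alone. The paper avoids this by producing its cube inside $c^{-1}(\mathcal{C})\cap E$, which forces the image to sit inside $\mathcal{C}$, whereas $\pi_E(\widetilde\sigma)$ need not lie in $c^{-1}(\sigma)$ a priori. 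Your argument can be repaired by one observation you did not make: for each vertex $v$ of $\widetilde\sigma$, since $c(v)$ is a vertex of $\sigma$ it lies in $c(E)$, so there is $x\in E$ with $c(x)=c(v)$; then $\s(v\mid E)\subseteq\s(v\mid x)$, and every hyperplane in $\s(v\mid x)$ is collapsed (a non-collapsed one would map to a hyperplane separating $c(v)$ from $c(x)$, which are equal). Hence $c(\pi_E(v))=c(v)$ for every vertex $v$ of $\widetilde\sigma$, whence $c(\pi_E(\widetilde\sigma))=c(\widetilde\sigma)=\sigma$. You should also note that the ``standard fact'' about gate projections of cubes being cubes needs a four-quadrant argument (the same one the paper uses to get its $\mathcal{C}'$), rather than being taken entirely for granted.
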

\begin{proof}
    The first part of the lemma comes from \cite{spatial}, Lemma~3.2 and 3.9 (the rightmost inequality does not require the collapse to be strong).

    Assume that $D\subset Y$ is convex. Let $(x_1,x_2,x_3)\in c^{-1}(D)\times c^{-1}(D)\times X$ be a triple of vertices. Then $c(\mu(x_1,x_2,x_3)) = \mu(c(x_1),c(x_2),c(x_3))\in D$ since $D$ is convex, hence $\mu(x_1,x_2,x_3)\in c^{-1}(D)$. It remains to see that $c^{-1}(D)$ is full. Let $\mathcal{C}$ be a cube of $X$ all of whose vertices are in $c^{-1}(D)$. Its image under $c$ is a cube all of whose vertices are in $D$. Since $D$ is full, $c(\mathcal{C})$ is contained in $D$, proving that $c^{-1}(D)$ is full.
    
    Likewise, assume $E\subset X$ is convex. Let $(y_1,y_2,y_3)\in c(E)\times c(E)\times Y$ be a triple of points. Since $c$ is surjective, there exists $(x_1,x_2,x_3)\in E\times E\times X$ with $c(x_i) = y_i$. Then $\mu(y_1,y_2,y_3) = c(\mu(x_1,x_2,x_3))\in c(E)$ since $E$ is convex. It remains to see that $c(E)$ is full. Let $\mathcal{C}$ be a cube of $Y$ all of whose vertices are in $c(E)$. The hyperplanes $H_1,\dots, H_n$ of $\mathcal{C}$ are the images of hyperplanes $H'_1,\dots, H'_n$ of $X$. For each $i$, there exist two vertices of $c^{-1}(\mathcal{C})\cap E$ separated by $H'_i$. By convexity of $c^{-1}(\mathcal{C})\cap E$, some edge of $c^{-1}(\mathcal{C})\cap E$ is dual to $H'_i$. More generally, for each $i\neq j$, there exist four vertices of $c^{-1}(\mathcal{C})\cap E$, in each of the four possible intersections of halfspaces for $H'_i$ and $H'_j$. This means that the hyperplanes $H'_i$ are pairwise transverse in the convex $c^{-1}(\mathcal{C})\cap E$. Hence, these hyperplanes are dual to a cube $\mathcal{C}'$ of $c^{-1}(\mathcal{C})\cap E$. Its image $c(\mathcal{C}')$ is a subcube of $\mathcal{C}$ contained in $c(E)$, and dual to all the $H_i$. Therefore, $c(\mathcal{C}') = \mathcal{C}\subseteq c(E)$, and $c(E)$ is full.
\end{proof}
\begin{cor}
\label{collapsedownstairs}
    Let $c\colon X\to Y$ be a collapse map between locally $\cat$ cube complexes and a homotopy equivalence. Let $\gamma$ be an edge cycle in $X$ of minimal length in its (free) homotopy class. Then $c\circ \gamma$ is of minimal length in its homotopy class. Vertex preimages under $c$ are locally convex and $\cat$.
\end{cor}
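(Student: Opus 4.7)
The plan is to handle the two assertions separately, lifting both to universal covers and invoking Lemma~\ref{collapseupstairs}.

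For the minimality of $c\circ \gamma$, I would show that $|c\circ \gamma|$ equals the translation length of the element $g\in \pi_1(X)$ represented by $\gamma$, acting on $\uc{Y}$ via $c_*$. Lift $\gamma$ to a combinatorial axis of $g$ in $\uc{X}$ based at a vertex $x_0$, so that $d_{\uc{X}}(x_0, g^n x_0) = n|\gamma|$ for every $n\geq 1$. By Lemma~\ref{collapseupstairs}, the lift $\uc{c}\colon \uc{X}\to \uc{Y}$ sends this geodesic edge path to a geodesic edge path in $\uc{Y}$ from $\uc{c}(x_0)$ to $c_*(g)^n\uc{c}(x_0)$, and by $g$-equivariance its length after reparametrization equals $n\cdot |c\circ \gamma|$ (the number of non-collapsed edges of the axis per $g$-period). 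Hence $d_{\uc{Y}}(\uc{c}(x_0), c_*(g)^n \uc{c}(x_0)) = n|c\circ \gamma|$; dividing by $n$ and taking $n\to \infty$ yields $\ell_{\uc{Y}}(c_*(g)) = |c\circ \gamma|$. Since this translation length equals the infimum of the lengths of edge loops in $Y$ freely homotopic to $c\circ \gamma$, we conclude that $c\circ \gamma$ realizes the infimum.

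For the claim on vertex preimages, fix $y\in Y^{(0)}$. The preimage of $c^{-1}(y)$ in $\uc{X}$ equals $\uc{c}^{-1}(\pi_Y^{-1}(y))$, which decomposes as the disjoint union $\bigsqcup_{\uc{y}\in \pi_Y^{-1}(y)} \uc{c}^{-1}(\uc{y})$. Applying Lemma~\ref{collapseupstairs} to each singleton convex subcomplex $\{\uc{y}\}\subset \uc{Y}$ shows that every $\uc{c}^{-1}(\uc{y})$ is convex (and therefore connected) in $\uc{X}$. Thus the connected components of $\pi_X^{-1}(c^{-1}(y))$ are precisely these convex subcomplexes, proving local convexity of $c^{-1}(y)$. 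To upgrade each component $K$ of $c^{-1}(y)$ to a $\cat$ cube complex, I would use that $c$ is a homotopy equivalence, so that $c_*\colon \pi_1(X)\to \pi_1(Y)$ is an isomorphism. By equivariance of $\uc{c}$, a deck transformation $g\in \pi_1(X)$ preserves $\uc{c}^{-1}(\uc{y})$ if and only if $c_*(g)\uc{y}=\uc{y}$; since $\pi_1(Y)$ acts freely on $\uc{Y}$, this forces $c_*(g)=1$, hence $g=1$. Thus $\pi_X$ restricts to a trivial covering of $K$ by $\uc{c}^{-1}(\uc{y})$, identifying $K$ with a convex subcomplex of the $\cat$ cube complex $\uc{X}$.

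The main delicate point is the first part: it simultaneously uses Lemma~\ref{collapseupstairs} for arbitrarily long pieces of the $g$-axis in $\uc{X}$ and the standard identification of the asymptotic translation length $\lim d(x, h^n x)/n$ with the translation length $\ell(h)$ for a semisimple isometry of a $\cat$ cube complex. The second part is essentially bookkeeping once local convexity is established; the only genuinely new input beyond Lemma~\ref{collapseupstairs} is that the homotopy equivalence hypothesis forces the stabilizer in $\pi_1(X)$ of each lifted vertex preimage to be trivial.
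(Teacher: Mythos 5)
Your proposal is correct and follows essentially the same route as the paper's proof: lift to universal covers, use Lemma~\ref{collapseupstairs} to send geodesic edge paths to geodesic edge paths, and compute the translation length of $c_*(g)$ via the asymptotic limit $\lim_n d(\cdot, c_*(g)^n\cdot)/n$. The second half also matches the paper's argument in spirit, using convexity of vertex preimages in $\uc{X}$ plus injectivity of $c_*$ to show each component of $c^{-1}(y)$ lifts isomorphically to a convex (hence $\cat$) subcomplex of $\uc{X}$.
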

\begin{proof}
    Lift $c$ to a collapse between the $\cat$ universal covers $\uc{c}\colon \uc{X}\to \uc{Y}$, and lift $\gamma$ to a bi-infinite edge path $\uc{\gamma}$ in $\uc{X}$, which is $[\gamma]$-stable, where $[\gamma]$ is the deck transformation of $\uc{X}$ corresponding to $\gamma$. Since $\gamma$ is of minimal length in its homotopy class, $\uc{\gamma}$ is a geodesic edge path. Thus, by Lemma~\ref{collapseupstairs}, $\uc{c}\circ \uc{\gamma}$ is a geodesic edge path in $\uc{Y}$. Since $c$ is a homotopy equivalence, $\uc{c}\circ \uc{\gamma}$ is $c_*[\gamma]$-invariant. For every vertices $y_1,y_2$ of $Y$, $\card{d(c_*[\gamma]^n(y_1), y_1)/n - d(c_*[\gamma]^n(y_2),y_2)/n}\leq 2d(y_1,y_2)/n\underset{n\to\infty}\to 0$. Thus, all $c_*[\gamma]$-invariant geodesics of $Y$ are translated the same amount by $c_*[\gamma]$. Therefore, $c\circ \gamma$ is of minimal length in its homotopy class.

    Since $c$ is a homotopy equivalence, if the image of $\gamma$ is contained in a vertex preimage under $c$, $\gamma$ is nullhomotopic. Since vertex preimages under $\uc{c}$ are convex, vertex preimages under $c$ are locally convex and $\cat$.
\end{proof}

For the next lemma, recall that a hyperplane in a locally $\cat$ cube complex is \emph{two-sided} when all of its dual edges can be oriented in a way consistent with parallelism.

\begin{lm}[\cite{spatial}, Lemma~3.10]
\label{tubenew}
    Let $c\colon X\to S$ be a strong collapse map, with $X$ compact, locally $\cat$ and $S$ combinatorially isomorphic to $\SS$. Assume all hyperplanes of $X$ are two-sided. Let $C$ be the unique vertex preimage in $X$. Let $e$ be an edge of $X$ outside $C$, with dual hyperplane $H_e$, and let $p$ be a geodesic edge path in $C$ joining the endpoints of $e$. The following hold:
\begin{itemize}
    \item Every edge $f$ spanning a square with $e$ spans a product with the cycle $ep^{-1}$.
    \item Every hyperplane transverse to $H_e$ is transverse to all the hyperplanes dual to edges of $p$.
\end{itemize}
\end{lm}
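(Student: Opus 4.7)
The plan is to lift to the $\cat$ universal cover via $\uc{c}\colon\uc{X}\to\USS$, making essential use of the convexity of vertex preimages (Lemma~\ref{collapseupstairs}). Fix a lift $\tilde{e}$ of $e$ with initial vertex $\tilde{x}_0$ in a vertex preimage $\uc{C}_0=\uc{c}^{-1}(*)$, and let $v\in V$ be the label of $\uc{c}(\tilde{e})$, so that the terminal vertex $\tilde{x}_1$ lies in $v\uc{C}_0$. Lift $p$ to a geodesic $\tilde{p}\subset v\uc{C}_0$ from $\tilde{x}_1$ to $v\tilde{x}_0$. Since $\tilde{p}$ lies in a single vertex preimage, its edges are dual to collapsed hyperplanes, whereas $\tilde{e}$ is dual to the non-collapsed hyperplane $\tilde{H}_e$; hence no hyperplane is crossed twice and $\tilde{e}\tilde{p}^{-1}$ is a geodesic in $\uc{X}$ of length $1+|p|$.

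For the first bullet, lift $f$ to an edge $\tilde{f}_0$ dual to a hyperplane $\tilde{H}_f$ transverse to $\tilde{H}_e$ and spanning a square with $\tilde{e}$ at $\tilde{x}_0$. I would build inductively edges $\tilde{f}_0,\ldots,\tilde{f}_{|p|+1}$ all dual to $\tilde{H}_f$, located at the successive vertices of $\tilde{e}\tilde{p}^{-1}$, with $\tilde{f}_{i+1}$ parallel to $\tilde{f}_i$ across a square cobounded with the $(i+1)$-th edge of $\tilde{e}\tilde{p}^{-1}$. The initial square is given by hypothesis; each subsequent square is produced by showing that $\tilde{H}_f$ crosses the hyperplane $\tilde{H}_{e_i}$ dual to an edge of $\tilde{p}$ and then invoking the standard $\cat$ cube complex fact that transverse hyperplanes span a square at every common corner. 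By equivariance under the deck transformation $v$, we obtain $\tilde{f}_{|p|+1}=v\tilde{f}_0$, so the strip of squares descends in $X$ to a tube of squares realizing the required product of $f$ with the cycle $ep^{-1}$.

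The second bullet is then an immediate consequence of the first: an arbitrary hyperplane $H$ of $X$ transverse to $H_e$ admits a lift $\tilde{H}$ with a dual edge $\tilde{f}$ spanning a square with $\tilde{e}$, whose projection $f$ in $X$ spans a square with $e$ and is dual to $H$; applying the first bullet to this $f$ gives squares witnessing $H\pitchfork H_{e_i}$ for every edge $e_i$ of $p$.

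The hardest point is to establish the transversality $\tilde{H}_f\pitchfork\tilde{H}_{e_i}$ in the inductive step. I plan to address it by working inside the convex subcomplex $\mathcal{N}\subset\uc{X}$ obtained as the preimage under $\uc{c}$ of the carrier of the relevant lift $\tilde{H}_v$ of the $v$-hyperplane of $\USS$ (convex by Lemma~\ref{collapseupstairs}). This $\mathcal{N}$ contains $\tilde{e}$, $\tilde{p}$ and both vertex preimages $\uc{C}_0$ and $v\uc{C}_0$, and the restriction of $\uc{c}$ to it collapses onto the product $\tilde{H}_v\times[0,1]$, in which $\tilde{H}_e$ is the unique non-collapsed hyperplane mapping to the $[0,1]$-factor. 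A combinatorial argument exploiting the strong collapse hypothesis (the second condition of Definition~\ref{defcollapse}) together with the convexity of the vertex preimages should then force $\tilde{H}_f$, playing the role of a ``horizontal'' hyperplane in this product description, to cross each collapsed hyperplane $\tilde{H}_{e_i}$ dual to an edge of $\tilde{p}$.
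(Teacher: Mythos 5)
Your high-level plan (lift to $\uc{X}$, observe that $\tilde{e}\tilde{p}^{-1}$ is a geodesic because it crosses each hyperplane once, then build a strip of squares along it) is reasonable, and the initial observations are sound. However, the write-up leaves the two steps that actually carry the proof essentially undone.

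First, the inductive transversality step $\tilde{H}_f\pitchfork\tilde{H}_{e_i}$ is flagged by you as ``the hardest point'' and then left as a hope that ``a combinatorial argument\dots should then force'' it. This is not a proof, and the step is not routine. Note in particular that the hyperplanes $\tilde{H}_{e_i}$ are collapsed by $\uc{c}$, so one cannot simply read off transversality in $\mathcal{N}$ from transversality of images in the carrier $N\cong\tilde{H}_v\times[0,1]$: collapses can create new transversalities, and even for non-collapsed hyperplanes the reflection of transversality through $\uc{c}$ requires using the strong-collapse hypothesis explicitly (a naive restriction quotient fails it). The $\mathcal{N}$ idea is plausible, but the concrete argument — presumably using that $\tilde{H}_e$ is the unique non-collapsed hyperplane in $\mathcal{N}$ mapping to $\tilde{H}_v$, together with convexity of the two vertex preimages and the second bullet of the strong-collapse definition — is exactly the content of the lemma and must be supplied.

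Second, the closing-up step ``by equivariance under the deck transformation $v$, we obtain $\tilde{f}_{|p|+1}=v\tilde{f}_0$'' is asserted without justification and is not automatic. Both $\tilde f_{|p|+1}$ and $v\tilde f_0$ are edges based at $v\tilde x_0$, but the former is dual to $\tilde H_f$ while the latter is dual to $v\tilde H_f$; these coincide only if $v\tilde H_f=\tilde H_f$, which is a genuine claim. When $H_f$ is non-collapsed it follows from the fact that $\uc{c}(\tilde H_f)$ is labeled by a vertex of $lk(v)$ and hence is $v$-invariant in $\USS$, together with injectivity of $\uc c$ on non-collapsed hyperplanes; when $H_f$ is collapsed a different argument is needed. (An alternative route, showing $\tilde f_{|p|+1}$ and $\tilde f_0$ merely project to the same edge of $X$, would require ruling out self-osculation in $X$, which is not among the stated hypotheses.) Finally, the reduction of the second bullet to the first implicitly assumes that a hyperplane transverse to $H_e$ has a lift admitting a dual edge spanning a square with the \emph{specific} lift $\tilde e$; transversality only provides a square with \emph{some} edge dual to $\tilde H_e$, and bridging this gap again needs an argument.
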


\begin{cor}
\label{tubemore}
    In the setup of Lemma~\ref{tubenew}, let $e'$ be any edge dual to $H_e$ and $p'$ any geodesic path in $C$ joining the endpoints of $e'$. Then $p$ and $p'$ cross the same hyperplanes.
\end{cor}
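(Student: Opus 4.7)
The plan is to reduce the corollary to the following \textbf{base fact}: for any two vertices $x, y$ of $C$ lying in the same path component, any two geodesic edge paths in $C$ from $x$ to $y$ cross the same hyperplanes of $X$. To establish this fact, I would lift two such paths $\gamma_1, \gamma_2$ to $\uc X$ from a common lift $\tilde x$ of $x$. Each lift stays entirely inside the convex component of $\uc c^{-1}(V(\uc S))$ containing $\tilde x$ (all its edges, being in $C$, are collapsed), and by convexity (Corollary~\ref{collapsedownstairs}) is a geodesic in $\uc X$. The loop $\gamma_1 \gamma_2^{-1}$ has constant image under $c$ and, since $c$ is a homotopy equivalence, is nullhomotopic in $X$; the two lifts therefore end at the same vertex. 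Two geodesics in $\uc X$ with the same endpoints cross the same hyperplanes of $\uc X$, which descend to the same hyperplanes of $X$.

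The main claim is then proved by induction on the length of a chain of squares witnessing the parallelism of $e$ and $e'$; the crucial case is $e, e'$ opposite in a single square $Q$. Write $e: u \to v$, $e': u' \to v'$, and let $f_1: u' \to u$ and $f_2: v' \to v$ be the other two edges of $Q$. The first bullet of Lemma~\ref{tubenew} applied with $f = f_1$ yields a cylinder of squares in $X$ with boundary cycles $ep^{-1}$ at $u$ and $e'q^{-1}$ at $u'$, where $q: u' \to v'$ is a path whose $i$-th edge is opposite in a cylinder square to the $i$-th edge of $p$, hence parallel in $X$ and dual to the same hyperplane. Since parallel edges in $X$ share the same collapsed status, each edge of $q$ is collapsed, so $q$ is a path in $C$ of length $n := \ell(p)$ crossing the same hyperplanes of $X$ as $p$.

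To finish, I need $q$ to be a geodesic in $C$. Its existence gives $d_C(u', v') \leq n = d_C(u, v)$; running the same cylinder construction with the roles of $e$ and $e'$ swapped (applied to $p'$) yields a path of length $d_C(u', v')$ in $C$ from $u$ to $v$, whence $d_C(u, v) \leq d_C(u', v')$, forcing both to equal $n$ and making $q$ geodesic. The base fact applied to $q$ and $p'$ (two geodesics in $C$ from $u'$ to $v'$) shows they cross the same hyperplanes of $X$; combining with $p$ and $q$ crossing the same hyperplanes completes the case $n = 1$, and hence the general case by induction along the parallelism chain. The main delicate point is establishing that $q$ is a geodesic in $C$, for which the symmetry argument between $e$ and $e'$ is key.
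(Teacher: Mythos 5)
Your proof is correct, and it takes a mildly different route through the key square case than the paper's. Both arguments reduce to the case of $e, e'$ opposite in a square $Q$ and both invoke the first bullet of Lemma~\ref{tubenew}. The paper splits into two subcases: when the two remaining edges $f, f'$ of $Q$ both lie in $C$, it appeals directly to the strong-collapse condition (parallel edges of $X$ mapping to the same vertex are parallel in the preimage) to conclude $f, f'$ are parallel in $C$, whence the separator in $C$ of the endpoints of $e$ equals that of $e'$; otherwise it applies Lemma~\ref{tubenew} with $f$ in the role of $e$, obtaining a cylinder on $fq^{-1}$ for a geodesic $q$ in $C$ joining the endpoints of $f$, and reduces to the first subcase square by square along that cylinder. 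You instead always build the cylinder on $ep^{-1}$ with rung $f_1$, transporting $p$ across it to a parallel path $q$ from $u'$ to $v'$ lying in $C$; the extra step you then need is that $q$ is geodesic in $C$, which your symmetry argument $d_C(u,v)\leq d_C(u',v')\leq d_C(u,v)$ supplies cleanly. Both proofs rely on the fact that two geodesic edge paths in the $\cat$ complex $C$ between the same pair of vertices cross the same hyperplanes, which the paper dispatches in one sentence by noting $C$ is $\cat$; your lift-to-$\uc{X}$ proof of this is correct but more elaborate than necessary. In sum, your version avoids the case split and the direct appeal to the strong-collapse property, at the modest cost of the geodesicity argument for $q$; it is a valid and slightly tidier alternative of comparable length.
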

\begin{proof}
    The result is clear when $e=e'$ since $C$ is $\cat$. We prove the result when $e$ and $e'$ are opposite edges in a square, the full result follows since $H_e$ is connected. Let $efe'^{-1}f'^{-1}$ be the boundary path of that square. If the edges $f$ and $f'$ are in $C$, then they are parallel in $C$ since $c$ is a strong collapse. In that case, the separator in $C$ of the endpoints of $e$ must be equal to the separator of the endpoints of $e'$, proving the result. Otherwise, let $q$ be a geodesic edge path in $C$ joining the endpoints of $f$. By Lemma~\ref{tubenew}, $e$ spans a product with $fq^{-1}$. The previous case applied successively to each square of the product of $e$ and $q$ proves the result in that case.
\end{proof}

\begin{defi}[see \cite{spatial}, Definition~5.4]
\label{defspatial}
    A \emph{spatial cube complex} is a locally $\cat$ cube complex $X$ satisfying the following assumptions:
    \begin{enumerate}
        \item $X$ is connected and not a single vertex.
        \item $X$ is not obtained by subdivision of some locally $\cat$ cube complex along a hyperplane.
        \item For each hyperplane $H$ of $X$, there exists a strong collapse $c\colon X\to S$ where $H$ is not collapsed and $S$ is combinatorially isomorphic to the Salvetti complex $\SS$.
    \end{enumerate}
    In particular $\pi_1 X \simeq \pi_1 \SS = A_\Gamma$. Note that $\SS$ is spatial. Spatial cube complexes are special (\cite{spatial}, Lemma~5.6).

    A \emph{marking} on a spatial cube complex $X$ is an isomorphism $m\colon \pi_1X\to A_\Gamma$ considered modulo conjugacy (allowing to omit the basepoint of $X$). Markings on $\SS$ are in one-to-one correspondance with elements of $\out(A_\Gamma)$. A marking $m$ on $X$ is \emph{untwisted} if for some (or every, see \cite{untwistedos}, Corollary~4.13) strong collapse $c\colon X\to \SS$, the marking $m\circ c_*^{-1}$ on $\SS$ corresponds to an element of $\unt(A_\Gamma)$. By work of Fioravanti (\cite{coarsemedian}), this is equivalent to the following: the action of $A_\Gamma$ on $\uc{X}$ by deck transformations given by $m$ induces the same coarse median structure on $A_\Gamma$ as the standard action of $A_\Gamma$ on $\SS$.
    
    The \emph{spine of untwisted outer space for $A_\Gamma$} is the simplicial complex $K_\Gamma$ defined as follows:
    \begin{itemize}
        \item Vertices of $K_\Gamma$ are spatial cube complexes $X$ with an untwisted marking $m$, modulo marking-preserving isomorphisms. Their equivalence classes are denoted $[X,m]$.
        \item Edges of $K_\Gamma$ are marking-preserving non-isomorphic collapse maps between vertices.
        \item $k$-cells of $K_\Gamma$ are sequences of $k$ composable edges.
    \end{itemize}
    $\unt(A_\Gamma)$ acts on $K_\Gamma$ by combinatorial isomorphisms, via post-composition with the markings. \emph{Marked Salvettis} are vertices of $K_\Gamma$ of the form $[\SS,\varphi]$ with $\varphi \colon A_\Gamma = \pi_1\SS\to A_\Gamma$, such that $[\varphi]\in \unt(A_\Gamma)$ (the vertex does not depend on the chosen representative for $[\varphi]$). $\unt(A_\Gamma)$ acts transitively on marked Salvettis, with finite stabilizers.
    
    It is clear that a marking-preserving collapse map between spatial cube complexes must be a homotopy equivalence. It is in fact a strong collapse (\cite{spatial}, Corollary~5.8).
    
    Note that since a non-isomorphic collapse map decreases the number of hyperplanes, $K_\Gamma$ is a flag complex: by \cite{spatial}, Lemma~5.11, two vertices are joined by at most one edge. Moreover, given a complete subgraph of its $1$-skeleton, order its vertices by decreasing number of hyperplanes. The edges joining these vertices in order correspond to composable collapses, proving that the subgraph bounds a simplex.

    A \emph{Whitehead move} is an edge path of length $2$ in $K_\Gamma$ from a marked Salvetti to a marked spatial cube complex with two vertices and then to a different marked Salvetti. A \emph{Whitehead path} is a concatenation of Whitehead moves going through each marked Salvetti at most once (this terminology follows \cite{untwistedos}, see Definition~4.18).
\end{defi}

This is not the original definition of $K_\Gamma$ given in \cite{untwistedos}, but it is equivalent to it by \cite{spatial}, Theorem~5.14. The main theorem of \cite{untwistedos}, that the present article generalizes, states that $K_\Gamma$ is contractible.

\begin{defi}[see \cite{untwistedos}, Section~3]
    Two Whitehead partitions $\mathbf{P}$ and $\mathbf{Q}$ of $V^\pm$ are \emph{adjacent} when some (every) pair of basepoints for them are neighbors in $\Gamma$. They are \emph{compatible} if they are adjacent or exactly one of $P\cap Q, P\cap Q^*, P^*\cap Q, P^*\cap Q^*$ is empty (see \cite{twistedos}, Definitions~2.7 and 2.8, amending \cite{untwistedos}, Definition~3.3). For any set $\mathbf{\Pi}$ of pairwise compatible Whitehead partitions, there exists a locally $\cat$ cube complex $\SS^\mathbf{\Pi}$ of called a \emph{blow-up} of the Salvetti complex $\SS$ together with a \emph{canonical collapse} $c_\mathbf{\Pi}\colon \SS^\mathbf{\Pi}\to \SS$.
\end{defi}

By \cite{spatial}, Proposition~5.7, being a spatial cube complex is equivalent to being isomorphic to some blow-up of $\SS$. We will not give the details of the definition of blow-ups (see \cite{untwistedos}) but we recall some useful properties.

\begin{lm}[see \cite{untwistedos}, Section~3]
\label{blowupproperties}
Let $\mathbf{\Pi}$ be a family of pairwise compatible Whitehead partitions. The following hold
\begin{itemize}
        \item $\SS^\mathbf{\Pi}$ is a compact special cube complex with fundamental group $A_\Gamma$ and $c_\mathbf{\Pi}$ is a homotopy equivalence
        \item There is a bijective labelling of the hyperplanes of $\SS^\mathbf{\Pi}$ by $\mathbf{\Pi}\sqcup V$, and a canonical orientation for hyperplanes (we will sometimes speak of the label of an edge meaning the label of its dual hyperplane).
        \item $c_\mathbf{\Pi}$ is a collapse in the sense of the above definition, collapsing exactly the hyperplanes labelled by partitions.
        \item The unique vertex preimage $C_\mathbf{\Pi}$ under $c_\mathbf{\Pi}$ is a $\cat$ cube complex.
        \item A hyperplane labelled by some partition $\mathbf{P}$ has the same transverse hyperplanes as every hyperplane labelled by a basepoint of the partition $\mathbf{P}$. Two hyperplanes labelled by standard generators are transverse if and only if the generators are adjacent in $\Gamma$. Two hyperplanes labelled by partitions are transverse if and only if the partitions are adjacent.
        \item For any $\mathbf{P}=(P,P^*,L)\in \mathbf{\Pi}$, the associated oriented hyperplane determines a partition of $C_\mathbf{\Pi}$ into positive and negative halfspaces. For any $v\in V$, if $v\in P$ (resp. $v^{-1}\in P$), all edges labelled $v$ of $X$ have their terminal (resp. initial) endpoint in the positive halfspace. Likewise, if $v\in P^*$ (resp. $v^{-1}\in P^*$), all edges labelled $v$ of $X$ have their terminal (resp. initial) endpoint in the negative halfspace.
        \item As a consequence, for any edge $e$ labelled by a standard generator $v$, the set of hyperplanes separating the endpoints of $e$ in $C_\mathbf{\Pi}$ is exactly the set of hyperplanes labelled by partitions $\mathbf{P}$ with $v\in single(\mathbf{P})$.
    \end{itemize}
\end{lm}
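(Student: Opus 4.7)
The plan is to verify each listed property by unpacking the explicit construction of $\SS^\mathbf{\Pi}$ and $c_\mathbf{\Pi}$ from \cite{untwistedos}, Section~3. Recall the rough shape of this construction: $C_\mathbf{\Pi}$ is built as the subcomplex of the cube $[0,1]^\mathbf{\Pi}$ (one coordinate per partition) cut out by the compatibility relations between partitions, and $\SS^\mathbf{\Pi}$ is assembled by gluing to $C_\mathbf{\Pi}$ one edge for each standard generator $v \in V$, together with higher cubes corresponding to cliques in $\Gamma$, with attaching maps dictated by the signs of $v, v^{-1}$ in each partition. The canonical collapse $c_\mathbf{\Pi}$ sends each of these new edges to the unique edge labelled $v$ in $\SS$ and crushes $C_\mathbf{\Pi}$ to the vertex.

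First I would dispatch items (1)--(3) together: the bijective labelling of hyperplanes by $\mathbf{\Pi}\sqcup V$ is immediate from the construction (hyperplanes transverse to the new edges are labelled by generators, those inside $C_\mathbf{\Pi}$ by partitions), and $c_\mathbf{\Pi}$ by inspection collapses exactly the partition-labelled hyperplanes. That $c_\mathbf{\Pi}$ is a homotopy equivalence — and in particular that $\pi_1\SS^\mathbf{\Pi} = A_\Gamma$ — follows because $C_\mathbf{\Pi}$ is contractible (being $\cat$, see next step) so $\SS^\mathbf{\Pi}$ deformation retracts onto $\SS$. Speciality is a check on self-osculation and inter-osculation of the hyperplanes, which is done in \cite{untwistedos} (and re-proven in \cite{spatial}, Lemma~5.6).

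Second, for item (4), one verifies that $C_\mathbf{\Pi}$ satisfies Gromov's link condition directly from the compatibility relation on $\mathbf{\Pi}$: a set of partitions span a cube of $C_\mathbf{\Pi}$ iff they are pairwise adjacent. Since $C_\mathbf{\Pi}$ embeds as a subcomplex of $[0,1]^\mathbf{\Pi}$ and is simply connected (being the vertex preimage of a collapse between simply connected covers — more precisely, $c_\mathbf{\Pi}$ is a homotopy equivalence and vertex preimages are locally convex and $\cat$ by Corollary~\ref{collapsedownstairs}), this yields that $C_\mathbf{\Pi}$ is $\cat$. The transversality statements in item (5) come from reading off when two edges span a square in the construction: adjacency of generators gives standard squares in $\SS$; adjacency of partitions gives commuting coordinates in $C_\mathbf{\Pi}$; a partition hyperplane is transverse to a generator hyperplane exactly when that generator is adjacent to a basepoint of the partition, which is equivalent to the generator lying in the link of the partition or in $double(P)\cup double(P^*)$ — these are precisely the generators not separated by $\mathbf{P}$.

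Third, for items (6) and (7), the key is to track the attaching maps of the edges labelled by standard generators. By construction, an edge $e$ labelled $v$ is attached between the two specific vertices of $C_\mathbf{\Pi}$ determined by the signs of $v$ and $v^{-1}$ in each partition: the initial endpoint sits in the halfspace containing $v$ and the terminal endpoint in the halfspace containing $v^{-1}$ (and symmetrically for $v^{-1}$). This is exactly the content of item (6). Item (7) is then a direct consequence: a partition-hyperplane labelled $\mathbf{P}$ separates the endpoints of the edge $e$ labelled $v$ in $C_\mathbf{\Pi}$ iff $v$ and $v^{-1}$ lie on opposite sides of $\mathbf{P}$, i.e.\ $v \in single(\mathbf{P})$.

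I expect the principal obstacle to be the care needed in item (4): proving $\cat$-ness of $C_\mathbf{\Pi}$ cleanly requires checking the link condition in terms of the combinatorics of compatible families of Whitehead partitions, and the interaction between the two notions of compatibility (adjacency versus one-empty-corner) has to be tracked carefully — this is where the subtlety of the construction in \cite{untwistedos} is concentrated. Once $C_\mathbf{\Pi}$ is $\cat$, the remaining items reduce to bookkeeping on the attaching maps.
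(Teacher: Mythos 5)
The paper does not actually prove this lemma; it is stated as a recall of results from \cite{untwistedos}, Section~3, and the surrounding text explicitly declines to give the details of the blow-up construction. So there is no proof of record in this paper against which your sketch can be compared; any comparison would have to be against \cite{untwistedos}.

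Taking your plan on its own terms, there is a genuine circularity between your treatment of the first and fourth bullets. You derive that $c_\mathbf{\Pi}$ is a homotopy equivalence from the contractibility of $C_\mathbf{\Pi}$, which you deduce from its being $\cat$; but the justification you give that $C_\mathbf{\Pi}$ is simply connected (hence $\cat$) invokes Corollary~\ref{collapsedownstairs}, whose hypothesis is precisely that the collapse is a homotopy equivalence. You must break this loop by establishing $\cat$-ness of $C_\mathbf{\Pi}$ combinatorially first --- the Gromov link condition you mention plus an independent proof of simple connectedness (e.g.\ by exhibiting an explicit deformation retraction of $C_\mathbf{\Pi}$ to a vertex, which is what \cite{untwistedos} does) --- and only afterwards conclude that the collapse of the contractible subcomplex $C_\mathbf{\Pi}$ is a homotopy equivalence. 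Corollary~\ref{collapsedownstairs} cannot be used to bootstrap the $\cat$ property here, since it presupposes what you are trying to prove. You correctly flag $\cat$-ness of $C_\mathbf{\Pi}$ as the principal obstacle, but your proposed resolution is not self-consistent.

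There is also a concrete error in your account of the fifth bullet. You assert that a partition hyperplane labelled $\mathbf{P}$ is transverse to a generator hyperplane labelled $v$ exactly when $v$ lies in $lk(\mathbf{P}) \cup double(P) \cup double(P^*)$, i.e.\ whenever $v \notin single(\mathbf{P})$. This is false. Since the $\mathbf{P}$-hyperplane has the same transverse hyperplanes as any basepoint $b$ of $\mathbf{P}$, the correct condition is $v \in lk(b) = lk(\mathbf{P})$ only; no element of $double(P)$ or $double(P^*)$ commutes with $b$, so none lies in $lk(\mathbf{P})$. Generators in $double(P) \cup double(P^*)$ are neither separated by $\mathbf{P}$ (consistent with the seventh bullet) nor transverse to it: their dual edges sit entirely within one halfspace of the $\mathbf{P}$-hyperplane.
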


\begin{rem}
Since there are only finitely many Whitehead partitions of $V^\pm$ and compatible partitions are distinct, there are finitely many blow-ups of $\SS$. Hence there are finitely many isomorphism types of spatial cube complexes (with a given fundamental group). This means that $K_\Gamma$ has finitely many $\unt(A_\Gamma)$-orbits of vertices, i.e., $\unt(A_\Gamma)$ acts cocompactly on $K_\Gamma$.
    
If $(\mathbf{P},v)$ is a based Whitehead partition, the collapse $c_v\colon \SS^\mathbf{P}\to \SS^\mathbf{P}_v$ from the blow-up $\SS^\mathbf{P}$ where the only collapsed hyperplane is labelled $v$ is a homotopy equivalence. Moreover, there exists an isomorphism $\iota$ between the collapse $\SS^\mathbf{P}_v$ and $\SS$, such that the outer automorphism $[(c_\mathbf{P})_*\circ (c_v)_*^{-1}\circ \iota^{-1}_*]\in\unt(A_\Gamma)$ is the outer Whitehead automorphism corresponding to $(\mathbf{P},v)$. This means that the marked Salvettis one Whitehead move away from $[\SS,id_{A_\Gamma}]$ are exactly the marked Salvettis one of whose markings is a Whitehead automorphism.
\end{rem}

\section{Minsets for cospecial actions}
\label{minsets}

The main goal of this section is to prove the following proposition.

\begin{pro}
\label{bounded_displacement}
Let $G$ be a group acting freely and cospecially on a $\rm{CAT}(0)$ cube complex $X$ of finite dimension $n$ by combinatorial isometries. Recall that for $g\in G$, $\ell(g)$ is the combinatorial translation length of $g$ in $X$. Let $(a_1,\dots , a_k)\in G^k$ and set the following. \[M \coloneqq \max_i \ell(a_i) + \frac{n}{2}\max_{i<j} \ell(a_ia_j)+\frac{3n}{2}\]
There exists a vertex $x\in X^{(0)}$ such that for every $i$, $d(x,a_ix)\leq M$.
\end{pro}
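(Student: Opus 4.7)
The strategy is to reduce the problem, via a structural identity for displacement, to finding a vertex simultaneously close to every minset $\mathrm{Min}(a_i)$, and then to build such a vertex iteratively by projections while controlling displacement growth through the pairwise bounds $\ell(a_i a_j)$. The starting point is the identity
\[ d(x, a x) = \ell(a) + 2\, d(x, \mathrm{Min}(a)), \]
valid for every vertex $x$ and every nontrivial $a \in G$. Since the action is free and cospecial, each $a_i$ acts as a combinatorially loxodromic isometry whose minset $\mathrm{Min}(a_i)$ is a nonempty convex (hence gated, by Lemma~\ref{convexcarac}) subcomplex of $X$. Using the $a$-equivariance of the projection $\pi_a \colon X \to \mathrm{Min}(a)$, the path $x \to \pi_a(x) \to a\pi_a(x) \to a x$ is a concatenation of three geodesic segments, the outer two having length $d(x,\mathrm{Min}(a))$ and the middle one length $\ell(a)$, proving the identity. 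The problem therefore reduces to producing $x$ with $d(x, \mathrm{Min}(a_i)) \leq (M - \ell(a_i))/2$ for every $i$.

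I would construct such a vertex by starting from $x_0 \in \mathrm{Min}(a_{i_0})$, where $i_0$ maximizes $\ell(a_i)$, and iteratively setting $x_{t+1} \coloneqq \pi_{\mathrm{Min}(a_{t+1})}(x_t)$ for a suitable enumeration of the remaining indices. At each step, $d(x_{t+1}, a_{t+1} x_{t+1}) = \ell(a_{t+1})$ attains the minimum, while for each already-treated $i \leq t$,
\[ d(x_{t+1}, a_i x_{t+1}) \leq 2\, d(x_t, \mathrm{Min}(a_{t+1})) + d(x_t, a_i x_t). \]
The crucial estimate is to bound $d(x_t, \mathrm{Min}(a_{t+1}))$ using the $\ell(a_i a_{t+1})$'s. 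Combining the triangle inequality $\ell(a_i a_{t+1}) \leq d(x_t, a_i x_t) + d(x_t, a_{t+1} x_t)$ with the structural identity above expresses each bridge length in terms of previously-controlled displacements and $\ell(a_i a_{t+1})$. The factors $\tfrac{n}{2}$ on $\max_{i<j} \ell(a_i a_j)$ and the offset $\tfrac{3n}{2}$ arise because, in a $\mathrm{CAT}(0)$ cube complex of dimension $n$, each projection step may have to traverse a cube of dimension up to $n$, so the combinatorial distance picks up an additive correction of order $n$ per step; aggregating these corrections across the $k$ iterations and through the pairwise bounds produces the stated constants.

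The principal obstacle is the one-sided character of the triangle inequality: $\ell(a_i a_{t+1}) \leq d(x_t, a_i x_t) + d(x_t, a_{t+1} x_t)$ is a lower bound on the sum, whereas what is needed is an upper bound on the single term $d(x_t, \mathrm{Min}(a_{t+1}))$. Overcoming this will require either a well-chosen order of projection (so that each new minset is approached from a vertex already controlled by the prior $a_i$'s) or replacing the naive iterated projection by a median-type construction that balances contributions of all pairs at once. The cospecial hypothesis is essential throughout, both to make each $\mathrm{Min}(a_i)$ a genuine convex subcomplex and to guarantee that no hyperplane is inverted, so that projections onto minsets are $1$-Lipschitz and displacement-decreasing in a predictable way.
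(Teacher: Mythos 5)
Your reduction step is correct: by Lemma~\ref{geodesic} one has $d(x,ax)=\ell(a)+2d(x,\mathrm{Min}(a))$ for every vertex $x$, so the problem becomes to find $x$ simultaneously within distance $\tfrac{n}{4}\max_{i<j}\ell(a_ia_j)+\tfrac{3n}{4}$ (roughly) of every $\mathrm{Min}(a_i)$. The paper starts from the same reduction. But the construction you then propose --- iterated gate projection $x_{t+1}=\pi_{\mathrm{Min}(a_{t+1})}(x_t)$ --- does not work, and you essentially say so yourself. Projecting onto a new minset can move $x_t$ arbitrarily far from the minsets already visited: there is no control on $d(x_{t+1},\mathrm{Min}(a_i))$ for $i\le t$ unless you already have an upper bound on $d(x_t,\mathrm{Min}(a_{t+1}))$, which is precisely what you observe you cannot extract from the lower-bound inequality $\ell(a_ia_{t+1})\le d(x_t,a_ix_t)+d(x_t,a_{t+1}x_t)$. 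The proposal thus names the missing idea (a ``median-type construction that balances contributions of all pairs at once'') but does not supply it, so the proof is incomplete.

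The paper's resolution is exactly that balancing device, carried out with three ingredients that your proposal does not invoke. First, Proposition~\ref{distance_minsets} bounds the \emph{distance between minsets} directly: $d(\mathrm{Min}(g),\mathrm{Min}(h))\le \ell(gh)/2$, proved by collapsing $X$ hyperplane-orbit by hyperplane-orbit onto trees and using the exact tree formula. Second, one thickens each $\mathrm{Min}(a_i)$ by $M'=\lceil\max_{i<j}\ell(a_ia_j)/4\rceil$ so that the neighborhoods pairwise intersect, and takes their combinatorial convex hulls $C_i$; Lemma~\ref{convex_neighborhood} guarantees $C_i\subseteq \mathrm{N}_{nM'}(\mathrm{Min}(a_i))$, which is where the dimension factor $n$ actually enters (not, as you suggest, as a per-step cube-traversal cost accumulated over $k$ iterations --- note the bound must be independent of $k$). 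Third, the Helly property for convex subcomplexes of a $\mathrm{CAT}(0)$ cube complex produces a vertex $x$ in every $C_i$ at once. Plugging $d(x,\mathrm{Min}(a_i))\le nM'$ into the displacement identity and unwinding the ceiling ($M'\le \max\ell(a_ia_j)/4+3/4$) gives the stated $M$. Your explanation of the constants is therefore also off: the $\tfrac{3n}{2}$ offset comes from the integrality adjustment in $M'$, and $\tfrac{n}{2}$ from the convex-hull dilation, neither of which depends on $k$.
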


The proof involves several results about invariant subsets in actions of finitely generated groups on $\cat$ cube complexes. Although most of them belong to folklore, we provide complete proofs. Recall that two convex subcomplexes are called parallel when the sets of hyperplanes dual to their edges are identical.
\begin{lm}
\label{invariantminimal}
Let $K$ be a finitely generated group acting on a $\cat$ cube complex $X$ by combinatorial isometries. There exists a non-empty convex subcomplex $Y$ of $X$ that is $K$-invariant and minimal (for the inclusion) with these properties. Besides, $Y$ has finitely many $K$-orbits of hyperplanes. Moreover, any two minimal subcomplexes with these properties are parallel.
\end{lm}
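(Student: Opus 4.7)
My plan is first to produce a non-empty $K$-invariant convex subcomplex $C_0$ having finitely many $K$-orbits of hyperplanes, then to extract from it a minimal such subcomplex via a hyperplane-counting argument, and finally to establish parallelism of any two minimal subcomplexes using gate projections.

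First I would pick any vertex $v\in X$ and set $C_0 := \mathrm{conv}(Kv)$, which is a non-empty $K$-invariant convex subcomplex whose hyperplanes are exactly those separating two vertices of the orbit $Kv$. Fixing a finite generating set $S$ of $K$, any hyperplane $H$ separating $v$ from $gv$ with $g = s_1\cdots s_m$, $s_i\in S^{\pm 1}$, must separate two consecutive vertices of the edge path $v, s_1v, s_1s_2v,\dots, gv$; translating this step back by an appropriate element of $K$ shows that $H$ lies in the $K$-orbit of some hyperplane separating $v$ from $sv$ for some $s\in S$. Since only $d(v,sv)<\infty$ hyperplanes separate $v$ from $sv$, the complex $C_0$ crosses at most $\sum_{s\in S} d(v,sv)$ orbits of hyperplanes under $K$.

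Next, for any $K$-invariant convex subcomplex $C\subseteq C_0$, let $N(C)\in\NN$ denote the number of $K$-orbits of hyperplanes crossing $C$. I would select $Y$ among non-empty $K$-invariant convex subcomplexes of $C_0$ minimizing $N$, which exists because $N$ takes integer values bounded by $N(C_0)<\infty$. To see that $Y$ is minimal in the poset of all non-empty $K$-invariant convex subcomplexes of $X$, suppose $Y'\subsetneq Y$ were such a strict subcomplex; picking a vertex $x\in Y\setminus Y'$ and using the gate projection from Lemma~\ref{convexcarac} produces a hyperplane $H$ separating $x$ from $\pi_{Y'}(x)$, hence crossing $Y$, while $Y'$ lies entirely on the $\pi_{Y'}(x)$-side of $H$. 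By $K$-invariance of $Y'$, the entire $K$-orbit of $H$ also fails to cross $Y'$, so $N(Y')<N(Y)$, a contradiction. Thus $Y$ is minimal and inherits finitely many $K$-orbits of hyperplanes from $C_0$.

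For the parallelism statement, let $Y_1,Y_2$ be two minimal $K$-invariant convex subcomplexes. The gate projection $\pi_{Y_1}\colon X\to Y_1$ is $K$-equivariant by the uniqueness clause of Lemma~\ref{convexcarac}, and standard hyperplane arguments in $\cat$ cube complexes show that $\pi_{Y_1}(Y_2)$ is a convex subcomplex of $Y_1$. This image is non-empty and $K$-invariant, so minimality of $Y_1$ forces $\pi_{Y_1}(Y_2)=Y_1$; symmetrically $\pi_{Y_2}(Y_1)=Y_2$. Given any hyperplane $H$ crossing $Y_1$, separating two vertices $y_1,y_2\in Y_1$, the surjectivity of $\pi_{Y_1}$ on $Y_2$ yields $y_1',y_2'\in Y_2$ projecting to $y_1,y_2$; since $H$ crosses $Y_1$ it cannot separate $y_i'$ from $y_i=\pi_{Y_1}(y_i')$, so $y_1'$ and $y_2'$ remain on opposite sides of $H$, and $H$ crosses $Y_2$. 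The reverse inclusion is symmetric, giving the desired parallelism. I expect the main technical hurdle to be the gate-projection step: verifying that $\pi_{Y_1}(Y_2)$ is a genuine convex subcomplex of $Y_1$, rather than a mere vertex set, relies on either a bridge-lemma-type result or a direct intersection-of-halfspaces description; once this is in hand, the rest of the argument reduces to bookkeeping about hyperplane orbits.
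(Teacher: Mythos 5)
Your proof matches the paper's for the construction of the finitely-orbited convex subcomplex $C_0$ and for the extraction of a minimal $Y$ by minimizing the number of $K$-orbits of hyperplanes; those two parts are essentially identical. For the parallelism of two minimal subcomplexes $Y_1,Y_2$, however, you take a genuinely different route. The paper picks a pair $(y,y')\in Y_1\times Y_2$ at minimal distance, notes that minimality forces $Y_1$ to be the combinatorial convex hull of $K\cdot y$ (so every hyperplane of $Y_1$ lies in some $\s(y\mid ky)$), and then runs a direct median computation to conclude $H\in\s(y'\mid ky')$. You instead use $K$-equivariance of the gate projection $\pi_{Y_1}$, the convexity of $\pi_{Y_1}(Y_2)$, and minimality to get $\pi_{Y_1}(Y_2)=Y_1$, and then transfer hyperplanes via $\s(y'\mid\pi_{Y_1}(y'))=\s(y'\mid Y_1)$. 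Both arguments are correct, and your version is conceptually clean, but the step you rightly flag---that $\pi_{Y_1}(Y_2)$ is a convex full subcomplex, a bridge-lemma-type fact---is not established in the paper and would need a short supporting argument for self-containment. You can actually sidestep it: the combinatorial convex hull of the vertex set $\pi_{Y_1}(Y_2)^{(0)}$ is automatically $K$-invariant, non-empty and convex, hence equals $Y_1$ by minimality without any bridge lemma; then any hyperplane $H$ crossing $Y_1$ must already separate two vertices $\pi_{Y_1}(y_1'),\pi_{Y_1}(y_2')$ of $\pi_{Y_1}(Y_2)^{(0)}$ (otherwise they would all lie in one halfspace, and so would their hull $Y_1$), after which your gate argument runs unchanged. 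In short: the paper's median argument is more elementary and self-contained, yours is more structural, and both are of comparable length once the convexity of the projection is settled.
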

Note that when $X$ has a $K$-invariant convex subcomplex $S$, $Y$ can be taken contained in $S$ simply by applying the result to $S$ instead of $X$.
\begin{proof}
    Let $\Sigma$ be a finite symmetric generating set for $K$ and $x$ a vertex of $X$. The combinatorial convex hull $C$ of $K\cdot x$ (that is, the intersection of all halfspaces containing $K\cdot x$) is convex, non-empty and $K$-invariant. Besides, if a hyperplane $H$ is dual to an edge of $C$, $K\cdot x$ is not contained in a halfspace of $H$ by definition of $C$. Thus, there exists $k_1,k_2\in K$ such that $H\in \s(k_1x\mid k_2x)$. Write $k_1^{-1}k_2 = s_1\dots s_n$ with $s_i\in \Sigma$. The following holds:
    \[\s(k_1x\mid k_2x)\subseteq \bigcup_{i=0}^{n-1} \s(k_1s_1\dots s_ix\mid k_1s_1\dots s_{i+1}x) = \bigcup_{i=0}^{n-1} k_1s_1\dots s_i\cdot \s(x\mid s_{i+1}x)\]
    
    Thus, $H$ is in the same $K$-orbit as some element of $\s(x\mid sx)$ for some $s\in \Sigma$. Since $\Sigma$ and separators are finite, $C$ has finitely many $K$-orbits of hyperplanes.

    Let $Y$ be a $K$-invariant non-empty convex subcomplex of $C$ with a minimal number of $K$-orbits of hyperplanes. Let $Z\subseteq Y$ be a $K$-invariant and non-empty convex subcomplex. By definition of $Y$, $Y$ and $Z$ have the same $K$-orbits of hyperplanes. Assuming $y$ is a vertex of $Y\setminus Z$, let $H\in \s(y\mid Z)$. Then by convexity $H$ is a hyperplane of $Y$, but $Z$ is contained in one of its halfspaces: $H$ is not a hyperplane of $Z$. By $K$-invariance of $Z$, no hyperplane in the $K$-orbit of $H$ is in $Z$, a contradiction. Therefore, $Y$ and $Z$ have the same $0$-skeleton. Since they are convex, hence full, $Z = Y$ is minimal for the inclusion.

    Let $Y'$ be another minimal $K$-invariant subcomplex, and let $(y,y') \in Y\times Y'$ be a pair of vertices at minimal distance. By minimality of $Y$, $Y$ is the combinatorial convex hull of $K\cdot y$. Therefore, for any hyperplane $H$ dual to an edge of $Y$, $H\in \s(y\mid ky)$ for some $k\in K$. Assuming $H\notin \s(y'\mid ky')$, then either $H\in \s(y\mid ky, y',ky')$ or $H\in \s(ky\mid y, y',ky')$. In the first case, $\mu(y,y',ky)\in Y$ belongs to a geodesic joining $y$ and $y'$ yet is separated from $y$ by $H$, contradicting the minimality of $d(y,y')$. Likewise, in the second case, $\mu(k^{-1}y,y,y')\in Y$ belongs to a geodesic joining $y$ and $y'$ yet is separated from $y$ by $k^{-1}H$, yielding the same contradiction. Thus $H\in \s(y'\mid ky')$ is dual to an edge of $Y'$. By symmetry of the assumptions, $Y$ and $Y'$ are parallel.
\end{proof}

In the case of trees ($1$-dimensional $\cat$ cube complexes), parallelism is equality (except for singletons), and one recovers the uniqueness of the minimal invariant subtree. In the following lemma, one considers a free cospecial action of $\ZZ$. Such actions arise naturally when considering a larger group $G$ acting properly discontinuously and cospecially on a $\cat$ cube complex and restricting the action to the subgroup $\gen{g}\simeq \ZZ$ generated by any infinite-order element $g\in G$.
\begin{lm}
\label{minset}
Let $\ZZ = \gen{g}$ act freely and cospecially on a $\cat$ cube complex $X$ by combinatorial isometries. The set of vertices $x\in X^{(0)}$ such that $d(x,gx)$ is minimal spans a non-empty $\gen{g}$-invariant convex subcomplex of $X$. We call this subcomplex the \emph{combinatorial minset} of $g$ and denote it $\mathrm{Min}(g)$.
\end{lm}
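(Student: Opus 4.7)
The plan is to prove the three claims of the lemma --- non-emptiness, $\gen{g}$-invariance, and convexity --- separately. Non-emptiness is short: the combinatorial displacement $x \mapsto d(x, gx)$ takes values in the positive integers (positive by freeness of the $\gen{g}$-action, integer-valued by the combinatorial metric on vertices), so it attains its minimum $m$ on some vertex. Invariance under $g$ is then immediate since $g$ acts by isometries: $d(gx, g^2 x) = d(x, gx)$, so $gx \in \mathrm{Min}(g)$ whenever $x$ is.

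For convexity, let $A$ be the full subcomplex of $X$ spanned by $\mathrm{Min}(g)^{(0)}$. By Lemma~\ref{convexcarac}, convexity of $A$ is equivalent to median-closure, $\mu(A^{(0)}, A^{(0)}, X^{(0)}) \subseteq A^{(0)}$. Take $x, y \in \mathrm{Min}(g)$ and $z \in X^{(0)}$, set $\mu = \mu(x, y, z)$; by equivariance, $g\mu = \mu(gx, gy, gz)$. The goal is to show $d(\mu, g\mu) \le m$. The approach is to analyze the hyperplanes in $\s(\mu \mid g\mu)$ via the median characterization (a hyperplane $H$ separates $\mu$ from a vertex $w$ iff it separates at least two of $\{x, y, z\}$ from $w$) and bound their count using the $g$-orbit structure on hyperplanes.

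The key input, and the main obstacle, is cospeciality of the action. Cospeciality ensures that no power $g^k$ ($k \neq 0$) inverts a hyperplane, and that distinct hyperplanes in a common $g$-orbit are disjoint and non-transverse; hence each $g$-orbit of ``skewer'' hyperplanes is linearly nested. I expect to use this to set up an injection $\s(\mu \mid g\mu) \hookrightarrow \s(x \mid gx)$ by case analysis on the sign patterns that a hyperplane $H \in \s(\mu \mid g\mu)$ induces on $\{x, y, z, gx, gy, gz\}$. The delicate case is when $H$ separates $z$ from $gz$ but neither $x$ from $gx$ nor $y$ from $gy$: here the injection must route $H$ to a $g$-translate $g^{\pm 1} H$ that lies in $\s(x \mid gx)$, using the linear nesting provided by cospeciality to ensure the routing is well-defined and injective. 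Working through the cases delivers $d(\mu, g\mu) \le m$, hence $\mu \in \mathrm{Min}(g)$, completing the proof.
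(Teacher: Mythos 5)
Your approach differs from the paper's. The paper proves criterion $(1)$ of Lemma~\ref{convexcarac} directly: any geodesic edge path between two vertices of $\mathrm{Min}(g)$ stays in $\mathrm{Min}(g)$, by induction one edge at a time. The identity $\s(x'\mid gx')=\s(x\mid gx)\sd\{H\}\sd\{gH\}$ (where $H$ is dual to the first edge $[x,x']$) confines the local step to the single pair $\{H,gH\}$, and the remaining case is ruled out by Haglund's result that every vertex of $\mathrm{Min}(g)$ lies on a combinatorial axis. Your plan is instead to prove criterion $(2)$, median-closure $\mu(x,y,z)\in\mathrm{Min}(g)$, by building an injection $\s(\mu\mid g\mu)\hookrightarrow\s(x\mid gx)$. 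This is a legitimate alternative route, but your sketch has a genuine gap.

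The gap: routing a ``bad'' hyperplane to $g^{\pm 1}H$ is not enough. Take $X$ the tree $\RR$ (vertices at integers) with a whisker of length $10$ attached at each integer, and $g$ translation by $2$, so $\mathrm{Min}(g)=\RR$. Put $x=0$, $y=10$, $z$ the tip of the whisker at $5$; then $\mu=5$, $g\mu=7$, and the hyperplane $H$ between $5$ and $6$ lies in $\s(\mu\mid g\mu)\cap\s(z\mid gz)$ but in neither $\s(x\mid gx)=\s(0\mid 2)$ nor $\s(y\mid gy)=\s(10\mid 12)$. The unique translate of $H$ in $\s(x\mid gx)$ is $g^{-2}H$, not $g^{\pm1}H$; in general the required exponent grows with $d(x,y)$, so the injection cannot be produced locally from $\{g^{-1}H,H,gH\}$. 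Even with the corrected recipe ``send $H$ to its $g$-orbit representative in $\s(x\mid gx)$'', you still owe proofs that such a representative exists for every bad orbit and that the map is injective: the parity of $|\omega\cap\s(\mu\mid g\mu)|$ matches that of $|\omega\cap\s(x\mid gx)|$ but gives no a priori upper bound, so several bad hyperplanes could share an orbit, and some orbit could meet $\s(\mu\mid g\mu)$ without meeting $\s(x\mid gx)$ at all. Pinning these down amounts to showing a combinatorial axis passes through $\mu$, which is essentially the conclusion itself. The paper's one-edge induction sidesteps all of this.
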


When $X$ is a tree, the combinatorial minset coincides with the \emph{axis} of $g$. In general, the vertex set of $\mathrm{Min}(g)$ is the union of vertex sets of all minimal $\gen{g}$-invariant convex subcomplexes of $X$. It is also the union of vertex sets of all \emph{combinatorial axes} of $g$ (i.e.~bi-infinite, $\ell^1$-geodesic edge paths stabilized by $g$).

Note that while Lemma~\ref{invariantminimal} applies for any isometric action on a $\cat$ cube complex, Lemma~\ref{minset} fails without the specialness assumption: vertices minimally translated by a diagonal glide reflection in the standard cubulation of $\RR^2$ lie in a neighborhood of a diagonal line, which can never span a convex subcomplex. The minimal invariant convex subset for $\ZZ$ in that case is the whole space $\RR^2$.

\begin{proof}

It is clear that $\mathrm{Min}(g)$ is non-empty and $\gen{g}$-invariant. By specialness, every hyperplane of $X$ descends to a two-sided hyperplane in the quotient $X/\ZZ$, hence $g$ acts \emph{stably without inversion}, in the sense of \cite{semisimple}. By \cite{semisimple}, Corollary~6.2, the translation length of $g$ is positive, and every vertex in $\mathrm{Min}(g)$ belongs to a combinatorial axis.

Let $x,y$ be vertices in $\mathrm{Min}(g)$ and let $\gamma$ be any combinatorial geodesic path from $x$ to $y$. To prove inductively that $\gamma$ is contained in $\mathrm{Min}(g)$, let $[x,x']$ be the first edge of $\gamma$ and $H$ its dual hyperplane. The following equality holds for $d$ the combinatorial metric:
\[
\begin{aligned}
d(x',gx') &= \card{\s(x'\mid gx')} \\
&= \card{\s(x'\mid x)\sd \s(x\mid gx) \sd \s(gx\mid gx')}\\
&= \card{\s(x\mid gx)\sd \{H\} \sd \{gH\}}\end{aligned}\]

If either $H\in \s(x\mid gx)$, $gH\in \s(x\mid gx)$ or $H=gH$, then $d(x',gx') \leq d(x,gx)$. Hence $x'\in \mathrm{Min}(g)$, and the induction proceeds.

Otherwise, $H\neq gH$ and $\{H,gH\}\cap \s(x\mid gx) = \emptyset$. By specialness, $H$ and $gH$ are not transverse. Hence $H\notin \s(gx\mid gy)$, or else the quadruple $x, x', gy, gx'$ would witness the transversality of $H$ and $gH$. As $H\in \s(x\mid y)$ yet $H\notin \s(x\mid gx)\cup \s(gx\mid gy)$, $H\in\s(y\mid gy)$. Symmetrically, $gH \in \s(y\mid gy)$, which rewrites as $H\in \s(g^{-1}y\mid y)$. Thus a combinatorial axis going through $y$ meets $H$ twice between $g^{-1}y$ and $gy$, yielding a contradiction.
\end{proof}

\begin{lm}
\label{axes}
In the same setup as Lemma~\ref{minset}, for every vertices $x,y\in \mathrm{Min}(g)$, there exists a unique bijection between $\s(x\mid gx)$ and $\s(y\mid gy)$ which maps each hyperplane to a hyperplane in the same $\gen{g}$-orbit. 
\end{lm}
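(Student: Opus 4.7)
The plan is to connect $x$ and $y$ by a combinatorial geodesic edge path inside $\mathrm{Min}(g)$ and prove by induction along this path that the set of $\gen{g}$-orbits of hyperplanes meeting the separator is preserved, each orbit contributing exactly one hyperplane.

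I would first record the following key observation: for any $z\in \mathrm{Min}(g)$ and any $H\in \s(z\mid gz)$, $H$ is the only element of the $\gen{g}$-orbit of $H$ lying in $\s(z\mid gz)$. Indeed, if $g^k H\in \s(z\mid gz)$ for some $k\neq 0$, then $g^k H\in g^k\s(z\mid gz) = \s(g^k z\mid g^{k+1}z)$ as well. Since $z\in \mathrm{Min}(g)$ satisfies $d(z,g^n z) = |n|\ell(g)$ for all $n\in\ZZ$, the bi-infinite broken edge path $\ldots, g^{-1}z, z, gz, g^2z, \ldots$ is a combinatorial axis of $g$; in particular any finite subpath is a combinatorial geodesic. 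Then $g^k H$ would be crossed twice by a geodesic subpath containing both $[z,gz]$ and $[g^k z, g^{k+1}z]$, a contradiction.

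Next I would use convexity of $\mathrm{Min}(g)$ (Lemma~\ref{minset}) to pick a combinatorial geodesic edge path $x=z_0,z_1,\ldots,z_n=y$ entirely inside $\mathrm{Min}(g)$. For the edge $[z_i,z_{i+1}]$ with dual hyperplane $H_i$, the symmetric difference computation from the proof of Lemma~\ref{minset} gives
\[\s(z_{i+1}\mid gz_{i+1}) = \s(z_i\mid gz_i)\sd \{H_i\}\sd \{gH_i\}.\]
Because both $z_i$ and $z_{i+1}$ lie in $\mathrm{Min}(g)$, the ``otherwise'' case in the proof of Lemma~\ref{minset} is excluded: one of $H_i\in \s(z_i\mid gz_i)$, $gH_i\in \s(z_i\mid gz_i)$ or $H_i=gH_i$ must hold. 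By the observation above applied to $z_i$, in each of the first two cases \emph{exactly} one of $H_i,gH_i$ belongs to $\s(z_i\mid gz_i)$, so the symmetric difference merely exchanges this hyperplane for the other one in the same $\gen{g}$-orbit; in the third case the symmetric difference is empty. In all three cases $\s(z_{i+1}\mid gz_{i+1})$ represents the same set of $\gen{g}$-orbits as $\s(z_i\mid gz_i)$.

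Iterating, $\s(x\mid gx)$ and $\s(y\mid gy)$ represent exactly the same $\gen{g}$-orbits of hyperplanes; combined with the initial observation applied to both endpoints, each such orbit is represented by precisely one hyperplane in each separator. The assignment sending $H\in \s(x\mid gx)$ to the unique element of $\gen{g}\cdot H$ in $\s(y\mid gy)$ is therefore a well-defined orbit-preserving bijection, and uniqueness of the bijection is automatic from uniqueness of the target in the orbit. The main technical point is the initial observation, which rests on identifying the concatenation along $\{g^n z\}_{n\in\ZZ}$ as a bi-infinite combinatorial geodesic axis.
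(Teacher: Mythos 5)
Your proof is correct, but it takes a genuinely different route from the paper's. Both arguments rely on the same key observation (each $\gen{g}$-orbit contributes at most one hyperplane to $\s(z\mid gz)$ when $z\in\mathrm{Min}(g)$, because the geodesic concatenation through $\{g^nz\}$ is a combinatorial axis), but they diverge afterward. You walk a geodesic $x=z_0,\dots,z_n=y$ inside the convex subcomplex $\mathrm{Min}(g)$ and, using the edge-by-edge identity $\s(z_{i+1}\mid gz_{i+1}) = \s(z_i\mid gz_i)\sd\{H_i\}\sd\{gH_i\}$ together with the minimality of $d(z_i,gz_i)=d(z_{i+1},gz_{i+1})$, show that each step replaces a hyperplane with another in the same orbit (or does nothing). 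The paper instead fixes a single orbit $\omega$, writes the one-step identity $\s(y\mid gy)=\s(y\mid x)\sd\s(x\mid gx)\sd\s(gx\mid gy)$, observes that $\omega\cap\s(x\mid y)$ and $\omega\cap\s(gx\mid gy)$ are translates of each other and thus of equal cardinality, and concludes that $|\omega\cap\s(y\mid gy)|$ is odd, hence exactly $1$ given the at-most-one bound. The parity argument is shorter and avoids the induction and the explicit case analysis of Lemma~\ref{minset}; your approach is more constructive in that it shows how the bijection is transported one edge at a time along a geodesic in $\mathrm{Min}(g)$. Both are sound; yours uses the convexity of $\mathrm{Min}(g)$ (Lemma~\ref{minset}) more directly, which the paper's proof sidesteps.
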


\begin{proof}
Let $H\in\s(x\mid gx)$ and $\omega$ its $\gen{g}$-orbit. Since some combinatorial axis goes through $x$, $\omega\cap \s(x\mid gx) = \{H\}$. For the same reason, $\omega\cap \s(y\mid gy)$ has at most one element.

However $\omega \cap \s(gx\mid gy)$ is a translate of $\omega\cap \s(x\mid y)$, hence of the same cardinality. Thus the cardinality of $\omega \cap \s(y\mid gy) = \omega \cap (\s(y\mid x) \sd \s(x\mid gx)\sd \s(gx\mid gy))$ is odd, hence is exactly one. Pick this unique element as the image of $H$. Since no choices have been made, the obtained bijection is unique.
\end{proof}

\begin{lm}
\label{geodesic}
In the same setup as Lemma~\ref{minset}, let $x\in X^{(0)}$ be a vertex, and $p$ its unique projection on $\mathrm{Min}(g)$. There exists a geodesic path from $x$ to $gx$ which passes through the vertices $p$ and $gp$.
\end{lm}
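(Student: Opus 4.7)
The plan is to show $d(x, gx) = d(x,p) + d(p,gp) + d(gp,gx)$; this immediately yields a geodesic from $x$ to $gx$ passing through $p$ and $gp$. The triangle inequality gives $\leq$, so it suffices to prove the reverse. Phrased in separator language, since $\s(x\mid gx)$ is always the symmetric difference of $\s(x\mid p)$, $\s(p\mid gp)$ and $\s(gp\mid gx)$, we only need the three sets to be pairwise disjoint. Two of the disjointness conditions are immediate: $\s(p\mid gp)$ consists of hyperplanes crossing the convex set $\mathrm{Min}(g)$ (Lemma~\ref{minset}), while the gate property at $p = \pi_{\mathrm{Min}(g)}(x)$ and $gp = \pi_{\mathrm{Min}(g)}(gx)$ forces hyperplanes of $\s(x\mid p)$ or $\s(gp\mid gx)$ to keep $\mathrm{Min}(g)$ in a single halfspace.

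The core step is $\s(x\mid p) \cap \s(gp\mid gx) = \emptyset$. I will argue by contradiction: suppose $H$ lies in both. Equivariance gives $\s(gp\mid gx) = g\cdot \s(x\mid p)$, so $g^{-1}H$ also belongs to $\s(x\mid p)$. Cospecialness of the $\gen{g}$-action rules out $H$ and $g^{-1}H$ being transverse, since otherwise the quotient hyperplane would self-intersect in $X/\gen g$, contradicting specialness; so they are disjoint. After relabeling if needed, assume $H$ is closer to $x$ than $g^{-1}H$ on a geodesic from $x$ to $p$. Writing $K^+$ for the halfspace of a hyperplane $K$ that contains $\mathrm{Min}(g)$, this yields the nested picture $g^{-1}H \subset H^+$, $H \subset (g^{-1}H)^-$, and consequently the strict containment $(g^{-1}H)^+ \subsetneq H^+$.

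The plan is then to iterate. I claim that $g^{-n}H \in \s(x\mid p)$ for every $n \geq 0$. Given this for some $n$, the nesting shows $g^{-n}H \subset H^+$ and hence $H^- \subset (g^{-n}H)^-$; combined with $gx \in H^-$ (from $H \in \s(gp\mid gx)$) and $gp \in \mathrm{Min}(g) \subset (g^{-n}H)^+$, this proves $g^{-n}H \in \s(gp\mid gx) = g\cdot \s(x\mid p)$, whence $g^{-(n+1)}H \in \s(x\mid p)$. Translating $(g^{-1}H)^+ \subsetneq H^+$ by iterated powers of $g^{-1}$ produces a strictly decreasing chain $\ldots \subsetneq (g^{-2}H)^+ \subsetneq (g^{-1}H)^+ \subsetneq H^+$, which precludes any equality $g^{-k}H = H$ with $k \geq 1$ (else one would get $H^+ \subsetneq H^+$). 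Therefore the $g^{-n}H$ are pairwise distinct, giving infinitely many elements inside the finite set $\s(x\mid p)$, a contradiction. The main difficulty lies precisely here: cospecialness is used essentially both to forbid transversality at the start and to rule out periodic $\gen{g}$-orbits via the strictly nested halfspaces, and without it the strategy collapses (as witnessed by the non-convex minset of a diagonal glide reflection on $\RR^2$).
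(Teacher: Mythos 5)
Your overall strategy (prove that $\s(x\mid p)$, $\s(p\mid gp)$ and $\s(gp\mid gx)$ are pairwise disjoint) matches the paper's, and your handling of the first two disjointness conditions is essentially identical to the paper's median/gate argument. For the crucial third condition $\s(x\mid p)\cap\s(gp\mid gx)=\emptyset$, your approach genuinely differs: the paper chooses $H$ \emph{closest to $p$}, shows that $p$ then lies on the carrier of $H$, and closes with a short case analysis on whether $gH=H$; you instead take an arbitrary $H$ and drive a contradiction from a strictly nested chain of halfspaces $(g^{-n}H)^+$, producing infinitely many hyperplanes inside the finite set $\s(x\mid p)$. The nesting machinery and its iteration are correctly set up.

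However, there is a genuine gap. After noting that cospecialness forbids $H$ and $g^{-1}H$ from being transverse, you conclude ``so they are disjoint'' — but two hyperplanes that are not transverse may also be \emph{equal}, and nothing in your setup rules out $g^{-1}H=H$, i.e.\ $gH=H$. Your subsequent argument relies essentially on $H\neq g^{-1}H$: the ``relabeling'' step, the strict inclusion $(g^{-1}H)^+\subsetneq H^+$, and hence the infinite strictly decreasing chain all collapse when $gH=H$. The paper explicitly treats $gH=H$ as a separate case, and it can only be dismissed there because $H$ was chosen closest to $p$, forcing $p$ to be adjacent to $H$; then the computation $d(p',gp')=\card{\s(p\mid gp)\sd\{H\}\sd\{gH\}}=d(p,gp)$ contradicts $p'\notin\mathrm{Min}(g)$. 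With your arbitrary choice of $H$, $p$ need not be on the carrier of $H$, and no easy replacement for that computation is available. You should either add the closest-to-$p$ normalization and handle $gH=H$ as the paper does, or supply a separate argument that a $g$-invariant hyperplane cannot separate $x$ from $\mathrm{Min}(g)$; as written the proof is incomplete.
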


This isn't necessarily verified by every geodesic, as shown by the universal cover of two squares glued together at two adjacent vertices (but not along the edge). Note that as a consequence of this lemma, every $g$-invariant convex subcomplex intersects $\mathrm{Min}(g)$.

\begin{proof}
The goal is to prove that $\s(x\mid p)$, $\s(p\mid gp)$ and $\s(gp\mid gx)$ are pairwise disjoint.

The separator $\s(x,gp\mid p)$ is empty, otherwise the median $\mu(x,p,gp)$, which belongs to $\mathrm{Min}(g)$ by convexity, would be a better projection than $p$. Symmetrically, $\s(p, gx\mid gp)$ is empty. Hence $\s(x\mid p) = \s(x \mid p,gp)$ and $\s(gx\mid gp) = \s(gx \mid p,gp)$. This proves that $\s(p\mid gp)$ intersects trivially the two others, and that the third intersection $\s(x\mid p)\cap \s(gx\mid gp)$ equals $\s(x,gx\mid p,gp)$.

Assume there exists $H\in \s(x,gx\mid p,gp)$, taken closest to $p$. Recall that $\s(H\mid p)$ denotes the set of hyperplanes separating the carrier of $H$ from $p$. Then $\s(H\mid p)\subseteq \s(x,gx\mid p)$. Moreover, $\s(x,gx\mid p)\cap \s(gp\mid p)\subseteq \s(x,gp\mid p)$, and the latter is empty by the argument above. Thus, $\s(H\mid p)\subseteq \s(x,gx\mid p,gp)$ and any hyperplane $K\in \s(H\mid p)$ would contradict the choice of $H$. This proves that $\s(H\mid p)$ is empty. Hence $p$ belongs to the carrier of $H$ by convexity of the latter. Therefore, $H$ is dual to some edge $[p,p']$ starting at $p$.

As $H$ separates $p$ from $x$, the vertex $p'$ cannot belong to $\mathrm{Min}(g)$. This yields a contradiction if $gH = H$, because then: \[d(p',gp')=\card{\s(p',gp')} =\card{\s(p\mid gp)\sd \{H\}\sd \{gH\}} = d(p,gp)\]

Otherwise, $gH\neq H$, but the quadruple $p', gx, gp', gp$ witnesses the transversality of $H$ and $gH$ which contradicts the specialness assumption.
\end{proof}

\begin{lm}
\label{collapseminset}
    Let $c\colon X\to Y$ be a collapse between $\cat$ cube complexes. Assume $\ZZ=\gen{g}$ acts freely and cospecially on $X$ and $Y$, and $c$ is equivariant. Then $c(\mathrm{Min}_X(g)) = \mathrm{Min}_Y(g)$.
\end{lm}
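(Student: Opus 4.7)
The plan is to prove the two inclusions separately, starting from the characterization of $\mathrm{Min}_X(g)$ as the union of vertex sets of combinatorial axes (the $\gen{g}$-stable bi-infinite $\ell^1$-geodesic edge paths) noted after Lemma~\ref{minset}.

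For the inclusion $c(\mathrm{Min}_X(g)) \subseteq \mathrm{Min}_Y(g)$, given $x \in \mathrm{Min}_X(g)$, I would pick a combinatorial axis $\gamma_X$ through $x$. By Lemma~\ref{collapseupstairs} the image $c \circ \gamma_X$ is a geodesic edge path up to reparametrization; after deleting the edges of $\gamma_X$ that $c$ collapses one obtains a genuine geodesic edge path in $Y$, automatically $\gen{g}$-stable by equivariance. This reparametrized image is bi-infinite, since otherwise it would reduce to a single $g$-fixed vertex, contradicting freeness of the action on $Y$. Therefore $c(x)$ lies on a combinatorial axis of $g$ in $Y$, which places it in $\mathrm{Min}_Y(g)$.

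For the reverse inclusion $\mathrm{Min}_Y(g) \subseteq c(\mathrm{Min}_X(g))$, let $y \in \mathrm{Min}_Y(g)$ and use surjectivity of $c$ (Lemma~\ref{collapseupstairs}) to pick a preimage $x_0$ of $y$. Let $x$ be the projection of $x_0$ onto the convex subcomplex $\mathrm{Min}_X(g)$. By Lemma~\ref{geodesic} some geodesic from $x_0$ to $gx_0$ passes through $x$ and $gx$, so by equivariance and Lemma~\ref{collapseupstairs} its image provides a geodesic from $y$ to $gy$ in $Y$ passing through $c(x)$ and $gc(x)$. This yields the length decomposition
\[d_Y(y, gy) = d_Y(y, c(x)) + d_Y(c(x), gc(x)) + d_Y(gc(x), gy).\]
Since $y \in \mathrm{Min}_Y(g)$ we have $d_Y(y, gy) = \ell_Y(g)$, and by the first inclusion $c(x) \in \mathrm{Min}_Y(g)$, so $d_Y(c(x), gc(x)) = \ell_Y(g)$ as well. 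The two remaining non-negative terms must both vanish, giving $c(x) = y$ with $x \in \mathrm{Min}_X(g)$.

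The main delicate point is the first inclusion: one has to verify both that deleting collapsed edges from $c \circ \gamma_X$ yields a geodesic in $Y$ (applying Lemma~\ref{collapseupstairs} to arbitrarily long finite subpaths of $\gamma_X$) and that the reparametrization is non-degenerate, i.e.~that not every edge of $\gamma_X$ is collapsed, which is where freeness of the $\gen{g}$-action on $Y$ enters. Once this is in place, the reverse inclusion reduces to a direct length calculation enabled by the good behavior of $c$ on geodesics and by the projection property of Lemma~\ref{geodesic}.
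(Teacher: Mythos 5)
Your proof is correct and follows essentially the same approach as the paper: the forward inclusion by pushing a combinatorial axis through the collapse (Lemma~\ref{collapseupstairs}), and the reverse inclusion by projecting a preimage onto $\mathrm{Min}_X(g)$ and invoking Lemma~\ref{geodesic} to force the projection to map to $y$. You spell out the non-degeneracy of the image axis (via freeness on $Y$) more explicitly than the paper does, which is a sound addition; the paper instead closes the first inclusion by observing that $c(\mathrm{Min}_X(g))$ is convex, so the vertex-level containment upgrades to a subcomplex containment, a small bookkeeping point you leave implicit.
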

\begin{proof}
    Let $x\in \mathrm{Min}_X(g)$ be a vertex. There exists a $g$-invariant bi-infinite geodesic edge path of $X$ containing $x$. By Lemma~\ref{collapseupstairs}, there exists a $g$-invariant bi-infinite geodesic edge path of $Y$ containing $c(x)$, hence $c(x)\in \mathrm{Min}_Y(g)$. By convexity, $c(\mathrm{Min}_X(g))\subseteq \mathrm{Min}_Y(g)$.
    
    Conversely, let $y\in \mathrm{Min}_Y(g)$ be a vertex and, using surjectivity of $c$, let $x\in X$ be a vertex in the preimage of $y$. Finally, let $p$ be the projection of $x$ onto the convex subcomplex $\mathrm{Min}_X(g)$. By Lemma~\ref{geodesic}, some geodesic joining $x$ and $gx$ goes through $p$ and $gp$. By Lemma~\ref{collapseupstairs} again, there is a geodesic joining $c(x) = y$ and $gy$ going through $c(p)$ and $gc(p)$. Thus, $d(c(p),gc(p))\leq d(y,gy)$ and equality holds since $y\in \mathrm{Min}_Y(g)$. Therefore, $y=c(p)\in c(\mathrm{Min}_X(g))$, and the result follows since $c(\mathrm{Min}_X(g))$ is convex by Lemma~\ref{collapseupstairs}.
\end{proof}

A well-known result states in the case of trees that if two group elements $g,h$ act with disjoint axes, $\ell(gh) = \ell(g) + \ell(h) + 2d(\mathrm{Min}(g),\mathrm{Min}(h))$. Unfortunately, this fails for general $\cat$ cube complexes. Consider for example the special cube complex $Y$ obtained by attaching two edges to a square, making all four attaching points distinct and one of the hyperplanes separating. Equality does not hold for the universal cover of $Y$ with isometries corresponding to the two edge loops of length $2$ of $Y$, suitably oriented. However, we can still recover an inequality.

\begin{pro}
\label{distance_minsets}
Let $G$ be a group acting freely and cospecially on a $\rm{CAT}(0)$ cube complex $X$ by combinatorial isometries. Let $g,h\in G$. The following inequality holds:
\[d(\mathrm{Min}(g), \mathrm{Min}(h))\leq \frac{\ell(gh)}{2}\].
\end{pro}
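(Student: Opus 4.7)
My plan is to fix a vertex $y \in \mathrm{Min}(gh)$ and analyze $d(y, ghy) = \ell(gh)$ through the intermediate point $hy$. First I would handle the trivial cases: if $\mathrm{Min}(g) \cap \mathrm{Min}(h) \neq \emptyset$, or if $gh$ has finite order (in which case freeness of the action forces $gh = 1$ and hence $\mathrm{Min}(h) = \mathrm{Min}(g^{-1}) = \mathrm{Min}(g)$), the distance $d := d(\mathrm{Min}(g), \mathrm{Min}(h))$ vanishes and the inequality is immediate. Otherwise $gh$ has infinite order, so Lemma~\ref{minset} applied to $\langle gh\rangle$ produces a non-empty $\mathrm{Min}(gh)$ from which to pick $y$, and $d > 0$.

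Next I would apply Lemma~\ref{geodesic} to $h$ based at $y$, and to $g$ based at $hy$, obtaining the exact formulas
\[
d(y, hy) = \ell(h) + 2d(y, \mathrm{Min}(h)), \qquad d(hy, ghy) = \ell(g) + 2d(hy, \mathrm{Min}(g)),
\]
each realized by a distinguished geodesic passing through the respective nearest-point projections (which lie inside $\mathrm{Min}(h)$, resp.\ $\mathrm{Min}(g)$). Since $h$ preserves $\mathrm{Min}(h)$ setwise as an isometry, $d(hy, \mathrm{Min}(h)) = d(y, \mathrm{Min}(h))$; combined with the triangle inequality $d \leq d(hy, \mathrm{Min}(g)) + d(hy, \mathrm{Min}(h))$ this yields $d(hy, \mathrm{Min}(g)) \geq d - d(y, \mathrm{Min}(h))$, and summing the two length formulas gives the \emph{sum bound}
\[
d(y, hy) + d(hy, ghy) \geq \ell(g) + \ell(h) + 2d.
\]

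The hard part is to sharpen this sum into the individual lower bound $d(y, ghy) \geq 2d$. Setting $\mu = \mu(y, hy, ghy)$, the median identity $d(y, ghy) = d(y, hy) + d(hy, ghy) - 2d(\mu, hy)$ reduces the task to bounding the \emph{overlap} $d(\mu, hy) = |\s(y\mid hy) \cap \s(hy\mid ghy)|$. My plan is to decompose each of the two geodesics, via Lemma~\ref{geodesic}, into three canonical pieces (a first segment reaching the appropriate minset, an axis segment inside it, and a last segment leaving it), and then to classify hyperplanes in the intersection according to which of the $3 \times 3$ product of pieces they belong to. Several combinations are ruled out by inspecting on which side of $H$ each of $y$, $hy$, $ghy$, $\mathrm{Min}(g)$, $\mathrm{Min}(h)$ sits; the cospecial hypothesis—which forces any two group-translates of the same hyperplane to be either equal or disjoint, never transverse—is crucial to forbid the pathological configurations that break the analogous tree identity. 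Once the overlap is shown to be at most $\tfrac{1}{2}(d(y,hy) + d(hy,ghy)) - d$, substitution into the median identity gives $d(y, ghy) \geq 2d$, hence $\ell(gh) \geq 2d$. The bulk of the work, and the principal obstacle, is this hyperplane-counting step; this is also where the cospecial hypothesis is essentially used, just as it was for Lemma~\ref{minset} itself (which fails without it, as shown by the diagonal glide reflection on $\RR^2$ from the remark preceding the proposition).
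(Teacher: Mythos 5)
The paper's proof takes a completely different route: it works orbit-by-orbit, collapsing $X$ onto the tree $T_\omega$ dual to a single $G$-orbit $\omega$ of hyperplanes, then invokes the exact tree identity $\ell_{T_\omega}(gh) = \ell_{T_\omega}(g) + \ell_{T_\omega}(h) + 2d(\mathrm{Min}_{T_\omega}(g),\mathrm{Min}_{T_\omega}(h))$ to get $M_\omega \geq 2m_\omega$ for each $\omega$, and sums. Your proposal instead attempts a direct median-geometric estimate at the triple $(y,hy,ghy)$. These are genuinely different strategies, and it is worth noting that the paper's orbit decomposition is precisely what lets the half-length term appear for free, whereas your formulation has to recover it by hand.

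The problem is that your reformulation does not actually advance the proof. Unwinding the median identity, the bound you target,
\[
d(\mu,hy) \;\leq\; \tfrac{1}{2}\bigl(d(y,hy)+d(hy,ghy)\bigr) - d,
\]
is \emph{exactly} equivalent to $\ell(gh) = d(y,ghy) \geq 2d$, i.e.\ to the statement of the proposition. So the ``overlap bound'' is not a lemma feeding into the result; it \emph{is} the result, and the entire content of the proof is the step you flag as ``the principal obstacle'' and leave as a plan. The sum bound $d(y,hy)+d(hy,ghy)\geq \ell(g)+\ell(h)+2d$ that you do carry out is correct but unused: it does not enter the median identity, and by itself it does not control $d(y,ghy)$ because the overlap $d(\mu,hy)$ could a priori absorb all of the excess.

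The $3\times 3$ classification you sketch is also not obviously sufficient. Identifying which of the nine products of geodesic pieces a hyperplane in $\s(y\mid hy)\cap\s(hy\mid ghy)$ lies in does not in itself produce the factor $\tfrac12(\ell(g)+\ell(h))$ of slack needed in the bound; that half must come from some pairing or counting argument — exactly what the tree identity encodes, and exactly what the cospecial hypothesis is guarding against failing in the cube complex. You are right that specialness must enter (the $\RR^2$ glide-reflection example shows the conclusion is false without it), but saying ``several combinations are ruled out'' is a placeholder, not an argument, and the pathological hyperplanes (those crossed by both axis segments, or those separating both $y$ and $ghy$ from $hy$ without separating the minsets) are precisely the ones that would have to be analyzed. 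In short: the setup is sensible and the target inequality is correctly identified, but there is a genuine gap — the central counting estimate is stated, not proved, and it is not clear that the proposed classification would close it without effectively re-deriving the orbit/tree reduction that the paper uses.
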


\begin{proof}
Let $\Omega$ be the set of $G$-orbits of hyperplanes of $X$, and $\omega$ any element of $\Omega$. By Lemma~\ref{axes}, the number $M_\omega$ of elements of $\omega$ in $\s(x\mid ghx)$ does not depend on the choice of a vertex $x\in \mathrm{Min}(gh)$. Let $m_\omega$ be the number of elements of $\omega$ in $\s(\mathrm{Min}(g)\mid \mathrm{Min}(h))$. By convexity of minsets, the following equalities hold:
\[\sum_{\omega\in \Omega} m_\omega = d(\mathrm{Min}(g), \mathrm{Min}(h))\qquad \sum_{\omega\in \Omega}M_\omega = \ell(gh)
\]
It now suffices to prove $2m_\omega\leq M_\omega$ for each $\omega\in\Omega$ to conclude.

Elements of $\omega$ are pairwise non-transverse by specialness. The collapse $c_\omega\colon X \to T_\omega$ of all hyperplanes of $X$ not contained in $\omega$ has a tree $T_\omega$ for range. Since the family of collapsed hyperplanes is $G$-invariant, there is a natural action of $G$ on $T_\omega$ by combinatorial isometries making $c_\omega$ equivariant. Note that $T_\omega$ is not necessarily locally finite, even when $X$ is. However, $M_\omega=\ell_{T_\omega}(gh)$ and $m_\omega=d(c_\omega(\mathrm{Min}_X(g)), c_\omega(\mathrm{Min}_X(h))) = d(\mathrm{Min}_{T_\omega}(g), \mathrm{Min}_{T_\omega}(h))$ using Lemma~\ref{collapseminset}. If $m_\omega = 0$ the result is obvious, and otherwise $\mathrm{Min}_{T_\omega}(g)$ and $\mathrm{Min}_{T_\omega}(h)$ are disjoint, and the equality for trees: $\ell_{T_\omega}(gh) = \ell_{T_\omega}(g) + \ell_{T_\omega}(h) + 2d(\mathrm{Min}_{T_\omega}(g), \mathrm{Min}_{T_\omega}(h))$ concludes the proof.
\end{proof}

In the following lemma, $\mathrm{N}_r(C)$ denotes the closed $r$-neighborhood of $C$ for the combinatorial metric.
\begin{lm}
\label{convex_neighborhood}
Let $C$ be a convex subcomplex of a $\cat$ cube complex $X$ of finite dimension $n$. Let $r$ be a nonnegative integer. The combinatorial convex hull of $\mathrm{N}_r(C)$ is contained in $\mathrm{N}_{nr}(C)$.
\end{lm}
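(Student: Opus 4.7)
The plan is to fix a vertex $v\in\operatorname{hull}(\mathrm{N}_r(C))$ of the combinatorial convex hull and prove the sharper statement that $d(v,C)\leq nr$. Since $C$ is convex, hence gated by Lemma~\ref{convexcarac}, this distance is realized by the gate projection $\pi_C(v)$ and equals $|\s(v\mid C)|$. I will bound this cardinality by endowing $\mathcal{H}\coloneqq \s(v\mid C)$ with a partial order and controlling its chains and antichains separately.

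For $H\in\mathcal{H}$, write $h_v(H)$ (resp. $h_C(H)$) for the halfspace of $H$ containing $v$ (resp. $C$), and set $H\preceq H'$ iff $h_v(H)\subseteq h_v(H')$. Given $H,H'\in\mathcal{H}$, the two quadrants $h_v(H)\cap h_v(H')$ and $h_C(H)\cap h_C(H')$ are nonempty, containing $v$ and any vertex of $C$ respectively. Therefore either all four quadrants are nonempty and $H,H'$ are transverse, or one of the remaining two is empty and the $v$-halfspaces are nested. In particular every $\preceq$-antichain is a set of pairwise transverse hyperplanes, whence has cardinality at most $n$ by the dimension hypothesis.

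To bound chains, consider $H_1\prec H_2\prec\cdots\prec H_k$ in $\mathcal{H}$: the smallest $v$-halfspace $h_v(H_1)$ contains $v$. Since $v\in \operatorname{hull}(\mathrm{N}_r(C))$ and the combinatorial convex hull is the intersection of all halfspaces containing $\mathrm{N}_r(C)$ (Lemma~\ref{convexcarac}), some $y\in \mathrm{N}_r(C)$ must lie in $h_v(H_1)$: otherwise $\mathrm{N}_r(C)\subseteq h_C(H_1)$ would force $v\in h_C(H_1)$, contradicting $H_1\in\mathcal{H}$. By nesting, $y$ then lies in $h_v(H_i)$ for every $i\leq k$, so each $H_i$ separates $y$ from $C$, giving $k\leq d(y,C)\leq r$.

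Partitioning $\mathcal{H}$ into antichains by $\preceq$-height (Mirsky's theorem, valid here because chains have length at most $r$) produces at most $r$ antichains, each of cardinality at most $n$. Hence $|\mathcal{H}|\leq nr$, which yields $v\in \mathrm{N}_{nr}(C)$. The only delicate step is the trichotomy observation that two incomparable elements of $\mathcal{H}$ must be transverse; beyond that the argument assembles quickly from the gated-projection identity $d(v,C)=|\s(v\mid C)|$ and the hyperplane description of the convex hull.
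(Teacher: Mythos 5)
Your proof is correct, and it takes a genuinely different route from the paper's. The paper covers $\s(v\mid C)$ by the sets $\s(v,y\mid C)$ over vertices $y\in\mathrm{N}_r(C)$, notes that each such set has size at most $r$, and then applies a Helly-type redundancy argument to shrink the (finite) cover down to at most $n$ sets: if $n+1$ terms were each irredundant, one extracts $n+1$ pairwise transverse hyperplanes via the witnesses, contradicting the dimension bound. You instead endow $\s(v\mid C)$ directly with the nesting partial order on $v$-halfspaces, observe that incomparable pairs are transverse (so antichains have size at most $n$), bound chain length by $r$ using the halfspace characterization of the combinatorial convex hull, and invoke Mirsky's theorem to partition into at most $r$ antichains. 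In effect, the two proofs dualize each other: the paper writes $\s(v\mid C)$ as a union of at most $n$ sets of size at most $r$, while you partition it into at most $r$ sets of size at most $n$. Your version is arguably cleaner — it replaces the ad hoc extraction-of-witnesses step with a named combinatorial theorem and a partition rather than a cover — and both reach the same sharp bound $nr$. All the individual steps you cite (gatedness giving $d(v,C)=\card{\s(v\mid C)}$, the hull as an intersection of halfspaces, the transversality trichotomy) are supported by Lemma~\ref{convexcarac} and standard cube-complex facts, so the argument is complete.
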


The bound is optimal, as seen with $X=[0,r]^n$, $C=\{0\}^n$.

\begin{proof}
Let $x$ be a vertex in the combinatorial convex hull of $\mathrm{N}_r(C)$. By convexity of $C$, the following holds:
\[\s(x\mid C) = \bigcup_{\substack{y\in X^{(0)}\\ d(y,C)\leq r}} \s(x,y\mid C)\]

Each term $\s(x,y\mid C)$ of the union is contained in $\s(y\mid C)$, hence has cardinality at most $r$. Besides, since $\s(x\mid C)$ is finite, the index set of the union can be restricted to a finite subset without changing the result.

\emph{Claim:} If this index subset has more than $n$ elements, one of them can be removed without changing the result.

This allows to remove terms in the union until no more than $n$ are left, yielding the desired bound $d(x,C)=\card{\s(x\mid C)}\leq nr$.

\emph{Proof of the claim:} Let $\s(x,y_i\mid C)_{1\leq i\leq n+1}$ be terms of the union. Assume by contradiction that no one is contained in the union of the others. For each $i$, there exists a hyperplane $H_i\in \s(x, y_i\mid C)$ not inside any of the $\s(x, y_j\mid C)_{j\neq i}$. Hence, for each pair $i\neq j$, the quadruple $x, y_i, y_j, C$ witnesses transversality of $H_i$ and $H_j$, but having $n+1$ pairwise transverse hyperplanes is a contradiction in a dimension $n$ complex (their carriers intersect globally by the Helly property).
\end{proof}

We can now finally prove the result.
\begin{proof}[Proof of Proposition~\ref{bounded_displacement}]
Let $M' = \ceil{\max_{i<j} \ell(a_ia_j)/4}\leq \max_{i<j} \ell(a_ia_j)/4+3/4$. For every $i$, set $N_i = \mathrm{N}_{M'}(\mathrm{Min}(a_i))$ and $C_i$ the combinatorial convex hull of $N_i$.

By Proposition~\ref{distance_minsets}, $N_i$ and $N_j$ intersect for every pair $i\neq j$. Hence the collection $C_i$ has pairwise nonempty intersection. By the Helly property, there exists a vertex $x\in X^{(0)}$ which belongs to every $C_i$. By Lemma~\ref{convex_neighborhood}, for every $i$, $d(x,\mathrm{Min}(a_i))\leq nM'$. Let $p_i$ be the unique projection of $x$ onto $\mathrm{Min}(a_i)$. The following holds for every $i$:
\[\begin{aligned}
d(x,a_ix) &\leq d(x,p_i) + d(p_i,a_ip_i) + d(a_ip_i, a_ix)\\
&\leq nM' + \ell(a_i) + nM'\\
&\leq M
\end{aligned}\]
\end{proof}

We end with two lemmas giving a partial converse to Lemma~\ref{invariantminimal} in the case of a cospecial action.
\begin{lm}
\label{parallelseparator}
    Let $A_1,A_2$ be parallel convex subcomplexes of a $\cat$ cube complex $X$. For every vertex $a$ of $A_1$, $\s(a\mid \pi_{A_2}(a))=\s(A_1\mid A_2)$. For every vertices $a,b$ of $A_1$, $\s(a\mid b) = \s(\pi_{A_2}(a)\mid \pi_{A_2}(b))$.
\end{lm}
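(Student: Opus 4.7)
The plan is to prove the first claim directly from the definition of parallelism together with the fact that $\pi_{A_2}(a)$ is the gate of $a$ onto $A_2$, and then derive the second claim via a symmetric-difference identity for separators.

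For the first claim, since $\pi_{A_2}(a)$ is a gate (Lemma~\ref{convexcarac}), any geodesic edge path from $a$ to any vertex of $A_2$ passes through $\pi_{A_2}(a)$, so $\s(a\mid \pi_{A_2}(a))=\s(a\mid A_2)$. It thus suffices to show $\s(a\mid A_2)=\s(A_1\mid A_2)$. The inclusion $\s(A_1\mid A_2)\subseteq \s(a\mid A_2)$ is immediate from $a\in A_1$. Conversely, given $H\in\s(a\mid A_2)$, the hyperplane $H$ does not cross $A_2$, i.e.~$H$ is not dual to any edge of $A_2$; by parallelism this means $H$ is not dual to any edge of $A_1$, so $A_1$ lies entirely in one halfspace of $H$. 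Since $a\in A_1$ lies in the halfspace opposite to $A_2$, the whole of $A_1$ does too, so $H\in\s(A_1\mid A_2)$.

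For the second claim, the plan is to use the standard identity
\[\s(x\mid z)=\s(x\mid y)\sd \s(y\mid z)\]
valid for any three vertices $x,y,z$ of $X$: a hyperplane separates $x$ from $z$ iff exactly one of the pairs $\{x,y\},\{y,z\}$ is separated by it (the other vertex either lies on $x$'s side or on $z$'s side). Applying this twice along the triple $\pi_{A_2}(a),a,b,\pi_{A_2}(b)$ gives
\[\s(\pi_{A_2}(a)\mid \pi_{A_2}(b))=\s(\pi_{A_2}(a)\mid a)\sd \s(a\mid b)\sd \s(b\mid \pi_{A_2}(b)).\]
By the first claim, $\s(\pi_{A_2}(a)\mid a)=\s(A_1\mid A_2)=\s(b\mid \pi_{A_2}(b))$, so these two outer terms cancel in the symmetric difference, leaving $\s(\pi_{A_2}(a)\mid \pi_{A_2}(b))=\s(a\mid b)$.

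There is no real obstacle here: the only place parallelism is used is in the first claim, to turn the information that $H$ does not cross $A_2$ into the information that $H$ does not cross $A_1$. The second claim is then a formal consequence, obtained by a clean telescoping through symmetric differences.
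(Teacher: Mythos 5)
Your proof is correct and uses the same core ingredients as the paper's: the gate property of the projection, parallelism to transfer non-crossing from $A_2$ to $A_1$, and the symmetric-difference telescope for the second claim (which is word-for-word the paper's argument). The only difference is in the organization of the first claim: you pass directly through $\s(a\mid A_2)$ via the gate property and then apply parallelism, while the paper first rules out hyperplanes of $A_1$ and $A_2$ from $\s(a\mid\pi_{A_2}(a))$ (via a median/minimality argument) and then bounds the separator by $\s(p\mid q)$ for a minimal pair $(p,q)\in A_1\times A_2$. Your version is a mild streamlining, not a genuinely different route.
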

\begin{proof}
    The inclusion $\s(a\mid \pi_{A_2}(a))\supseteq \s(A_1\mid A_2)$ is clear since $a\in A_1$ and $\pi_{A_2}(a)\in A_2$. Moreover, by convexity of $A_2$, $\s(a\mid \pi_{A_2}(a))$ contains no hyperplane dual to an edge of $A_2$ (otherwise $\mu(a,x,\pi_{A_2}(a))$ would be a better projection, for $x$ the appropriate vertex of such an edge). By parallelism, $\s(a\mid \pi_{A_2}(a))$ contains no hyperplane dual to an edge of $A_1$. Finally, letting $(p,q)\in A_1\times A_2$ be vertices at minimal distance, $\s(a\mid p)$ contains only hyperplanes of $A_1$ and $\s(q\mid \pi_{A_2}(a))$ contains only hyperplanes of $A_2$. Hence, $\s(a\mid \pi_{A_2}(a))\subseteq \s(p\mid q) = \s(A_1\mid A_2)$, proving the equality.

    Now given $a,b\in A_1$:
    \[\begin{aligned}\s(\pi_{A_2}(a)\mid \pi_{A_2}(b)) &= \s(\pi_{A_2}(a)\mid a)\sd \s(a\mid b)\sd \s(b\mid\pi_{A_2}(b))\\
    &=\s(A_1\mid A_2)\sd \s(a\mid b)\sd \s(A_1\mid A_2)\\
    &=\s(a\mid b)
    \end{aligned}\]
\end{proof}

\begin{lm}
\label{parallelstabilizer}
    Let $G$ act freely and cospecially on a $\cat$ cube complex $X$ by combinatorial isometries. Let $A_1,A_2$ be parallel convex subcomplexes of $X$. Assume that for every $g\in G$, $i\in\{1,2\}$, either $gA_i = A_i$ or $gA_i$ and $A_i$ are disjoint. Then $Stab(A_1) = Stab(A_2)$.
\end{lm}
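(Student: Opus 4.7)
The plan is to argue by contradiction. Suppose $g\in Stab(A_1)\setminus Stab(A_2)$; since $g$ preserves the common hyperplane set $\mathcal{H}_0$ dual to both $A_1$ and $A_2$, the subcomplex $gA_2$ is parallel to $A_2$ but distinct from it, hence disjoint. I aim to produce a forbidden self-osculation in the quotient $X/\gen{g}$, which is a special cube complex since it covers the special quotient $X/G$.

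The first step is a product decomposition. Writing $\mathcal{H}_+=\bigcup_{n\in\ZZ}\s(A_1\mid g^nA_2)$, each hyperplane in $\mathcal{H}_+$ is transverse to every hyperplane in $\mathcal{H}_0$: indeed any $K\in\mathcal{H}_0$ crosses both $A_1$ and $g^nA_2$ by parallelism, and these sit in opposite halfspaces of any separating hyperplane, producing the required four nonempty quadrants. Consequently, the $\gen{g}$-invariant combinatorial convex hull $D$ of $A_1\cup \bigcup_n g^nA_2$ has its dual hyperplanes partitioned into the mutually transverse classes $\mathcal{H}_0$ and $\mathcal{H}_+$, so $D$ splits as a product $A_1\times J$ of $\cat$ cube complexes in which $A_1$ identifies with $A_1\times\{i_1\}$ and each $g^nA_2$ with $A_1\times\{v_n\}$ for a sequence of vertices $(v_n)$ of $J$. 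The element $g$ decomposes accordingly as a pair $(g_1,g_2)$: the assumption $gA_1=A_1$ forces $g_2i_1=i_1$, while freeness of $g$ on $X$ forces $g_1$ to act freely on $A_1$. Our contradictory hypothesis $gA_2\neq A_2$ reads $g_2i_2\neq i_2$ for $i_2:=v_0$.

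Next I would locate a problematic pair of edges. Since $g_2$ fixes $i_1$ but not $i_2$, and $J$ is connected, some edge of $J$ joins a fixed vertex $v$ to a non-fixed vertex $w$; letting $H$ be its dual hyperplane, $g_2H\neq H$ (otherwise $w$, the unique neighbor of $v$ across $H$, would be fixed), so $g_2w$ is a second neighbor of $v$ across the distinct hyperplane $g_2H$. For any $a\in A_1$, the edges $e_1=(a,v)-(a,w)$ and $e_2=(a,v)-(a,g_2w)$ of $D$ are dual respectively to the hyperplanes $A_1\times H$ and $A_1\times g_2H$, which get identified in $D/\gen{g}$ through the relation $g\cdot(A_1\times H)=A_1\times g_2H$. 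Their $\gen{g}$-orbits however remain distinct: any equality $g^ne_1=e_2$ forces, via freeness of $g_1$ on $A_1$, the contradiction $g_2w=w$. Hence $\overline{(a,v)}$ carries two distinct edges dual to a single hyperplane in $D/\gen{g}$, the desired self-osculation. Since $D$ is convex in $X$, the image $D/\gen{g}$ is a locally convex subcomplex of the special complex $X/\gen{g}$ and must itself be special, yielding the contradiction. The delicate steps are securing the product decomposition and then verifying that $e_1,e_2$ are not identified in the quotient; both rely squarely on the free cospecial hypothesis.
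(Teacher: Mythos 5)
Your proof is correct and takes a genuinely different route from the paper. The paper argues directly: for $g\in Stab(A_1)$ and $x\in A_1$ it takes the geodesic $p$ from $x$ to $gx$ and the geodesic $q$ from $x$ to $\pi_{A_2}(x)$, shows (via transversality of their hyperplane sets) that $p$ and $q$ span a product rectangle, and then uses no-direct-self-osculation inductively along $q$ to conclude that the translate $q'$ of $q$ starting at $gx$ equals $gq$; hence $\pi_{A_2}(gx)=g\pi_{A_2}(x)$, so $gA_2$ meets $A_2$. You instead argue by contradiction, globalize the product by taking the whole convex hull $D$ of $A_1\cup\bigcup_n g^nA_2$ and splitting it as $A_1'\times J$, then locate a direct self-osculation in the quotient $D/\gen{g}$, contradicting the fact that $D/\gen{g}$ is special (being a locally convex subcomplex of the cover $X/\gen{g}$ of the special complex $X/G$). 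Both proofs are ultimately driven by the no-direct-self-osculation condition applied across a product, but yours works in the quotient while the paper works upstairs. The price of your route is that it invokes several unproven (though standard) facts: that a convex subcomplex whose hyperplanes split into two mutually transverse families is a product, that covers of special cube complexes are special, and that locally convex subcomplexes of special complexes are special; it also implicitly uses that free actions here have torsion-free acting group (so that $g_1^na=a$ forces $n=0$), which is fine in context but worth flagging. One small imprecision: in ruling out $g^n e_1=e_2$, the subcase where $g^n$ swaps the two endpoints is killed not by freeness of $g_1$ but simply because it would force $g_2w=v$, which is impossible since $g_2^{-1}v=v\ne w$; the freeness of $g_1$ is only needed in the other subcase. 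These are minor, and the overall argument is sound.
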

This fails without the specialness assumption in the following example: consider $X=\RR\times [-1,1]$ with its cubical subdivision with vertices at integer points, and let $\ZZ$ act on $X$ by $(x,y)\mapsto (x+1,-y)$. The stabilizer of $\RR\times \{1\}$ has index $2$ in the stabilizer of $\RR\times \{0\}$.

\begin{proof}
Let $g\in Stab(A_1)$ and let $x\in A_1$ be any vertex. Let $p$ be a geodesic edge path joining $x$ to $gx$, and $q$ a geodesic edge path joining $x$ to $\pi_{A_2}(x)$. By Lemma~\ref{parallelseparator}, every hyperplane crossed by $q$ separates $A_1$ from $A_2$. Besides, by parallelism, every hyperplane crossed by $p$ is dual to an edge of $A_1$ and an edge of $A_2$. Therefore, every hyperplane crossed by $p$ is transverse by every hyperplane crossed by $q$. By an inductive application of \cite{richestoraags}, Lemma~3.6, $p$ and $q$ span a product in $X$. Let $q'$ be the path parallel to $p$ on the opposite side of the product starting at $gx$. Since $q'$ crosses the same hyperplanes as $q$, which are exactly the hyperplanes separating $gx$ from $\pi_{A_2}(gx)$ (by Lemma~\ref{parallelseparator} again), $q'$ joins $gx$ to $\pi_{A_2}(gx)$.

Now the paths $q'$ and $gq$ have the same initial endpoint $gx$, and the $n$th (oriented) edges of $q'$ and $gq$ have dual (oriented) hyperplanes in the same $g$-orbit for all $n$. Inductively, by specialness (more specifically, no direct self-osculation), the vertices of $q'$ and $gq$ all coincide. In particular, $\pi_{A_2}(gx) = g\pi_{A_2}(x)$. Therefore, $gA_2$ intersects $A_2$, and thus, by assumption, $g\in Stab(A_2)$. This proves that $Stab(A_1)\subseteq Stab(A_2)$ and the reverse inclusion holds symmetrically.
\end{proof}

\section{Collapses of locally convex subcomplexes}
\label{collapses}

This section presents two useful lemmas about the action of homotopy equivalent collapse maps on locally convex subcomplexes of the base.

\begin{lm}
\label{liftloop}
Let $c\colon X\to X'$ be a collapse map between locally $\cat$ cube complexes, which is a homotopy equivalence. Let $D$ be a connected, locally convex subcomplex of $X$. Choose $*$ any vertex of $D$. The following are equivalent:
\begin{enumerate}
\item The pointed map $c_{\mid D}\colon (D,*)\to (c(D),c(*))$ induces a surjection of fundamental groups.
\item For every vertex preimage $C$ in $X$, $D\cap C$ is connected
\item For every vertex preimage $C$ in $X$, $D\cap C$ is convex in $C$
\item For every deck transformation $g$ of the universal cover $\uc{X}$, either $g\uc{D} = \uc{D}$ or $\uc{c}(g\uc{D})$ is disjoint from $\uc{c}(\uc{D})$, where $\uc{c}\colon\uc{X}\to \uc{X'}$ lifts $c$ between universal covers and $\uc{D}$ is any connected component of the preimage of $D$ in $\uc{X}$. 
\end{enumerate}
Moreover, when they hold, $c(D)$ is a connected, locally convex subcomplex of $X'$, and $c_{\mid D}$ is a homotopy equivalence between $D$ and $c(D)$.
\end{lm}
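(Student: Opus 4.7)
My plan is to reduce all four conditions to equivalent combinatorial statements in the universal covers. Let $p\colon \uc X\to X$ and $p'\colon\uc{X'}\to X'$ denote the covering maps, fix lifts $\tilde *\in\uc D$ of $*$ and $\tilde{*'}\coloneqq\uc c(\tilde *)$ of $c(*)$, and set $H\coloneqq Stab(\uc D)$, which under the injection $\pi_1(D,*)\hookrightarrow\pi_1(X,*)$ (from local convexity of $D$) is the image of $\pi_1(D,*)$.

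The key preliminary observation is that every vertex preimage $C\subset X$ is simply connected. Indeed $C$ is locally convex by Corollary~\ref{collapsedownstairs}, so $\pi_1(C)$ injects into $\pi_1(X)$; but every loop of $C$ maps via $c$ to a constant loop, and $c_*\colon\pi_1(X)\to\pi_1(X')$ is an isomorphism, so $\pi_1(C)=1$. Consequently, every component $\uc C\subset p^{-1}(C)$ projects isomorphically to $C$ under $p$, and distinct $\pi_1(X)$-translates of $\uc C$ are disjoint. Identifying $C\cong\uc C$ via $p$, the subcomplex $D\cap C$ corresponds to $\bigsqcup_{h}\uc C\cap h\uc D\subset\uc C$, where $h$ ranges over cosets in $\pi_1(X)/H$. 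Each piece is convex in $\uc X$ by Lemma~\ref{collapseupstairs}, and distinct pieces are disjoint. Since a disjoint union of non-empty convex subcomplexes of $\uc C$ is connected, or convex, exactly when there is only one piece, both (2) and (3) are equivalent to the statement: \emph{for every vertex preimage $\uc C$, at most one coset $hH$ satisfies $\uc C\cap h\uc D\neq\emptyset$.} Condition (4) translates the same way, since $\uc c(\uc D)\cap\uc c(g\uc D)\neq\emptyset$ iff some vertex preimage $\uc C$ meets both $\uc D$ and $g\uc D$. This will give $(2)\Leftrightarrow(3)\Leftrightarrow(4)$.

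To prove $(4)\Rightarrow(1)$ together with the ``moreover'' clause, I will argue that under (4) the $c_*$-translates of $\uc c(\uc D)$ are pairwise disjoint, connected, and cover $p'^{-1}(c(D))$, hence each is a full connected component of $p'^{-1}(c(D))$. The restriction $\uc c|_{\uc D}\colon \uc D\to \uc c(\uc D)$ is then a collapse between the convex (thus contractible) subcomplexes $\uc D\subset\uc X$ and $\uc c(\uc D)\subset\uc{X'}$ (Lemma~\ref{collapseupstairs}), so it is a homotopy equivalence. It is $H$-equivariant via $c_*$ and descends along the free actions to $c|_D\colon D\to c(D)$, which remains a homotopy equivalence and in particular induces (1). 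The convexity of each component $c_*(g)\uc c(\uc D)\subset\uc{X'}$ also shows $c(D)$ is locally convex in $X'$.

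Finally, for $(1)\Rightarrow(4)$, I argue by contraposition. Given $g\notin H$ with $\uc c(\uc D)\cap\uc c(g\uc D)\ni y=\uc c(x)=\uc c(gx')$ for vertices $x,x'\in\uc D$, I concatenate the $\uc c$-image of an edge path in $\uc D$ from $\tilde *$ to $x$ with the reverse of the $\uc c$-image of the $g$-translate of an edge path in $\uc D$ from $\tilde *$ to $x'$, obtaining a path from $\tilde{*'}$ to $c_*(g)\tilde{*'}$ inside $p'^{-1}(c(D))$, descending to a loop $\ell'$ in $c(D)$ at $c(*)$ whose class in $\pi_1(X')$ is $c_*(g)$. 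Under (1), $\ell'$ is the image of some loop $\ell$ in $D$; pushing to $\pi_1(X')$ and inverting $c_*$ forces $g\in H$, a contradiction. The main delicate point I expect is the bookkeeping of basepoints and deck actions in this loop construction, since several canonical choices must be made coherent for the final $\pi_1$ identity to come out correctly.
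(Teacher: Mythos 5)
Your proof is correct, and it takes a recognizably different route from the paper's, even though the core geometric input (vertex preimages lift isomorphically to convex subcomplexes of $\uc{X}$, and the preimage of $D$ decomposes into disjoint $H$-cosets of $\uc{D}$) is the same. The paper proceeds cyclically as $(4)\Rightarrow(3)\Rightarrow(2)\Rightarrow(1)\Rightarrow(4)$: after arguing that a vertex preimage $\uc{C}$ can meet at most one translate of $\uc{D}$ under $(4)$, it establishes $(2)\Rightarrow(1)$ by an entirely combinatorial path-lifting argument downstairs (lifting each edge of a loop in $c(D)$ to an edge of $D$ and connecting successive lifts inside the connected sets $C\cap D$), and then derives the ``moreover'' clause at the very end using $(4)$, local convexity, and asphericity. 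You instead collapse $(2)$, $(3)$, $(4)$ simultaneously to the single combinatorial statement that at most one coset $hH$ meets each $\uc{C}$, which is a cleaner way to see those three equivalences, and you prove $(4)\Rightarrow(1)$ by first establishing the ``moreover'' clause directly (via $c_*(H)=\mathrm{Stab}(\uc{c}(\uc{D}))$, contractibility of convex subcomplexes, and asphericity of $D$ and $c(D)$), which is somewhat heavier machinery than the paper's path-lifting for $(2)\Rightarrow(1)$ but yields the homotopy-equivalence conclusion as a byproduct rather than an afterthought. The $(1)\Rightarrow(4)$ contrapositive is essentially the same in both.

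Two small points worth pinning down in a final write-up: in the equivalence of $(4)$ with your common reformulation, the direction ``$(4)\Rightarrow$ at most one coset meets each $\uc{C}$'' uses an equivariance argument (translate by $h_1^{-1}$ to reduce the case of two arbitrary cosets $h_1H, h_2H$ to the case $H$, $g H$), which is implicit in ``translates the same way'' but deserves a sentence; and the identification $\uc{c}(\uc{D})/c_*(H)=c(D)$ needs the observation that $c_*(H)$ is the \emph{full} stabilizer of $\uc{c}(\uc{D})$, which you do use but should state explicitly, as it is precisely where $(4)$ is consumed in that step.
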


Note that the intersection $D\cap C$ may be empty, in which case it is vacuously convex and connected.
\begin{proof}
First note that $c_{\mid D}$ is always $\pi_1$-injective. Indeed, (up to restriction of the range) it is the composition of the homotopy equivalence $c$ and the inclusion $D\hookrightarrow C$, the latter being $\pi_1$-injective by local convexity of $D$.

$(4.)\Rightarrow (3.)$ Let $C$ be a vertex preimage in $X$ and $\uc{C}$ a connected component of its preimage in $\uc{X}$. Fix also $\uc{D}$ as in $(4.)$. Since $C$ is locally convex and $\cat$ (Corollary~\ref{collapsedownstairs}), the universal cover restricts to an isomorphism $\uc{C}\to C$, and $\uc{C}$ is convex. Let $g,g'$ be deck transformations and assume that both $g\uc{D}$ and $g'\uc{D}$ intersect $\uc{C}$. Then $\uc{c}(g\uc{D})$ and $\uc{c}(g'\uc{D})$ have in common the vertex $\uc{c}(\uc{C})$. By assumption, $g\uc{D} = g'\uc{D}$. Thus $\uc{C}$ intersects at most one connected component of the preimage of $D$. This component is convex by local convexity of $D$, and the convex intersection is exactly the preimage of $D\cap C$ under the isomorphism $\uc{C}\to C$.

$(3.)\Rightarrow (2.)$ is clear.

$(2.)\Rightarrow (1.)$ Let $\gamma$ be a closed edge path in $c(D)$ based at $c(*)$. Pick, for each edge of $\gamma$, an edge of $D$ of which it is the image. For each vertex of $\gamma$ different from $*$, with preimage $C$ in $X$, the incoming and outgoing edges of $\gamma$ at this vertex correspond to chosen edges of $D$, each having an endpoint in $C\cap D$. By assumption, these endpoints can be joined by an edge path in $C\cap D$. Likewise at $*$, the first endpoint of the edge of $D$ chosen for the first edge of $\gamma$ and the last endpoint of the edge of $D$ chosen for the last edge of $\gamma$ can both be joined to $*$ by edge paths in $c^{-1}(c(*))\cap D$. This provides a closed edge path in $D$ based at $*$ projecting to $\gamma$ up to reparametrization.

$(1.)\Rightarrow (4.)$ Choose a lift $\uc{*}\in \uc{D}$ of $*$ and a deck transformation $g$ of $\uc{X}$. Assume that $\uc{c}(g\uc{D})$ and $\uc{c}(\uc{D})$ intersect at a vertex $x$. Then there exists a path $p'$ joining $\uc{c}(\uc{*})$ to $\uc{c}(g\uc{*})$, going through $x$, such that the image $\gamma'$ of $p'$ in $X'$ is contained in $c(D)$. The closed path $\gamma'$ represents a homotopy class in $\pi_1(c(D),c(*))$. By assumption, there exists a closed path $\gamma$ in $D$ based at $*$ such that $c\circ \gamma$ is homotopic to $\gamma'$. Lift $\gamma$ to a path $p$ in $\uc{X}$ starting at $\uc{*}$, and ending at some $h\uc{*}$ for $h$ a deck transformation of $\uc{X}$ representing $[\gamma]\in \pi_1(D,*)$. Then $p$ is in $\uc{D}$ and $\uc{c}\circ p$ starts at $\uc{c}(\uc{*})$ and ends at $\uc{c}(g\uc{*})$ like $p'$. Thus $\uc{c}(g\uc{*}) = \uc{c}(h\uc{*})$, and since $c$ is a homotopy equivalence, $g$ and $h$ are the same deck transformation. Therefore, $g=h$ represents an element of $\pi_1(D,*)$, hence $g\uc{D} =\uc{D}$.

This proves the equivalence.
\medskip

Since $D$ is connected, $c(D)$ is connected as well. The preimage of $c(D)$ in the universal cover $\uc{X'}$ is the union over all deck transformations $g$ of $\uc{X}$ of $\uc{c}(g\uc{D})$. Assume $(4.)$ holds. Then $\uc{c}(\uc{D})$ is a connected component of this union, and $\uc{c}(\uc{D})$ is convex by convexity of $\uc{D}$ and Lemma~\ref{collapseupstairs}. Therefore, $c(D)$ is locally convex in $X'$. In particular, $c(D)$ is locally $\cat$, hence both $D$ and $c(D)$ are aspherical, and $c_{\mid D}$ is $\pi_1$-bijective by $(1.)$. This proves that $c_{\mid D}$ is a homotopy equivalence.
\end{proof}

The following lemma makes the further assumption that $X'$ has a single vertex.

\begin{lm}
\label{parallel}
Let $c\colon X\to X'$ be a collapse map between locally $\cat$ cube complexes, which is a homotopy equivalence. Assume $X'$ has only one vertex. Let $D_1,D_2$ be connected, locally convex subcomplexes of $X$. Assume there exists a choice of components $\uc{D_1},\uc{D_2}$ of the preimages of $D_1$ and $D_2$ in the universal cover $\uc{X}$ satisfying the following conditions:
\begin{enumerate}
    \item $\uc{D_1}$ and $\uc{D_2}$ are parallel
    \item $\uc{D_1}$ and $\uc{D_2}$ have the same stabilizer in the group of deck transformations of $\uc{X}$ (by Lemma~\ref{parallelstabilizer}, this follows from $(1.)$ when $X$ is special).
\end{enumerate}
Then $D_1$ satisfies the equivalent conditions from Lemma~\ref{liftloop} if and only if $D_2$ does. Moreover, when $D_1$ and $D_2$ satisfy these conditions and $X'$ is isomorphic to $\SS$, $c(D_1)=c(D_2)$.
\end{lm}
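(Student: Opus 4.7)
The proof strategy is to use condition~(4) of Lemma~\ref{liftloop} throughout, together with the observation that collapses respect parallelism. First, I would observe that $\uc{c}(\uc{D_1})$ and $\uc{c}(\uc{D_2})$ are convex in $\uc{X'}$ (Lemma~\ref{collapseupstairs}) and share the same set of dual hyperplanes, namely the projection under $\uc{c}$ of the common dual hyperplane set of $\uc{D_1}$ and $\uc{D_2}$, restricted to uncollapsed hyperplanes. Thus $\uc{c}(\uc{D_1})$ is parallel to $\uc{c}(\uc{D_2})$ in $\uc{X'}$, or equal to it if every hyperplane in $\s(\uc{D_1}\mid\uc{D_2})$ is collapsed. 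This structural fact underlies everything that follows.

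For the equivalence of conditions, since $Stab(\uc{D_1}) = Stab(\uc{D_2})$, the clauses ``$g\uc{D_i} = \uc{D_i}$'' in condition~(4) are equivalent for $i = 1, 2$. By symmetry, the equivalence reduces to the following transfer claim: $\uc{c}(g\uc{D_1}) \cap \uc{c}(\uc{D_1}) \neq \emptyset$ if and only if $\uc{c}(g\uc{D_2}) \cap \uc{c}(\uc{D_2}) \neq \emptyset$. Starting from $y$ in the latter intersection, I would lift to vertices $a \in g\uc{D_2}$ and $b \in \uc{D_2}$ with $\uc{c}(a) = \uc{c}(b) = y$, take the gated projections $b_1 = \pi_{\uc{D_1}}(b) \in \uc{D_1}$ and $a_1 = \pi_{g\uc{D_1}}(a) \in g\uc{D_1}$, and use Lemma~\ref{parallelseparator} to identify $\s(b\mid b_1) = \s(\uc{D_1}\mid\uc{D_2})$ and $\s(a\mid a_1) = g\cdot\s(\uc{D_1}\mid\uc{D_2})$. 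The crux is to show $\uc{c}(a_1) = \uc{c}(b_1)$, producing a vertex of $\uc{c}(g\uc{D_1}) \cap \uc{c}(\uc{D_1})$ as desired.

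For the Salvetti case, assuming both $D_1, D_2$ satisfy the Lemma~\ref{liftloop} conditions, each $c(D_i)$ is a locally convex subcomplex of $\SS$ and therefore a standard sub-Salvetti $\SS_{\Delta_i}$ determined by a label set $\Delta_i \subseteq V$. The set $\Delta_i$ coincides with the labels of hyperplanes dual to edges of $c(D_i)$, which pull back to the labels of hyperplanes dual to edges of $\uc{c}(\uc{D_i})$ in $\USS$. Since $\uc{c}(\uc{D_1})$ and $\uc{c}(\uc{D_2})$ are parallel (or equal) by the first observation, these label sets agree; hence $\Delta_1 = \Delta_2$ and $c(D_1) = \SS_{\Delta_1} = \SS_{\Delta_2} = c(D_2)$.

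The main obstacle is the identity $\uc{c}(a_1) = \uc{c}(b_1)$ in the transfer claim: the separator sets $\s(\uc{D_1}\mid\uc{D_2})$ and $g\cdot\s(\uc{D_1}\mid\uc{D_2})$ are distinct families of hyperplanes in $\uc{X}$, yet their uncollapsed projections to $\uc{X'}$ must produce the same displacement from the common image $y$. The hypothesis $Stab(\uc{D_1}) = Stab(\uc{D_2})$, together with the rigid transversality between separator hyperplanes and hyperplanes of $\uc{D_1}$ and $\uc{D_2}$ (forced by parallelism), should be what forces this coincidence, but it is the only step requiring a delicate geometric argument rather than a direct invocation of the setup.
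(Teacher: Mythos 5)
Your first observation (that $\uc{c}(\uc{D_1})$ and $\uc{c}(\uc{D_2})$ are parallel, or equal, in $\uc{X'}$) is correct and is also what the paper uses; your treatment of the Salvetti case is essentially the paper's argument. The gap is in the equivalence part, and you have in fact put your finger on it yourself.

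Your reduction to the unconditional ``transfer claim'' $\uc{c}(g\uc{D_1})\cap\uc{c}(\uc{D_1})\neq\emptyset \Leftrightarrow \uc{c}(g\uc{D_2})\cap\uc{c}(\uc{D_2})\neq\emptyset$ is not what the paper proves, and I do not believe it is obtainable from the stated hypotheses alone. Tracing through your own computation: with $a_1=\pi_{g\uc{D_1}}(a)$ and $b_1=\pi_{\uc{D_1}}(b)$, the non-collapsed part of $\s(a_1\mid b_1)$ is exactly the non-collapsed part of $g\,\s(\uc{D_1}\mid\uc{D_2})\sd\s(\uc{D_1}\mid\uc{D_2})$; so $\uc{c}(a_1)=\uc{c}(b_1)$ would require $g$ to carry the set of uncollapsed separating hyperplanes to itself. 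But that is precisely what you would get for free if you already knew $g\in Stab(\uc{D_1})=Stab(\uc{D_2})$ --- which is the thing to be proved. Parallelism and the stabilizer hypothesis control $\s(\uc{D_1}\mid\uc{D_2})$, but they do not constrain how an \emph{arbitrary} deck transformation $g$ moves that separator set. So the ``delicate geometric argument'' you postulate is not a finishable gap; the reduction itself needs to change.

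The paper's proof never tries to prove the transfer claim unconditionally. It assumes $D_1$ satisfies Condition~(4), takes $g$ with $\uc{c}(g\uc{D_2})\cap\uc{c}(\uc{D_2})\neq\emptyset$, and picks $x,y\in\uc{D_2}$ with $\uc{c}(gx)=\uc{c}(y)$ and projections $x_1,y_1\in\uc{D_1}$. It then uses the single-vertex hypothesis on $X'$ to produce \emph{some} deck transformation $h$ with $\uc{c}(hx_1)=\uc{c}(y_1)$, and \emph{at this point} invokes Condition~(4) for $D_1$ (not for $g$ but for this auxiliary $h$) to deduce $h\in Stab(\uc{D_1})=Stab(\uc{D_2})$. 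Only with $h$ known to stabilize both does the parallelism machinery (Lemma~\ref{parallelseparator}) force $hx_1=\pi_{\uc{D_1}}(hx)$ and $\s(hx_1\mid y_1)=\s(hx\mid y)$ to consist of collapsed hyperplanes, giving $\uc{c}(hx)=\uc{c}(gx)$ and hence $h=g$ by the homotopy-equivalence hypothesis. So Condition~(4) for $D_1$ is an \emph{input} used in the middle of the argument, not a conclusion to be transferred. If you keep your overall structure, the fix is to run your chain of reasoning only in the direction ``assume $D_1$ satisfies (4), deduce $g\in Stab(\uc{D_2})$'', letting yourself invoke (4) on whatever auxiliary deck transformation shows up; the symmetric statement then follows by swapping $D_1$ and $D_2$.
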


\begin{proof}
Let $\uc{c}\colon \uc{X}\to \uc{X'}$ lift $c$ between universal covers, and assume that for every deck transformation $g$ of $\uc{X}$, $g\uc{D_1} = \uc{D_1}$ or $\uc{c}(g\uc{D_1})$ and $\uc{c}(\uc{D_1})$ are disjoint (i.e.~$D_1$ satisfies Assumption~$(4.)$ of Lemma~\ref{liftloop}). Now let $g$ be a fixed deck transformation of $\uc{X}$ and assume $\uc{c}(g\uc{D_2})$ and $\uc{c}(\uc{D_2})$ intersect. Then there exist vertices $x,y$ of $\uc{D_2}$ such that $\uc{c}(gx) = \uc{c}(y)$. Now let $x_1 = \pi_{\uc{D_1}}(x)$ and $y_1 = \pi_{\uc{D_1}}(y)$. Since $c$ has a single vertex preimage, $\uc{c}$ only has one orbit of vertex preimages. Therefore, there exists $h$ a deck transformation such that $\uc{c}(y_1) = \uc{c}(hx_1)$.

First note that $h\in Stab(\uc{D_1}) = Stab(\uc{D_2})$ by the assumptions. Then, note that $\s(\uc{D_1}\mid \uc{D_2})\subseteq \s(hx_1\mid hx) = h\s(x_1\mid x) = h\s(\uc{D_1}\mid \uc{D_2})$ by Lemma~\ref{parallelseparator}. Since the first and last term have the same cardinality, the inclusion is an equality and $hx_1 = \pi_{\uc{D_1}}(hx)$. By Lemma~\ref{parallelseparator} again, $\s(hx_1\mid y_1) = \s(hx\mid y)$ contains only hyperplanes collapsed by $\uc{c}$. Therefore, $\uc{c}(hx) = \uc{c}(y)=\uc{c}(gx)$. Since $c$ is a homotopy equivalence, $g=h\in Stab(\uc{D_2})$. This proves that $D_2$ satisfies Assumption~$(4.)$ of Lemma~\ref{liftloop}. The converse holds symmetrically by exchanging $D_1$ and $D_2$.

Finally, assume that the assumptions of Lemma~\ref{liftloop} hold for both $D_1$ and $D_2$, and that $X'$ is isomorphic to $\SS$. Then, $c(D_1)$ and $c(D_2)$ correspond to locally convex subcomplexes of $\SS$, i.e.~embedded Salvetti complexes $\SS_{\Delta_1}$, $\SS_{\Delta_2}$ for some subgraphs $\Delta_1,\Delta_2$ of $\Gamma$. Since $\uc{D_1}$ and $\uc{D_2}$ are parallel, $\uc{c}(\uc{D_1})$ and $\uc{c}(\uc{D_2})$ are parallel in $\uc{X'}$. Since hyperplanes of $\uc{X'}$ have only one orbit of edges, this means that $c(D_1)$ and $c(D_2)$ have the same edges, i.e.~$\Delta_1$ and $\Delta_2$ have the same vertices. Hence $\Delta_1 = \Delta_2$ and $c(D_1) = c(D_2)$, proving the result.

\end{proof}

\section{Relative untwisted outer space}
\label{relative}

Let $\mathcal{G} = (A_{\Delta_1},\dots, A_{\Delta_m})$ be a collection of standard subgroups of $A_\Gamma$.

\begin{defi}
\label{relativeos}
We consider vertices of $K_\Gamma$ as spatial cube complexes with untwisted markings, modulo marking-preserving isomorphism, the equivalence class of a complex $X$ with marking $m$ being denoted $[X,m]\in K_\Gamma^{(0)}$. \emph{Marked Salvettis} are a particular type of vertices which are combinatorially isomorphic to the spatial cube complex $\SS$. Recall that the star $st(\sigma)$ of a marked Salvetti $\sigma$ is the simplicial subcomplex of $K_\Gamma$ spanned by $\sigma$ and its the adjacent vertices (i.e.~the marked spatial cube complex with a marking-preserving collapse to $\sigma$). Every marking mentioned from now on will implicitly be assumed untwisted.

Let $S^\mathcal{G}$ be the set of marked Salvettis $[S,m]$ in $K_\Gamma$ such that there exists a combinatorial isomorphism $\iota\colon S\to \SS$ making the outer automorphism $\iota_*\circ m^{-1}\colon A_\Gamma\to \pi_1\SS = A_\Gamma$ an element of $\unt(A_\Gamma;\,\mathcal{G})$. Note that this depends on $\iota$ in general (a different $\iota$ will still give an untwisted outer automorphism, but might not preserve $\mathcal{G}$). Let $K^\mathcal{G}\subseteq K_\Gamma$ be the union of stars of vertices of $S^\mathcal{G}$, i.e.~the subcomplex of $K_\Gamma$ spanned by marked spatial cube complexes $[X,m]$ such that there exists a homotopy equivalent collapse $c\colon X\to S$ and a combinatorial isomorphism $\iota \colon S\to \SS$ making the outer automorphism $\iota_*\circ c_*\circ m^{-1}\colon A_\Gamma\to \pi_1\SS = A_\Gamma$ an element of $\unt(A_\Gamma;\,\mathcal{G})$. Once again, this depends on $\iota$ and $c$ in general.
\end{defi}

\begin{rem}
Note that an untwisted marking on $\SS$ itself is an outer automorphism in $\unt(A_\Gamma)$. Every marked Salvetti in $K_\Gamma$ is of the form $[\SS,\varphi]$ for some $[\varphi]\in \unt(A_\Gamma)$. Every marked Salvetti in $S^\mathcal{G}$ is of the form $[\SS,\varphi]$ for some $[\varphi]\in \unt(A_\Gamma;\,\mathcal{G})$.

Clearly both $S^\mathcal{G}$ and $K^\mathcal{G}$ are preserved under the action of $\unt(A_\Gamma;\,\mathcal{G})$ and the action of $\unt(A_\Gamma;\,\mathcal{G})$ on $S^\mathcal{G}$ is transitive.

Finally, let $c\colon X\to S$ and $c'\colon X\to S'$ be two collapses of a spatial cube complex $X$, and let $m,M,m'$ be untwisted markings on $S,X,S'$ respectively such that $c$ and $c'$ preserve markings. Assume that $[S,m]\in S^\mathcal{G}$. Then $[S',m']\in S^\mathcal{G}$ if and only if there exist combinatorial isomorphisms $\iota\colon S\to \SS$ and $\iota'\colon S'\to \SS$ such that the outer automorphism $[\iota_* \circ c_*\circ (\iota'_*\circ c'_*)^{-1}]$ is in $\unt(A_\Gamma;\,\mathcal{G})$. The main goal of this section is to characterize combinatorially in $X$ when this can happen and derive some consequences. 
\end{rem}

In the next sections, we will prove a general result (Theorem~\ref{mccoolcyclic}) implying that $K^\mathcal{G}$ with the action of $\unt(A_\Gamma;\,\mathcal{G})$ is a \emph{spine of untwisted relative outer space}, meaning that the $K^\mathcal{G}$ is contractible and the action is proper and cocompact. We start by simply proving that $K^\mathcal{G}$ is connected.

\begin{defi}
    Say an edge of $K_\Gamma$ is \emph{elementary} when it corresponds to a marking-preserving collapse of a single hyperplane with domain a marked Salvetti. Recall that a \emph{Whitehead move} is an edge path of length $2$ in $K_\Gamma$ joining two marked Salvettis using two elementary edges, and a \emph{Whitehead path} is a concatenation of Whitehead moves going through each marked Salvetti at most once. Marked Salvettis one Whitehead move away from $[\SS, id]$ are exactly of the form $[\SS, W]$ for $W$ an outer Whitehead automorphism.
\end{defi}

\begin{lm}
\label{connected}
The complex $K^\mathcal{G}$ is connected. Any two marked Salvettis in $S^\mathcal{G}$ are joined by a Whitehead path in $K^\mathcal{G}$.
\end{lm}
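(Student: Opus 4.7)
The plan is to reduce the statement to a single $\unt(A_\Gamma;\,\mathcal{G})$-orbit representative, and then use the generating set from Corollary~\ref{gen} to build a Whitehead path step by step. First, I will observe that $K^\mathcal{G}$ is by construction the union of stars of vertices of $S^\mathcal{G}$, so every vertex of $K^\mathcal{G}$ is joined by a single edge to some vertex of $S^\mathcal{G}$. This reduces the connectedness of $K^\mathcal{G}$ to the second claim, namely that any two marked Salvettis in $S^\mathcal{G}$ are joined by a Whitehead path in $K^\mathcal{G}$. By transitivity of the $\unt(A_\Gamma;\,\mathcal{G})$-action on $S^\mathcal{G}$ together with $\unt(A_\Gamma;\,\mathcal{G})$-equivariance of $K^\mathcal{G}$, it will suffice to exhibit, for every $[\varphi]\in \unt(A_\Gamma;\,\mathcal{G})$, a Whitehead path in $K^\mathcal{G}$ from $[\SS, id]$ to $[\SS,\varphi]$.

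Next, I will use Corollary~\ref{gen} to decompose $[\varphi] = [\varphi_n]\circ \cdots \circ [\varphi_1]$ with each $[\varphi_i]$ an outer graph permutation, fold, or partial conjugation already in $\unt(A_\Gamma;\,\mathcal{G})$. Setting $[\psi_k] = [\varphi_k]\circ \cdots \circ [\varphi_1]\in \unt(A_\Gamma;\,\mathcal{G})$, every $[\SS,\psi_k]$ lies in $S^\mathcal{G}$, and I will show that consecutive vertices $[\SS,\psi_{k-1}]$ and $[\SS,\psi_k]$ either coincide or are joined by a single Whitehead move in $K^\mathcal{G}$. Graph permutations are realized by combinatorial self-isomorphisms of $\SS$, so if $[\varphi_k]$ is of this type, then $[\SS,\varphi_k]=[\SS, id]$ at the vertex level of $K_\Gamma$, hence $[\SS,\psi_k] = [\SS,\psi_{k-1}]$. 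For folds and partial conjugations, an explicit combinatorial construction will write $[\varphi_k] = [W_k]\circ[\tau_k]$, where $[W_k]$ is an outer Whitehead automorphism and $[\tau_k]$ is an inversion; the graph-permutation argument then yields $[\SS,\varphi_k] = [\SS,W_k]$, and the last remark of Section~\ref{background} provides a Whitehead move from $[\SS, id]$ to $[\SS, W_k]$ through the blow-up $\SS^{\mathbf{P}_k}$ associated to the underlying Whitehead partition. This blow-up is in the star of $[\SS, id]\in S^\mathcal{G}$, and therefore in $K^\mathcal{G}$; translating by $[\psi_{k-1}]$, the resulting Whitehead move also stays in $K^\mathcal{G}$.

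The main obstacle I anticipate is the combinatorial verification that every fold and every partial conjugation belonging to $\unt(A_\Gamma;\,\mathcal{G})$ can be written as $[W_k]\circ[\tau_k]$ with $W_k$ a genuine based Whitehead automorphism, by exhibiting a Whitehead partition that satisfies all four axioms: link equal to $lk(b)$, correct splitting at the basepoint, pairwise non-commutation across non-inverse sides, and at least two elements on each side. The restriction to non-trivial outer partial conjugations will be needed to secure the size condition on the sides. Once this is in place, concatenating the $n$ consecutive moves produces a path in $K^\mathcal{G}$ from $[\SS, id]$ to $[\SS,\varphi]$; if it revisits any marked Salvetti, I would shortcut between the first and last occurrences to obtain a genuine Whitehead path in the sense of the definition.
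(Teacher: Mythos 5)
Your proposal takes essentially the same route as the paper: reduce to a single $\unt(A_\Gamma;\,\mathcal{G})$-orbit representative, decompose via Corollary~\ref{gen}, rewrite folds and partial conjugations as (outer Whitehead automorphism)$\circ$(inversion), and string together Whitehead moves that are translates of moves based at $[\SS,id]$. The two minor worries you flag (the explicit combinatorial construction of the Whitehead partition behind each fold/partial conjugation, and shortcutting revisited Salvettis) are real but benign; the paper treats both implicitly.

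However, there is a composition-order slip that breaks the details as you have written them. You define $\psi_k = \varphi_k\circ\cdots\circ\varphi_1$, so $\psi_k = \varphi_k\circ\psi_{k-1}$. When you translate the move $[\SS,id]\to[\SS,W_k]$ by $[\psi_{k-1}]$ (recall the action is by post-composition), you land at $[\SS,\psi_{k-1}\circ W_k]$, which is \emph{not} $[\SS,\psi_k]=[\SS,W_k\circ\tau_k\circ\psi_{k-1}]$ — the two markings differ by $W_k^{-1}\circ\psi_{k-1}^{-1}\circ W_k\circ\tau_k\circ\psi_{k-1}$, which is generally not a graph permutation. The same issue undermines your claim that a graph-permutation factor $\varphi_k$ gives $[\SS,\psi_k]=[\SS,\psi_{k-1}]$: that would require the conjugate $\psi_{k-1}^{-1}\circ\varphi_k\circ\psi_{k-1}$ to be a graph permutation, which is false in general. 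The fix is simply to reverse the convention: write $\varphi=\varphi_1\circ\cdots\circ\varphi_n$ and set $\psi_k=\varphi_1\circ\cdots\circ\varphi_k$, so $\psi_k=\psi_{k-1}\circ\varphi_k$ and $[\SS,\psi_k]$ really is the $[\psi_{k-1}]$-translate of $[\SS,\varphi_k]$. (The paper sidesteps the interleaving altogether by using that graph permutations normalize the set of Whitehead automorphisms, shuffling all graph permutations to one end so the path consists of the prefix products $[\SS,W_1\circ\cdots\circ W_i]$.)
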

\begin{proof}
Any vertex of $K^\mathcal{G}$ is at distance $1$ from a marked Salvetti via a marking-preserving collapse. Start from a marked Salvetti $[\SS,\varphi] \in S^\mathcal{G}$, with $[\varphi]\in \unt(A_\Gamma;\,\mathcal{G})$. If $[\varphi]$ is an outer Whitehead automorphism, $[\SS,\varphi]$ is one Whitehead move away from $[\SS,id]$, and the length $2$ path stays in $K^\mathcal{G}$.

In general, using Corollary~\ref{gen}, decompose $\varphi$ as a product of outer oriented graph permutations, outer folds and outer partial conjugations in $\unt(A_\Gamma;\,\mathcal{G})$. Each of the outer folds and outer partial conjugations decomposes as the product of an outer Whitehead automorphism and an outer inversion still in $\unt(A_\Gamma;\,\mathcal{G})$. Finally, as the set of Whitehead automorphisms is normalized by outer oriented graph permutations, write $\varphi = W_1\circ \dots\circ W_k\circ \psi$ where the $[W_i]\in \unt(A_\Gamma;\,\mathcal{G})$ are outer Whitehead automorphisms, and $\psi$ is an oriented graph permutation. As before, $[\SS,W_i]$ and $[\SS, id]$ are one Whitehead move away for all $i$, thus so are $[\SS, W_1\circ\dots\circ W_i]$ and $[\SS, W_1\circ\dots\circ W_{i-1}]$, which all belong to $S^\mathcal{G}$. This provides a Whitehead path in $K^\mathcal{G}$ from $[\SS, id]$ to $[\SS, W_1\circ\dots\circ W_k] = [\SS,\varphi]$, the last equality holding since $\psi$ stabilizes $[\SS,id]$.
\end{proof}

\begin{lm}
\label{whitehead}
Let $W$ be the Whitehead automorphism corresponding to the based Whitehead partition $(\mathbf{P},b)$. Let $c_\mathbf{P}\colon \SS^\mathbf{P}\to \SS$ be the collapse of the hyperplane labeled $\mathbf{P}$, and $c_b\colon \SS^\mathbf{P}\to \SS^\mathbf{P}_b$, the collapse of the hyperplane labeled $b$. The following are equivalent:
\begin{enumerate}
\item $[W]\in \unt(A_\Gamma;\,\mathcal{G})$
\item For each $A_{\Delta_i}\in \mathcal{G}$ such that $b\notin \Delta_i$, $single(\mathbf{P})\cap \Delta_i$ is empty, and at least one of $double(P)\cap \Delta_i$, $double(P^*)\cap \Delta_i$ is empty.
\item There exists $[\psi]$ an outer oriented graph automorphism such that $[W\circ \psi]\in \unt(A_\Gamma;\,\mathcal{G})$, i.e.~$[\SS,W]\in S^\mathcal{G}$.
\item $[\SS^\mathbf{P}_b,(c_{\mathbf{P}})_*\circ (c_b)_*^{-1}]\in S^\mathcal{G}$.
\end{enumerate}
\end{lm}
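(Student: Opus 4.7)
The plan is to prove the equivalences via $(2) \Rightarrow (1)$, $(1) \Rightarrow (2)$, $(3) \Leftrightarrow (4)$, $(1) \Rightarrow (3)$, and $(3) \Rightarrow (1)$. The implications $(3) \Leftrightarrow (4)$ and $(1) \Rightarrow (3)$ are essentially definitional: the remark following Lemma~\ref{blowupproperties} produces a combinatorial isomorphism $\iota\colon \SS^\mathbf{P}_b \to \SS$ realizing the identity $[\SS^\mathbf{P}_b, (c_\mathbf{P})_* \circ (c_b)_*^{-1}] = [\SS, W]$ in $K_\Gamma$, after which the definition of $S^\mathcal{G}$ translates directly to (3); and $(1) \Rightarrow (3)$ is witnessed by $\psi = \mathrm{id}$.

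For $(2) \Rightarrow (1)$, I would fix $A_{\Delta_i} \in \mathcal{G}$ and proceed by cases on the position of $b$ relative to $\Delta_i$. If $b \in \Delta_i$, every formula defining $W$ sends a generator $v \in \Delta_i$ to a word in letters of $\{v, b\} \subseteq \Delta_i$, giving $W(A_{\Delta_i}) \subseteq A_{\Delta_i}$, upgraded to equality by involutivity of Whitehead automorphisms. If $b \notin \Delta_i$ and $double(P) \cap \Delta_i = \emptyset$, condition (2) places $\Delta_i$ inside $double(P^*) \cup L$, on which $W$ is the identity, so $W(A_{\Delta_i}) = A_{\Delta_i}$. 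If $b \notin \Delta_i$ and $double(P^*) \cap \Delta_i = \emptyset$ instead, every $v \in \Delta_i$ lies in $double(P) \cup L$; on $double(P)$, $W(v) = bvb^{-1}$, and on $L = lk(b)$ the generator $v$ commutes with $b$ so $W(v) = v = bvb^{-1}$. Hence $W$ acts as conjugation by $b$ on $\Delta_i$, giving $W(A_{\Delta_i}) = b A_{\Delta_i} b^{-1}$.

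The main obstacle is the direction $(1) \Rightarrow (2)$, specifically ruling out that both $double(P) \cap \Delta_i$ and $double(P^*) \cap \Delta_i$ can be nonempty when $b \notin \Delta_i$. The easy part is $single(\mathbf{P}) \cap \Delta_i = \emptyset$: if $v$ were in it, $W(v) \in \{vb^{-1}, bv\}$ is cyclically reduced (since $v$ and $b$ do not commute, by the Whitehead condition on non-inverse elements in distinct sides) with letter set $\{v, b\}$ containing $b \notin \Delta_i$, contradicting that cyclically reduced representatives of a conjugate of an element of $A_{\Delta_i}$ live in $\Delta_i$. For the harder part, suppose $v_1 \in double(P) \cap \Delta_i$ and $v_2 \in double(P^*) \cap \Delta_i$ coexist, and let $g \in A_\Gamma$ satisfy $W(A_{\Delta_i}) = gA_{\Delta_i}g^{-1}$. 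Using Servatius' centralizer theorem $Z(v) = A_{\{v\}\cup lk(v)}$ and the standard normal-form fact that $hvh^{-1} \in A_{\Delta_i}$ for $v \in \Delta_i$ forces $h \in A_{\Delta_i} \cdot Z(v)$, the relations $g^{-1}v_2 g, g^{-1}(bv_1 b^{-1})g \in A_{\Delta_i}$ yield $g \in A_{\Delta_i} \cdot Z(v_2)$ and $g \in b \cdot A_{\Delta_i} \cdot Z(v_1)$. Combining, $b \in A_{\Delta_i}\cdot Z(v_1) \cdot Z(v_2) \cdot A_{\Delta_i}$; abelianizing in $\ZZ^V$, $b$ must lie in the coordinate subgroup spanned by $\Delta_i \cup \{v_1, v_2\} \cup lk(v_1) \cup lk(v_2)$. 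But $b \notin \Delta_i$, $b \neq v_1, v_2$, and the Whitehead partition conditions force $b$ not to commute with $v_1$ or $v_2$, a contradiction.

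Finally, $(3) \Rightarrow (1)$ reuses the same machinery. Given $\psi$ with $[W \circ \psi] \in \unt(A_\Gamma;\,\mathcal{G})$, the condition reads $W(A_{\psi(\Delta_i)}) \sim A_{\Delta_i}$ for each $i$, where $\sim$ denotes conjugacy. Applying the $(1) \Rightarrow (2)$ analysis with domain $\psi(\Delta_i)$ and target $\Delta_i$ gives, in the case $b \notin \Delta_i$, that $W$ restricts to $A_{\psi(\Delta_i)}$ as either the identity or conjugation by $b$, so $W(A_{\psi(\Delta_i)}) \sim A_{\psi(\Delta_i)}$. Combined with (3), this forces $A_{\psi(\Delta_i)} \sim A_{\Delta_i}$; since conjugate standard subgroups share their defining subset of $V$ (comparing abelianizations), $\psi(\Delta_i) = \Delta_i$, and (1) follows. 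When $b \in \Delta_i$, the first case of $(2) \Rightarrow (1)$ already yields $W(A_{\Delta_i}) = A_{\Delta_i}$ unconditionally.
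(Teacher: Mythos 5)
Your proof is correct, and the equivalences $(2)\Rightarrow(1)$, $(1)\Rightarrow(3)$, and $(3)\Leftrightarrow(4)$ track the paper closely (for $(3)\Leftrightarrow(4)$ the paper rederives the isomorphism $\iota$ and the identity $[W]=[(c_{\mathbf{P}})_*\circ(c_b)_*^{-1}\circ\iota_*^{-1}]$ rather than citing the remark, but the substance is the same). Where you genuinely diverge is in the two remaining directions. For the hard half of $(1)\Rightarrow(2)$ the paper observes simply that $W(v_1v_2)=bv_1b^{-1}v_2$ is already cyclically reduced (since $b$ commutes with neither $v_1$ nor $v_2$), so $b$ appears in a cyclically reduced representative of a conjugate of an element of $A_{\Delta_i}$ and hence $b\in\Delta_i$ — a one-line contradiction that stays entirely inside the normal-form machinery already set up in Section~2. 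Your route through Servatius' centralizer theorem and the abelianization is also valid, but it invokes the extra fact that $hvh^{-1}\in A_{\Delta_i}$ forces $h\in A_{\Delta_i}\cdot Z(v)$, and a careful reader would want that spelled out (it follows from uniqueness of the cyclically reduced representative of a conjugacy class up to cyclic shuffles, plus the centralizer theorem). Similarly, for $(3)\Rightarrow(1)$ the paper argues directly: for $v\in\Delta_i$ the letter $\psi(v)^{\pm1}$ appears in every cyclically reduced representative of $[W\circ\psi(v)]$, so $\psi(v)\in\Delta_i$, whence $[\psi]\in\unt(A_\Gamma;\,\mathcal{G})$ and $[W]=[W\circ\psi]\circ[\psi]^{-1}\in\unt(A_\Gamma;\,\mathcal{G})$; you instead rerun your $(1)\Rightarrow(2)$ machinery with domain $\psi(\Delta_i)$ to conclude $W(A_{\psi(\Delta_i)})\sim A_{\psi(\Delta_i)}$ and then compare abelianizations to get $\psi(\Delta_i)=\Delta_i$. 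This works (note that your centralizer fact automatically gives $v_1,v_2\in\Delta_i$ even when starting from $v_1,v_2\in\psi(\Delta_i)$), but it is a longer loop than the paper's direct argument and it is worth being aware that the extra ingredients you import — centralizer structure, cyclic-shuffle uniqueness — are not strictly needed here.
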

\begin{proof}
$(1.)\Rightarrow (2.)$ Let $[W]\in \unt(A_\Gamma;\,\mathcal{G})$. If for some $\Delta_i$, there exists $v$ in $single(\mathbf{P})\cap \Delta_i$, $W(v)\in \{bv,vb^{-1}, b^{-1}\}$ is conjugate to an element of $A_{\Delta_i}$ in which the total power in $b$ is $\pm 1$ ($b^{-1}$ appears only as $W(b)$). Hence $b$ appears in a cyclically reduced word representing that element, thus belongs to $\Delta_i$.

Likewise, if $v_1\in double(P)\cap \Delta_i$ and $v_2 \in double(P^*)\cap \Delta_i$, then $W(v_1v_2)=bv_1b^{-1}v_2$ is conjugate to an element of $A_{\Delta_i}$. As $link(b)=link(\mathbf{P})$ is disjoint from $double(P)$ and $double(P^*)$, $b$ does not commute with $v_1$ nor $v_2$. Hence the word $bv_1b^{-1}v_2$ is cyclically reduced and $b\in \Delta_i$ once again.

$(2.)\Rightarrow (1.)$ Fix $A_{\Delta_i}\in \mathcal{G}$. For every vertex $v$ of $\Delta_i$, $W(v)\in \{v, v^{-1}, vb^{-1}, bv, bvb^{-1}\}$. Thus if $b\in \Delta_i$, $W$ preserves $A_{\Delta_i}$. Otherwise, $single(\mathbf{P})\cap \Delta_i$ and one of $double(P)\cap \Delta_i$, $double(P^*)\cap \Delta_i$ are empty. If $double(P)\cap \Delta_i$ is empty, for every $v\in \Delta_i$, $W(v)=v$ and $W$ preserves $A_{\Delta_i}$ once again. Else, $double(P^*)\cap \Delta_i$ is empty and for every $v\in \Delta_i$, $W(v)=bvb^{-1}$ (as $v=bvb^{-1}$ when $v\in L$), thus a conjugate of $W$ preserves $A_{\Delta_i}$.

$(1.)\Rightarrow (3.)$ Pick $\psi = id$.

$(3.)\Rightarrow (1.)$ Pick an oriented graph automorphism $\psi$ representing $[\psi]$. For every standard generator $v\in V$, the standard generator $\psi(v)$ or $\psi(v)^{-1}$ appears in $[W\circ \psi(v)]$ with total power $\pm 1$. This implies that if $v\in \Delta_i$, $\psi(v)\in A_{\Delta_i}$. Hence $[\psi]\in \unt(A_\Gamma;\,\mathcal{G})$. Finally, $[W]\in \unt(A_\Gamma;\,\mathcal{G})$.

$(3.)\Leftrightarrow (4.)$ Since $b$ is a basepoint for $\mathbf{P}$, the hyperplanes labeled $b$ and $\mathbf{P}$ in $\SS^\mathbf{P}$ have the same set of transverse hyperplanes. By specialness of $\SS^\mathbf{P}$, each cube spanned by the edge $e_\mathbf{P}$ and other edges corresponds to a unique cube spanned by the edge $e_b$ and the same other edges, and vice-versa. Therefore, the exchange of $e_\mathbf{P}$ and $e_b$ extends to a combinatorial automorphism $\alpha$ of $\SS^\mathbf{P}$ fixing all the other edges and both vertices. There also exists a combinatorial isomorphism $\overline{\alpha}\colon \SS^\mathbf{P}_b\to \SS$ such that $\overline{\alpha}\circ c_b = c_\mathbf{P}\circ \alpha$. Then it is easy to check on standard generators that $[W] = [(c_\mathbf{P})_*\circ \alpha^{-1}_*\circ (c_\mathbf{P})_*^{-1}]=[(c_\mathbf{P})_*\circ (c_b)_*^{-1}\circ \overline{\alpha}^{-1}_*]$. By definition of vertices of $K_\Gamma$, $[\SS^\mathbf{P}_b, (c_\mathbf{P})_*\circ (c_b)_*^{-1}]=[\SS, (c_\mathbf{P})_*\circ (c_b)_*^{-1}\circ \overline{\alpha}_*^{-1}] = [\SS,W]$.
\end{proof}

The following proposition generalizes Lemma~\ref{whitehead} in the sense that it detects combinatorially when a marking change preserves the collection $\mathcal{G}$, for the wider class of marking changes induced by collapses of more than one hyperplane.

\begin{pro}
\label{caracwhitehead}
Let $X$ be a spatial cube complex and $c\colon X\to S$, $c'\colon X\to S'$ be two collapses that are homotopy equivalences with ranges isomorphic to $\SS$. Let $C, C'\subseteq X$ be the corresponding unique vertex preimages. Let $\iota \colon S\to \SS$ and $\iota'\colon S'\to \SS$ be combinatorial isomorphisms. The following are equivalent:
\begin{enumerate}
\item There exists $D_1,\dots , D_m\subseteq X$ a family of connected, locally convex subcomplexes such that for every $i\leq m$, the following hold:
    \begin{enumerate}
    \item[($a$)] $\iota\circ c(D_i) = \SS_{\Delta_i}$
    \item[($a'$)] $\iota'\circ c'(D_i) = \SS_{\Delta_i}$
    \item[($b$)] $D_i \cap C$ is convex in $C$
    \item[($b'$)] $D_i \cap C'$ is convex in $C'$
    \end{enumerate}
\item $[\iota_* \circ c_*\circ (\iota'_*\circ c'_*)^{-1}]\in \unt(A_\Gamma;\,\mathcal{G})$
\end{enumerate}
\end{pro}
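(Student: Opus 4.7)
The plan for $(1)\Rightarrow (2)$ is fairly direct. Given subcomplexes $D_i$ satisfying $(a),(a'),(b),(b')$, I would first combine condition $(b)$ with Lemma~\ref{liftloop} to obtain that $c|_{D_i}$ is a homotopy equivalence from $D_i$ onto its image $c(D_i)=\iota^{-1}(\SS_{\Delta_i})$ (using also $(a)$); symmetrically $c'|_{D_i}$ is a homotopy equivalence onto $\iota'^{-1}(\SS_{\Delta_i})$. Passing to fundamental groups, both $\iota_*\circ c_*$ and $\iota'_*\circ c'_*$ send the image of $\pi_1(D_i)$ in $\pi_1(X)$ onto $A_{\Delta_i}$ up to conjugacy. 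Hence $\varphi := \iota_*\circ c_*\circ(\iota'_*\circ c'_*)^{-1}$ preserves $[A_{\Delta_i}]$ for every $i$, giving $[\varphi]\in \out(A_\Gamma;\,\mathcal{G})$. Untwistedness of $\varphi$ is automatic: both markings $\iota_*\circ c_*$ and $\iota'_*\circ c'_*$ on $X$ come from strong collapses of the spatial complex $X$ onto $\SS$, so their difference lies in $\unt(A_\Gamma)$ by the characterization of Definition~\ref{defspatial}.

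For $(2)\Rightarrow (1)$, my plan is to work in the universal cover $\tilde X$ and build each $D_i$ from a minimal invariant subcomplex. Using $m = \iota_*\circ c_*$ to identify $\pi_1(X)\cong A_\Gamma$, the lift $\tilde f := \tilde\iota\circ \tilde c\colon\tilde X\to \tilde \SS$ is equivariant for the standard action of $A_\Gamma$ on $\tilde \SS$, whereas $\tilde f' := \tilde\iota'\circ\tilde c'$ is equivariant for the $\varphi^{-1}$-twisted action ($\tilde f'(g\cdot x)=\varphi^{-1}(g)\cdot \tilde f'(x)$). Since $[\varphi]\in\out(A_\Gamma;\,\mathcal{G})$, for each $i$ I can pick $h_i'\in A_\Gamma$ with $\varphi^{-1}(A_{\Delta_i}) = h_i' A_{\Delta_i} h_i'^{-1}$, and consider the convex preimages $\tilde A_i = \tilde f^{-1}(\tilde\SS_{\Delta_i}^{std})$ and $\tilde B_i = \tilde f'^{-1}(h_i'\cdot\tilde\SS_{\Delta_i}^{std})$ in $\tilde X$, both of which then have stabilizer exactly $A_{\Delta_i}$.

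The key step is to choose minimal $A_{\Delta_i}$-invariant convex subcomplexes $\tilde A_i^{min}\subseteq \tilde A_i$ and $\tilde B_i^{min}\subseteq \tilde B_i$ via Lemma~\ref{invariantminimal}. Being minimal $A_{\Delta_i}$-invariant convex subcomplexes of $\tilde X$, they are parallel (Lemma~\ref{invariantminimal}), and by Lemma~\ref{parallelstabilizer} applied to the free cospecial action of $A_\Gamma$ on $\tilde X$ they share the stabilizer $A_{\Delta_i}$. Since $A_{\Delta_i}$ is vertex-transitive on $\tilde\SS_{\Delta_i}^{std}$, the images $\tilde f(\tilde A_i^{min})$ and $\tilde f'(\tilde B_i^{min})$ are exactly $\tilde\SS_{\Delta_i}^{std}$ and $h_i'\tilde\SS_{\Delta_i}^{std}$ respectively. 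I would then set $D_i := \pi_X(\tilde A_i^{min})$ and $E_i := \pi_X(\tilde B_i^{min})$, both connected locally convex in $X$. For each $g\notin A_{\Delta_i}$, the translate $g\tilde A_i^{min}$ has $\tilde f$-image $g\tilde\SS_{\Delta_i}^{std}$, disjoint from $\tilde\SS_{\Delta_i}^{std}$, verifying condition $(4)$ of Lemma~\ref{liftloop} for $D_i$ with respect to $c$; this gives $(a)$ and $(b)$ for $D_i$, and the parallel check gives $(a'),(b')$ for $E_i$. The last step is to apply Lemma~\ref{parallel} to the pair $(D_i,E_i)$ with the collapse $c'$: the parallel same-stabilizer lifts transfer both the Lemma~\ref{liftloop} conditions and the image identity from $E_i$ to $D_i$, so $c'(D_i)=c'(E_i)=\iota'^{-1}(\SS_{\Delta_i})$ and $D_i\cap C'$ is convex in $C'$, establishing $(a')$ and $(b')$ for $D_i$.

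The main obstacle is that $c$ and $c'$ in general act incompatibly on preimages of $\SS_{\Delta_i}$, so neither direct preimage construction yields a subcomplex satisfying all four conditions on its own. The resolution relies on coordinating both sides through a pair of minimal $A_{\Delta_i}$-invariant convex subcomplexes in $\tilde X$, using Lemma~\ref{invariantminimal} for their existence and parallelism, Lemma~\ref{parallelstabilizer} to force the stabilizers to agree, and Lemma~\ref{parallel} to transfer the $\pi_1$-surjectivity from one parallel lift to the other.
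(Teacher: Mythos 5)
Your proposal is correct and follows essentially the same route as the paper. In particular, the core of your $(2)\Rightarrow(1)$ argument — lift to the universal cover, take preimages of (translates of) $\widetilde{\SS}_{\Delta_i}$ under both lifted collapses, extract parallel minimal $A_{\Delta_i}$-invariant convex subcomplexes via Lemma~\ref{invariantminimal}, use Lemma~\ref{parallelstabilizer} to match stabilizers and Lemma~\ref{parallel} to transfer the Lemma~\ref{liftloop} conditions and the image identity between the two projected subcomplexes $D_i$ and $E_i$ — is exactly the paper's argument; the only cosmetic difference is that you fix the twisting element $h'_i$ and the component $h'_i\cdot\widetilde{\SS}_{\Delta_i}^{std}$ upfront, whereas the paper starts with arbitrary components $A_i, A'_i$ and then re-chooses $A'_i$ so that the stabilizers agree rather than just being conjugate.
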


Note that Properties~($b$) and ($b'$) can be reformulated thanks to Lemma~\ref{liftloop}.

\begin{proof}

$(1.)\Rightarrow(2.)$ Pick such $D_1,\dots,D_m$. For some fixed $i$, pick $\widetilde{*}\in D_i$. By Lemma~\ref{liftloop}, the map $\iota\circ c_{\mid D_i}$ induces an isomorphism between $\pi_1(D_i,\widetilde{*})$ and $\pi_1(\SS_{\Delta_i},*) = A_{\Delta_i}$. Likewise,
the map $\iota' \circ c'_{\mid D_i}$ induces an isomorphism between $\pi_1(D_i,\widetilde{*})$ and $\pi_1(\SS_{\Delta_i},*)$. Hence we found a choice of basepoint of $X$ so that $\iota_* \circ c_*\circ (\iota'_*\circ c'_*)^{-1}$ preserves $A_{\Delta_i}$. Since $i$ was arbitrary, $(2.)$ holds.

\bigskip

$(2.)\Rightarrow (1.)$ Fix lifts $f,f'\colon \uc{X}\to \uc{\SS}$ of $\iota\circ c$ and $\iota'\circ c'$ respectively between universal covers. Note that $f,f'$ are collapses between $\cat$ cube complexes. Let $i\leq m$ and let $A_i$, $A'_i$ be two arbitrary connected components of the preimage of $\SS_{\Delta_i}$ in the universal cover $\uc{\SS}$ (not necessarily distinct). Note that $A_i$ and $A'_i$ are convex, and that for every deck transformation $g$ of $\uc{\SS}$, if $gA_i$ and $A_i$ intersect, then $g$ is represented by the homotopy class of an edge cycle in $\SS_{\Delta_i}$ and $gA_i = A_i$. This means that $A_i$ is disjoint from its distinct translates. The same holds for $A'_i$.

Since $\iota \circ c$ and $\iota'\circ c'$ are homotopy equivalences, $f$ establishes a correspondence between the stabilizer of $f^{-1}(A_i)$ in the deck transformations of $\uc{X}$, and the stabilizer of $A_i$, a conjugate of $\pi_1\SS_{\Delta_i} = A_{\Delta_i}$ in $\pi_1\SS = A_\Gamma$. Likewise, $f'$ makes the stabilizer of $f'^{-1}(A'_i)$ correspond to a conjugate of $A_{\Delta_i}$ as well. By Assumption~$(2.)$, the stabilizers of $f^{-1}(A_i)$ and $f'^{-1}(A'_i)$ are conjugate (in the group of deck transformations of $\uc{X}$). Up choosing a different component $A'_i$, assume these stabilizers are the same subgroup $G$ of deck transformations. Note that $G$ is isomorphic to $A_{\Delta_i}$, hence $G$ is finitely generated. By Lemma~\ref{collapseupstairs}, both $f^{-1}(A_i)$ and $f'^{-1}(A'_i)$ are $G$-invariant convex subcomplexes. By Lemma~\ref{invariantminimal}, $f^{-1}(A_i)$ contains a minimal $G$-invariant convex (in $\uc{X}$) subcomplex $B_i$, $f'^{-1}(A'_i)$ contains a minimal $G$-invariant convex subcomplex $B'_i$, and $B_i$ and $B'_i$ are parallel in $\uc{X}$.

Let $g$ be a deck transformation of $\uc{X}$ and assume that $f(gB_i)$ and $f(B_i)$ intersect. Let $f_*g$ be the deck transformation of $\uc{\SS}$ corresponding to $g$ via $f$. Then, $f_*gA_i$ and $A_i$ intersect. By the argument above, $f_*gA_i=A_i$, thus $g\in G$ and $gB_i = B_i$. This fact has two consequences. First, $B_i$ is disjoint from its distinct translates, meaning that the quotient $B_i/G$ embeds in $X$. Second, let $D_i\simeq B_i/G$ be the image of this embedding. Since the restriction $B_i\to D_i$ of the universal covering $\uc{X}\to X$ is again a universal covering. Thus, $D_i$ satisfies Assumption~$(4.)$ of Lemma~\ref{liftloop} for the collapse $f$. Since $B_i$ is convex, $D_i$ is locally convex.

Symmetrically, $B'_i/G$ embeds in $X$ and the image $D'_i$ of this embedding is locally convex and satisfies Assumption~$(4.)$ of Lemma~\ref{liftloop} for the collapse $f'$. Moreover, $B_i$ and $B'_i$ are parallel, $X$ is special, and $\iota\circ c$, $\iota'\circ c'$ have range $\SS$. By Lemma~\ref{parallel}, both $D_i$ and $D'_i$ satisfy all the assumptions of Lemma~\ref{liftloop} for both collapses $\iota\circ c$ and $\iota'\circ c'$, and the equalities $\iota\circ c(D_i) = \iota\circ c(D'_i)$, $\iota'\circ c'(D_i) = \iota'\circ c'(D'_i)$ hold between locally convex subcomplexes of $\SS$. This proves in particular that $D_i$ satisfies ($b$) and ($b'$) from $(1.)$.

Finally, $f(B_i)$ is contained in $A_i$, and invariant under the stabilizer of $A_i$. Hence, $\iota\circ c(D_i)$ is locally convex, $\iota\circ c(D_i)\subset \SS_{\Delta}$, and the fundamental group of $\iota\circ c(D_i)$ contains $A_{\Delta_i}$. Therefore, for each vertex $v$ of $\Delta_i$, the subcomplex $\iota\circ c(D_i)$ contains the only edge of $\SS$ labelled $v$. By local convexity, $\iota\circ c(D_i)$ contains $\SS_{\Delta_i}$ entirely, proving that $D_i$ satisfies ($a$) from $(1.)$. By a symmetric argument, $\iota'\circ c'(D_i) = \iota'\circ c'(D'_i) = \SS_{\Delta_i}$, proving that $D_i$ satisfies ($a'$) from $(1.)$.

This construction provides for all $i$ a subcomplex $D_i$ satisfying all the requirements, and concludes the proof.
\end{proof}

The following two consequences of Proposition~\ref{caracwhitehead} will be of great use to prove contractibility of $K^\mathcal{G}$. Lemma~\ref{factorpath} is a factorization lemma in $K^\mathcal{G}$, and Lemma~\ref{prepeakreduction} is the main preliminary to peak reduction of Whitehead paths in $K^\mathcal{G}$.

\begin{lm}
\label{factorpath}
    Let $X$ be a spatial cube complex and $\H = \{H_1,\dots, H_k\}$, $\K$ two families of hyperplanes of $X$. Assume that the collapses $c_0\colon X\to S_0$ of $\H$ and $c_k\colon X\to S_k$ of $\K$ are both homotopy equivalences, and that there exist combinatorial isomorphisms $\iota_0 \colon S_0\to \SS$ and $\iota_k\colon S_k\to \SS$ such that $[(\iota_0)_* \circ (c_0)*\circ ((\iota_k)_*\circ (c_k)_*)^{-1}]\in \unt(A_\Gamma;\,\mathcal{G})$. 
    
    Then there exists an ordering $\K = \{K_1,\dots, K_k\}$ with the following properties:
    \begin{itemize}
        \item For all $0\leq j\leq k$, the collapse $c_j\colon X\to S_j$ of $\{H_1,\dots, H_{k-j},K_{k-j+1},\dots, K_k\}$ is a homotopy equivalence.
        \item There exist combinatorial isomorphisms $\iota_j\colon S_j\to \SS$ for $1\leq j<k$ such that for all $0\leq j<k$, $[(\iota_j)_* \circ (c_j)_*\circ ((\iota_{j+1})_*\circ (c_{j+1})_*)^{-1}]\in \unt(A_\Gamma;\,\mathcal{G})$.
    \end{itemize}

    Note that, in particular, for all $0\leq j\leq k$, $[(\iota_0)_* \circ (c_0)_*\circ ((\iota_{j})_*\circ (c_{j})_*)^{-1}]\in \unt(A_\Gamma;\,\mathcal{G})$.
\end{lm}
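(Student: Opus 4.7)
The plan is to induct on $r = |\H \setminus \K|$ (equal to $|\K \setminus \H|$ since both families have the same cardinality $k$). First, I apply Proposition~\ref{caracwhitehead} to the pair $(c_0, c_k)$ to obtain connected locally convex subcomplexes $D_1, \dots, D_m \subseteq X$ such that $\iota_0 \circ c_0(D_i) = \iota_k \circ c_k(D_i) = \SS_{\Delta_i}$ and such that $D_i \cap C_0$, $D_i \cap C_k$ are convex in the unique vertex preimages $C_0, C_k$ of $c_0, c_k$. These subcomplexes serve as the relative data throughout the induction.

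The base case $r = 0$ gives $\H = \K$, so $c_0$ and $c_k$ agree up to an isomorphism of the range; any ordering of $\K$ works, with each intermediate collapse equal to $c_0 = c_k$.

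For the inductive step with $r > 0$, the goal is to exhibit $K \in \K \setminus \H$ to serve as $K_k$, such that the collapse $c_1\colon X \to S_1$ of $(\H \setminus \{H_k\}) \cup \{K\}$ is a homotopy equivalence to a Salvetti and the single-step marking change is in $\unt(A_\Gamma; \mathcal{G})$. The intermediate complex $T$ obtained by collapsing only $\H \setminus \{H_k\}$ is spatial with $|V| + 1$ hyperplanes, hence $T \cong \SS^{\{\mathbf{P}\}}$ for a single Whitehead partition $\mathbf{P}$ by the classification of blow-ups. Since further collapsing the image of $H_k$ reconstructs $c_0$ (up to isomorphism), the image of $H_k$ in $T$ is the $\mathbf{P}$-labelled hyperplane, and the remaining $|V|$ hyperplanes of $T$ carry $V$-labels. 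Collapsing any one of them further produces a Salvetti exactly when its label is a basepoint of $\mathbf{P}$, by Lemma~\ref{whitehead} and the discussion preceding it, giving the desired elementary Whitehead move.

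The main obstacle is the combinatorial existence of $K \in \K \setminus \H$ whose image in $T$ is such a basepoint and such that the induced Whitehead move lies in $\unt(A_\Gamma; \mathcal{G})$. For the basepoint property, the key fact is that $c_k$ is a homotopy equivalence collapsing $\K$ to a Salvetti, so the blow-up structures coming from $c_0$ and $c_k$ must be compatible in a manner that forces at least one hyperplane in $\K \setminus \H$ to cover a basepoint of $\mathbf{P}$ in $T$; formalising this requires analysing how the two blow-up descriptions $X \cong \SS^{\mathbf{\Pi}}$ and $X \cong \SS^{\mathbf{\Pi}'}$ relate through the compatibility of their Whitehead partitions. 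For the $\unt(A_\Gamma; \mathcal{G})$ condition, I apply Proposition~\ref{caracwhitehead} to $(c_0, c_1)$ with the same subcomplexes $D_1, \dots, D_m$: convexity of $D_i \cap C_1$ in $C_1 \subseteq C_0$ follows from convexity of $D_i \cap C_0$ in $C_0$, and the image condition $\iota_1 \circ c_1(D_i) = \SS_{\Delta_i}$ holds for an appropriate $\iota_1$ by tracking $D_i$ through $T$. Applying the induction hypothesis to $(c_1, c_k)$, now with $|\H' \setminus \K| = r-1$, yields the rest of the ordering and the intermediate data.
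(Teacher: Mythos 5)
Your high-level structure (induct on a size parameter, pull the $D_i$ from Proposition~\ref{caracwhitehead}, reduce to swapping one hyperplane from $\mathcal{H}$ for one from $\mathcal{K}$, and view the intermediate complex $T$ as a single-partition blow-up $\SS^{\{\mathbf{P}\}}$) matches the paper's strategy. But the write-up stops exactly at the two places where the real work happens, and in one of them what you assert is actually false.

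First, you explicitly label the existence of $K\in\mathcal{K}\setminus\mathcal{H}$ whose image in $T$ is a basepoint of $\mathbf{P}$ as ``the main obstacle'' and then only gesture at ``analysing how the two blow-up descriptions relate.'' That is the crux, and it has to be argued, not deferred. The paper's proof does it concretely: since $H_k\notin\mathcal{K}$, any edge $e$ dual to $H_k$ lies outside $C_k$; take a geodesic edge path $p$ in $C_k$ joining the endpoints of $e$. Its image in $Y$ (the collapse of $H_1,\dots,H_{k-1}$) must cross from one vertex of $Y$ to the other, so some edge of $p$ is not collapsed by $d$; the hyperplane $K_k$ dual to that edge lies in $\mathcal{K}$, and Lemma~\ref{tubenew} (applied twice, symmetrically) shows $K_k$ and $H_k$ have the same transverse hyperplanes. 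That, together with specialness, gives the swap automorphism $\alpha$ of $Y$ and hence the homotopy-equivalent collapse to a Salvetti — this is what makes $K_k$ play the role of a basepoint. Without some argument of this type your induction has no legitimate next step.

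Second, the verification that the one-step marking change lies in $\unt(A_\Gamma;\,\mathcal{G})$ rests on the assertion ``convexity of $D_i\cap C_1$ in $C_1\subseteq C_0$ follows from convexity of $D_i\cap C_0$ in $C_0$.'' But $C_1\not\subseteq C_0$: $C_0$ is spanned by edges dual to $\{H_1,\dots,H_k\}$ while $C_1$ is spanned by edges dual to $\{H_1,\dots,H_{k-1},K_k\}$, and since $H_k\notin\mathcal{K}$ and $K_k\notin\mathcal{H}$ each contains edges the other does not. So the convexity of $D_i\cap C_1$ does not come for free. The paper instead works in $Y$: using Corollary~\ref{tubemore} it shows that $D'_i=d(D_i)$ contains the edge $\overline{e}$ dual to $H_k$ if and only if it contains the edge $\overline{f}$ dual to $K_k$, hence $D'_i$ is $\alpha$-invariant; convexity of $D'_i\cap C'$ and the image condition then both follow, and Proposition~\ref{caracwhitehead} applies. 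You would need to replace your incorrect containment claim by an argument of this kind.

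Minor remarks: the claim that a single step of the resulting Whitehead path already lies in $\unt(A_\Gamma;\,\mathcal{G})$ also needs the application of Proposition~\ref{caracwhitehead} at the level of $Y$ (not just at the level of $X$), since the statement to prove is about the factor $[(\iota_j)_*\circ(c_j)_*\circ((\iota_{j+1})_*\circ(c_{j+1})_*)^{-1}]$; and your base case $r=0$ is fine but you should still say a word about why the ordering and the intermediate $\iota_j$ can be taken so that each consecutive factor is trivial (hence in $\unt(A_\Gamma;\,\mathcal{G})$).
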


\begin{proof}
    The proof is by induction on $k$. Note that if $H_k\in \K$, setting $K_k=H_k$ and $\iota_1=\iota_0$ allows the induction to proceed in the range of the collapse of $K_k=H_k$. Assume now that $H_k\notin \K$. Let $C_0, C_k\subseteq X$ be the vertex preimages of $c_0,c_k$. Using Proposition~\ref{caracwhitehead} for $c_0$ and $c_k$, let $D_1,\dots, D_m\subseteq X$ as in the proposition. Let $d\colon X\to Y$ denote the collapse of $H_1,\dots H_{k-1}$, and $c\colon Y\to S_0$ the collapse of the image of $H_k$. By \cite{spatial}, Proposition~5.7 and Corollary~5.8, $Y$ is spatial and $c$ is a strong collapse. Moreover, the vertex preimage $C$ of $c$ contains only edges dual to $H_k$. Since $c$ is a strong collapse, this preimage is a single edge, thus $Y$ has exactly two vertices. The preimages of these two vertices in $X$ are convex subcomplexes of $C_0$. Thus, for all $i$, the intersection of $D_i$ with each of these two subcomplexes is convex in $C_0$. By Lemma~\ref{liftloop}, $D'_i = d(D_i)$ is connected and locally convex in $Y$ for all $i$.

    Clearly, if $D'_i$ contains both vertices of $Y$, $D_i$ contains vertices on either halfspace of $H_k$ in $C_0$. Since $D_i\cap C_0$ is convex in $C_0$, $D_i$ contains an edge dual to $H_k$, thus $D'_i\cap C = C$ is the edge $\overline{e}$ dual to $H_k$ in $Y$: the intersection is convex in $C$. Otherwise, $D'_i$ contains only one vertex of $Y$, and the intersection $D'_i\cap C$ is this vertex, which is convex in $C$. Moreover, in all cases, $\iota_0\circ c(D'_i) = \iota_0 \circ c_0(D_i) = \SS_{\Delta_i}$. This proves that $D'_i$, $c$ and $\iota_0$ satisfy Assumptions~$(a)$ and $(b)$ of Proposition~\ref{caracwhitehead}, $(1.)$.

    Since $H_k\notin \K$, edges of $X$ dual to $H_k$ are not in $C_k$. Choose such an edge $e$ and $p$ a geodesic path in $C_k$ with the same endpoints. The projection of $p$ in $Y$ joins the two vertices of $Y$, thus some edge of $p$ does not project to a loop in $Y$. Let $K_k\in \K$ be a hyperplane dual to such an edge. By Lemma~\ref{tubenew}, every hyperplane of $X$ transverse to $H_k$ is transverse to $K_k$. In particular, $H_k$ and $K_k$ are not transverse. Symmetrically, choose an edge $f$ in $X$ dual to $K_k$ and $q$ a geodesic path in $C_0$ with the same endpoints. As before, some edge of $q$ does not project to a loop in $Y$, thus has its endpoints in opposite halfspaces of $H_k$ in $C_0$. This edge of $q$ must then be dual to $H_k$. By Lemma~\ref{tubenew} again, every hyperplane of $X$ transverse to $K_k$ is transverse to $H_k$.
    
    By specialness of $Y$, there is a single edge $\overline{f}$ dual to $K_k$ in $Y$, and every cube spanned by the edge dual to $H_k$ and other edges corresponds to a unique cube spanned by the edge dual to $K_k$ and other edges and vice-versa. Therefore, there exists an involutive combinatorial automorphism $\alpha$ of $Y$ that exchanges $\overline{e}$ and $\overline{f}$ while fixing both vertices and all the other oriented edges. This proves that the collapse $c'\colon Y\to S_1$ of $K_k$ in $Y$ is a homotopy equivalence. Besides, $\alpha$ induces an isomorphism $\overline{\alpha}\colon S_0\to S_1$ in the sense that $c'\circ\alpha = \overline{\alpha} \circ c$. In particular, $\iota_1 = \iota_0\circ\overline{\alpha}^{-1} \colon S_1\to S_0\to \SS$ is a combinatorial isomorphism, and $\iota_1\circ c' = \iota_0\circ c\circ \alpha^{-1}$. Moreover, letting $c_1\colon X\to S_1$ be the collapse of $\{H_1,\dots, H_{k-1}, K_k\}$, $c_1$ is a composition $c'\circ d$ of homotopy equivalences, hence a homotopy equivalence itself.
    
    Recall that $\overline{e}$ and $\overline{f}$ are the only edges of $Y$ dual to $H_k$ and $K_k$ respectively. Assume $\overline{e}$ is in $D'_i$. Then some edge $e'$ parallel to $e$ is in $D_i$. Let $p'$ be a geodesic path in $C_k$ joining the endpoints of $e'$. By convexity of $D_i\cap C_k$, $p'$ is contained in $D_i$. By Corollary~\ref{tubemore}, every edge of $p$ is parallel to an edge of $p'$. In particular, $D_i$ contains some edge dual to $K_k$. In $Y$, its image $D'_i$ then contains $\overline{f}$, the only edge dual to $K_k$. Symmetrically, if $\overline{f}$ is in $D'_i$ then so is $\overline{e}$. Therefore, the $1$-skeleton of $D'_i$ is $\alpha$-invariant, and by local convexity of $D'_i$, the whole subcomplex $D'_i$ is $\alpha$-invariant. In particular, $\iota_1\circ c'(D'_i) = \iota_0\circ c(D'_i) = \SS_{\Delta_i}$. Moreover, if $D'_i$ contains both vertices of $Y$, $D'_i$ contains $\overline{e}$, hence contains $\overline{f}$ as well. Thus, the intersection of $D'_i$ with the vertex preimage $C' = \overline{f}$ of $c'$ is convex in $C'$. This proves that $D'_i$, $c'$ and $\iota_1$ satisfy Assumptions~$(a')$ and $(b')$ of Proposition~\ref{caracwhitehead}, $(1.)$.

    By Proposition~\ref{caracwhitehead}, $[(\iota_0)_* \circ (c_0)_*\circ ((\iota_{1})_*\circ (c_{1})_*)^{-1}] = [(\iota_0)_* \circ (c)_*\circ ((\iota_{1})_*\circ (c')_*)^{-1}] \in \unt(A_\Gamma;\,\mathcal{G})$. Thus $[(\iota_1)_* \circ (c_1)_*\circ ((\iota_{k})_*\circ (c_{k})_*)^{-1}]\in \unt(A_\Gamma;\,\mathcal{G})$ as well. Both collapses $c_1$, $c_k$ factor through the collapse of $\{K_k\}$ and its range $X'$. The induction now proceeds in $X'$.
\end{proof}

\begin{lm}
\label{prepeakreduction}
    Let $X$ be a spatial cube complex, and let $H_a,H_b,H_c,H_d$ be four distinct hyperplanes of $X$. Assume that both collapses $c_{ab}\colon X\to S_{ab}$ of $\{H_a,H_b\}$ and $c_{cd}\colon X\to S_{cd}$ of $\{H_c,H_d\}$ are homotopy equivalences with range isomorphic to $\SS$. Then, up to exchanging $H_a$ and $H_b$, both collapses $c_{ac}\colon X\to S_{ac}$ of $\{H_a,H_c\}$ and $c_{bd}\colon X\to S_{bd}$ of $\{H_b,H_d\}$ are homotopy equivalences with range isomorphic to $\SS$.

    Moreover, if there exists combinatorial isomorphisms $\iota_{ab}\colon S_{ab}\to \SS$ and $\iota_{cd}\colon S_{cd}\to \SS$ such that $[(\iota_{ab})_* \circ (c_{ab})_*\circ ((\iota_{cd})_*\circ (c_{cd})_*)^{-1}]\in \unt(A_\Gamma;\,\mathcal{G})$, then there exists combinatorial isomorphisms $\iota_{ac}\colon S_{ac}\to \SS$ and $\iota_{bd}\colon S_{bd}\to \SS$ such that $[(\iota_{ab})_* \circ (c_{ab})_*\circ ((\iota_{ac})_*\circ (c_{ac})_*)^{-1}]\in \unt(A_\Gamma;\,\mathcal{G})$ and $[(\iota_{ab})_* \circ (c_{ab})_*\circ ((\iota_{bd})_*\circ (c_{bd})_*)^{-1}]\in \unt(A_\Gamma;\,\mathcal{G})$.
\end{lm}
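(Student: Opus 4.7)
The plan is to identify $X$ as a blow-up of the Salvetti complex $\SS$ by exactly two pairwise compatible Whitehead partitions, characterize combinatorially which pairs of hyperplanes of $X$ can be collapsed onto a complex isomorphic to $\SS$, and then invoke Lemma~\ref{factorpath} in carefully chosen orderings to produce both intermediates with the untwisted McCool property.

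First I would invoke \cite{spatial}, Proposition~5.7 to write $X \simeq \SS^\mathbf{\Pi}$ for some family $\mathbf{\Pi}$ of pairwise compatible Whitehead partitions. Since $X$ has $|V| + |\mathbf{\Pi}|$ hyperplanes by Lemma~\ref{blowupproperties} and $c_{ab}$ reaches a complex with $|V|$ hyperplanes by collapsing only two of them, we must have $|\mathbf{\Pi}| = 2$; write $\mathbf{\Pi} = \{\mathbf{P},\mathbf{Q}\}$. For any basepoint $b$ of $\mathbf{P}$, the hyperplanes labeled $\mathbf{P}$ and $b$ share the same set of transverse hyperplanes by Lemma~\ref{blowupproperties}, so by specialness there is an involutive combinatorial automorphism of $X$ exchanging their dual edges and fixing every other edge, exactly as in the proof of Lemma~\ref{whitehead}, $(3.)\Leftrightarrow(4.)$. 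Writing, for each hyperplane $H$ of $X$, $f(H)\subseteq\{\mathbf{P},\mathbf{Q}\}$ for the set of partitions $\mathbf{R}$ such that $H$ is labeled $\mathbf{R}$ or by a basepoint of $\mathbf{R}$, I conclude that a pair $\{H,H'\}$ of distinct hyperplanes of $X$ admits a homotopy-equivalent collapse onto a complex isomorphic to $\SS$ if and only if there is a matching $\{H,H'\}\to\{\mathbf{P},\mathbf{Q}\}$ assigning each hyperplane to an element of its $f$-set, equivalently both $f(H)$ and $f(H')$ are nonempty with $f(H)\cup f(H')=\{\mathbf{P},\mathbf{Q}\}$.

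By hypothesis, $\{H_a,H_b\}$ and $\{H_c,H_d\}$ carry such matchings $\mu_{ab}$ and $\mu_{cd}$. Normalize so that $\mu_{ab}(H_a)=\mathbf{P}$ and $\mu_{ab}(H_b)=\mathbf{Q}$. If $\mu_{cd}(H_c)=\mathbf{Q}$ and $\mu_{cd}(H_d)=\mathbf{P}$, the assignments $(H_a\mapsto\mathbf{P},H_c\mapsto\mathbf{Q})$ and $(H_b\mapsto\mathbf{Q},H_d\mapsto\mathbf{P})$ are matchings witnessing that $\{H_a,H_c\}$ and $\{H_b,H_d\}$ each collapse onto a complex isomorphic to $\SS$. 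Otherwise $\mu_{cd}(H_c)=\mathbf{P}$ and $\mu_{cd}(H_d)=\mathbf{Q}$, and exchanging $H_a$ and $H_b$ as allowed by the statement reduces to the previous case. This proves the first assertion.

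For the moreover part, I apply Lemma~\ref{factorpath} in two orderings. Taking $\H=\{H_a,H_b\}$ ordered with $H_b$ last and $\K=\{H_c,H_d\}$, the lemma outputs an ordering $(K_1,K_2)$ such that the intermediate collapse of $\{H_a,K_2\}$ is a homotopy equivalence onto a complex isomorphic to $\SS$ with the resulting compositions with $c_{ab}$ and $c_{cd}$ both in $\unt(A_\Gamma;\,\mathcal{G})$; the combinatorial characterization above constrains the valid choices of $K_2$, and after possibly renaming $H_c\leftrightarrow H_d$ we may take $K_2=H_c$, yielding $\iota_{ac}$. A second application with $\H=\{H_a,H_b\}$ in the reversed order (making $H_a$ last) similarly produces an intermediate of the form $\{H_b,K'_2\}$; the label analysis rules out $K'_2=H_c$ whenever $\{H_b,H_c\}$ fails the matching criterion, forcing $K'_2=H_d$ and providing $\iota_{bd}$. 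The main obstacle will be the sub-cases where some $f(H_i)$ equals $\{\mathbf{P},\mathbf{Q}\}$, which may allow several valid intermediates and hence multiple possible outputs of Lemma~\ref{factorpath}; such sub-cases are resolved by additionally running Lemma~\ref{factorpath} with $\H$ and $\K$ exchanged (for instance $\H=\{H_c,H_d\}$, $\K=\{H_a,H_b\}$), where a different hyperplane serves to force the desired intermediate, and then composing the resulting untwisted McCool transitions with the one given by hypothesis.
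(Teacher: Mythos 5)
Your approach is genuinely different from the paper's: rather than working directly with the vertex preimages $C_{ab}$, $C_{cd}$ and constructing an explicit involutive automorphism $\alpha$ of $X$ swapping a hyperplane from each pair (which the paper then exploits for both conclusions at once), you attempt to reduce everything to a labeling calculus on the blow-up $X\simeq\SS^{\{\mathbf{P},\mathbf{Q}\}}$ and to repeated applications of Lemma~\ref{factorpath}. There are two real gaps.

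The foundational claim, the $f$-set characterization of collapsible pairs, is not established. For sufficiency you invoke the swap automorphism ``exactly as in the proof of Lemma~\ref{whitehead}, $(3.)\Leftrightarrow(4.)$,'' but that construction is carried out in a single-partition blow-up $\SS^{\mathbf{P}}$, where the vertex preimage is a single edge and each of the two swapped hyperplanes is dual to exactly one edge, so the swap is unambiguous. In $\SS^{\{\mathbf{P},\mathbf{Q}\}}$ with $\mathbf{P},\mathbf{Q}$ adjacent, the canonical vertex preimage is a square and $H_{\mathbf{P}}$ is dual to two edges; to extend a swap of $H_{\mathbf{P}}$ and $H_{b}$ across $X$ one first needs a consistent pairing of their dual edges, and producing it (via the embedded torus and two applications of Lemma~\ref{tubenew}) is precisely the technical content of the paper's proof that your citation does not supply. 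The necessity direction — that any pair collapsing to $\SS$ must carry an $f$-matching — is asserted without any argument at all, and is not immediate: Lemma~\ref{blowupproperties} gives only the one-way implication (basepoint of $\mathbf{Q}$ $\Rightarrow$ same transverse hyperplanes as $H_{\mathbf{Q}}$), so sharing the transverse set of $H_{\mathbf{Q}}$ does not obviously force a vertex hyperplane to be labeled by a basepoint of $\mathbf{Q}$. Since you need necessity to extract the matchings $\mu_{ab},\mu_{cd}$ from the hypothesis, this is a load-bearing gap. Separately, for the moreover part, Lemma~\ref{factorpath} is an existence statement that may return an unwanted intermediate when several are valid; your proposed fix (rerun with $\H,\K$ exchanged, ``where a different hyperplane serves to force the desired intermediate'') is plausible and can in fact be made to work by a four-way case analysis across the orderings $(H_a,H_b)$, $(H_b,H_a)$, $(H_c,H_d)$, $(H_d,H_c)$, but as written the resolution is asserted rather than carried out, whereas the paper sidesteps this entirely by producing $\iota_{ac}$ and $\iota_{bd}$ simultaneously from the single automorphism $\alpha$ and checking the McCool condition via $\alpha$-invariance of the subcomplexes $D_i$ from Proposition~\ref{caracwhitehead}.
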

\begin{proof}
    Let $C_{ab},C_{cd}\subseteq X$ be the vertex preimages of $c_{ab}$ and $c_{cd}$. Since $c_{ab}$ is a strong collapse, the $\cat$ cube complex $C_{ab}$ has two hyperplanes ($H_a$ and $H_b$). Two cases arise depending on whether they are transverse or not:
    \begin{itemize}
        \item If $H_a$ and $H_b$ are transverse, $C_{ab}$ is a square, thus $X$ and $C_{cd}$ have four vertices. Since $C_{cd}$ has two hyperplanes as well ($H_c$ and $H_d$), $C_{cd}$ is a square, and $H_c$ and $H_d$ are transverse. Then some edge $e$ of $C_{ab}$ and some edge $f$ of $C_{cd}$ have the same endpoints. If $f$ is dual to $H_d$, up to exchanging $H_a$ and $H_b$, assume $e$ is dual to $H_a$ and $f$ to $H_d$ (if $f$ is dual to $H_c$, up to exchanging, assume $e$ is dual to $H_b$ and the argument is symmetrical). Applying Lemma~\ref{tubenew} twice, once for $e$, $c_{cd}$ and once for $f$, $c_{ab}$, yields that $H_a$ and $H_d$ have the same transverse hyperplanes in $X$. In particular, $H_a$ and $H_d$ are both transverse to $H_b$ and $H_c$. By specialness, an edge $g$ dual to $H_c$ cannot join opposite vertices of the square $C_{ab}$, otherwise it would span a square with both edges of $C_{ab}$ dual to $H_a$, making the hyperplane $H_a$ one-sided. Therefore, the edge $g$ has the same endpoints as an edge $h$ dual to $H_b$. By specialness again, the loops $ef^{-1}$ and $gh^{-1}$ span a product in $X$, which is an embedded combinatorial torus $T$. The exchange of edges $e$ and $f$ extends to a combinatorial automorphism of this torus fixing the four vertices, $g$, and $h$. Since $H_a$ and $H_d$ have the same transverse hyperplanes and $X$ is special, this automorphism extends further to an involutive automorphism $\alpha$ of $X$ fixing all the other edges. Since $\alpha$ exchanges $H_a$ and $H_d$ and preserves all the other hyperplanes, there exists combinatorial automorphisms $\overline{\alpha}\colon S_{ab}\to S_{bd}$ and $\overline{\alpha}'\colon S_{cd}\to S_{ac}$ such that $c_{bd} \circ \alpha = \overline{\alpha}\circ c_{ab}$ and $c_{ac} \circ \alpha = \overline{\alpha}'\circ c_{cd}$ . This proves that $c_{bd}$ and $c_{ac}$ are homotopy equivalences.

        \item If $H_a$ and $H_b$ are not transverse, $X$ has three vertices. Thus, $H_c$ and $H_d$ are not transverse, and both $C_{ab}$ and $C_{cd}$ consist of two incident edges. Therefore, some edge $e$ of $C_{ab}$ and some edge $f$ of $C_{cd}$ have the same endpoints. If $f$ is dual to $H_d$, up to exchanging $H_a$ and $H_b$, assume $e$ is dual to $H_a$ (if $f$ is dual to $H_c$, up to exchanging, assume $e$ is dual to $H_b$ and the argument is symmetrical). Applying Lemma~\ref{tubenew} twice as before, $H_a$ and $H_d$ have the same transverse hyperplanes in $X$. By specialness, the automorphism of the graph $C_{ab}\cup C_{cd}$ fixing all vertices, exchanging $e$ and $f$ and fixing all the other edges extends to an involutive automorphism $\alpha$ of $X$ fixing all the other edges. Moreover, there exists combinatorial automorphisms $\overline{\alpha}\colon S_{ab}\to S_{bd}$ and $\overline{\alpha}'\colon S_{cd}\to S_{ac}$ as before. Thus $c_{bd}$ and $c_{ac}$ are homotopy equivalences exactly as in the first case.
    \end{itemize}
    
    Assume now the existence of $\iota_{ab}$, $\iota_{cd}$ as in the statement and let $\iota_{ac} = \iota_{cd}\circ \overline{\alpha}'^{-1} \colon S_{ac}\to S_{cd}\to \SS$ and $\iota_{bd} =  \iota_{ab} \circ \overline{\alpha}^{-1}\colon S_{bd}\to S_{ab}\to \SS$, so that $\iota_{ac}\circ c_{ac} = \iota_{cd}\circ c_{cd}\circ \alpha$ and $\iota_{bd}\circ c_{bd} = \iota_{ab}\circ c_{ab}\circ \alpha$. Use Proposition~\ref{caracwhitehead} to find connected locally convex $D_i\subseteq X$ for all $i\leq m$ such that $D_i\cap C_{ab}$ and $D_i\cap C_{cd}$ are convex in $C_{ab}$ and $C_{cd}$ respectively, and $\iota_{ab}\circ c_{ab}(D_i) = \iota_{cd}\circ c_{cd}(D_i) = \SS_{\Delta_i}$. For a fixed $i$, by convexity of $D_i\cap C_{ab}$ and $D_i\cap C_{cd}$, $D_i$ contains the edge $e$ if and only if it contains both its endpoints, if and only if it contains the edge $f$. By local convexity, $D_i$ contains a cube spanned by $e$ and other edges if and only if $D_i$ contains a cube spanned by $f$ and the same edges. In the case where $H_a$ and $H_b$ are transverse, the same is true for the other edge $e'$ parallel to $e$ in $C_{ab}$ and the other edge $f'$ parallel to $f$ in $C_{cd}$. Since any cube not containing $e$, $f$, $e'$ or $f'$ is fixed by $\alpha$, $D_i$ is $\alpha$-invariant. Therefore, $C_{ac}\cap D_i = \alpha(C_{cd})\cap D_i = \alpha(C_{cd}\cap D_i)$ is convex in $\alpha(C_{cd}) = C_{ac}$ and $C_{bd}\cap D_i = \alpha(C_{ab})\cap D_i)$ is convex in $C_{bd}$. Moreover, $\iota_{ac}\circ c_{ac}(D_i) = \iota_{cd}\circ c_{cd}\circ \alpha(D_i) = \SS_{\Delta_i}$ and likewise $\iota_{bd}\circ c_{bd}(D_i) = \iota_{ab}\circ c_{ab}\circ \alpha(D_i) =\SS_{\Delta_i}$. Thus, by Proposition~\ref{caracwhitehead}, $[(\iota_{ab})_* \circ (c_{ab})_*\circ ((\iota_{ac})_*\circ (c_{ac})_*)^{-1}]\in \unt(A_\Gamma;\,\mathcal{G})$ and $[(\iota_{ab})_* \circ (c_{ab})_*\circ ((\iota_{bd})_*\circ (c_{bd})_*)^{-1}]\in \unt(A_\Gamma;\,\mathcal{G})$ as required.
\end{proof}

\section{Adding fixed cyclic subgroups}
\label{fixedZ}

Let $\mathcal{G} = (A_{\Delta_1},\dots A_{\Delta_p})$ be a collection of standard subgroups of $A_\Gamma$ and let $\mathcal{H} = (\gen{h_1},\dots,\gen{h_q})$ be a finite collection of cyclic subgroups of $A_\Gamma$. Our goal is to construct the following object:

\begin{thm}
\label{mccoolcyclic}
There exists a contractible subcomplex $K^{\mathcal{G}}_{\mathcal{H}^t}$ of the spine of untwisted outer space $K_\Gamma$ on which $\unt(A_\Gamma; \mathcal{G},\mathcal{H}^t)$ acts properly and cocompactly. In particular, this group is of type VF.
\end{thm}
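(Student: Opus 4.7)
The plan is to define $K^{\mathcal{G}}_{\mathcal{H}^t}$ as the full subcomplex of the relative spine $K^\mathcal{G}$ spanned by those marked spatial cube complexes $[X,m]$ at which the translation length $\ell_{\uc{X}}(m^{-1}(h_j))$ attains, for every $j$ simultaneously, its minimum $L_j \coloneqq \inf\{\ell_{\uc{X}}(m^{-1}(h_j)) : [X,m]\in K^\mathcal{G}\}$. These infima are attained because there are finitely many isomorphism types of spatial cube complexes (Lemma~\ref{blowupproperties} and the remark following it), and translation lengths take values in $\NN$. Invariance of $K^{\mathcal{G}}_{\mathcal{H}^t}$ under the McCool group is immediate: if $[\varphi]\in \unt(A_\Gamma;\mathcal{G},\mathcal{H}^t)$, then $\varphi^{-1}(h_j)$ is conjugate to $h_j$, so $\ell_{\uc{X}}((\varphi\circ m)^{-1}(h_j)) = \ell_{\uc{X}}(m^{-1}(\varphi^{-1}(h_j)))= \ell_{\uc{X}}(m^{-1}(h_j))$. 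Properness is inherited directly from the properness of the $\unt(A_\Gamma)$-action on $K_\Gamma$, which has finite vertex stabilizers.

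For cocompactness I would reduce to a fixed isomorphism type $X$ and a fixed strong collapse $c\colon X\to \SS$. Any $[X,m]\in K^\mathcal{G}$ then determines an outer automorphism $\phi \coloneqq [m\circ c_*^{-1}]$ which, by definition of $K^\mathcal{G}$, can be chosen to lie in $\unt(A_\Gamma;\mathcal{G})$ up to the finite isomorphism group of $X$. By Corollary~\ref{collapsedownstairs}, $\ell_{\uc{X}}(m^{-1}(h_j)) = \ell_{\USS}(\phi^{-1}(h_j))$, so $[X,m]\in K^{\mathcal{G}}_{\mathcal{H}^t}$ if and only if $\phi^{-1}([h_j])$ has cyclically reduced word length $L_j$ for every $j$. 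Since $A_\Gamma$ has only finitely many conjugacy classes of any given word length, the allowed set of $\phi$'s decomposes into finitely many right cosets of $\unt(A_\Gamma;\mathcal{G},\mathcal{H}^t)$, yielding finitely many orbits of vertices.

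The main obstacle is contractibility, which is the subject of Section~\ref{contractibility}. The strategy mirrors that of \cite{untwistedos}: starting from a closed Whitehead path in $K^{\mathcal{G}}_{\mathcal{H}^t}$, I aim to reduce it to a trivial one by successive local replacements at peaks, that is, pairs of consecutive Whitehead moves through a common marked Salvetti. Compared to \cite{untwistedos} each replacement must preserve two conditions at once. The relative condition, that every intermediate Whitehead automorphism lies in $\unt(A_\Gamma;\mathcal{G})$, is controlled by Lemma~\ref{whitehead} together with the factorization Lemma~\ref{factorpath} and the local swap Lemma~\ref{prepeakreduction} of the preceding section. The minimality condition, that each intermediate marked Salvetti still achieves $L_j$ for every $j$, forces a careful Whitehead-style analysis of how each based Whitehead partition acts on cyclically reduced representatives of the $[h_j]$: the goal is to show that no admissible peak reduction can ever raise any $\ell_{\USS}(\phi^{-1}(h_j))$ above $L_j$, mirroring the argument of Bestvina--Feighn--Handel in the free case. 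Assembling the two, contractibility of $K^{\mathcal{G}}_{\mathcal{H}^t}$ follows, and together with properness, cocompactness, finite dimension of $K^{\mathcal{G}}_{\mathcal{H}^t}$, and virtual torsion-freeness of $\unt(A_\Gamma)$, Theorem~\ref{mccoolcyclic} is established.
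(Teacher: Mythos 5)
Your proposal captures the correct high-level strategy (a subcomplex of $K^\mathcal{G}$ cut out by minimizing translation lengths of the fixed conjugacy classes, then peak reduction for contractibility), but it deviates from the paper's construction in ways that create genuine gaps.

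First, your definition of $K^{\mathcal{G}}_{\mathcal{H}^t}$ as the \emph{full} subcomplex of $K^\mathcal{G}$ spanned by marked complexes where every $\ell_{\uc{X}}(m^{-1}(h_j))$ achieves the global minimum $L_j$ is not the same object as the paper's, which is the union of stars (in $K_\Gamma$) of the Salvettis in $S^\mathcal{G}_{\mathcal{H}^t}$. Since marking-preserving collapses never increase translation lengths (Corollary~\ref{collapsedownstairs}), a blow-up $[X,m]$ adjacent to a minimizing Salvetti typically has $\ell_{\uc{X}}(m^{-1}(h_j)) > L_j$ for at least one $j$, so it is excluded from your subcomplex. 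The result is that your $K^{\mathcal{G}}_{\mathcal{H}^t}$ consists essentially of the minimizing Salvettis plus only those blow-ups whose extra hyperplanes miss all the relevant minimal cycles; there is no reason this should be connected, let alone contractible. The paper's union-of-stars definition is what makes the deformation-retraction machinery in Section~\ref{contractibility} apply. A separate issue with your definition: you need a single marked complex achieving \emph{all} the $L_j$ simultaneously, and the finiteness argument you give only produces a minimizer for each $j$ individually.

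Second, your contractibility plan only gestures at the hard work. The paper's crucial device is the lexicographic norm $N$ built from an \emph{infinite} sequence $(h_i)_{i\geq 1}$ extending $(h_1,\dots,h_q)$ to hit every conjugacy class. This is what makes $N$ injective on marked Salvettis (Lemma~\ref{wellorder}), hence a well-ordering, which in turn enables the transfinite induction that proves contractibility of each $K^\mathcal{G}_{<\sigma}$ via the local retractions of Proposition~\ref{contractintersect}. If one only tracks the finitely many $\ell(h_j)$, $j\leq q$, the norm is not injective and the peak-reduction scheme has no well-founded ordering to induct on. Your appeal to ``mirroring the argument of Bestvina--Feighn--Handel'' defers exactly the point where all the technical lemmas of Section~\ref{contractibility} (Lemmas~\ref{changenorm}, \ref{peak}, \ref{factoring}, \ref{higginslyndon}, \ref{pushing}) do real work, particularly in maintaining the two conditions (relative to $\mathcal{G}$ and to $N$) simultaneously; these depend on Proposition~\ref{caracwhitehead}, Lemma~\ref{factorpath}, and Lemma~\ref{prepeakreduction}, which your sketch does not engage with. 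Finally, the paper defines $S^\mathcal{G}_{\mathcal{H}^t}$ relative to the lexicographically minimal Salvetti $\sigma_0$, not relative to simultaneous minimizers of the individual $L_j$'s; this is a deliberate choice that makes $S^\mathcal{G}_{\mathcal{H}^t}$ a lower set for the well-ordering and lets $K^\mathcal{G}_{\mathcal{H}^t}$ appear as a stage $K^\mathcal{G}_{<\sigma_1}$ of the same induction that proves $K^\mathcal{G}$ itself is contractible.
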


\begin{ex}
\label{conjugatingautomorphisms}
An automorphism $\varphi\in \aut(A_\Gamma)$ is \emph{conjugating} when $\varphi(v)$ is conjugate to $v$ for all $v\in V$. By \cite{laurence}, Theorem~2.2, the subgroup of conjugating automorphisms is exactly the subgroup generated by partial conjugations in $A_\Gamma$. In particular, its image in $\out(A_\Gamma)$ is contained in $\unt(A_\Gamma)$. Therefore, this image is the McCool group $\unt(A_\Gamma;\,\{\gen{v}\mid v\in V\}^t)$, hence is of type VF by Theorem~\ref{mccoolcyclic}.
\end{ex}

Together with Lemma~\ref{autmccool}, we get directly a \emph{spine of untwisted auter space} for $A_\Gamma$ (as a subcomplex of the spine of untwisted outer space for $A_\Gamma*\ZZ$):

\begin{cor}
\label{auterspace}
There exists a contractible simplicial complex $L_\Gamma$ on which $\uaut(A_\Gamma)$ acts properly and cocompactly. In particular, this group is of type VF.
\end{cor}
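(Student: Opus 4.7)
The plan is to combine Lemma~\ref{autmccool} and Theorem~\ref{mccoolcyclic} directly, with essentially no additional geometric work. Concretely, let $\Gamma'$ be the graph obtained from $\Gamma$ by adjoining one isolated vertex $t$, so that $A_{\Gamma'} \simeq A_\Gamma * \ZZ$. Set $\mathcal{G} = \{A_\Gamma\}$, which is a standard subgroup of $A_{\Gamma'}$ (corresponding to the full subgraph $\Gamma \subset \Gamma'$), and set $\mathcal{H} = \{\gen{t}\}$, a cyclic subgroup of $A_{\Gamma'}$. I would define $L_\Gamma$ to be the subcomplex $K^{\mathcal{G}}_{\mathcal{H}^t} \subseteq K_{\Gamma'}$ produced by Theorem~\ref{mccoolcyclic}.

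The identification of the acting group is exactly the content of Lemma~\ref{autmccool}, which gives an isomorphism $\uaut(A_\Gamma) \simeq \unt(A_{\Gamma'};\, \mathcal{G}, \mathcal{H}^t)$. Transporting the action of $\unt(A_{\Gamma'};\, \mathcal{G}, \mathcal{H}^t)$ on $L_\Gamma = K^{\mathcal{G}}_{\mathcal{H}^t}$ through this isomorphism yields an action of $\uaut(A_\Gamma)$ on $L_\Gamma$.

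By Theorem~\ref{mccoolcyclic} applied to $A_{\Gamma'}$ with the above choice of $\mathcal{G}$ and $\mathcal{H}$, the complex $L_\Gamma$ is contractible and the action of $\unt(A_{\Gamma'};\, \mathcal{G}, \mathcal{H}^t)$ on it is proper and cocompact, and the group is of type VF. Combined with the isomorphism above, this gives all the claimed properties for $\uaut(A_\Gamma)$ acting on $L_\Gamma$.

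There is no serious obstacle here: the entire content of the corollary is bundled into Lemma~\ref{autmccool} (which expresses $\uaut(A_\Gamma)$ as a McCool group fitting the hypotheses of the main theorem) and Theorem~\ref{mccoolcyclic} (which supplies the geometric model). The only minor point worth checking explicitly is that $\gen{t}$ is a cyclic subgroup, so that $\mathcal{H}$ meets the hypotheses of Theorem~\ref{mccoolcyclic} (which requires $\mathcal{H}$ to consist of cyclic subgroups), and that $A_\Gamma$ is a standard subgroup of $A_{\Gamma'}$, so that $\mathcal{G}$ meets its hypotheses; both are immediate from the construction of $\Gamma'$.
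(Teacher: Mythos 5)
Your proof is correct and follows exactly the route the paper intends: the corollary is stated immediately after Theorem~\ref{mccoolcyclic}, with the remark that it follows ``together with Lemma~\ref{autmccool},'' which is precisely the combination you carry out. The verification that $A_\Gamma$ is a standard subgroup of $A_{\Gamma'}$ and that $\gen{t}$ is cyclic is the only point of substance, and you have addressed it.
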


Note that type VF was already an algebraic consequence of the fact that $\unt(A_\Gamma)$ is of type VF and $\inn(A_\Gamma)$ is of type F, as a right-angled Artin group (\cite{geoghegan}, Theorem~7.3.4).

\begin{defi}
For $X$ a spatial cube complex, $[g]\in \pi_1X$ a conjugacy class, let $\ell_X(g)$ be the smallest length of an edge cycle in the free homotopy class representing $[g]$, or equivalently the translation length (for the combinatorial metric) of any representative $g$ acting on $\uc{X}$. In particular, for $g\in A_\Gamma$, let $\ell(g)=\ell_{\SS}(g)$, which is the minimal length of any cyclic word in the standard generators representing the conjugacy class of $g$. Complete $(h_1,\dots, h_q)$ into an infinite sequence $(h_i)_{i\geq 1}$ of elements of $A_\Gamma$ containing at least one representative per conjugacy class.

Given a marked Salvetti $\sigma = [X,m]\in K_\Gamma$, its \emph{lexicographic norm} will be the following sequence of non-negative integers: \[N(\sigma) = (\ell_X(m^{-1}(h_i)))_{i\geq 1}\]
If $[X,m]=[X',m']$ is a marked Salvetti, there exists a combinatorial isomorphism $\alpha\colon X\to X'$ such that $[\alpha_*]=[m'^{-1}\circ m]$. Since $\alpha$ preserves lengths, for all $h\in A_\Gamma$, $\ell_X(m^{-1}(h)) = \ell_{X'}(\alpha_*\circ m^{-1}(h)) = \ell_{X'}(m'^{-1}(h))$, hence $N$ is well-defined.
\end{defi}

\begin{rem}
    This norm is related to the norm $\n{\cdot}$ defined in \cite{untwistedos}, Definition~6.1 by a shuffling and possible duplicating of entries. We will show that most of the properties proved in \cite{untwistedos} for $\n{\cdot}$ also hold for $N$.
\end{rem}

\begin{lm}
\label{wellorder}
The map $N$ is injective on the set of marked Salvettis in $K_\Gamma$. Setting $\sigma \leq \sigma'$ if and only if $N(\sigma)\leq N(\sigma')$ in the lexicographic order defines a well-ordering of the set of marked Salvettis in $K_\Gamma$
\end{lm}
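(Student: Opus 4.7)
The plan is to prove injectivity first, then deduce the well-ordering from a finiteness result obtained from Proposition~\ref{bounded_displacement}. For injectivity, start from $N(\sigma)=N(\sigma')$ with $\sigma=[\SS,\varphi]$ and $\sigma'=[\SS,\varphi']$: since the sequence $(h_i)_{i\ge 1}$ meets every conjugacy class and $\ell$ depends only on the conjugacy class, this equality extends to $\ell(\varphi^{-1}(h))=\ell(\varphi'^{-1}(h))$ for every $h\in A_\Gamma$. Setting $\beta=\varphi'^{-1}\circ\varphi\in\out(A_\Gamma)$ and using bijectivity of $\varphi^{-1}$, this forces $\ell(\beta(g))=\ell(g)$ for every $g\in A_\Gamma$. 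Now $[\SS,\varphi]=[\SS,\varphi']$ is equivalent to $\beta$ being the outer class of a combinatorial automorphism of $\SS$, i.e.\ of a signed graph automorphism of $\Gamma$, so injectivity reduces to showing that every length-preserving outer automorphism of $A_\Gamma$ has this form. From $\ell(\beta(v))=1$ one obtains $[\beta(v)]=[\sigma(v)^{\varepsilon_v}]$ for some $\sigma\colon V\to V$ and signs $\varepsilon_v$; Lemma~\ref{relativedroms} applied with $\Gamma_i=\{v_i\}$ and $\Delta_i=\{\sigma(v_i)\}$ promotes $\sigma$ to a graph automorphism of $\Gamma$. Letting $\alpha$ denote the signed graph automorphism $v\mapsto\sigma(v)^{\varepsilon_v}$, the composition $\beta\circ\alpha^{-1}$ is a length-preserving conjugating automorphism in the sense of Example~\ref{conjugatingautomorphisms}. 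The main obstacle is to verify that such an automorphism is inner; I plan to attack this by a direct analysis of the cyclic lengths of $\tau(vw)=k_vvk_v^{-1}k_wwk_w^{-1}$ for non-commuting $v,w\in V$, forcing $k_v^{-1}k_w$ to lie in a combination of the centralizers of $v$ and $w$, then using connectedness of the commutation graph to synchronize all conjugators $k_v$ modulo the center, or alternatively by invoking marked-length-spectrum rigidity for free cospecial actions on $\cat$ cube complexes.

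For the well-ordering, the core ingredient is a finiteness estimate. Choose $K$ large enough that $\{h_1,\dots,h_K\}$ meets every conjugacy class $[v]$ with $v\in V$ and every $[vw]$ with $v,w\in V$; this is possible since there are only finitely many such classes. For $M\ge 0$ set $T_K(M) = \{[\SS,\varphi] : \ell(\varphi^{-1}(h_i))\le M \text{ for all } i\le K\}$. For $[\SS,\varphi]\in T_K(M)$, the $n=|V|$ elements $a_i=\varphi^{-1}(v_i)$ of $A_\Gamma$ satisfy $\ell(a_i)\le M$ and $\ell(a_ia_j)\le M$ by conjugation-invariance of $\ell$ and the choice of $K$. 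Proposition~\ref{bounded_displacement} applied to the free cospecial action of $A_\Gamma$ on the finite-dimensional $\cat$ cube complex $\uc\SS$ then produces a vertex $x\in\uc\SS^{(0)}$ with $d(x,a_ix)\le M'$ for each $i$, where $M'$ depends only on $M$ and on the dimension of $\uc\SS$. Since $A_\Gamma$ acts transitively on vertices of $\uc\SS$, translating $x$ back to a fixed basepoint $x_0$ amounts to replacing $\varphi^{-1}$ by an inner conjugate and thus does not alter $[\varphi]$; after this change each $a_i$ lies in the finite ball of radius $M'$ around $x_0$, so only finitely many tuples $(a_1,\dots,a_n)$ arise. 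Since $V$ generates $A_\Gamma$, each tuple determines $\varphi^{-1}$ uniquely and hence $[\varphi]$ takes finitely many values; quotienting by the finite stabilizer of $[\SS,\mathrm{id}]$ shows that $T_K(M)$ is finite.

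Suppose now for contradiction there is an infinite strictly lex-descending chain $\sigma_1>\sigma_2>\cdots$ of marked Salvettis. Each coordinate of the sequence $N(\sigma_n)$ eventually stabilizes: the first coordinate is weakly decreasing in $n$ and bounded below by $0$, so stabilizes at some $m_1$; restricting past this stabilization the second coordinate is then weakly decreasing and stabilizes at some $m_2$; and so on by induction on $i$. For $n$ beyond the stabilization indices of all coordinates $i\le K$, every $\sigma_n$ lies in $T_K(\max_{i\le K} m_i)$, which is finite by the previous paragraph. However, injectivity of $N$ combined with strict lex descent forces the $\sigma_n$ to be pairwise distinct, giving the required contradiction and establishing that $\le$ is a well-order.
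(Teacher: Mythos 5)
The well-ordering half of your argument is correct and essentially the same as the paper's: bound the relevant translation lengths along a descending chain by coordinate-wise stabilization, then apply Proposition~\ref{bounded_displacement} and transitivity of the vertex action on $\uc\SS$ to conclude only finitely many marked Salvettis can realize any given bound. (Your set $T_K(M)$ and the paper's constant $C$ play the same role.)

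The injectivity half, however, has a genuine gap. You correctly reduce to the claim that a length-preserving outer automorphism $\beta$ is an outer oriented graph permutation, and your use of Lemma~\ref{relativedroms} to extract a graph automorphism $\sigma$ is sound. But at the decisive step --- showing that the residual conjugating automorphism $\beta\circ\alpha^{-1}$ is inner --- you explicitly announce two possible plans of attack (``I plan to attack this by a direct analysis\ldots or alternatively by invoking marked-length-spectrum rigidity'') and carry out neither. This is exactly the statement the paper delegates to \cite{untwistedos}, Lemma~6.2, which asserts that an outer automorphism preserving the length of every element of length at most $2$ is an oriented graph permutation. Without citing that lemma or completing one of your two sketches, the injectivity argument is incomplete. (Your proposed direct analysis would also need to deal with disconnectedness of $\Gamma$ and with the fact that partial conjugations are conjugating but generally non-inner, so it is not obviously short; the citation route is the one the paper intends.)
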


\begin{proof}
Let $\sigma =[\SS,\varphi],\sigma'=[\SS,\varphi']$ be two marked Salvettis and assume $N(\sigma) = N(\sigma')$. Then for every $i\geq 1$, $\ell(\varphi^{-1}\circ \varphi'(h_i)) = \ell(h_i)$. In particular for every $h\in A_\Gamma$ such that $\ell(h)\leq 2$, $\ell(\varphi^{-1}\circ \varphi'(h)) = \ell(h)$. By Lemma~6.2 in \cite{untwistedos}, $\varphi^{-1}\circ \varphi'$ is an outer oriented graph permutation, thus $\sigma = \sigma'$, proving that $N$ is injective.

Suppose the existence of a decreasing sequence $(\sigma_n = [\SS,\varphi_n])_n$, and choose arbitrary representatives $\varphi_n\in \aut(A_\Gamma)$. The non-increasing positive integer sequence $(\ell(\varphi_n^{-1}(h_1)))_n$ is eventually constant. Thus the sequence $(\ell(\varphi_n^{-1}(h_2)))_n$ is eventually non-increasing, and then itself eventually constant. Inductively, for every $i$, the sequence $(\ell(\varphi_n^{-1}(h_i)))_n$ is eventually constant. This implies that the following quantity: \[\max_i \ell(\varphi_n^{-1}(v_i)) + \dfrac{\dim(\SS)}{2}\max_{i<j} \ell(\varphi_n^{-1}(v_iv_j))+2\dim(\SS)\] is bounded by some constant $C$ independent of $n$.

Apply Proposition~\ref{bounded_displacement} for all $n$ to $A_\Gamma$ acting on $\USS$, with $a_i = \varphi_n^{-1}(v_i)$ the preimages of standard generators. There exists $(x_n)_n\in \USS$ a sequence of vertices such that $d(x_n,\varphi_n^{-1}(v_i)x_n)\leq C$ for all $i,n$. Since $A_\Gamma$ acts transitively on vertices of $\USS$, up to composing $\varphi_n$ by an inner automorphism, assume $(x_n)_n$ is constant equal to $x_0$. Since $A_\Gamma$ acts freely on $\USS$, only finitely many group elements $g$ satisfy $d(x_0,gx_0)\leq C$. Thus there are only finitely many choices for all the $\varphi_n^{-1}(v_i)$. This implies the existence of $n\neq n'$ with $\varphi_n =\varphi_{n'}$, hence $\sigma_n =\sigma_{n'}$ and a contradiction.
\end{proof}

Beware that this order type may be more complicated that in \cite{untwistedos}, where the set of marked Salvettis has simply the order type of the integers (\cite{untwistedos}, Corollary~6.21). We will however still be able to proceed using transfinite induction. The following construction of the spine of untwisted outer space relative to $\mathcal{G}$ and $\mathcal{H}$ is close to the construction in \cite{mccoolwhitehead} of a relative outer space for free groups.

\begin{defi}
For every marked Salvetti $\sigma\in K_\Gamma$, let $K^\mathcal{G}_{<\sigma}$ be the union of all stars (in $K_\Gamma$) of marked Salvettis in $S^\mathcal{G}$ that are (strictly) smaller than $\sigma$. It is clear that $K^\mathcal{G}_{<\sigma}$ is a subcomplex of $K^\mathcal{G}$. Let $\sigma_0 = [\SS,\varphi_0]$ be the minimum of $S^\mathcal{G}$ for our well-ordering, so that $K^\mathcal{G}_{<\sigma}=\emptyset \Leftrightarrow \sigma = \sigma_0$.

Let $S^\mathcal{G}_{\mathcal{H}^t}$ be the set of marked Salvettis in $S^\mathcal{G}$ where each conjugacy class in $\mathcal{H}$ has the same length as in $\sigma_0$. It is a lower subset of $S^\mathcal{G}$ for the well-ordering:
\[S^\mathcal{G}_{\mathcal{H}^t} = \{[X,m]\in S^\mathcal{G} \mid \forall i\leq q,\, \ell_X(m^{-1}(h_i)) = \ell(\varphi_0^{-1}(h_i))\}\]
Let $K^\mathcal{G}_{\mathcal{H}^t}$ be the union of stars (in $K_\Gamma$) of all vertices in $S^\mathcal{G}_{\mathcal{H}^t}$. If $S^\mathcal{G}_{\mathcal{H}^t}\neq S^\mathcal{G}$, let $\sigma_1$ be the minimum of the complement $S^\mathcal{G}\setminus S^\mathcal{G}_{\mathcal{H}^t}$ so that $K^\mathcal{G}_{\mathcal{H}^t} = K^\mathcal{G}_{<\sigma_1}$. Otherwise, $K^\mathcal{G}_{\mathcal{H}^t} = K^\mathcal{G}$.
\end{defi}

\begin{lm}
The complex $K^\mathcal{G}_{\mathcal{H}^t}$ is $\unt(A_\Gamma;\,\mathcal{G},\mathcal{H}^t)$-stable with only finitely many orbits of vertices.
\end{lm}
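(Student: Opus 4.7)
The plan is to prove the two claims separately: stability of $K^\mathcal{G}_{\mathcal{H}^t}$ under the action of $\unt(A_\Gamma;\,\mathcal{G},\mathcal{H}^t)$, and finiteness of the orbit set of vertices.

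For stability, I would first verify that the set of marked Salvettis $S^\mathcal{G}_{\mathcal{H}^t}$ is stable, and then extend to $K^\mathcal{G}_{\mathcal{H}^t}$ by noting that the $\unt(A_\Gamma)$-action on $K_\Gamma$ is by combinatorial isomorphisms and thus maps stars to stars. For a marked Salvetti $[\SS,\varphi]\in S^\mathcal{G}_{\mathcal{H}^t}$ and $[\psi]\in \unt(A_\Gamma;\,\mathcal{G},\mathcal{H}^t)$, the translate $[\SS,\psi\circ\varphi]$ remains in $S^\mathcal{G}$ because the action of $\unt(A_\Gamma;\,\mathcal{G})$ preserves $S^\mathcal{G}$ (as observed following Definition~\ref{relativeos}). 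For the length condition, I would write $(\psi\circ\varphi)^{-1}(h_i)=\varphi^{-1}(\psi^{-1}(h_i))$. Since $[\psi^{-1}]$ also lies in $\unt(A_\Gamma;\,\mathcal{G},\mathcal{H}^t)$, the element $\psi^{-1}(h_i)$ is conjugate to $h_i$ in $A_\Gamma$, and $\varphi^{-1}$ preserves conjugacy; as translation length is a conjugacy invariant, the computed length equals $\ell(\varphi^{-1}(h_i))=\ell(\varphi_0^{-1}(h_i))$, as required.

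For finiteness of orbits, I would first reduce the problem to marked Salvettis: every vertex of $K^\mathcal{G}_{\mathcal{H}^t}$ lies in the star of some element of $S^\mathcal{G}_{\mathcal{H}^t}$, and each such star is finite, since vertices in the star of a marked Salvetti correspond to blow-ups of $\SS$, parametrized by pairwise compatible families of Whitehead partitions of $V^\pm$, a finite set (as remarked just after Lemma~\ref{blowupproperties}). It then suffices to show that $S^\mathcal{G}_{\mathcal{H}^t}$ has finitely many $\unt(A_\Gamma;\,\mathcal{G},\mathcal{H}^t)$-orbits. Given two marked Salvettis $[\SS,\varphi],[\SS,\varphi']\in S^\mathcal{G}_{\mathcal{H}^t}$, they lie in the same orbit iff $[\varphi'\circ\varphi^{-1}]\in \unt(A_\Gamma;\,\mathcal{G},\mathcal{H}^t)$. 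The $\mathcal{G}$-condition is automatic since $[\varphi],[\varphi']\in\unt(A_\Gamma;\,\mathcal{G})$. For the $\mathcal{H}$-condition, using that the automorphism $\varphi'$ preserves conjugacy relations and that $h_i=\varphi'(\varphi'^{-1}(h_i))$, I obtain that $\varphi'\circ\varphi^{-1}(h_i)$ is conjugate to $h_i$ if and only if $\varphi^{-1}(h_i)$ is conjugate to $\varphi'^{-1}(h_i)$.

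Therefore, the map sending the orbit of $[\SS,\varphi]$ to the tuple $([\varphi^{-1}(h_i)])_{1\leq i\leq q}$ of conjugacy classes in $A_\Gamma$ is well-defined and injective. Each coordinate is a conjugacy class of translation length exactly $\ell(\varphi_0^{-1}(h_i))$, a fixed integer. Since there are finitely many cyclic words of any given length in the standard generators and their inverses, there are finitely many conjugacy classes of a fixed translation length in $A_\Gamma$, hence the target is finite, concluding the argument. The only delicate step is the equivalence of orbit-equality with equality of tuples of conjugacy classes; everything else is a matter of book-keeping.
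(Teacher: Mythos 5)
Your proof takes essentially the same approach as the paper: reduce to marked Salvettis via finiteness of stars, and bound the number of orbits by counting tuples of conjugacy classes of bounded translation length. The stability argument is also correct and fills in a detail the paper leaves implicit.

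There is, however, a small logical slip in your finiteness argument. You assert that $[\SS,\varphi]$ and $[\SS,\varphi']$ lie in the same $\unt(A_\Gamma;\,\mathcal{G},\mathcal{H}^t)$-orbit \emph{if and only if} $[\varphi'\circ\varphi^{-1}]\in\unt(A_\Gamma;\,\mathcal{G},\mathcal{H}^t)$, and you use this to conclude that the orbit-to-tuple map is well-defined. The ``if'' direction is fine (take $\psi=\varphi'\circ\varphi^{-1}$). But the ``only if'' direction is false in general: $[\SS,\varphi]=[\SS,\varphi']$ already requires only that $[\varphi'^{-1}\circ\varphi]$ be the class of an oriented graph permutation, and similarly two marked Salvettis lie in the same orbit iff $[\varphi'\circ\alpha_*\circ\varphi^{-1}]$ belongs to the McCool group for \emph{some} oriented graph permutation $\alpha$, which is weaker than the $\alpha=\mathrm{id}$ case you wrote. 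In particular a graph permutation in the stabilizer of a vertex can change the tuple $([\varphi^{-1}(h_i)])_i$, so the ``map on orbits'' is not literally well-defined. This is exactly why the paper phrases its map on the set $D$ of outer automorphisms rather than on orbits, and only uses the implication ``same image $\Rightarrow$ same orbit.'' Your argument survives this correction: choosing any marking representative for each orbit, the implication ``same tuple $\Rightarrow$ same orbit'' (which follows from the direction you do have) already bounds the number of orbits by the number of possible tuples. So you should drop the well-definedness claim and retain only the injectivity-style bound; with that adjustment the proof is correct and matches the paper's.
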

\begin{proof}
Note first that if $\mathcal{H}$ is empty, we know from Definition~\ref{relativeos} that $K^\mathcal{G}$ is $\unt(A_\Gamma;\,\mathcal{G})$-stable and has a single orbit of marked Salvettis.

If $\mathcal{H}$ is non-empty, $S^\mathcal{G}_{\mathcal{H}^t}$ is $\unt(A_\Gamma;\,\mathcal{G},\mathcal{H}^t)$-stable, and so is $K^\mathcal{G}_{\mathcal{H}^t}$. Let $C$ be the finite set of conjugacy classes in $A_\Gamma$ of length at most $\displaystyle \max_{1\leq i\leq q} \ell(\varphi_0^{-1}(h_i))$, let $D = \{[\varphi]\in \unt(A_\Gamma;\,\mathcal{G})\mid  \forall i\leq q,\, \ell(\varphi^{-1}(h_i)) = \ell(\varphi_0^{-1}(h_i))\}$ and consider the following map:
\[\begin{aligned}
D&\to C^q\\
[\varphi]&\mapsto ([\varphi^{-1}(h_i)])_{1\leq i\leq q}
\end{aligned}
\]

Two outer automorphisms $[\varphi],[\psi]\in D$ have the same image under this map if and only if $[\varphi\circ \psi^{-1}(h_i)] = [h_i]$ for every $i\leq q$, or equivalently $[\varphi\circ\psi^{-1}]\in \unt(A_\Gamma;\,\mathcal{G},\mathcal{H}^t)$. In that case, the marked Salvettis $[\SS,\varphi],[\SS,\psi]\in S^\mathcal{G}_{\mathcal{H}^t}$ have the same $\unt(A_\Gamma;\,\mathcal{G},\mathcal{H}^t)$-orbit. Therefore, $S^\mathcal{G}_{\mathcal{H}^t}$ contains at most $\card{C^q}$ distinct such orbits (by picking any marking representatives in the equivalence classes). Hence $K^\mathcal{G}_{\mathcal{H}^t}$ has finitely many $\unt(A_\Gamma;\,\mathcal{G},\mathcal{H}^t)$-orbits of marked Salvettis.

Finally, every vertex in $K^\mathcal{G}_{\mathcal{H}^t}$ is adjacent to a marked Salvetti in $S^\mathcal{G}_{\mathcal{H}^t}$, which has no more neighbors than the finite number of sets of pairwise compatible Whitehead partitions of $V^\pm$, proving the result.
\end{proof}

\section{Proof of contractibility}
\label{contractibility}

We found a subcomplex $K^\mathcal{G}_{\mathcal{H}^t}\subseteq K_\Gamma$ on which $\unt(A_\Gamma;\,\mathcal{G},\mathcal{H}^t)$ acts properly and cocompactly, and the latter is virtually torsion-free as a subgroup of $\out(A_\Gamma)$. To prove Theorem~\ref{mccoolcyclic}, it remains only to see that $K^\mathcal{G}_{\mathcal{H}^t}$ is contractible. This generalizes the main result of \cite{untwistedos} (Theorem~6.24), and our proof will be very parallel to \cite{untwistedos}, Section~6. The proofs there use the terminology of blow-ups and Whitehead partitions, and we will mirror this use when reemploying them.

\begin{defi}
    An elementary edge $e$ of $K_\Gamma$ incident to a marked Salvetti $\sigma$ is \emph{$N\mathcal{G}$-reductive} if there exists a Whitehead move in $K^\mathcal{G}$ containing $e$, with endpoints $\sigma$ and $\sigma'$, such that $N(\sigma')<N(\sigma)$. Such a Whitehead move is an \emph{$N\mathcal{G}$-reductive move} for $\sigma$.

    Two (possibly equal) elementary edges of $K_\Gamma$ incident to the same marked Salvetti $\sigma$, with other endpoints $\alpha,\beta$ are \emph{compatible} if there exists a vertex $\gamma$ of $K_\Gamma$ that collapses onto both $\alpha$ and $\beta$. They are \emph{incompatible} otherwise. Let $e,e'$ be two edges of $K_\Gamma$ (not necessarily elementary). Say that $e'$ \emph{factors} $e$ when $e'$ is elementary and the edges $e,e'$ are part of a triangle in $K_\Gamma$. In that case, some collapse map representing $e$ factors through some collapse map representing $e'$.
    \medskip

    Let $X$ be a spatial cube complex, $H$ a hyperplane of $X$ and $[g]\in \pi_1X$ a conjugacy class. Define $\ell_{X, H}(g)$ as the number of edges dual to $H$ in a shortest edge loop representing $[g]$. By Lemma~\ref{axes}, this number is independent from the chosen representative loop. Given a marking $m$ on $X$, define the following sequence of non-negative integers:
    \[N_H(X,m) = (\ell_{X,H}(m^{-1}(h_i)))_{i\geq 1}\]
    
    Let $e$ be an elementary edge, and choose a marking-preserving collapse $(X,m)\to (S,m')$ representing $e$. Let $H$ be the collapsed hyperplane of $X$. Define $N^+(e) = N_H(X,m)$, which is independent from the chosen representative collapse for $e$.

    Finally, for $\varphi\in\unt(A_\Gamma;\,\mathcal{G})$ say a based Whitehead partition $(\mathbf{P},b)$ is \emph{$\varphi$-$N\mathcal{G}$-reductive} if it induces an $N\mathcal{G}$-reductive Whitehead move at $[\SS,\varphi]$. In other words, if $N([\SS^\mathbf{P}_b,\varphi'])<N([\SS,\varphi])$ where $c_\mathbf{P}\colon \SS^\mathbf{P}\to \SS$ and $c_b\colon \SS^\mathbf{P}\to \SS^\mathbf{P}_b$ are marking-preserving collapses of the hyperplanes labelled $\mathbf{P}$ and $b$ respectively, for some markings $M$ on $\SS^\mathbf{P}$ and $\varphi'$ on $\SS^\mathbf{P}_b$ and moreover $[\SS^\mathbf{P}_b,\varphi']\in S^\mathcal{G}$.
\end{defi}

\begin{rem}
\label{collapseeverywhere}
    Let $c\colon (X,m) \to (X',m')$ be a marking-preserving collapse, and let $H$ be a hyperplane of $X$ not collapsed by $c$. By Corollary~\ref{collapsedownstairs}, for all $i$, a shortest loop representing $[m^{-1}(h_i)]$ is mapped by $c$ to a shortest loop representing $[m'^{-1}(h_i)]$. Therefore, $N_H(X,m) = N_H(X',m')$. In particular, $N_H(X,m) = N^+(e)$ for any elementary edge $e$ induced by a collapse of $H$ factoring some collapse of $X$.

    Consider a single blow-up $\SS^\mathbf{P}$ with a marking $M$ and two homotopy equivalent collapses $c_\mathbf{P}\colon \SS^\mathbf{P}\to \SS$, $c_v\colon \SS^\mathbf{P}\to \SS^\mathbf{P}_v$ of the hyperplanes labeled $\mathbf{P}$ and $v\in V$ respectively. We will still denote these hyperplanes by $\mathbf{P}$ and $v$ for simplicity of notation. Let $\varphi\in \unt(A_\Gamma)$ be a marking on $\SS$ making $c_\mathbf{P}$ marking-preserving. Let $e$ be the edge of $K_\Gamma$ represented by $c_\mathbf{P}$ and $f$ the edge represented by $c_v$. By definition and the previous remark, $N^+(e) = N_{\mathbf{P}}(\SS^\mathbf{P},M_\mathbf{P})$ and $N^+(f) = N_v(\SS^\mathbf{P},M_\mathbf{P}) = N_v(\SS,\varphi)$. In the notation of \cite{untwistedos}, Section~6.2, if $i\geq 1$ and $w_i$ a minimal length word representing $[\varphi^{-1}(h_i)]$, the $i$-th coordinate of $N^+(e)$ is $|\mathbf{P}|_w$ and the $i$-th coordinate of $N^+(f)$ is $|v|_w$. Thus $|\mathbf{P}|_\sigma$ and $|\mathbf{v}|_\sigma$ correspond to $N^+(e)$ and $N^+(f)$ up to the same shuffling and possible duplicating of entries that relates $N(\sigma)$ with $\n{\sigma}$.
\end{rem}

\begin{lm}[see \cite{untwistedos}, Corollary~6.6]
\label{changenorm}
    Let $\sigma,\sigma'$ be two marked Salvettis in $K_\Gamma$ and $\alpha$ a vertex of $K_\Gamma$ with an edge $e$ to $\sigma$ and an edge $f$ to $\sigma'$. The following equality holds:
    \[N(\sigma') = N(\sigma) + \sum_{e'\text{ factoring }e} N^+(e') - \sum_{f'\text{ factoring }f} N^+(f')\]
\end{lm}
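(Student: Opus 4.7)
The plan is to introduce an auxiliary norm at the common vertex $\alpha = [X,m]$ by setting $\widetilde{N}(\alpha) \coloneqq (\ell_X(m^{-1}(h_i)))_{i\geq 1}$, which agrees with $N$ whenever $\alpha$ happens to be a marked Salvetti. The strategy will be to establish the single-edge identity
\[
\widetilde{N}(\alpha) - N(\sigma) = \sum_{e' \text{ factoring } e} N^+(e'),
\]
prove its analogue obtained by replacing $e, \sigma$ with $f, \sigma'$, and subtract the two formulas to cancel $\widetilde{N}(\alpha)$ and recover the claimed equality.

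For the single-edge identity, I would fix a marking-preserving collapse $c\colon (X,m) \to (S,m_\sigma)$ representing $e$ and let $\mathcal{H}_e$ denote the (finite) set of hyperplanes of $X$ that $c$ collapses. First I would check that elementary edges factoring $e$ are in bijection with $\mathcal{H}_e$: each such edge corresponds to the single-hyperplane collapse at some $H \in \mathcal{H}_e$, which yields a valid spatial cube complex (visible from the blow-up description of $X$) and thus a triangle in $K_\Gamma$ by the flag property. By Remark~\ref{collapseeverywhere}, the factor associated to $H$ has $N^+$-value $(\ell_{X,H}(m^{-1}(h_i)))_{i\geq 1}$. Then, working coordinate by coordinate, I would choose for each $i$ a shortest edge loop $\gamma_i$ in $X$ representing $[m^{-1}(h_i)]$ and appeal to Corollary~\ref{collapsedownstairs} to conclude that $c\circ \gamma_i$ is still a shortest loop in $S$, representing $[m_\sigma^{-1}(h_i)]$. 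The length difference $\ell_X(m^{-1}(h_i)) - \ell_S(m_\sigma^{-1}(h_i))$ then counts exactly the edges of $\gamma_i$ that are dual to collapsed hyperplanes, and partitioning these edges by their dual hyperplane rewrites this count as $\sum_{H \in \mathcal{H}_e} \ell_{X,H}(m^{-1}(h_i))$; Lemma~\ref{axes} ensures that each per-hyperplane count $\ell_{X,H}(m^{-1}(h_i))$ is independent of the chosen $\gamma_i$, so this partitioning is unambiguous.

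I do not expect any substantial obstacle: the two ingredients of substance, namely that collapses of spatial cube complexes map shortest loops to shortest loops (Corollary~\ref{collapsedownstairs}) and that the per-hyperplane edge counts in such a loop are canonical (Lemma~\ref{axes}), are already established. The only point requiring a sentence of care is the identification between elementary edges factoring $e$ and the hyperplanes in $\mathcal{H}_e$, and this follows straightforwardly from the flag structure of $K_\Gamma$ together with the blow-up description of spatial cube complexes.
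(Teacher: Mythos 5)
Your proposal is correct and takes essentially the same approach as the paper's proof. The paper likewise partitions $\ell_X(M^{-1}(h_i))$ by dual hyperplane, uses Corollary~\ref{collapsedownstairs} to identify $\sum_{H\notin\mathcal{H}_e}\ell_{X,H}(M^{-1}(h_i))$ with the $i$-th coordinate of $N(\sigma)$, invokes Remark~\ref{collapseeverywhere} to identify each $\ell_{X,H}(M^{-1}(h_i))$ with the $i$-th coordinate of $N^+(e')$ for the elementary edge $e'$ corresponding to $H\in\mathcal{H}_e$, and concludes coordinatewise by comparing the two decompositions of $\ell_X(M^{-1}(h_i))$ obtained from $c$ and $c'$; your ``auxiliary norm'' $\widetilde{N}(\alpha)$ is just a repackaging of that common quantity.
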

\begin{proof}
    Let $X$ be a spatial cube complex and $M$ a marking on $X$ with $[X,M]=\alpha$. Let $c\colon X\to S$ and $c'\colon X\to S'$ be homotopy equivalent collapses representing the edges $e$ and $f$. Let $m, m'$ be markings on $S,S'$ making $c$ and $c'$ marking-preserving. Let $\H$ be the set of hyperplanes of $X$ collapsed by $c$ and $\K$ the set of hyperplanes of $X$ collapsed by $c'$.

    Let $i\geq 1$. Let $\gamma$ be a minimal length edge loop in $X$ whose free homotopy class represents the conjugacy class $[M^{-1}(h_i)]$. The length of $\gamma$ is $\ell_X(M^{-1}(h_i)) = \displaystyle\sum_{H\text{ hyperplane of }X} \ell_{X,H}(M^{-1}(h_i))$. The sum $\displaystyle\sum_{H\notin \H} \ell_{X,H}(M^{-1}(h_i))$ corresponds to the length of $c\circ \gamma$ in $S$. By Corollary~\ref{collapsedownstairs}, $c\circ \gamma$ is of minimal length representing the conjugacy class $[m^{-1}(h_i)]$. Thus $\displaystyle\sum_{H\notin \H} \ell_{X,H}(M^{-1}(h_i)) = \ell_{S_1}(m^{-1}(h_i))$, which is the $i$-th coordinate of $N(\sigma)$.
    
    Now, edges factoring $e$ are in one-to-one correspondance with hyperplanes of $\H$. Let $e'$ be an edge factoring $e$ and $H\in \H$ the corresponding hyperplane. By Remark~\ref{collapseeverywhere}, $\ell_{X,H}(M^{-1}(h_i))$ is the $i$-th coordinate of $N^+(e')$.

    The previous argument holds symmetrically for $c'\colon X\to S'$, $m'$, and $\K$. Thus $\ell_X(M^{-1}(h_i))$ is both the sum of the $i$-th coordinate of $N(\sigma)$ with all the $i$-th coordinates of $N^+(e')$, $e'$ factoring $e$ and the sum of the $i$-th coordinate of $N(\sigma')$ with all the $i$-th coordinates of $N^+(f')$, $f'$ factoring $f$. This proves the equality coordinatewise.
\end{proof}

\begin{lm}
\label{peak}
    Let $\sigma,\tau_1,\tau_2$ be distinct marked Salvettis in $S^\mathcal{G}$. Assume there exists an $N\mathcal{G}$-reductive Whitehead move from $\sigma$ to $\tau_1$ with first edge $e_1$, and an $N\mathcal{G}$-reductive Whitehead move from $\sigma$ to $\tau_2$ with first edge $e_2$. Assume $e_1$ and $e_2$ are compatible. Then there exists a Whitehead path of one or two Whitehead moves, joining $\tau_1$ to $\tau_2$, whose possible middle marked Salvetti is in $S^\mathcal{G}$ and has smaller norm than $\sigma$.
\end{lm}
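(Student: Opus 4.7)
The strategy mirrors the peak reduction step of \cite{untwistedos}, Section~6, rephrased via blow-ups and the combinatorial lemmas of Sections~\ref{collapses} and \ref{relative}. Each edge $e_i$ is determined by a based Whitehead partition $(\mathbf{P}_i,b_i)$: the two-vertex middle of the Whitehead move $\sigma\to\tau_i$ is $\alpha_i=[\SS^{\mathbf{P}_i},\cdot]$, and the second edge collapses the hyperplane labeled $b_i$, a standard generator. The plan splits on whether the two partitions and the two basepoints coincide. If $\mathbf{P}_1=\mathbf{P}_2$, then $\alpha_1=\alpha_2$ and $\tau_1\to\alpha_1\to\tau_2$ is a single Whitehead move, with no middle Salvetti. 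Otherwise, compatibility of $e_1,e_2$ upgrades to compatibility of the Whitehead partitions $\mathbf{P}_1,\mathbf{P}_2$, giving a double blow-up $\gamma=[\SS^{\{\mathbf{P}_1,\mathbf{P}_2\}},\cdot]\in K_\Gamma$ that collapses onto both $\alpha_1$ and $\alpha_2$. If additionally $b_1=b_2=b$, then $\gamma/b$ is a two-vertex complex that collapses onto $\tau_1$ and $\tau_2$ by further collapsing $\mathbf{P}_2$ and $\mathbf{P}_1$ respectively, again producing a single Whitehead move.

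The substantive case is $\mathbf{P}_1\neq\mathbf{P}_2$ and $b_1\neq b_2$. Define $\rho$ as the vertex of $K_\Gamma$ obtained from $\gamma$ by collapsing the two distinct standard-generator hyperplanes $b_1,b_2$. Extending the basepoint--partition swap isomorphism of the one-partition Lemma~\ref{whitehead} to the compatible pair $(\mathbf{P}_1,\mathbf{P}_2)$ (simultaneously exchanging $b_i\leftrightarrow\mathbf{P}_i$) identifies $\rho$ combinatorially with $\SS$, so $\rho$ is a marked Salvetti. Letting $\delta_i=\gamma/b_i$, each $\delta_i$ is a two-vertex spatial cube complex that collapses onto $\tau_i$ (further collapsing $\mathbf{P}_{3-i}$) and onto $\rho$ (further collapsing $b_{3-i}$), producing the length-two Whitehead path $\tau_1\leftarrow\delta_1\to\rho\leftarrow\delta_2\to\tau_2$.

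The two remaining claims are (a) $\rho\in S^\mathcal{G}$ and (b) $N(\rho)<N(\sigma)$. For (a), I apply Lemma~\ref{prepeakreduction} to $X=\gamma$ with the four distinct hyperplanes $\mathbf{P}_1,\mathbf{P}_2,b_1,b_2$, taking $c_{ab}$ to be the homotopy-equivalent collapse of $\{b_1,\mathbf{P}_2\}$ (to $\tau_1$) and $c_{cd}$ that of $\{\mathbf{P}_1,b_2\}$ (to $\tau_2$). The marking change $\tau_1\to\tau_2$ lies in $\unt(A_\Gamma;\mathcal{G})$ as the composite through $\sigma$ of the two $N\mathcal{G}$-reductive marking changes, so the marking-preservation hypothesis of Lemma~\ref{prepeakreduction} holds; the admissible exchange of $H_a,H_b$ regroups the hyperplanes as $\{\mathbf{P}_1,\mathbf{P}_2\}$ (collapse to $\sigma$) and $\{b_1,b_2\}$ (collapse to $\rho$), and the lemma outputs markings placing $\tau_1\to\rho$ in $\unt(A_\Gamma;\mathcal{G})$. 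Since $\tau_1\in S^\mathcal{G}$, so is $\rho$, and the whole length-two path lies in $K^\mathcal{G}$.

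For (b), writing $N^+_H$ for the value of $N^+$ at any elementary edge collapsing a hyperplane $H$ of $\gamma$ (well-defined by Remark~\ref{collapseeverywhere}), Lemma~\ref{changenorm} applied at $\gamma$ to the pair $(\gamma\to\sigma,\gamma\to\rho)$ and to the pairs $(\gamma\to\sigma,\gamma\to\tau_i)$ gives
\[
N(\rho)-N(\sigma)=(N^+_{\mathbf{P}_1}-N^+_{b_1})+(N^+_{\mathbf{P}_2}-N^+_{b_2}),
\qquad
N(\tau_i)-N(\sigma)=N^+_{\mathbf{P}_i}-N^+_{b_i}.
\]
Both right-hand differences in the second equation are lex-negative by $N\mathcal{G}$-reductivity. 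Letting $k_i$ be the first coordinate at which $N^+_{\mathbf{P}_i}-N^+_{b_i}$ is nonzero, it is negative there and all earlier coordinates vanish; then at each index strictly below $\min(k_1,k_2)$ both summands in the first equation vanish, while at $\min(k_1,k_2)$ at least one summand is strictly negative and the other is nonpositive, yielding $N(\rho)<N(\sigma)$. The most delicate step I anticipate is the extension of the swap isomorphism of Lemma~\ref{whitehead} from one partition to a compatible pair, needed to confirm that $\rho$ is a marked Salvetti and that all four pairwise collapses of $\gamma$ produce homotopy equivalences with range $\SS$; once this is in place, the remaining ingredients fit together cleanly.
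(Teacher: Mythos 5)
Your proposal follows the paper's strategy at a high level (double blow-up, Lemma~\ref{prepeakreduction}, Lemma~\ref{changenorm}), but there is a genuine gap in the argument for claim (a), and it is precisely the point where the paper is more careful. You commit in advance to a specific candidate middle Salvetti $\rho=\gamma/\{b_1,b_2\}$, and then you need Lemma~\ref{prepeakreduction}, applied to the four hyperplanes $b_1,\mathbf{P}_2,\mathbf{P}_1,b_2$, to output exactly the regrouping $\{\mathbf{P}_1,\mathbf{P}_2\}$ (to $\sigma$) and $\{b_1,b_2\}$ (to $\rho$). But the lemma says only ``\emph{up to} exchanging $H_a$ and $H_b$,'' with the exchange determined in its proof by which parallel pair of edges across $C_{ab}$ and $C_{cd}$ is found: it could equally return the regrouping $\{b_1,\mathbf{P}_1\}$ and $\{\mathbf{P}_2,b_2\}$. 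Both regroupings become available in the degenerate situation where $\mathbf{P}_1$ and $\mathbf{P}_2$ have the same transverse hyperplanes (which happens, for instance, when $b_1$ is simultaneously a basepoint of $\mathbf{P}_1$ and of $\mathbf{P}_2$); in general you cannot deduce which one the lemma's ``moreover'' clause, and hence the $S^\mathcal{G}$ membership, actually certifies. Your assertion that ``the admissible exchange of $H_a,H_b$ regroups the hyperplanes as $\{\mathbf{P}_1,\mathbf{P}_2\}$ and $\{b_1,b_2\}$'' is therefore unjustified, and the $S^\mathcal{G}$ membership of $\rho$ does not follow. You also rely on the unproven ``simultaneous swap'' extension to conclude that $\rho$ is even a marked Salvetti; you flag this yourself as the delicate step, but the two issues are really one: if you could prove the swap extension, you could bypass Lemma~\ref{prepeakreduction} entirely and argue that the marking change $\sigma\to\rho$ equals the product of the two Whitehead automorphisms, each in $\unt(A_\Gamma;\mathcal{G})$, so the need to invoke Lemma~\ref{prepeakreduction} with a specified regrouping reflects the gap.

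The paper avoids this entirely by being regrouping-agnostic. It applies Lemma~\ref{prepeakreduction} and simply accepts whatever two Salvettis $T,T'$ the lemma produces (one of which may even coincide with $\sigma$). Then it uses Lemma~\ref{changenorm} to get
\[
N(T)=N(\tau_1)+N_{H_b}-N_{H_c},\qquad N(T')=N(\tau_2)-N_{H_b}+N_{H_c},
\]
and observes that the two differences are opposite and nonzero (by injectivity of $N$), so one of them is lex-negative; hence one of $T,T'$ has norm smaller than one of $\tau_1,\tau_2$, and a fortiori smaller than $\sigma$, while both $T$ and $T'$ admit Whitehead moves to both $\tau_1$ and $\tau_2$. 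That makes the choice of middle Salvetti an output rather than an input. Your lexicographic observation that a sum of two lex-negative sequences is lex-negative is correct, and indeed gives a stronger conclusion for $\rho$ when $\rho$ is legitimate, but it does not repair the gap since it presupposes $N(\rho)$ is defined and comparable. To turn your argument into a complete proof you would either have to establish the simultaneous swap automorphism of $\gamma$ directly, or drop the commitment to $\rho$ and reason symmetrically as the paper does.
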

\begin{proof}
    By compatibility, let $\alpha$ be a vertex of $K^\mathcal{G}$ adjacent to $\sigma$ via an edge $f$ such that $e_1$ and $e_2$ factor $f$. In particular, $\alpha$ is greater (in the poset of vertices of $K^\mathcal{G}$) than all vertices of the two considered Whitehead moves. Thus, $\alpha$ is adjacent to $\tau_1$ and $\tau_2$.

    Let $X$ be a spatial cube complex with a marking $M$ such that $[X,M] = \alpha$. Let $c_1\colon X\to S_1$, $c_2\colon X\to S_2$, $c_3\colon X\to S_3$ be homotopy equivalent collapses representing the edges from $\alpha$ to $\tau_1$, $\tau_2$ and $\sigma$ respectively, and let $m_1$, $m_2$, $m_3$ be corresponding markings on $S_1$, $S_2$, $S_3$ making the collapses marking-preserving, i.e $m_1\circ (c_1)_* = m_2\circ (c_2)_* = m_3\circ (c_3)_* = M$. Since $[S_1,m_1]=\tau_1$ and $[S_2,m_2]=\tau_2$ are in $S^\mathcal{G}$, there exists by definition two combinatorial isomorphisms $\iota_1\colon S_1\to \SS$ and $\iota_2\colon S_2\to \SS$ with $[(\iota_1)_*\circ m_1^{-1}], [(\iota_2)_*\circ m_2^{-1}]\in \unt(A_\Gamma;\,\mathcal{G})$. In particular, $[(\iota_1)_*\circ (c_1)_*\circ ((\iota_2)_*\circ (c_2)_*)^{-1}] = [(\iota_1)_*\circ m_1^{-1}\circ ((\iota_2)_*\circ m_2^{-1})^{-1}]\in\unt(A_\Gamma;\,\mathcal{G})$.

    Let $H_a,H_b$ be the two hyperplanes of $X$ collapsed by $c_1$ and $H_c,H_d$ the two hyperplanes of $X$ collapsed by $c_2$. Assume that $H_b$ and $H_c$ are collapsed by $c_3$, the other cases being symmetrical. If $H_a=H_d$, then the collapse of this hyperplane defines the middle vertex of a Whitehead move joining $\tau_1$ and $\tau_2$, proving the claim. Assume henceforth that $H_a$ and $H_d$ are distinct. Moreover $H_a$ and $H_c$ are distinct, otherwise $\tau_1 = \sigma$. Likewise, $H_b$ and $H_d$ are distinct, otherwise $\tau_2 = \sigma$. Finally, since the three families $\{H_a,H_b\}$, $\{H_c,H_d\}$ and $\{H_b,H_c\}$ are collapsible, $H_a,H_b,H_c,H_d$ are all pairwise distinct.

    By Lemma~\ref{prepeakreduction}, up to exchanging $H_a$ and $H_b$, the collapse $c_{ac}$ of $\{H_a,H_c\}$ and the collapse $c_{bd}$ of $\{H_b,H_d\}$ in $X$ yield complexes $T$ and $T'$ respectively, isomorphic to $\SS$, with markings $n$, $n'$ making the collapses marking-preserving, and isomorphisms $\iota\colon T\to \SS$, $\iota'\colon T'\to \SS)$ with the following properties:
    \[[(\iota_1)_*\circ (c_1)_*\circ (\iota_*\circ (c_{ac})_*)^{-1}]\in\unt(A_\Gamma;\,\mathcal{G})\]
    \[[(\iota_1)_*\circ (c_1)_*\circ (\iota'_*\circ (c_{bd})_*)^{-1}]\in\unt(A_\Gamma;\,\mathcal{G})\]
    
    In particular, since $[(\iota_1)_*\circ m_1^{-1}]\in \unt(A_\Gamma;\,\mathcal{G})$, both $[\iota_*\circ n^{-1}]$ and $[\iota'_*\circ n'^{-1}]$ are in $\unt(A_\Gamma;\,\mathcal{G})$ as well. Thus, $[T,n], [T',n']\in S^\mathcal{G}$. Moreover, since $\tau_1,\tau_2, [T,n], [T',n']$ are obtained from $[X,M]$ by collapsing distinct families of hyperplanes, these four points of $S^\mathcal{G}$ are distinct by \cite{spatial}, Lemma~5.11.
    
    By Remark~\ref{collapseeverywhere} and Lemma~\ref{changenorm}, the following equalities hold:
    \[\begin{aligned}
        N([T,n]) &= N(\tau_1) + N_{H_b}(X,M)-N_{H_c}(X,M)\\
        N([T',n']) &= N(\tau_2) - N_{H_b}(X,M)+N_{H_c}(X,M)
    \end{aligned}\]
    The two sequences $N_{H_b}(X,M)-N_{H_c}(X,M)$, $N_{H_c}(X,M)-N_{H_b}(X,M)$ are opposite and not null, by injectivity of $N$. Thus, one of the sequences is smaller than the null sequence in the lexicographic order. Therefore, one of $[T,n]$, $[T',n']$ has smaller norm than one of $\tau_1$, $\tau_2$. In particular, it has smaller norm than $\sigma$. This finishes the proof since both $[T,n]$ and $[T',n']$ admit Whitehead moves to $\tau_1$ and $\tau_2$.
\end{proof}

\begin{lm}[see \cite{untwistedos}, Corollary~6.8, Case 1]
\label{factoring}
    Let $\sigma,\tau$ be two marked Salvettis in $S^\mathcal{G}$ and $\alpha$ a vertex of $K_\Gamma$ with an edge $e$ to $\sigma$ and an edge $f$ to $\tau$. Assume $N(\tau)<N(\sigma)$. Then there exists an $N\mathcal{G}$-reductive elementary edge incident to $\sigma$ factoring $e$.
\end{lm}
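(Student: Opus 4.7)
The plan is to apply Lemma~\ref{factorpath} to decompose the two collapses representing $e$ and $f$ into a Whitehead path in $K^\mathcal{G}$ joining $\sigma$ to $\tau$, and then to exploit the freedom in the orderings produced by that lemma in order to make the first Whitehead move in the path already $N\mathcal{G}$-reductive at $\sigma$.

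Concretely, I first pick a representative $(X,M)$ of $\alpha$ and marking-preserving collapses $c_e\colon X\to S$, $c_f\colon X\to S'$ representing $e$ and $f$, with both $S, S'$ isomorphic to $\SS$. Let $\H, \K$ denote the sets of hyperplanes collapsed by $c_e, c_f$; since both ranges have the same number of hyperplanes as $\SS$, we have $|\H|=|\K|=k$. Because $\sigma, \tau \in S^\mathcal{G}$, the composed outer automorphism $[(\iota_e)_*\circ (c_e)_*\circ ((\iota_f)_*\circ (c_f)_*)^{-1}]$ lies in $\unt(A_\Gamma;\,\mathcal{G})$, so Lemma~\ref{factorpath} applies and produces an ordering of $\K$ together with a sequence of marked Salvettis $\sigma = S_0, S_1, \dots, S_k = \tau$, all in $S^\mathcal{G}$, where consecutive pairs are joined by a Whitehead move in $K^\mathcal{G}$ through an intermediate blow-up $Y_j$ itself adjacent to $\alpha$. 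Crucially, the elementary edge $Y_0\to \sigma$ collapses the single hyperplane $H_k$, so it factors $e$.

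To quantify when this first move reduces the norm, I apply Lemma~\ref{changenorm} at $Y_0$, using Remark~\ref{collapseeverywhere}, to obtain $N(S_1) = N(\sigma) + N_{H_k}(X,M) - N_{K_k}(X,M)$. Telescoping the same identity along the whole path and using the hypothesis $N(\tau)<N(\sigma)$ gives the aggregate strict inequality $\sum_{K\in\K} N_K(X,M) > \sum_{H\in\H} N_H(X,M)$, which after cancellation of the common hyperplanes reduces to $\sum_{K\in\K\setminus\H} N_K(X,M) > \sum_{H\in\H\setminus\K} N_H(X,M)$. A pigeonhole argument then produces at least one pair $(H,K)\in (\H\setminus\K)\times (\K\setminus\H)$ with $N_K(X,M) > N_H(X,M)$.

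The final step is to invoke Lemma~\ref{factorpath} with orderings of $\H$ and $\K$ arranged so that the extracted pair $(H,K)$ appears as the last pair $(H_k,K_k)$; then $N(S_1) < N(\sigma)$, and the elementary edge $Y_0\to\sigma$ is $N\mathcal{G}$-reductive via the Whitehead move $\sigma \to Y_0 \to S_1$ in $K^\mathcal{G}$, which is the required edge. The main obstacle will be verifying this reordering is admissible: the inductive construction in Lemma~\ref{factorpath} fixes $K_k$ from $H_k$ via a specific geodesic in the vertex preimage of $c_f$, so I need to inspect that proof to confirm that the choice of $H_k$ as the last hyperplane of $\H$ to be removed is genuinely free, and that the map $H_k \mapsto K_k$ it induces hits enough pairs in $(\H\setminus\K)\times(\K\setminus\H)$ to realize some pair satisfying the pigeonhole inequality. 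This is where most of the technical work of the proof will be concentrated.
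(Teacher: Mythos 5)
Your approach diverges substantially from the paper's and has a genuine gap at exactly the point you flag. The paper also begins by invoking Lemma~\ref{factorpath} to produce a Whitehead path in $K^\mathcal{G}$ from $\sigma$ to $\tau$ through vertices adjacent to $\alpha$, but then takes a completely different route: it notes there are only finitely many such paths, picks one whose maximum norm is as small as possible, and uses the peak reduction result (Lemma~\ref{peak}) to show the maximum cannot occur at an interior vertex, forcing $\sigma$ to be the unique maximum. The first edge of that optimal path is then $N\mathcal{G}$-reductive and factors $e$. Your proof instead tries to engineer an $N\mathcal{G}$-reductive first move directly by a pigeonhole argument and a cleverly chosen ordering of $\H$ and $\K$.

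The problem is that Lemma~\ref{factorpath} does not give you the control you need over the matching $H_k \mapsto K_k$. The ordering of $\H$ is an input to that lemma, so you are free to name any $H\in\H\setminus\K$ as $H_k$, but the ordering of $\K$ is an \emph{output}: when $H_k\notin\K$, the lemma's proof constructs $K_k$ via a specific geodesic in the vertex preimage $C_k$ and then invokes Lemma~\ref{tubenew} to produce a hyperplane swap. The construction is designed precisely so that $H_k$ and $K_k$ have the same transverse hyperplanes, which is what makes the intermediate collapse a homotopy equivalence and the Whitehead move land back in $K^\mathcal{G}$. There is no reason the pigeonhole pair $(H,K)\in(\H\setminus\K)\times(\K\setminus\H)$ with $N_H<N_K$ should satisfy this geometric compatibility, and indeed for generic $(H,K)$ the collapse of $\{H_1,\dots,H_{k-1},K\}$ need not be a homotopy equivalence at all. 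So "arranging the orderings so that the extracted pair appears last" is not a legitimate invocation of Lemma~\ref{factorpath}; it would require reproving that lemma with a much stronger flexibility claim that is likely false. Your telescoping identity $N(\tau)-N(\sigma)=\sum_{H\in\H}N_H(X,M)-\sum_{K\in\K}N_K(X,M)$ and the ensuing pigeonhole argument are both correct, but they only establish the \emph{existence} of a numerically good pair, not a \emph{geometrically admissible} one. The paper avoids this altogether: peak reduction operates on whatever pairs Lemma~\ref{factorpath} happens to produce, improving the path globally without ever needing to choose the terminal pair in advance.
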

\begin{proof}
    Let $X$ be a spatial cube complex and $M$ a marking on $X$ with $[X,M]=\alpha$. Let $c\colon X\to S$ and $c'\colon X\to S'$ be homotopy equivalent collapses representing the edges $e$ and $f$. Let $m, m'$ be markings on $S,S'$ making $c$ and $c'$ marking-preserving, i.e.~$m\circ c_* = m'\circ c'_* = M$. Let $\H$ be the set of hyperplanes of $X$ collapsed by $c$ and $\K$ the set of hyperplanes of $X$ collapsed by $c'$. Pick $H_1,\dots, H_k$ an arbitrary ordering of $\H$. By definition of $S^\mathcal{G}$, there exist combinatorial isomorphisms $\iota\colon S\to \SS$ and $\iota'\colon S'\to \SS$ with $[\iota_*\circ m^{-1}], [\iota'_*\circ m'^{-1}]\in \unt(A_\Gamma;\,\mathcal{G})$. In particular, $[\iota'_*\circ c'_*\circ (\iota_*\circ c_*)^{-1}] = [\iota'_*\circ m'^{-1}\circ (\iota_*\circ m^{-1})^{-1}] \in\unt(A_\Gamma;\,\mathcal{G})$.

    By Lemma~\ref{factorpath}, there exists an ordering $K_1,\dots,K_k$ of elements of $\K$ yielding a sequence of homotopy equivalent collapses $c_i\colon X\to S_i$, with $c_0 = c$, $c_k = c'$, such that two consecutive collapses differ only by one hyperplane, and there exists a sequence $\iota_i\colon S_i\to \SS$ of combinatorial isomorphisms such that $[(\iota_i)_*\circ (c_i)_*\circ (\iota_*\circ c_*)^{-1}]\in \unt(A_\Gamma;\,\mathcal{G})$ for all $i$. Letting $m_i$ be the marking on $S_i$ making $c_i$ marking-preserving, $[(\iota_i)_*\circ m_i^{-1}] = [(\iota_i)_*\circ (c_i)_*\circ (\iota_*\circ c_*)^{-1}][\iota_*\circ m^{-1}]\in \unt(A_\Gamma;\,\mathcal{G})$, proving that $[S_i,m_i]\in S^\mathcal{G}$. Up to removing loops if $[S_i,m_i]$ was equal to $[S_j,m_j]$, this yields a Whitehead path in $K^\mathcal{G}$ from $\sigma$ to $\tau$. Moreover, each marked Salvetti of the Whitehead path is joined by an edge to $\alpha$, and each edge of the path factors such an edge to $\alpha$.

    Since only finitely many edges are incident to $\alpha$ in $K^\mathcal{G}$, there are only finitely many Whitehead paths in $K^\mathcal{G}$ from $\sigma$ to $\tau$ satisfying the previous assumption. Pick one such path where the maximum norm attained by a marked Salvetti of the path is as small as possible. We will still use the notations $[S_i,m_i]$ for vertices of this path, with $c_i\colon X\to S_i$ preserving markings and $\iota_i\colon S_i\to \SS$ combinatorial. The maximal marked Salvetti is unique by injectivity of $N$, and it is different from $[S_k,m_k] = \tau$ since $N(\sigma)>N(\tau)$. Assume it is $[S_i,m_i]$ with $0<i<k$. In particular, $N([S_{i-1},m_{i-1}])<N([S_i,m_i])$ and $N([S_{i+1},m_{i+1}])<N([S_i,m_i])$ (remember that the norm is injective on marked Salvettis).

    The two edges of the Whitehead path at $[S_i,m_i]$ are factoring the edge joining $[S_i,m_i]$ to $\alpha$, hence are compatible. By Lemma~\ref{peak}, the portion of the path between $[S_{i-1},m_{i-1}]$ and $[S_{i+1},m_{i+1}]$ can be replaced either by a single Whitehead move, making the path shorter, or by a concatenation of two Whitehead moves with middle marked Salvetti still in $S^\mathcal{G}$ and lesser than $[S_i,m_i]$ making the maximum of the path smaller (still by injectivity of $N$). This is a contradiction in both cases. Therefore, the maximal marked Salvetti is $[S_0,m_0]$ and the first edge of the path is $N\mathcal{G}$-reductive at $\sigma$ and factors $e$.
\end{proof}

\begin{lm}[see \cite{untwistedos}, Lemma~6.17, Case 1]
\label{higginslyndon}
    Let $e_1,e_2$ be $N\mathcal{G}$-reductive elementary edges incident to the same marked Salvetti $\sigma\in S^\mathcal{G}$. There exists an $N\mathcal{G}$-reductive elementary edge $e_3$ incident to $\sigma$ that is compatible both with $e_1$ and $e_2$.
\end{lm}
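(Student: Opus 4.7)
The plan is to imitate the peak-reduction argument of \cite{untwistedos}, Lemma~6.17, Case~1, which proves the analogous statement in $K_\Gamma$ (i.e.\ for $\mathcal{G}=\emptyset$), and to verify at each step that the resulting edge $e_3$ remains $N\mathcal{G}$-reductive, not merely $N$-reductive. Writing $\sigma=[\SS,\varphi]$ with $[\varphi]\in\unt(A_\Gamma;\,\mathcal{G})$, each $e_k$ ($k=1,2$) is represented by the collapse of the hyperplane labelled by some Whitehead partition $\mathbf{P}_k$ of $V^\pm$ in the blow-up $\SS^{\mathbf{P}_k}$, and the $N\mathcal{G}$-reductive Whitehead move through $e_k$ is specified by a basepoint $b_k$ of $\mathbf{P}_k$. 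By Lemma~\ref{whitehead}, the assumption that the target marked Salvetti lies in $S^\mathcal{G}$ is equivalent to the combinatorial constraint that, for each $A_{\Delta_i}\in\mathcal{G}$ (transported by $\varphi$) with $b_k\notin\Delta_i$, both $single(\mathbf{P}_k)\cap\Delta_i=\emptyset$ and at least one of $double(P_k)\cap\Delta_i$, $double(P_k^*)\cap\Delta_i$ is empty.

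If $e_1$ and $e_2$ are already compatible we take $e_3=e_1$. Otherwise the Whitehead partitions $\mathbf{P}_1$ and $\mathbf{P}_2$ are incompatible, and the peak-reduction construction of \cite{untwistedos}, Lemma~6.17 produces a third Whitehead partition $\mathbf{P}_3$, compatible with both $\mathbf{P}_1$ and $\mathbf{P}_2$, equipped with a basepoint $b_3$ making the associated Whitehead move $N$-reductive at $\sigma$. In that construction the sides of $\mathbf{P}_3$ are obtained by suitable Boolean combinations of the sides of $\mathbf{P}_1$ and $\mathbf{P}_2$, so that $single(\mathbf{P}_3)\subseteq single(\mathbf{P}_1)\cup single(\mathbf{P}_2)$, each of $double(P_3)$, $double(P_3^*)$ is contained in a $double$ or $link$ side of one of $\mathbf{P}_1, \mathbf{P}_2$, and $b_3$ can be chosen among the basepoints of $\mathbf{P}_1$ or $\mathbf{P}_2$.

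The remaining task, which is the only new ingredient compared to \cite{untwistedos}, is to verify the $\mathcal{G}$-compatibility condition of Lemma~\ref{whitehead}(2) for $(\mathbf{P}_3,b_3)$. Fix $A_{\Delta_i}\in\mathcal{G}$ with $b_3\notin\Delta_i$. Since $b_3$ is a basepoint of, say, $\mathbf{P}_k$, the pair $(\mathbf{P}_k,b_3)$ satisfies Lemma~\ref{whitehead}(2) with respect to $\Delta_i$, giving $single(\mathbf{P}_k)\cap\Delta_i=\emptyset$ and one-sidedness of doubles for $\mathbf{P}_k$. If moreover $b_{3-k}\notin\Delta_i$ as well, the same holds for $\mathbf{P}_{3-k}$ and the inclusions of the previous paragraph immediately yield $single(\mathbf{P}_3)\cap\Delta_i=\emptyset$ and one-sidedness of doubles for $\mathbf{P}_3$.

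The main obstacle is thus the configuration in which $b_{3-k}\in\Delta_i$: then $(\mathbf{P}_{3-k},b_{3-k})$ imposes no constraint on $\Delta_i$, and one must argue by hand that the sides of $\mathbf{P}_3$ restrict trivially to $\Delta_i$. The key observation is that, because $b_3\notin\Delta_i$, the Boolean piece of $\mathbf{P}_3$ contributing generators from $\Delta_i$ is determined entirely by $\mathbf{P}_k$: any $v\in\Delta_i\cap single(\mathbf{P}_3)$ would force $v\in single(\mathbf{P}_k)$ (contradicting the $\mathcal{G}$-compatibility of $(\mathbf{P}_k,b_3)$), and similarly the one-sidedness of doubles for $(\mathbf{P}_k,b_3)$ propagates to $(\mathbf{P}_3,b_3)$ through the corresponding inclusion. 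A short case analysis covering the three possible locations of $(b_1,b_2)$ with respect to $\Delta_i$ then completes the verification in every case, producing the required $N\mathcal{G}$-reductive elementary edge $e_3$ compatible with both $e_1$ and $e_2$.
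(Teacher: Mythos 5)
The high-level plan is correct and matches the paper's: run the peak-reduction argument from \cite{untwistedos}, Lemma~6.17, Case~1 to obtain a partition $\mathbf{P}_3$ (compatible with $\mathbf{P}_1,\mathbf{P}_2$ and $N$-reductive), then verify that the resulting Whitehead automorphism lies in $\unt(A_\Gamma;\,\mathcal{G})$. However, the verification step --- the only genuinely new content --- is where you go wrong, and the gap is real, not just a matter of suppressed routine detail.

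You claim that when $b_3\notin\Delta_i$, any $v\in\Delta_i\cap single(\mathbf{P}_3)$ forces $v\in single(\mathbf{P}_k)$ (where $\mathbf{P}_k$ is the partition of which $b_3$ is a basepoint). This inclusion is false. The quadrant construction of \cite{untwistedos}, Lemma~6.16 only yields an inclusion of the form $single(\mathbf{P}_3)\subseteq single(\mathbf{P}_k)\cup double(\text{one side of }\mathbf{P}_k)$; see the inclusions $single(\mathbf{R})\subseteq single(\mathbf{Q})\cup double(Q^*)$, $double(R)\subseteq double(Q^*)$ in the paper's Lemma~\ref{relativepartition}. So to rule out $v\in\Delta_i$ you must also control the relevant $double$ intersected with $\Delta_i$, and that control does \emph{not} come for free from $(\mathbf{P}_k,b_3)$ alone: in the case where, say, $v\in\Delta_i$ but $w\notin\Delta_i$, the argument uses the fact that $v\in double(Q)\cap\Delta_i$ (a feature of the specific configuration in Lemma~6.16) to deduce $double(Q^*)\cap\Delta_i=\emptyset$ via Lemma~\ref{whitehead}(2) applied to $(\mathbf{Q},w)$ --- i.e.\ it requires input from \emph{both} partitions and from the geometry of the quadrants, not just the one carrying the basepoint.

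Moreover, the hardest case is the one you don't name: both basepoints outside $\Delta_i$ (your "three possible locations" undercounts; there are four, and the both-out case is not covered by your propagation idea at all). There, neither partition alone gives $double(\cdot)\cap\Delta_i=\emptyset$ on its own, and one must assume for contradiction a pair $x_1\in double(P_3)\cap\Delta_i$, $x_2\in double(P_3^*)\cap\Delta_i$ and argue through subcases, the last of which relies on a commutation obstruction ($x_2\in lk(\mathbf{P})$ would force $x_2$ to commute with $v$, impossible since $x_2\in double(Q^*)$ while $v\in double(Q)$). This is exactly what the paper isolates as Lemma~\ref{relativepartition}, and it is a nontrivial combinatorial argument that your sketch asserts rather than carries out. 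In short: the approach is right, but the claimed "propagation" is incorrect as stated, and the decisive subcase is missing.
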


To prove this lemma, we will use the proof of Lemma~6.17 in \cite{untwistedos} and the vocabulary of Whitehead partitions. We need first a technical lemma to relate some Whitehead partitions appearing in Lemma~6.16 of \cite{untwistedos}, with the group $\unt(A_\Gamma;\,\mathcal{G})$. Recall that a based Whitehead partition is entirely determined by its basepoint and one side (the link is the link of the basepoint and the other side is the complement). Recall also that two based Whitehead partitions $(\mathbf{P},v)$ and $(\mathbf{Q},w)$ that are distinct (even up to change of sides and inversion of basepoint) are non-compatible if and only if none of $P\cap Q, P\cap Q^*, P^*\cap Q, P^*\cap Q^*$ are empty and $v,w$ are equal or do not commute.

\begin{lm}[see \cite{untwistedos}, Lemma~6.16]
\label{relativepartition}
Let $(\mathbf{P},v)$, $(\mathbf{Q},w)$ be two distinct non-compatible based Whitehead partitions whose corresponding Whitehead automorphisms are in $\unt(A_\Gamma;\,\mathcal{G})$.
\begin{enumerate}
\item If $v\in double(Q)$ and $w^{-1}\in P$, the following define two based Whitehead partitions compatible with $\mathbf{P}$ and $\mathbf{Q}$ whose corresponding automorphisms are in $\unt(A_\Gamma;\,\mathcal{G})$:
\[(\mathbf{R}, w^{-1}),\,\text{with } R = P\cap Q^*\qquad (\mathbf{S}, v^{-1}),\,\text{with } S = P^*\cap Q\]
\item If $v\in single(\mathbf{Q})$, $w\in single(\mathbf{P})$, and $v\in Q$, the following define two based Whitehead partitions compatible with $\mathbf{P}$ and $\mathbf{Q}$ whose corresponding automorphisms are in $\unt(A_\Gamma;\,\mathcal{G})$:
\[(\mathbf{T}, v),\,\text{with } T = P\cap Q\qquad (\mathbf{U}, v^{-1}),\,\text{with } U = P^*\cap Q^*\]
\end{enumerate}
\end{lm}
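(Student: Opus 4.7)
The assertion that $(\mathbf{R}, w^{-1})$ and $(\mathbf{S}, v^{-1})$ are based Whitehead partitions compatible with both $\mathbf{P}$ and $\mathbf{Q}$ (and analogously $(\mathbf{T}, v)$, $(\mathbf{U}, v^{-1})$ in case~2) is exactly the content of Lemma~6.16 of \cite{untwistedos}, which makes no reference to $\mathcal{G}$. The new content to establish is that the corresponding Whitehead automorphisms lie in $\unt(A_\Gamma;\,\mathcal{G})$. I plan to verify this via the combinatorial criterion of Lemma~\ref{whitehead}: for each $A_{\Delta_i}\in\mathcal{G}$ whose generating set $\Delta_i\subseteq V$ does not contain the basepoint of the new partition, I will show that $\Delta_i^\pm = \Delta_i\cup\Delta_i^{-1}$ is contained in a single closed halfspace (side union link) of the new partition, which is equivalent to the vanishing of $single\cap \Delta_i$ and the emptiness of one of the two intersections $double\cap\Delta_i$.

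Fix such a $\Delta_i$ and split into cases according to whether $v\in\Delta_i$ and whether $w\in\Delta_i$. Whenever $v\notin\Delta_i$, applying Lemma~\ref{whitehead} to $(\mathbf{P},v)$ under the standing hypothesis yields $\Delta_i^\pm\subseteq P\cup L_P$ or $\Delta_i^\pm\subseteq P^*\cup L_P$; similarly $w\notin\Delta_i$ confines $\Delta_i^\pm$ to one closed halfspace of $\mathbf{Q}$. The remaining sub-cases are handled directly: for instance, in case~1 with $v\in\Delta_i$, the condition $v\in double(Q)$ combined with Lemma~\ref{whitehead} applied to $(\mathbf{Q},w)$ forces $\Delta_i^\pm\subseteq Q\cup L_Q$; other configurations (such as $w\in\Delta_i$ together with $w\in single(\mathbf{P})$ in case~2) are shown to contradict the hypothesis outright.

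The crux of the argument lies in two containment relations enforced by the Whitehead partition axioms. In case~1, the hypothesis $v\in double(Q)$ forces $L_P\cap Q^* = \emptyset$, because any element of this intersection would commute with $v\in Q$ while lying in $Q^*$, violating the non-commuting requirement of $\mathbf{Q}$; symmetrically, $w^{-1}\in P$ forces $L_Q\cap P^* = \emptyset$. In case~2, the fact that $v\in single(\mathbf{Q})$ and $w\in single(\mathbf{P})$ yields the stronger equality $L_P = L_Q$, via the axiom that the link of a split element is contained in the partition's link. Using these relations, I will decompose each of the four intersections of a closed halfspace of $\mathbf{P}$ with a closed halfspace of $\mathbf{Q}$ as a union of smaller pieces each lying in a single closed halfspace of the new partition, from which the required conclusion follows immediately. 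The main obstacle is the case bookkeeping for each of the four new partitions $\mathbf{R},\mathbf{S},\mathbf{T},\mathbf{U}$ and the corresponding basepoint conditions; but once the two containment relations above are established, every sub-case reduces to a direct verification.
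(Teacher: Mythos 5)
Your proposal is correct and uses the same basic strategy as the paper (verify the combinatorial criterion of Lemma~\ref{whitehead} for each $\Delta_i$, splitting into cases according to whether $v,w\in\Delta_i$), but the execution differs in a way worth noting. The paper works directly with the $single/double$ data: it observes $single(\mathbf{R})\subseteq single(\mathbf{P})\cup single(\mathbf{Q})$ and similar inclusions, and in the hard sub-case $v,w\notin\Delta_i$ it runs a by-hand contradiction in three sub-sub-cases. You instead rephrase Lemma~\ref{whitehead}(2) as \emph{$\Delta_i^\pm$ lies in a single closed halfspace} $P\cup L_P$ or $P^*\cup L_P$ (a correct equivalence), establish once and for all the set relations $L_P\cap Q^*=\emptyset$, $L_Q\cap P^*=\emptyset$ in case~1 and $L_P=L_Q$ in case~2, and then chase the four closed halfspace intersections systematically. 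This is a valid and somewhat tidier organization; in fact the paper's case~(c) of the $v,w\notin\Delta_i$ argument secretly relies on the same observation $L_P\cap Q^*=\emptyset$ that you make a headline lemma. One small imprecision: in case~2 you describe the configuration ``$w\in\Delta_i$ together with $w\in single(\mathbf{P})$'' as ``contradicting the hypothesis outright''; it does not contradict the hypothesis of the lemma (the paper in fact uses it non-trivially: it forces $v\in\Delta_i$ via Lemma~\ref{whitehead}). What you presumably mean is that, under the standing assumption $v\notin\Delta_i$ of that sub-case, the configuration is impossible; phrased that way the step is fine.
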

Note that in both cases, symmetrical results hold if the sides of $\mathbf{P}$ or $\mathbf{Q}$ are exchanged (with basepoint replaced by their inverses). The reader will check easily that this covers all the cases of Whitehead partitions obtained in \cite{untwistedos}, Lemma~6.16.

\begin{proof}
The fact that these expressions define indeed based Whitehead partitions compatible with $\mathbf{P}$ and $\mathbf{Q}$ is the exact content of \cite{untwistedos}, Lemma~6.16. The only thing left to prove is that the corresponding automorphisms are in $\unt(A_\Gamma;\,\mathcal{G})$. To do so, we check Condition $(2.)$ from Lemma~\ref{whitehead} for some arbitrary $A_{\Delta_i}\in\mathcal{G}$. Note that, since each partition $\mathbf{R},\mathbf{S},\mathbf{T},\mathbf{U}$ has one side contained both in a side of $\mathbf{P}$ and a side of $\mathbf{Q}$, that side does not intersect the symmetric sets $lk(\mathbf{P})$ and $lk(\mathbf{Q})$. In particular, $single(\mathbf{R})$, $single(\mathbf{S})$, $single(\mathbf{T})$ and $single(\mathbf{U})$ are all disjoint from $lk(\mathbf{P})$ and $lk(\mathbf{Q})$.

Several cases arise for Assertion~$(1)$.
\begin{itemize}
\item Assume $v,w\in \Delta_i$. The result is obvious for both $\mathbf{R}$ and $\mathbf{S}$.

\item Assume $v\in \Delta_i$ and $w\notin \Delta_i$. The result is obvious for $\mathbf{S}$. Since $v\in double(Q)\cap \Delta_i$, $double(Q^*)\cap \Delta_i$ is empty by Lemma~\ref{whitehead}. The inclusions $single(\mathbf{R})\subseteq single(\mathbf{Q})\cup double(Q^*)$ and $double(R)\subseteq double(Q^*)$ provide the result for $\mathbf{R}$.

\item Assume $v \notin \Delta_i$ and $w\in \Delta_i$. The result is obvious for $\mathbf{R}$. By Lemma~\ref{whitehead}, $single(\mathbf{P})\cap \Delta_i$ is empty, in particular $w\notin single(\mathbf{P})$. Hence $w\in double(P)\cap \Delta_i$, and $double(P^*)\cap \Delta_i$ is empty. As above, the inclusions $single(\mathbf{S})\subseteq single(\mathbf{P})\cup double(P^*)$ and $double(S)\subseteq double(P^*)$ provide the result for $\mathbf{S}$.

\item Assume finally $v,w\notin \Delta_i$. The inclusion $single(\mathbf{R})\subseteq single(\mathbf{P})\cup single(\mathbf{Q})$ proves that $single(\mathbf{R})\cap \Delta_i$ is empty. Now, for the sake of contradiction, let $x_1\in double(R)\cap \Delta_i$ and $x_2\in double(R^*)\cap \Delta_i$. Note that since $x_2\in double(R^*)$, $x_2\notin lk(\mathbf{R}) = lk(\mathbf{Q})$. Several subcases arise.
\begin{enumerate}
    \item[a.] If $x_2\in single(\mathbf{P})\cup single(\mathbf{Q})$, $x_2 \in \Delta_i$ is absurd. 
    \item[b.] If $x_2\in double(P^*)\cup double(Q)$, since $x_1\in double(R)\subseteq double(P)\cap double(Q^*)$, $x_1,x_2\in \Delta_i$ is absurd.
    \item[c.] Otherwise, $x_2\notin single(\mathbf{P})\cup double(P^*)$ and $x_2\in double(Q^*)$. Thus, since $x_2\notin double(R)$, $x_2\notin double(P)$. Hence $x_2\in lk(\mathbf{P})$ and $x_2$ commutes with $v$. However, $x_2\in double(Q^*)$ and $v\in double(Q)$, hence they cannot commute, a contradiction.
\end{enumerate}
This proves the desired result for $\mathbf{R}$. The proof for $\mathbf{S}$ is very similar, exchanging $(\mathbf{P},v)$ and $(\mathbf{Q},w)$:
The inclusion $single(\mathbf{S})\subseteq single(\mathbf{P})\cup single(\mathbf{Q})$ proves that $single(\mathbf{S})\cap \Delta_i$ is empty. Now, for the sake of contradiction, let $y_1\in double(S)\cap \Delta_i$ and $y_2\in double(S^*)\cap \Delta_i$. Note that since $y_2\in double(S^*)$, $y_2\notin lk(\mathbf{S}) = lk(\mathbf{P})$. Several subcases arise.
\begin{enumerate}
    \item[a.] If $y_2\in single(\mathbf{Q})\cup single(\mathbf{P})$, $y_2 \in \Delta_i$ is absurd.
    \item[b.] If $y_2\in double(Q^*)\cup double(P)$, since $y_1\in double(S)\subseteq double(Q)\cap double(P^*)$, $y_1,y_2\in \Delta_i$ is absurd.
    \item[c.] Otherwise, $y_2\notin single(\mathbf{Q})\cup double(Q^*)$ and $y_2\in double(P^*)$ in this case. Thus, since $y_2\notin double(S)$, $y_2\notin double(Q)$. Hence $y_2\in lk(\mathbf{Q})$ and $y_2$ commutes with $w$. This time, since $y_2\in double(P^*)$ and $w^{-1}\in P$, they cannot commute, a contradiction again.
\end{enumerate}

\end{itemize}

Several cases arise for Assertion $(2)$ as well. We prove only the result for $\mathbf{T}$, the case of $\mathbf{U}$ being entirely symmetrical.
\begin{itemize}
\item If $v\in\Delta_i$, the result is obvious.
\item If $w\in \Delta_i$, since $w\in single(\mathbf{P})$, $v\in\Delta_i$ and the result is again obvious.
\item Finally, if $v,w\notin \Delta_i$, the argument is identical to the last case of the proof of Assertion~$(1)$ for $\mathbf{S}$, switching the sides of $\mathbf{P}$ (the proof of this case does not use the fact that $v\in double(Q)$.)
\end{itemize}
\end{proof}

\begin{proof}[Proof of Lemma~\ref{higginslyndon}]
    Let $\varphi\in\unt(A_\Gamma;\,\mathcal{G})$ such that $\sigma = [\SS,\varphi]$. Let $e'_1$ (resp. $e'_2$) be an edge completing $e_1$ (resp. $e_2$) into an $N\mathcal{G}$-reductive Whitehead move from $\sigma$. Using the fact that any collapse map comes from a collapse of partitions in a blow-up of $\SS$ (\cite{spatial}, Proposition~5.7), let $(\mathbf{P},v)$ (resp. $(\mathbf{Q},w)$) be a $\varphi$-$N\mathcal{G}$-reductive based Whitehead partition representing the Whitehead move $e_1e'_1$ (resp. $e_2e'_2$) at $\sigma$. Let $c_\mathbf{P}\colon (\SS^\mathbf{P},M_\mathbf{P})\to (\SS, \varphi)$, $c_v\colon (\SS^\mathbf{P},M_\mathbf{P})\to (\SS^\mathbf{P}_v, m)$ (resp. $c_\mathbf{Q}\colon (\SS^\mathbf{Q},M_\mathbf{Q})\to (\SS, \varphi)$, $c_w\colon (\SS^\mathbf{Q},M_\mathbf{Q})\to (\SS^\mathbf{Q}_w, m')$) be marking-preserving collapses of the hyperplanes with the corresponding labels. If $\mathbf{P}$ and $\mathbf{Q}$ are compatible, the edges $e_1$ and $e_2$ both factor some edge representing the collapse $\SS^{\{\mathbf{P},\mathbf{Q}\}}\to\SS$ with appropriate markings. Otherwise, $\mathbf{P}$ and $\mathbf{Q}$ are not compatible. By preservation of markings, $[m\circ (c_v)_*\circ (c_\mathbf{P})_*^{-1}]=[m'\circ (c_w)_*\circ (c_\mathbf{Q})_*^{-1}]=\varphi$. By assumption and by $\unt(A_\Gamma;\,\mathcal{G})$-invariance of $S^\mathcal{G}$, $\varphi^{-1}\cdot [\SS^\mathbf{P}_v,m] = [\SS^\mathbf{P}_v, (c_\mathbf{P})_*\circ (c_v)_*^{-1}] $ and $\varphi^{-1}\cdot [\SS^\mathbf{Q}_w,m'] = [\SS^\mathbf{Q}_w, (c_\mathbf{Q})_*\circ (c_w)_*^{-1}] $ are in $S^\mathcal{G}$. By Lemma~\ref{whitehead}, the outer Whitehead automorphisms corresponding to the based Whitehead partitions $(\mathbf{P},v)$ and $(\mathbf{Q},w)$ are in $\unt(A_\Gamma;\,\mathcal{G})$.
    
    Moreover, by Remark~\ref{collapseeverywhere} and Lemma~\ref{changenorm}, $\mathbf{0}<_{lex} N([\SS,\varphi]) - N([\SS^\mathbf{P}_v,m]) = N_v(\SS,\varphi) - N_\mathbf{P}(\SS^\mathbf{P},M_\mathbf{P})$, hence $N_v(\SS,\varphi) > N_\mathbf{P}(\SS^\mathbf{P},M_\mathbf{P})$ and $\mathbf{0}<_{lex} N([\SS,\varphi]) - N([\SS^\mathbf{Q}_w,m']) = N_w(\SS,\varphi) - N_\mathbf{Q}(\SS^\mathbf{Q},M_\mathbf{Q})$, hence $N_w(\SS,\varphi) > N_\mathbf{Q}(\SS^\mathbf{Q},M_\mathbf{Q})$.

    Now we reuse the proof of Lemma~6.17 in \cite{untwistedos} (Case 1). The order of entries in our norm is different, but we obtained the inequalities $|\mathbf{P}|_\sigma-|v|_\sigma<\mathbf{0}$ and $|\mathbf{Q}|_\sigma-|w|_\sigma<\mathbf{0}$ with our ordering of entries by the argument above, and the reader can check that all the other involved inequalities in the proof of \cite{untwistedos} hold term by term of the sequence. Thus there exist a based Whitehead partition $(\mathbf{V},b)$ obtained from Lemma~6.16 in \cite{untwistedos} such that $\mathbf{V}$ is both compatible with $\mathbf{P}$ and $\mathbf{Q}$, and $|\mathbf{V}|_\sigma - |b|_\sigma <\mathbf{0}$ with our ordering of entries, i.e.~$N_b(\SS,id)>N_{\mathbf{V}}(\SS^\mathbf{R},M_\mathbf{V})$, where $M_\mathbf{V}$ is the marking on $\SS^\mathbf{V}$ making the collapse of $\mathbf{V}$ marking-preserving. Let $m''$ be the marking on the range of the collapse $c_b\colon \SS^\mathbf{V}\to \SS^\mathbf{V}_b$ of the hyperplane labeled $b$ making $c_b$ marking-preserving, i.e.~$m''\circ (c_b)_*\circ (c_\mathbf{V})_*^{-1} = M_{\mathbf{V}}\circ (c_\mathbf{V})_*^{-1} = \varphi$. By Remark~\ref{collapseeverywhere} and Lemma~\ref{changenorm}, $N([\SS^\mathbf{V}_b,m'']) = N([\SS, \varphi]) + N_{\mathbf{V}}(\SS^\mathbf{V},M_\mathbf{V}) - N_b(\SS,\varphi)< N([\SS,\varphi])$. The edge $e_3$ of $K_\Gamma$ representing the collapse $c_\mathbf{V}\colon \SS^\mathbf{V}\to \SS$ is incident to $\sigma$ and part of a Whitehead move whose other endpoint $[\SS^\mathbf{V}_b,m'']$ has lesser norm than $\sigma$. Furthermore, the fact that $\mathbf{V}$ is compatible with both $\mathbf{P}$ and $\mathbf{Q}$ proves that $e_3$ is compatible with both $e_1$ and $e_2$.

    Finally, $(\mathbf{V},b)$ was obtained from \cite{untwistedos} Lemma~6.16 from $(\mathbf{P},v)$ and $(\mathbf{Q},w)$. By Lemma~\ref{relativepartition}, $(\mathbf{V},b)$ induces an outer Whitehead automorphism in $\unt(A_\Gamma;\,\mathcal{G})$. Therefore $[\SS^\mathbf{V}_b,m''] = \varphi\cdot [\SS^\mathbf{V}_b,(c_\mathbf{V})_*\circ (c_b)_*^{-1}]\in S^\mathcal{G}$ by Lemma~\ref{whitehead}, finishing the proof that $e_3$ is $N\mathcal{G}$-reductive.
\end{proof}

\begin{lm}[see \cite{untwistedos}, Theorem~6.18]
\label{peakred}
    Let $\sigma,\tau_1,\tau_2$ be three distinct marked Salvettis in $S^\mathcal{G}$. Assume there exist $N\mathcal{G}$-reductive Whitehead moves from $\sigma$ to $\tau_1$ and from $\sigma$ to $\tau_2$. Then there exists a Whitehead path in $K^\mathcal{G}$ from $\tau_1$ to $\tau_2$ passing only through marked Salvettis $\tau$ with $N(\tau)<N(\sigma)$.
\end{lm}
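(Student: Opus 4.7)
The plan is to bridge the two given $N\mathcal{G}$-reductive moves out of $\sigma$ through an auxiliary $N\mathcal{G}$-reductive move at $\sigma$ whose first edge is simultaneously compatible with the first edges of both given moves, and then to combine two applications of Lemma~\ref{peak} around this auxiliary move. Denote by $e_1$ (resp.\ $e_2$) the first edge of the given $N\mathcal{G}$-reductive Whitehead move from $\sigma$ to $\tau_1$ (resp.\ $\tau_2$); by definition of $N\mathcal{G}$-reductivity, both $\tau_1$ and $\tau_2$ already satisfy $N(\tau_i) < N(\sigma)$.

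First, I would invoke Lemma~\ref{higginslyndon} on $e_1$ and $e_2$ to produce an $N\mathcal{G}$-reductive elementary edge $e_3$ at $\sigma$ that is compatible with both $e_1$ and $e_2$. Since $e_3$ is $N\mathcal{G}$-reductive, it extends to some $N\mathcal{G}$-reductive Whitehead move at $\sigma$ whose other endpoint is a marked Salvetti $\tau_3 \in S^\mathcal{G}$ with $N(\tau_3) < N(\sigma)$.

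Next, assuming $\tau_3 \neq \tau_1$, I would apply Lemma~\ref{peak} to the two $N\mathcal{G}$-reductive moves from $\sigma$ to $\tau_1$ and from $\sigma$ to $\tau_3$, legitimately because $e_1$ and $e_3$ are compatible. This yields a Whitehead path $P_1$ in $K^\mathcal{G}$ from $\tau_1$ to $\tau_3$, consisting of one or two Whitehead moves, whose possible middle marked Salvetti lies in $S^\mathcal{G}$ and has norm strictly less than $N(\sigma)$. When $\tau_3 = \tau_1$, take $P_1$ trivial. Symmetrically, using the compatibility of $e_3$ and $e_2$, I would construct a Whitehead path $P_2$ in $K^\mathcal{G}$ from $\tau_3$ to $\tau_2$ with the same norm bound.

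Concatenating $P_1$ and $P_2$ (and shortcutting any accidental repetition of a marked Salvetti to respect the definition of a Whitehead path) produces the desired Whitehead path in $K^\mathcal{G}$ from $\tau_1$ to $\tau_2$. Every marked Salvetti traversed is one of $\tau_1, \tau_2, \tau_3$, all of which are strictly below $\sigma$ in norm by $N\mathcal{G}$-reductivity, or a middle vertex supplied by Lemma~\ref{peak}, again strictly below $\sigma$ by that lemma. The main subtlety, rather than a genuine obstacle, is bookkeeping the degenerate cases where $\tau_3$ coincides with $\tau_1$ or $\tau_2$; no new geometric input beyond Lemmas~\ref{peak} and \ref{higginslyndon} is needed.
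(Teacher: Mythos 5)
Your proposal is correct and follows essentially the same route as the paper's proof: produce a bridge edge $e_3$ via Lemma~\ref{higginslyndon}, then apply Lemma~\ref{peak} on each side and concatenate. The only (cosmetic) difference is that the paper handles the case where $e_1$ and $e_2$ are already compatible by applying Lemma~\ref{peak} directly, whereas you uniformly invoke Lemma~\ref{higginslyndon}; both are valid, and your explicit bookkeeping of the degenerate cases $\tau_3 = \tau_1$ or $\tau_3 = \tau_2$ is a welcome precaution.
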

\begin{proof}
    Let $e_1$, $e_2$ be the $N\mathcal{G}$-reductive edges at $\sigma$ starting the Whitehead moves to $\tau_1$ and $\tau_2$ respectively. If $e_1$ and $e_2$ are compatible, Lemma~\ref{peak} concludes. If $e_1$ and $e_2$ are not compatible, by Lemma~\ref{higginslyndon}, there exists an $N\mathcal{G}$-reductive edge $e_3$ at $\sigma$ compatible with both $e_1$ and $e_2$. Let $\tau_3$ be a marked Salvetti in $S^\mathcal{G}$ adjacent to an endpoint of $e_3$, with $N(\tau_3)<N(\sigma)$. Applying the compatible case to both $\tau_1, \tau_3$ and $\tau_3,\tau_2$, then concatenating the obtained Whitehead paths yields the result.
\end{proof}

\begin{cor}[see \cite{untwistedos}, Corollary~6.20]
\label{reduction}
    Every marked Salvetti in $S^\mathcal{G}\setminus\{\sigma_0\}$ admits an incident $N\mathcal{G}$-reductive elementary edge.
\end{cor}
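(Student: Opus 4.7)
The plan is to descend optimally from $\sigma$ to $\sigma_0$ in $K^\mathcal{G}$ and read off a reductive edge at the first step. By Lemma~\ref{connected}, the set $\mathcal{P}$ of Whitehead paths in $K^\mathcal{G}$ from $\sigma$ to $\sigma_0$ is non-empty. For each $p\in\mathcal{P}$, let $M(p)$ denote the maximum value of $N$ over the marked Salvettis of $p$, and $n(p)$ the number of marked Salvettis of $p$ realizing this maximum. Since values of $N$ on marked Salvettis are well-ordered by Lemma~\ref{wellorder} and $n(p)$ is a non-negative integer, I would pick $p\in\mathcal{P}$ minimizing $M(p)$ and, subject to that, minimizing $n(p)$. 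Write the marked Salvettis of $p$ as $\sigma=\tau_0,\tau_1,\dots,\tau_k=\sigma_0$, with consecutive $\tau_i,\tau_{i+1}$ joined by a Whitehead move of $p$.

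The goal is to show that $M(p)=N(\sigma)$ and that this maximum is realized only at $\tau_0$. Once this is known, $N(\tau_1)<N(\sigma)$, and the first Whitehead move of $p$ provides an $N\mathcal{G}$-reductive elementary edge at $\sigma$. (The Whitehead move lies in $K^\mathcal{G}$ because $p$ does, and $\tau_1\in S^\mathcal{G}$ because marked Salvettis are minimal for the collapse order, so the only marked Salvetti in the star of some $\sigma'\in S^\mathcal{G}$ is $\sigma'$ itself, forcing every marked Salvetti visited by a path in $K^\mathcal{G}$ to lie in $S^\mathcal{G}$.)

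To prove the claim, suppose for contradiction that $M=M(p)$ is attained at some $\tau_j$ with $0<j<k$. Since $N(\sigma_0)<N(\sigma)\leq M$, one has $\tau_j\neq\sigma_0$; since $N$ is injective on marked Salvettis by Lemma~\ref{wellorder} and a Whitehead path visits each marked Salvetti at most once, the three values $N(\tau_{j-1})$, $N(\tau_j)$, $N(\tau_{j+1})$ are distinct, forcing $N(\tau_{j-1})<M$ and $N(\tau_{j+1})<M$. Hence both Whitehead moves of $p$ incident to $\tau_j$ are $N\mathcal{G}$-reductive. Lemma~\ref{peakred} then yields a Whitehead path $q$ in $K^\mathcal{G}$ from $\tau_{j-1}$ to $\tau_{j+1}$ whose marked Salvettis all have norm strictly less than $M$. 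Splicing $q$ in place of the subpath $\tau_{j-1},\tau_j,\tau_{j+1}$ of $p$ and then pruning any repeated marked Salvettis to restore the Whitehead-path condition produces $p'\in\mathcal{P}$ with either $M(p')<M$, contradicting minimality of $M(p)$, or $M(p')=M$ and $n(p')\leq n(p)-1$, contradicting minimality of $n(p)$.

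The substantive work has already been done in Lemma~\ref{peakred} (peak reduction); the remaining argument is essentially an extremal bookkeeping. The only delicate point is verifying that after splicing and pruning, the result is still a valid Whitehead path in $K^\mathcal{G}$, but this is immediate since all the vertices used came from paths already in $K^\mathcal{G}$ and pruning can only remove loops.
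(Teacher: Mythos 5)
Your proof is correct and follows essentially the same route as the paper's: consider a Whitehead path from $\sigma$ to $\sigma_0$ in $K^\mathcal{G}$, and use Lemma~\ref{peakred} to eliminate interior peaks until the maximum sits at $\sigma$. The only cosmetic difference is that you select an extremal path up front (minimizing $M(p)$, then $n(p)$) rather than iterating the alteration and invoking the well-ordering to terminate; these are equivalent. Note that your secondary tie-breaking criterion $n(p)$ is vacuous: since $N$ is injective on marked Salvettis (Lemma~\ref{wellorder}) and a Whitehead path visits each marked Salvetti at most once, $n(p)=1$ for every $p\in\mathcal{P}$, so after splicing $q$ (all of whose norms are $<M$) in place of $\tau_{j-1},\tau_j,\tau_{j+1}$ and pruning, the new maximum is automatically strictly less than $M$. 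Your observation that every marked Salvetti visited by a path in $K^\mathcal{G}$ must lie in $S^\mathcal{G}$ (because a Salvetti has the minimal number of hyperplanes, hence appears in the star of a marked Salvetti only as that Salvetti itself) is correct and is used implicitly in the paper.
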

\begin{proof}
    Let $\sigma\in S^\mathcal{G}\setminus \{\sigma_0\}$. By Lemma~\ref{connected}, there exists a Whitehead path in $K^\mathcal{G}$ joining $\sigma$ to $\sigma_0$. Since the Whitehead path does not go through the same marked Salvetti twice and the norm is injective, the Whitehead path reaches a marked Salvetti of maximum norm only once. If the marked Salvetti of maximum norm is not an endpoint of the path, use Lemma~\ref{peakred} with this maximal marked Salvetti and its two neighbors to alter the path, reducing the maximum norm attained while staying in $K^\mathcal{G}$. By Lemma~\ref{wellorder}, this alteration can only be performed finitely many times, after which either $\sigma$ or $\sigma_0$ is maximal. Since $N(\sigma_0)<N(\sigma)$ by minimality, $\sigma$ is maximal and the first edge of the path is $N\mathcal{G}$-reductive at $\sigma$. 
\end{proof}

For this last preliminary lemma, we need the vocabulary of Whitehead partitions again.

\begin{lm}[see \cite{untwistedos}, Proposition~6.23]
\label{pushing}
    Let $\sigma = [\SS,\varphi]\in S^\mathcal{G}\setminus \{\sigma_0\}$ with $\varphi\in \unt(A_\Gamma;\,\mathcal{G})$. Let $(\mathbf{M}, m)$ be a based Whitehead partition satisfying the following assumptions:
    \begin{itemize}
        \item $(\mathbf{M},m)$ is $\varphi$-$N\mathcal{G}$-reductive (such a based Whitehead partition always exists by Corollary~\ref{reduction}).
        \item $lk(\mathbf{M})$ is maximal among all based Whitehead partitions satisfying the previous property
        \item $N(\tau)$, is minimal among all based Whitehead partitions satisfying the two previous properties, where $\tau$ is the other endpoint of the Whitehead move corresponding to $(\mathbf{M},m)$, starting at $(\SS,\varphi)$.
    \end{itemize}
    Let $(\mathbf{P},p)$ be a $\varphi$-$N\mathcal{G}$-reductive Whitehead partition. Assume $\mathbf{P}$ and $\mathbf{M}$ are not compatible, so that $m\notin lk(\mathbf{P})$. Assume up to possibly exchanging the sides of $\mathbf{P}$ that $m\in P$. Then one of the quadrants $P^*\cap M$ or $P^*\cap M^*$ determines a $\varphi$-$N\mathcal{G}$-reductive based Whitehead partition $(\mathbf{P}_0,p_0)$ (compatible with both $\mathbf{M}$ and $\mathbf{P}$), with $lk(\mathbf{P}_0) = lk(\mathbf{P})$.
\end{lm}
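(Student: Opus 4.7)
The plan is to reuse the combinatorial construction of \cite{untwistedos}, Proposition~6.23 wholesale, and then upgrade its output to $\mathcal{G}$-preservation by invoking Lemma~\ref{relativepartition}.

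First I would apply the argument of \cite{untwistedos}, Proposition~6.23 directly to the pair $(\mathbf{M},m)$ and $(\mathbf{P},p)$: these are distinct and non-compatible based Whitehead partitions, and $(\mathbf{M},m)$ was selected with maximal link among $\varphi$-$N$-reductive partitions and with minimal $N(\tau)$ among those. That argument exhibits a based Whitehead partition $(\mathbf{P}_0,p_0)$ whose side is one of the quadrants $P^*\cap M$ or $P^*\cap M^*$, satisfying $lk(\mathbf{P}_0)=lk(\mathbf{P})$, compatibility with both $\mathbf{M}$ and $\mathbf{P}$, and strict decrease of the norm at $[\SS,\varphi]$. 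The norm-reduction step is a coordinatewise transfer to our lexicographic norm $N$ via Remark~\ref{collapseeverywhere} and Lemma~\ref{changenorm}, exactly as in the proof of Lemma~\ref{higginslyndon}. None of these combinatorial properties depend on $\mathcal{G}$.

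The only genuinely new task is to verify that the Whitehead automorphism associated to $(\mathbf{P}_0,p_0)$ lies in $\unt(A_\Gamma;\,\mathcal{G})$; by Lemma~\ref{whitehead} this is equivalent to the other endpoint $[\SS^{\mathbf{P}_0}_{p_0},\varphi']$ of the Whitehead move lying in $S^\mathcal{G}$, and hence promotes $(\mathbf{P}_0,p_0)$ from $\varphi$-$N$-reductive to $\varphi$-$N\mathcal{G}$-reductive. Since $(\mathbf{P},p)$ and $(\mathbf{M},m)$ are non-compatible and their Whitehead automorphisms both lie in $\unt(A_\Gamma;\,\mathcal{G})$ (again by Lemma~\ref{whitehead}, since they are $\varphi$-$N\mathcal{G}$-reductive), the candidate partition $(\mathbf{P}_0,p_0)$ built from a quadrant of the form $P^*\cap M$ or $P^*\cap M^*$ matches one of the two cases of Lemma~\ref{relativepartition}, possibly after exchanging sides of $\mathbf{P}$ or $\mathbf{M}$ and inverting basepoints, as allowed by the symmetry remark following that lemma. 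That lemma then delivers the required membership.

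The main obstacle is organizational: one must align the case distinction underlying \cite{untwistedos}, Proposition~6.23, governed by whether $p\in double(M)$ versus $p\in single(\mathbf{M})$ and by the side of $\mathbf{P}$ containing $m$, with the two cases of Lemma~\ref{relativepartition}, up to the symmetric variants. This is precisely the content of the remark following Lemma~\ref{relativepartition} asserting that its cases cover all the partitions obtained in \cite{untwistedos}, Lemma~6.16, so no additional combinatorial work is needed once the correspondence is set up; the proof then closes.
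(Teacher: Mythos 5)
Your proposal is correct and follows essentially the same route as the paper: rerun the combinatorial argument of \cite{untwistedos}, Proposition~6.23 (with the entries of the norm reordered), and then observe that because $(\mathbf{P}_0,p_0)$ is produced by \cite{untwistedos}, Lemma~6.16 from $(\mathbf{M},m)$ and $(\mathbf{P},p)$, whose Whitehead automorphisms lie in $\unt(A_\Gamma;\,\mathcal{G})$, Lemma~\ref{relativepartition} promotes $(\mathbf{P}_0,p_0)$ to a $\varphi$-$N\mathcal{G}$-reductive partition. The only imprecision worth flagging is that you describe the maximality of $lk(\mathbf{M})$ as being taken over $\varphi$-$N$-reductive partitions, whereas the lemma takes it over the smaller class of $\varphi$-$N\mathcal{G}$-reductive ones; this weaker maximality still suffices precisely because the competitor partitions produced along the way are themselves $\varphi$-$N\mathcal{G}$-reductive by Lemma~\ref{relativepartition}, which is the point both you and the paper are implicitly relying on.
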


\begin{proof}
    The proof is drawn from \cite{untwistedos}, Proposition~6.23. Once again, we need to reorder the entries of the norms, but all the lexicographic inequalities are either true term-by-term or a consequence of the assumptions of the lemma. Since $(\mathbf{P}_0,p_0)$ is obtained by applying \cite{untwistedos}, Lemma~6.16 with $(\mathbf{M},m)$ and $(\mathbf{P},p)$ which both induce Whitehead automorphisms in $\unt(A_\Gamma;\,\mathcal{G})$, $(\mathbf{P}_0, p_0)$ also induces a Whitehead automorphism in $\unt(A_\Gamma;\,\mathcal{G})$ by Lemma~\ref{relativepartition}. This finishes the proof that $(\mathbf{P}_0,p_0)$ is $\varphi$-$N\mathcal{G}$-reductive.
\end{proof}

\begin{pro}[see \cite{untwistedos}, proof of Theorem~6.24]
\label{contractintersect}
Let $\sigma\in S^\mathcal{G}\setminus \{\sigma_0\}$. The intersection $K^\mathcal{G}_{<\sigma}\cap st(\sigma)$ is contractible.
\end{pro}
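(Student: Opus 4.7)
The plan is to mirror the proof of \cite{untwistedos}, Theorem~6.24, taking advantage of the relative analogues developed above. Write $\sigma = [\SS, \varphi]$ with $\varphi \in \unt(A_\Gamma;\,\mathcal{G})$. By \cite{spatial}, Proposition~5.7, every vertex of $st(\sigma) \setminus \{\sigma\}$ is uniquely of the form $[\SS^\mathbf{\Pi}, M_\mathbf{\Pi}]$ for some non-empty set $\mathbf{\Pi}$ of pairwise compatible Whitehead partitions, where $M_\mathbf{\Pi}$ is the unique untwisted marking on $\SS^\mathbf{\Pi}$ making $c_\mathbf{\Pi}$ marking-preserving to $\sigma$, and higher simplices of $st(\sigma)$ correspond to chains of such $\mathbf{\Pi}$ under inclusion. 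Combining Lemma~\ref{changenorm}, Remark~\ref{collapseeverywhere}, and Lemma~\ref{whitehead}, I would first show that $[\SS^\mathbf{\Pi}, M_\mathbf{\Pi}]$ lies in $K^\mathcal{G}_{<\sigma}$ if and only if $\mathbf{\Pi}$ contains at least one $\varphi$-$N\mathcal{G}$-reductive based Whitehead partition: such a partition provides an elementary edge to a smaller marked Salvetti of $S^\mathcal{G}$ that factors through $[\SS^\mathbf{\Pi}, M_\mathbf{\Pi}]$, and conversely Lemma~\ref{factorpath} allows any reductive collapse from $[\SS^\mathbf{\Pi}, M_\mathbf{\Pi}]$ to be rearranged through such a single-partition collapse.

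Next, fix a $\varphi$-$N\mathcal{G}$-reductive based Whitehead partition $(\mathbf{M}, m)$ as in Lemma~\ref{pushing}, whose existence is ensured by Corollary~\ref{reduction}. Let $L \subseteq K^\mathcal{G}_{<\sigma} \cap st(\sigma)$ be the subcomplex spanned by vertices $[\SS^\mathbf{\Pi}, M_\mathbf{\Pi}]$ with $\mathbf{M} \in \mathbf{\Pi}$. Each such simplex, in the flag complex $K_\Gamma$, admits $[\SS^{\{\mathbf{M}\}}, M_{\{\mathbf{M}\}}] \in K^\mathcal{G}_{<\sigma}$ as a face (or already contains it), so $L$ is a cone with apex $[\SS^{\{\mathbf{M}\}}, M_{\{\mathbf{M}\}}]$ and is therefore contractible.

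Finally, I would define a simplicial deformation retraction $\rho \colon K^\mathcal{G}_{<\sigma} \cap st(\sigma) \to L$. Given $\mathbf{\Pi}$ in the intersection, split $\mathbf{\Pi} = \mathbf{\Pi}^{\mathbf{M}} \sqcup \mathbf{\Pi}^{\perp}$ with $\mathbf{\Pi}^{\mathbf{M}}$ the partitions compatible with $\mathbf{M}$; for each $\mathbf{P} \in \mathbf{\Pi}^{\perp}$, Lemma~\ref{pushing} (applied according to whether $m \in P$ or $m \in P^*$) provides a canonical $\varphi$-$N\mathcal{G}$-reductive replacement $\mathbf{P}_0$ compatible with both $\mathbf{M}$ and $\mathbf{P}$ and depending only on $(\mathbf{P}, \mathbf{M})$. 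Set $\rho(\mathbf{\Pi}) = \mathbf{\Pi}^{\mathbf{M}} \cup \{\mathbf{P}_0 : \mathbf{P} \in \mathbf{\Pi}^{\perp}\} \cup \{\mathbf{M}\}$. Since all partitions in this collection are compatible with $\mathbf{M}$ and, as one verifies, pairwise compatible with each other, $\rho(\mathbf{\Pi})$ defines a vertex of $L$. The simultaneous inclusions $\mathbf{\Pi} \supseteq \mathbf{\Pi}^{\mathbf{M}} \subseteq \rho(\mathbf{\Pi})$ produce a length-two simplicial path from $\mathbf{\Pi}$ to $\rho(\mathbf{\Pi})$ staying in $K^\mathcal{G}_{<\sigma} \cap st(\sigma)$ (when $\mathbf{\Pi}^{\mathbf{M}} = \emptyset$ we replace this intermediate vertex by $\sigma$ itself, which is still adjacent to both), and the resulting homotopy extends simplicially over all simplices because $\rho$ depends monotonically on $\mathbf{\Pi}$.

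The main obstacle is verifying the non-trivial combinatorial fact that the set $\mathbf{\Pi}^{\mathbf{M}} \cup \{\mathbf{P}_0 : \mathbf{P} \in \mathbf{\Pi}^{\perp}\} \cup \{\mathbf{M}\}$ consists of pairwise compatible partitions. This amounts to a case analysis on quadrants of overlapping partitions analogous to the one used in \cite{untwistedos}, Lemma~6.16 and Proposition~6.23, and is precisely the step where it is essential that Lemma~\ref{relativepartition} has already certified that all relevant pushed partitions remain in $\unt(A_\Gamma;\,\mathcal{G})$, so that the entire homotopy stays inside the relative subcomplex $K^\mathcal{G}$.
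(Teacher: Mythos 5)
Your overall strategy mirrors the paper's (which in turn follows \cite{untwistedos}, Theorem~6.24): characterize vertices of the intersection, then cone everything onto a single well-chosen reductive partition $\mathbf{M}$. However, there is a genuine gap: you skip the intermediate deformation retraction to the subcomplex $K'$ spanned by vertices $[\SS^\mathbf{\Pi},M_\mathbf{\Pi}]$ for which \emph{every} partition in $\mathbf{\Pi}$ is $\varphi$-$N\mathcal{G}$-reductive. The paper obtains this retraction from Lemma~\ref{factoring} together with Quillen's poset lemma (the poset map $\mathbf{\Pi}\mapsto\mathbf{\Pi}_{\mathrm{red}}$, monotone decreasing and with image in the subcomplex by Lemma~\ref{factoring}), and only then applies the pushing argument.

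Without that step your retraction $\rho$ is not well-defined. Your characterization of $K^\mathcal{G}_{<\sigma}\cap st(\sigma)$ only says $\mathbf{\Pi}$ contains \emph{at least one} $\varphi$-$N\mathcal{G}$-reductive partition; it may contain many non-reductive ones. But Lemma~\ref{pushing} explicitly requires the input partition $(\mathbf{P},p)$ to be $\varphi$-$N\mathcal{G}$-reductive, so the replacement $\mathbf{P}_0$ is simply undefined when $\mathbf{P}\in\mathbf{\Pi}^\perp$ is not reductive. A second, related failure occurs in your two-step path $\mathbf{\Pi}\supseteq\mathbf{\Pi}^\mathbf{M}\subseteq\rho(\mathbf{\Pi})$: the intermediate vertex $[\SS^{\mathbf{\Pi}^\mathbf{M}},M_{\mathbf{\Pi}^\mathbf{M}}]$ need not lie in $K^\mathcal{G}_{<\sigma}$. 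Indeed, all the reductive partitions of $\mathbf{\Pi}$ could sit in $\mathbf{\Pi}^\perp$, leaving $\mathbf{\Pi}^\mathbf{M}$ non-empty but with no reductive member, so the homotopy would exit the subcomplex. Both problems vanish once one first retracts to $K'$: there every $\mathbf{P}\in\mathbf{\Pi}$ is reductive, so Lemma~\ref{pushing} applies to every element of $\mathbf{\Pi}^\perp$, and $\mathbf{\Pi}^\mathbf{M}$ (when non-empty) automatically contains a reductive partition. Inserting this Quillen-lemma retraction before your pushing step would bring your proof in line with the paper's.
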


\begin{proof}
Vertices in $K^\mathcal{G}_{<\sigma}\cap st(\sigma)$ are exactly the $\alpha$ that are adjacent to $\sigma$ and to some marked Salvetti $\tau\in S^\mathcal{G}$ with $N(\tau)<N(\sigma)$. Such a vertex exists by Corollary~\ref{reduction}. The cell structure on the $K^\mathcal{G}_{<\sigma}\cap st(\sigma)$ corresponds to the induced poset of collapses.

The rest of the proof is identical to \cite{untwistedos}: by Lemma~\ref{factoring} and Quillen's poset lemma, $K^\mathcal{G}_{<\sigma}\cap st(\sigma)$ deformation retracts to its subcomplex $K'$ spanned by vertices $\alpha'$ such that every elementary edge factoring the edge from $\alpha'$ to $\sigma$ is $N\mathcal{G}$ reductive. Choose then $(\mathbf{M},m)$ a based Whitehead partition satisfying the assumptions of Lemma~\ref{pushing}, and build a deformation retraction of $K'$ to a point exactly as in the proof of Theorem~6.24 in \cite{untwistedos}.
\end{proof}

\begin{cor}
For every $\sigma\in S^\mathcal{G}$ different from $\sigma_0$, the complex $K^\mathcal{G}_{<\sigma}$ is contractible. The complex $K^\mathcal{G}$ is also contractible.
\end{cor}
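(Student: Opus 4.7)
The plan is to run a transfinite induction on the well-ordered set $S^\mathcal{G}$, proving simultaneously two statements: (a) for every $\sigma \in S^\mathcal{G}$ with $\sigma \neq \sigma_0$, the subcomplex $K^\mathcal{G}_{<\sigma}$ is contractible, and (b) for every $\sigma \in S^\mathcal{G}$, the subcomplex $K^\mathcal{G}_{\leq \sigma} := K^\mathcal{G}_{<\sigma} \cup st(\sigma)$ is contractible. Proposition~\ref{contractintersect} and the fact that stars are cones are the two contractibility inputs; the transfinite bookkeeping assembles them.

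For the base case $\sigma = \sigma_0$, $K^\mathcal{G}_{<\sigma_0}$ is empty by construction and $K^\mathcal{G}_{\leq\sigma_0} = st(\sigma_0)$ is a cone with apex $\sigma_0$, hence contractible. For the inductive step at $\sigma > \sigma_0$, I first establish (a) by observing that $K^\mathcal{G}_{<\sigma} = \bigcup_{\tau < \sigma} K^\mathcal{G}_{\leq \tau}$ is a directed union of contractible subcomplexes (by the inductive hypothesis (b) applied to all $\tau < \sigma$), with inclusions of subcomplexes. Since any continuous map from a sphere $S^n$ into $K^\mathcal{G}_{<\sigma}$ has compact image, it is contained in a finite subcomplex, whose finitely many vertices all lie in some $K^\mathcal{G}_{\leq \tau}$ (take $\tau$ to be the maximum among them in the well-ordering). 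Hence any such map is null-homotopic; by Whitehead's theorem, $K^\mathcal{G}_{<\sigma}$ is contractible.

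Then (b) at $\sigma$ follows by decomposing $K^\mathcal{G}_{\leq \sigma} = K^\mathcal{G}_{<\sigma} \cup st(\sigma)$: the two pieces are contractible (by (a) just shown and because the star is a cone), and their intersection is contractible by Proposition~\ref{contractintersect}. Being a union of two contractible CW-subcomplexes along a contractible subcomplex, $K^\mathcal{G}_{\leq \sigma}$ is contractible (this is the standard Mayer-Vietoris gluing lemma for CW complexes, applied via the pushout $K^\mathcal{G}_{<\sigma} \leftarrow K^\mathcal{G}_{<\sigma}\cap st(\sigma) \rightarrow st(\sigma)$). Finally, contractibility of $K^\mathcal{G} = \bigcup_{\sigma \in S^\mathcal{G}} K^\mathcal{G}_{\leq \sigma}$ follows by the exact same compactness argument applied to the full directed union.

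The main subtlety is that the order type of $S^\mathcal{G}$ furnished by Lemma~\ref{wellorder} need not be $\omega$; unlike in \cite{untwistedos} (where the order is that of $\mathbb{N}$ and a straightforward countable induction suffices), one must pass limit ordinals, which is exactly what the compactness/finite-subcomplex argument handles. Everything else reduces to invoking Proposition~\ref{contractintersect} and formal properties of CW pushouts, so the only real work is this transfinite plumbing.
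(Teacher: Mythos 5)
Your proof is correct and follows the same strategy as the paper: a transfinite induction in which the successor step is handled via Proposition~\ref{contractintersect} together with contractibility of the star, and the limit step via compactness of spheres and the CW structure of $K_\Gamma$. The only differences are cosmetic — you phrase the successor step as a gluing/pushout of contractible pieces along a contractible intersection (the paper instead collapses the contractible star via $(A\cup B)/B \cong A/(A\cap B)$), and you split the inductive invariant into two simultaneous statements (a) and (b) where the paper carries a single statement and treats successors and limits as separate cases, but the mathematical content is identical.
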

This concludes the proof of Theorem~\ref{mccoolcyclic}, since $K^\mathcal{G}_{\mathcal{H}^t}$ is equal either to $K^\mathcal{G}$ or to $K^\mathcal{G}_{<\sigma_1}$.
\begin{proof}
The proof is by transfinite induction. Let $P(\sigma)$ be the property "Every homotopy group of $K^\mathcal{G}_{<\sigma}$ is trivial". The property holds for the successor $\tau$ of $\sigma_0$ in the well-ordering since $K_{<\tau} = st(\sigma_0)$ is contractible. To prove preservation under successors assume $K^\mathcal{G}_{<\sigma}\cap st(\sigma)$ and $st(\sigma)$ are contractible subcomplexes of $K^\mathcal{G}$. Then $K^\mathcal{G}_{<\sigma}\cup st(\sigma)$ is homotopy equivalent to the topological quotient $(K^\mathcal{G}_{<\sigma}\cup st(\sigma))/st(\sigma) = K^\mathcal{G}_{<\sigma}/(K^\mathcal{G}_{<\sigma}\cap st(\sigma))$ which is itself homotopy equivalent to $K^\mathcal{G}_{<\sigma}$ by Proposition~\ref{contractintersect}. Finally, the property is preserved under increasing limits because homotopy classes are compactly supported and all the objects are CW-complexes. Hence $P(\sigma)$ is true for every $\sigma\in S^\mathcal{G}$ different from $\sigma_0$, and $K^\mathcal{G}_{<\sigma}$ is contractible by Whitehead's theorem on CW-complexes. Finally, if $S^\mathcal{G}$ has a maximum $\sigma_{max}$, $K^\mathcal{G} = K^\mathcal{G}_{<\sigma_{max}}\cup st(\sigma_{max})$, and otherwise $K^\mathcal{G} = \displaystyle\bigcup_{\sigma\in S^\mathcal{G}} K^\mathcal{G}_{<\sigma}$. Thus $K^\mathcal{G}$ is contractible by the same arguments as above.
\end{proof}

\section{Adding arbitrary fixed subgroups}
\label{fixedfg}

This concluding section generalizes Theorem~\ref{mccoolcyclic}, starting with the following consequence of Proposition~\ref{bounded_displacement}:

\begin{lm}
\label{finite_index_general}
Let $H<A_\Gamma$ be finitely generated. There exists $\mathcal{K} = (K_1,\dots, K_L)$ a family of cyclic subgroups of $A_\Gamma$ such that $\out(A_\Gamma;\,\{H\}^t)$ is a subgroup of finite index of $\out(A_\Gamma;\, \mathcal{K}^t)$.
\end{lm}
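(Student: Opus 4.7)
The plan is to take $\mathcal{K} = \{\gen{h_i} : 1 \leq i \leq n\} \cup \{\gen{h_i h_j} : 1 \leq i < j \leq n\}$ for any finite generating set $h_1, \ldots, h_n$ of $H$, and to reduce the finite-index claim to a counting argument powered by Proposition~\ref{bounded_displacement}. The inclusion $\out(A_\Gamma; \{H\}^t) \subseteq \out(A_\Gamma; \mathcal{K}^t)$ is immediate, since a representative fixing $H$ pointwise fixes each $h_i$ and each $h_i h_j$.

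The crucial reformulation I would use is that $\out(A_\Gamma; \{H\}^t)$ is precisely the stabilizer of the simultaneous $A_\Gamma$-conjugacy class $[h_1, \ldots, h_n]$ under the natural $\out(A_\Gamma)$-action on such tuples: being \emph{inner on $H$} means admitting a single inner representative equal to the identity on every generator $h_i$ simultaneously, which is exactly the condition that the image tuple be simultaneously $A_\Gamma$-conjugate to $(h_1, \ldots, h_n)$. By orbit-stabilizer, the index $[\out(A_\Gamma; \mathcal{K}^t) : \out(A_\Gamma; \{H\}^t)]$ equals the size of the orbit of $[h_1, \ldots, h_n]$ under $\out(A_\Gamma; \mathcal{K}^t)$, so it suffices to prove this orbit is finite.

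For any representative tuple $(k_1, \ldots, k_n) = (\Phi(h_1), \ldots, \Phi(h_n))$ of a class in the orbit, membership of $\varphi$ in $\out(A_\Gamma; \mathcal{K}^t)$ forces $k_i$ to be conjugate to $h_i$ and $k_i k_j$ to be conjugate to $h_i h_j$, whence the translation lengths $\ell_{\USS}(k_i) = \ell(h_i)$ and $\ell_{\USS}(k_i k_j) = \ell(h_i h_j)$ are bounded uniformly in $\varphi$ by quantities depending only on $H$. I would then apply Proposition~\ref{bounded_displacement} to the free cospecial action of $A_\Gamma$ on $\USS$, with the $k_i$ as the chosen group elements, to obtain a vertex $x \in \USS^{(0)}$ with $d(x, k_i x) \leq M$ for all $i$ and some $M$ uniform in $\varphi$.

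Finally, since $A_\Gamma$ acts transitively on the vertex set of $\USS$, I would choose $g \in A_\Gamma$ sending a fixed basepoint $x_0$ to $x$; simultaneous conjugation by $g^{-1}$ does not alter the class $[k_1, \ldots, k_n]$ but replaces each $k_i$ by an element $g^{-1} k_i g$ lying in the finite set $\{a \in A_\Gamma : d(x_0, a x_0) \leq M\}$ (finite by freeness of the action). The orbit is then bounded by the finite number of such tuples, concluding the proof. The main subtlety, and the reason I must include the $\gen{h_i h_j}$ in $\mathcal{K}$ and not only the $\gen{h_i}$, is that Proposition~\ref{bounded_displacement} crucially requires uniform control of the products $\ell(k_i k_j)$, not merely of $\ell(k_i)$, to produce the common near-fixed vertex $x$.
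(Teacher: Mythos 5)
Your proposal is correct and follows essentially the same route as the paper: you choose the same family $\mathcal{K}$, bound the translation lengths of the images of the generators and their pairwise products, invoke Proposition~\ref{bounded_displacement} to find a uniformly nearly fixed vertex in $\USS$, and normalize by a conjugation to land in a fixed finite ball. The only cosmetic difference is that you phrase the finiteness of the index via orbit-stabilizer for the $\out(A_\Gamma;\,\mathcal{K}^t)$-action on simultaneous conjugacy classes of tuples, whereas the paper directly constructs a finite-to-one map from the set of left cosets into $B^k$; these are two formulations of the same counting argument.
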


\begin{proof}
Fix a generating system $H = \gen{a_1,\dots, a_{k}}$ and set for $(K_i)_{1\leq i\leq L}$ the family of all cyclic subgroups generated by group elements of the form $a_i$, or $a_{i}a_{j}$ with $i<j$. As every $K_i$ is a subgroup of $H$, an outer automorphism that is inner on $H$ is necessarily inner on every $K_i$, proving the inclusion $\out(A_\Gamma;\,\{H\}^t)\subseteq \out(A_\Gamma;\,\mathcal{K}^t)$.

Besides, given $[\varphi] \in \mathrm{Out}(A_\Gamma;\,\mathcal{K}^t)$, the automorphism $\varphi$ maps each $a_i$ and each $a_ia_j$ with $i<j$ to conjugates of themselves. In particular $\ell(\varphi(a_i))=\ell(a_i)$ and $\ell(\varphi(a_ia_j)) = \ell(a_ia_j)$.

Now apply Proposition~\ref{bounded_displacement} to $X=\USS$ and the family $(\varphi(a_1),\dots, \varphi(a_k))$. The bound $M$ depends only on the $a_i$ and not on $\varphi$, and there exists $x_\varphi\in X^{(0)}$ such that $d(x_\varphi, \varphi(a_i)x_\varphi)\leq M$ for every $i$. As $X$ has only one orbit of vertices, there is a group element $g_\varphi$ such that $x_{id} = g_\varphi x_\varphi$. Thus, conjugating $\varphi$ by $g_\varphi^{-1}$, pick a representative $\varphi_c$ for each class $c\in \mathrm{Out}(A_\Gamma;\,\mathcal{K}^t)$ satisfying $d(x_{id}, \varphi_c(a_i)x_{id}) \leq M$ for every $i$.

Finally, let $B$ be the finite vertex set of the closed $\ell^1$-ball in $\USS$ of radius $M$ centered at $x_{id}$, and consider the map:
\[\begin{aligned}\mathrm{Out}(A_\Gamma;\,\mathcal{K}^t)&\to B^k\\
c&\mapsto (\varphi_c(a_i)x_{id})_{1\leq i\leq k}
\end{aligned}\]

If two classes $c,c'$ have the same image under this map, then $\varphi_c^{-1}\circ \varphi_{c'}$ fixes every $a_i$ because the action is free. Since, $H$ is generated by the $a_i$, this means that $c^{-1}c'\in \mathrm{Out}(A_\Gamma;\,\{H\}^t)$. Hence $\out(A_\Gamma;\,\{H\}^t)$ has at most $\card{B^k}$ left cosets in $\out(A_\Gamma;\,\mathcal{K}^t)$.
\end{proof}

\begin{thm}
\label{mccoolgeneral}
Let $\mathcal{G} = (A_{\Delta_1},\dots, A_{\Delta_m})$ be a collection of standard subgroups of $A_\Gamma$ and let $\mathcal{H} = (H_1,\dots,H_n)$ be a finite collection of arbitrary finitely generated subgroups of $A_\Gamma$. There exists a contractible subcomplex $K^{\mathcal{G}}_{\mathcal{H}^t}$ of the spine of untwisted outer space $K_\Gamma$ on which $\unt(A_\Gamma; \mathcal{G},\mathcal{H}^t)$ acts properly and cocompactly. In particular, this group is of type VF.
\end{thm}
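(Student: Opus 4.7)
The plan is to reduce the general statement to the already-established cyclic case, Theorem~\ref{mccoolcyclic}, via a finite-index commensurability argument using Lemma~\ref{finite_index_general}. The complex $K^{\mathcal{G}}_{\mathcal{H}^t}$ will not be constructed afresh: it will be defined to be the complex associated to a suitably chosen family of cyclic subgroups, and the geometric properties for $\unt(A_\Gamma;\,\mathcal{G},\mathcal{H}^t)$ will be inherited from those of a group in which it sits with finite index.

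For each $j$, apply Lemma~\ref{finite_index_general} to the subgroup $H_j$ to obtain a finite collection $\mathcal{K}_j$ of cyclic subgroups of $A_\Gamma$ such that $\out(A_\Gamma;\,\{H_j\}^t)$ is a finite-index subgroup of $\out(A_\Gamma;\,\mathcal{K}_j^t)$. Set $\mathcal{K} = \bigcup_j \mathcal{K}_j$, which is a finite family of cyclic subgroups. The McCool groups are intersections over the conditions coming from each fixed subgroup. Combining the elementary observation that the intersection of finitely many finite-index subgroups of a common overgroup is of finite index with the fact that intersecting with a further common subgroup preserves finite index, I conclude that
\[
\unt(A_\Gamma;\,\mathcal{G},\mathcal{H}^t) = \unt(A_\Gamma)\cap \out(A_\Gamma;\,\mathcal{G})\cap \bigcap_j \out(A_\Gamma;\,\{H_j\}^t)
\]
has finite index in
\[
\unt(A_\Gamma;\,\mathcal{G},\mathcal{K}^t) = \unt(A_\Gamma)\cap \out(A_\Gamma;\,\mathcal{G})\cap \bigcap_j \out(A_\Gamma;\,\mathcal{K}_j^t).
\]

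Now I apply Theorem~\ref{mccoolcyclic} to the collection $\mathcal{G}$ of standard subgroups and the collection $\mathcal{K}$ of cyclic subgroups: this yields a contractible subcomplex $K^{\mathcal{G}}_{\mathcal{K}^t}\subseteq K_\Gamma$ on which $\unt(A_\Gamma;\,\mathcal{G},\mathcal{K}^t)$ acts properly and cocompactly. Define $K^{\mathcal{G}}_{\mathcal{H}^t} \coloneqq K^{\mathcal{G}}_{\mathcal{K}^t}$. Since a proper cocompact action of a group restricts to a proper cocompact action of any of its finite-index subgroups, the group $\unt(A_\Gamma;\,\mathcal{G},\mathcal{H}^t)$ acts properly and cocompactly on the same contractible subcomplex.

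There is essentially no obstacle here: all the heavy lifting was done in establishing Theorem~\ref{mccoolcyclic} and Lemma~\ref{finite_index_general}, the latter itself relying on the bounded-displacement result Proposition~\ref{bounded_displacement}. The only further point to note is type VF: the group $\unt(A_\Gamma;\,\mathcal{G},\mathcal{H}^t)$ is virtually torsion-free as a subgroup of $\out(A_\Gamma)$, so a torsion-free finite-index subgroup acts freely, properly and cocompactly on the finite-dimensional contractible complex $K^{\mathcal{G}}_{\mathcal{H}^t}$, providing a finite classifying space and therefore type F for that subgroup, hence type VF for $\unt(A_\Gamma;\,\mathcal{G},\mathcal{H}^t)$.
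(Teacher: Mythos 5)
Your proposal is correct and follows essentially the same route as the paper: apply Lemma~\ref{finite_index_general} to each $H_j$ to obtain cyclic families $\mathcal{K}_j$, take $\mathcal{K}=\bigcup_j\mathcal{K}_j$, note that $\unt(A_\Gamma;\,\mathcal{G},\mathcal{H}^t)$ sits with finite index in $\unt(A_\Gamma;\,\mathcal{G},\mathcal{K}^t)$, and inherit the proper cocompact action on the contractible complex $K^{\mathcal{G}}_{\mathcal{K}^t}$ from Theorem~\ref{mccoolcyclic}. The only cosmetic quibble is in how you phrase the finite-index step: what is actually used is that if $A_j$ has finite index in $B_j$ for each $j$ (all inside a common ambient group), then $\bigcap_j A_j$ has finite index in $\bigcap_j B_j$, which is a slight variant of the statement about intersecting finite-index subgroups of a single overgroup; the argument is standard either way and the conclusion is the same.
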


\begin{proof}
For each $H_j$, set $\mathcal{K}_j$ as in Lemma~\ref{finite_index_general}, and let $\mathcal{K}$ be the union of the $\mathcal{K}_j$. The group $\unt(A_\Gamma;\, \mathcal{G},\mathcal{H}^t) = \unt(A_\Gamma;\,\mathcal{G})\cap \displaystyle \bigcap_j \out(A_\Gamma;\, \{H_j\}^t)$ is a finite-index subgroup of $\unt(A_\Gamma;\,\mathcal{G})\cap \displaystyle \bigcap_j \out(A_\Gamma;\, \mathcal{K}_j^t) = \unt(A_\Gamma;\,\mathcal{G},\mathcal{K}^t)$. Hence, it acts properly and cocompactly on the contractible subcomplex $K^\mathcal{G}_{\mathcal{K}^t}$ of $K_\Gamma$ from Theorem~\ref{mccoolcyclic}.
\end{proof}

We conjecture the following, that would strengthen Theorem~\ref{mccoolgeneral} by allowing finitely many arbitrary abelian subgroups in $\mathcal{G}$ in addition to the standard subgroups. Note that abelian subgroups of $A_\Gamma$ have rank bounded by the cohomological dimension of $A_\Gamma$, hence are finitely generated.
\begin{conj}
Let $H<A_\Gamma$ be abelian, there exists $\mathcal{K} = (K_1,\dots,K_L)$ a family of cyclic subgroups of $A_\Gamma$ such that $\unt(A_\Gamma;\, \{H\})$ is a subgroup of finite index of $\unt(A_\Gamma;\, \mathcal{K}^t)$.
\end{conj}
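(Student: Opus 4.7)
The plan is to adapt Lemma~\ref{finite_index_general} from the fixer setting to the conjugacy-stabilizer setting. The key obstruction is that any $\varphi \in \unt(A_\Gamma;\{H\})$ satisfies only $\varphi(H) = g_\varphi H g_\varphi^{-1}$ for some $g_\varphi \in A_\Gamma$, so it induces a group automorphism $\sigma_\varphi \in \aut(H)$ via $\varphi(h) = g_\varphi \sigma_\varphi(h) g_\varphi^{-1}$. The crux, and the main obstacle, is the following:

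\textbf{Key claim.} The homomorphism $\Sigma \colon \unt(A_\Gamma;\{H\}) \to \aut(H)$, $\varphi \mapsto \sigma_\varphi$, has finite image $F$.

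To prove the Key claim, I would use Fioravanti's characterization (cf.~Proposition~\ref{fioravanti} and \cite{coarsemedian}) of $\unt(A_\Gamma)$ as the automorphisms that coarsely preserve the standard median structure on $A_\Gamma$. Any finitely generated abelian subgroup $H$ is conjugate into a standard clique subgroup $A_C \simeq \ZZ^{|C|}$, which is median-convex in $A_\Gamma$ and carries the $\ell^1$-median. The induced $\sigma_\varphi$ then acts on $H$ coarsely preserving the restricted $\ell^1$-median; since coarse $\ell^1$-median automorphisms of $\ZZ^{|C|}$ are at bounded distance from signed permutations of a basis, and $\sigma_\varphi$ is a genuine group automorphism, $F$ embeds in the finite subgroup of $\gl_{|C|}(\ZZ)$ consisting of signed permutations stabilizing the sublattice $H$.

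Given the Key claim, let $H^F \leq H$ be the $F$-fixed sublattice; it has finite index in $H$ by averaging over $F$. Choose a $\ZZ$-basis $a_1, \ldots, a_p$ of $H^F$ and let $\mathcal{K}$ consist of the cyclic subgroups generated by the $a_i$ and by the products $a_i a_j$ for $i<j$, exactly as in Lemma~\ref{finite_index_general}. For any $\varphi \in \unt(A_\Gamma;\{H\})$ and any generator $k$ of some $K \in \mathcal{K}$, the identity $\sigma_\varphi(k) = k$ yields $\varphi(k) = g_\varphi k g_\varphi^{-1}$, so $\varphi$ is inner on $K$; this establishes $\unt(A_\Gamma;\{H\}) \subseteq \unt(A_\Gamma;\mathcal{K}^t)$. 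The finite-index bound follows verbatim from the end of the proof of Lemma~\ref{finite_index_general}: Proposition~\ref{bounded_displacement} produces a common bounded-displacement vertex for the $\varphi(a_i)$, confining them (up to inner composition) to a finite ball, so $[\unt(A_\Gamma;\mathcal{K}^t) : \unt(A_\Gamma;\{H^F\}^t)] < \infty$, and hence the intermediate subgroup $\unt(A_\Gamma;\{H\})$ also has finite index in $\unt(A_\Gamma;\mathcal{K}^t)$.

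A serious subtlety arises when $H^F$ has rank strictly smaller than $H$, for instance if $F$ contains inversions that kill a generator of $H$: the $\mathcal{K}$ built from $H^F$ may fail to distinguish $H$ from other finite-index extensions of $H^F$ in $A_\Gamma$, so that $\unt(A_\Gamma;\mathcal{K}^t)$ could be genuinely larger than expected. Handling this will likely require enriching $\mathcal{K}$ with further cyclic subgroups whose conjugacy class is $F$-invariant (e.g. based on products of $F$-orbits rather than orbit sums), while preserving the compatibility needed for the inclusion above, and I expect this to be the hardest step after the Key claim itself.
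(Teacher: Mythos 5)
This statement is posed as an open \emph{Conjecture} in the paper, so there is no reference proof for me to compare against; I can only assess your proposal on its own terms.

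The proof of your Key claim rests on the assertion that every finitely generated abelian subgroup of $A_\Gamma$ is conjugate into a standard clique subgroup $A_C$, and this is false. Take $\Gamma$ to be the $4$-cycle with vertices $a,b,c,d$ (edges $ab,bc,cd,da$), so that $A_\Gamma\cong\gen{a,c}\times\gen{b,d}\cong F_2\times F_2$. The elements $ac$ and $bd$ commute and generate a rank-$2$ free abelian subgroup $H$, but $ac$ is cyclically reduced and every reduced word for a conjugate of $ac$ contains both letters $a$ and $c$, which span no clique in $\Gamma$; hence no conjugate of $H$ lies in any clique subgroup. The Key claim itself may well hold, but an argument for it has to engage directly with the flats supplied by the Flat Torus Theorem rather than reduce to the clique case. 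You should also address why $\Sigma$ is a well-defined homomorphism: $g_\varphi$ is only determined up to the normalizer of $H$, so $\sigma_\varphi$ is only defined modulo the image of $N_{A_\Gamma}(H)$ in $\aut(H)$, and the map to a coset space need not be a homomorphism.

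Separately, the issue you flag at the end about $H^F$ dropping in rank is not a subtlety but a collapse of the construction, and I would urge you to test the conjecture on it before investing further. For $A_\Gamma=F_2=\gen{a,b}$ and $H=\gen{a}$, the inversion $a\mapsto a^{-1}$, $b\mapsto b$ lies in $\unt(F_2;\{H\})$, so $F$ contains $-1$, $H^F$ is trivial, and the proposed $\mathcal{K}$ is empty. Worse, the inclusion $\unt(F_2;\{H\})\subseteq\unt(F_2;\mathcal{K}^t)$ forces every generator $k$ of a $K\in\mathcal{K}$ to have $[k]$ fixed by all of $\unt(F_2;\{H\})$; any such $k$ must have abelianization fixed by the inversion and the fold $b\mapsto ab$, hence trivial abelianization, and one still has to fight the unipotent dynamics of the fold. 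I was unable to produce any nontrivial choice of $\mathcal{K}$ making the stated inclusion hold. Note that Lemma~\ref{finite_index_general} places $\out(A_\Gamma;\{H\}^t)$, not $\out(A_\Gamma;\{H\})$, inside $\out(A_\Gamma;\mathcal{K}^t)$; the conjecture as printed may in fact require the inclusion in the opposite direction, and settling which inclusion is intended should be the first step of any attempted proof.
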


We also make the following conjecture.
\begin{conj}
The set $\{Fix(\varphi)\mid \varphi\in \uaut(A_\Gamma)\}$, ordered by inclusion, contains no infinite chain.
\end{conj}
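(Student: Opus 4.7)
The plan is to combine a finite-generation result for fixed subgroups of untwisted automorphisms with a bounded-complexity argument that forces any chain to terminate. First, I would prove that for every $\varphi \in \uaut(A_\Gamma)$, the subgroup $\mathrm{Fix}(\varphi) = \{g \in A_\Gamma : \varphi(g) = g\}$ is finitely generated. For $A_\Gamma = F_n$ this is the Bestvina-Handel theorem; in the RAAG setting I would lift $\varphi$ to a combinatorial map $\tilde\varphi \colon \USS \to \USS$ equivariant under the twisted action $g \cdot x \mapsto \varphi(g)\tilde\varphi(x)$. The fixed point set $\mathrm{Fix}(\tilde\varphi) \subseteq \USS$ is a convex subcomplex on which $\mathrm{Fix}(\varphi)$ acts freely; a train-track-like argument adapted to spatial cube complexes, using the minset machinery of Section~\ref{minsets} applied to the twisted action, should show this action is cocompact.

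Next, I would attach to each fixed subgroup $H = \mathrm{Fix}(\varphi)$ a complexity invariant $c(H)$ valued in a finite totally ordered set, for instance the pair consisting of the cohomological dimension of $H$ and the tuple of ranks of its maximal abelian subgroups. Both are bounded by data extracted from $\Gamma$ (in particular by the clique number), so $c$ takes only finitely many values. Along any chain of fixed subgroups, $c$ is monotone and hence eventually constant.

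I would then treat the constant-complexity case: suppose $H \subsetneq H'$ are both fixed subgroups of untwisted automorphisms with $c(H) = c(H')$. Apply Theorem~\ref{mccoolgeneral} with $\mathcal{H} = \{H\}$ to obtain a contractible complex $K^\emptyset_{\{H\}^t}$ on which $\unt(A_\Gamma;\,\{H\}^t)$ acts properly cocompactly; since $H \subseteq H'$, the natural inclusion $\unt(A_\Gamma;\,\{H'\}^t) \subseteq \unt(A_\Gamma;\,\{H\}^t)$ places the outer class of any $\psi$ with $\mathrm{Fix}(\psi) = H'$ in the larger group. The equality $c(H) = c(H')$ should translate into a rigidity statement: the minimal $H$-invariant convex subcomplex of $\USS$ (Lemma~\ref{invariantminimal}) has the same combinatorial complexity as that of $H'$, and Lemma~\ref{parallelstabilizer} then forces $H = H'$, a contradiction.

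The main obstacle is the first step: no train-track theory for $\uaut(A_\Gamma)$ is currently available in the required generality. A reasonable first pass would be to verify finite generation for each of the generators of $\uaut(A_\Gamma)$ given by Corollary~\ref{gen}, where $\mathrm{Fix}$ is explicitly computable, and then attempt an induction on the Whitehead-generator length of $\varphi$, leveraging the action on the spine of untwisted auter space $L_\Gamma$ from Corollary~\ref{auterspace}. A secondary difficulty is to design $c$ finely enough that the equality $c(H) = c(H')$ genuinely precludes strict inclusion while remaining bounded on the whole collection of fixed subgroups.
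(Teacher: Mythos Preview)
The statement you are attempting to prove is labelled a \emph{Conjecture} in the paper; the author does not claim to prove it and offers no argument. There is therefore no proof in the paper to compare your proposal against. What you have written is not a proof but an outline of a possible research programme, and you yourself identify the two principal gaps.

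Those gaps are real. First, finite generation of $\mathrm{Fix}(\varphi)$ for $\varphi\in\uaut(A_\Gamma)$ is genuinely open; the claim that $\mathrm{Fix}(\tilde\varphi)\subseteq\USS$ is a convex subcomplex is not justified (a lift $\tilde\varphi$ need not be a median-preserving map, and even if it were, convexity of its fixed set would require argument), and no substitute for train-track theory is currently available for general $A_\Gamma$. Checking finite generation on the Laurence--Servatius generators and inducting on word length does not work: $\mathrm{Fix}$ is not functorial under composition in any way that supports such an induction.

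Second, the complexity step is not sound as written. The proposed invariant $c(H)$ is not well defined as an element of a finite totally ordered set (the collection of maximal abelian subgroups of $H$ is typically infinite), and even granting some sensible complexity, the deduction via Lemma~\ref{parallelstabilizer} does not go through: that lemma concludes equality of stabilizers of two \emph{parallel} convex subcomplexes, whereas you would need to show that minimal invariant subcomplexes for $H$ and $H'$ are parallel, which certainly fails for a strict inclusion $H\subsetneq H'$ in general. Nothing in Section~\ref{minsets} converts an equality of numerical invariants into parallelism of minimal subcomplexes. In short, both halves of the plan rest on steps that are either open problems or do not follow from the tools invoked.
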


Following \cite{guirardel_mccool_2015}, this would prove that, in Theorem~\ref{mccoolgeneral}, assuming the elements of $\mathcal{H}$ finitely generated is unnecessary.

\bibliographystyle{plain}
\bibliography{VF}
\bigskip

\begin{flushleft}
Adrien Abgrall
\\
Univ Rennes, CNRS, IRMAR - UMR 6625, F-35000 Rennes, France.
\\
E-mail: \href{mailto:adrien.abgrall@univ-rennes.fr}{\texttt{adrien.abgrall@univ-rennes.fr}}
\end{flushleft}

\end{document}